\numberwithin{equation}{section}
\newtheorem{thm}{Theorem}[section]
\newtheorem{cor}[thm]{Corollary}
\newtheorem{lem}[thm]{Lemma}
\newtheorem{prop}[thm]{Proposition}
\newtheorem{example}[thm]{Example}
\newtheorem{defn}[thm]{Definition}
\newtheorem{conj}[thm]{Conjecture}
\newtheorem{rem}[thm]{Remark}
\definecolor{darkspringgreen}{rgb}{0.09, 0.45, 0.27}
\newcommand\numberthis{\addtocounter{equation}{1}\tag{\theequation}}
\numberwithin{equation}{section}
\begin{document}
	
	\newcommand{\Uq}{U_q sl_2}
	\newcommand{\Uqhat}{U_q \widehat{sl}_2}
	\newcommand{\Loop}{\mathcal{L} U_q sl_2}

	\newcommand{\beqa}{\begin{eqnarray}}
		\newcommand{\eeqa}{\end{eqnarray}}
	\newcommand{\thmref}[1]{Theorem~\ref{#1}}
	\newcommand{\secref}[1]{Sect.~\ref{#1}}
	\newcommand{\lemref}[1]{Lemma~\ref{#1}}
	\newcommand{\propref}[1]{Proposition~\ref{#1}}
	\newcommand{\corref}[1]{Corollary~\ref{#1}}
	\newcommand{\remref}[1]{Remark~\ref{#1}}
	\newcommand{\er}[1]{(\ref{#1})}
	\newcommand{\nc}{\newcommand}
	\newcommand{\rnc}{\renewcommand}

    \newcommand{\CWM}{\mathsf{W}_-^{(N)}}
	\newcommand{\CWP}{\mathsf{W}_+^{(N)}}
	\newcommand{\CG}{\mathsf{G}^{(N)}}
	\newcommand{\CtG}{\widetilde{\mathsf{G}}^{(N)}}
	
	\newcommand{\WMu}{\displaystyle{\sum_{k=0}^{N-1} P_{-k}^{(N)}(u) \tW_{-k}^{(N)}}}
	\newcommand{\WPu}{\displaystyle{\sum_{k=0}^{N-1} P_{-k}^{(N)}(u) \tW_{k+1}^{(N)}}}
	\newcommand{\Gu}{\displaystyle{\sum_{k=0}^{N-1} P_{-k}^{(N)}(u)\tG_{k+1}^{(N)}}}
	\newcommand{\tGu}{\displaystyle{\sum_{k=0}^{N-1} P_{-k}^{(N)}(u) \tilde{\tG}_{k+1}^{(N)}}}
	
	\nc{\cal}{\mathcal}
	\nc{\diag}{\mathrm{diag}}
	\nc{\goth}{\mathfrak}
	\rnc{\bold}{\mathbf}
	\renewcommand{\frak}{\mathfrak}
	\renewcommand{\Bbb}{\mathbb}

	\newcommand{\epsp}{\varepsilon_+}
	\newcommand{\epsm}{\varepsilon_-}
	\newcommand{\bepsp}{\overline{\varepsilon}_+}
	\newcommand{\bepsm}{\overline{\varepsilon}_-}

    \newcommand{\bj}{\boldsymbol{\bar{\jmath}}}

	\newcommand{\bt}{{\bf {\mathsf{T}}}}
	\newcommand{\by}{{\bf {\mathsf{Y}}}}
	\newcommand{\id}{\mathrm{id}}
	\nc{\Cal}{\mathcal}
	\nc{\Xp}[1]{X^+(#1)}
	\nc{\Xm}[1]{X^-(#1)}
	\nc{\on}{\operatorname}
	\nc{\ch}{\mbox{ch}}
	\nc{\Z}{{\bold Z}}
	\nc{\J}{{\mathcal J}}
	\nc{\C}{{\bold C}}
	\nc{\Q}{{\bold Q}}
	\nc{\oC}{{\widetilde{C}}}
	\nc{\oc}{{\tilde{c}}}
	\nc{\ocI}{ \overline{\cal I}}
	\nc{\og}{{\tilde{\gamma}}}
	\nc{\lC}{{\overline{C}}}
	\nc{\lc}{{\overline{c}}}
	\nc{\Rt}{{\tilde{R}}}
	
	\nc{\tW}{\normalfont{{\mathsf{W}}}}
	\nc{\tG}{\normalfont{{\mathsf{G}}}}
    \nc{\tK}{\normalfont{{\mathsf{K}}}}
	\nc{\tZ}{\normalfont{{\mathsf{Z}}}}
	\nc{\tI}{{\mathsf{I}}}

	\nc{\tE}{{\mathsf{E}}}
	\nc{\tF}{{\mathsf{F}}}
	\nc{\tx}{{\mathsf{x}}}
	\nc{\tho}{{\mathsf{h}}}
	\nc{\tk}{{\mathsf{k}}}
	\nc{\tep}{{\bf{\cal E}}}

	\nc{\te}{{\mathsf{e}}}
	\nc{\tf}{{\mathsf{f}}}

	\nc{\odel}{{\overline{\delta}}}
	
	\def\pr#1{\left(#1\right)_\infty}  
	
	\renewcommand{\P}{{\mathcal P}}
	\nc{\N}{{\Bbb N}}
	\nc\beq{\begin{equation}}
		\nc\enq{\end{equation}}
	\nc\lan{\langle}
	\nc\ran{\rangle}
	\nc\bsl{\backslash}
	\nc\mto{\mapsto}
	\nc\lra{\leftrightarrow}
	\nc\hra{\hookrightarrow}
	\nc\sm{\smallmatrix}
	\nc\esm{\endsmallmatrix}
	\nc\sub{\subset}
	\nc\ti{\tilde}
	\nc\nl{\newline}
	\nc\fra{\frac}
	\nc\und{\underline}
	\nc\ov{\overline}
	\nc\ot{\otimes}
	
	\nc\ochi{\overline{\chi}}
	\nc\bbq{\bar{\bq}_l}
	\nc\bcc{\thickfracwithdelims[]\thickness0}
	\nc\ad{\text{\rm ad}}
	\nc\Ad{\text{\rm Ad}}
	\nc\Hom{\text{\rm Hom}}
	\nc\End{\text{\rm End}}
	\nc\Ind{\text{\rm Ind}}
	\nc\Res{\text{\rm Res}}
	\nc\Ker{\text{\rm Ker}}
	\rnc\Im{\text{Im}}
	\nc\sgn{\text{\rm sgn}}
	\nc\tr{\text{\rm tr}}
	\nc\Tr{\text{\rm Tr}}
	\nc\supp{\text{\rm supp}}
	\nc\card{\text{\rm card}}
	\nc\bst{{}^\bigstar\!}
	\nc\he{\heartsuit}
	\nc\clu{\clubsuit}
	\nc\spa{\spadesuit}
	\nc\di{\diamond}
	\nc\cW{\mathsf{W}}
	\nc\cG{\mathsf{G}}
    \nc\cK{\mathsf{K}}
    \nc\cZ{\mathsf{Z}}
	\nc\ocW{\overline{\cal W}}
	\nc\ocZ{\overline{\cal Z}}
	\nc\al{\alpha}
	\nc\bet{\beta}
	\nc\ga{\gamma}
	\nc\de{\delta}
	\nc\ep{\epsilon}
	\nc\io{\iota}
	\nc\om{\omega}
	\nc\si{\sigma}
	\rnc\th{\theta}
	\nc\ka{\kappa}
	\nc\la{\lambda}
	\nc\ze{\zeta}
	
	\nc\vp{\varpi}
	\nc\vt{\vartheta}
	\nc\vr{\varrho}
	
	\nc\odelta{\overline{\delta}}
	\nc\Ga{\Gamma}
	\nc\De{\Delta}
	\nc\Om{\Omega}
	\nc\Si{\Sigma}
	\nc\Th{\Theta}
	\nc\La{\Lambda}
	
	\nc\boa{\bold a}
	\nc\bob{\bold b}
	\nc\boc{\bold c}
	\nc\bod{\bold d}
	\nc\boe{\bold e}
	\nc\bof{\bold f}
	\nc\bog{\bold g}
	\nc\boh{\bold h}
	\nc\boi{\bold i}
	\nc\boj{\bold j}
	\nc\bok{\bold k}
	\nc\bol{\bold l}
	\nc\bom{\bold m}
	\nc\bon{\bold n}
	\nc\boo{\bold o}
	\nc\bop{\bold p}
	\nc\boq{\bold q}
	\nc\bor{\bold r}
	\nc\bos{\bold s}
	\nc\bou{\bold u}
	\nc\bov{\bold v}
	\nc\bow{\bold w}
	\nc\boz{\bold z}
	
	\nc\ba{\bold A}
	\nc\bb{\bold B}
	\nc\bc{\bold C}
	\nc\bd{\bold D}
	\nc\be{\bold E}
	\nc\bg{\bold G}
	\nc\bh{\bar{h}}
	\nc\bi{\bold I}
	\nc\bk{\bold K}
	\nc\bl{\bold L}
	\nc\bm{\bold M}
	\nc\bn{\bold N}
	\nc\bo{\bold O}
	\nc\bp{\bold P}
	\nc\bq{\bold Q}
	\nc\br{\bold R}
	\nc\bs{\bold S}
	\nc\bu{\bold U}
	\nc\bv{\bold V}
	\nc\bw{\bold W}
	\nc\bz{\bold Z}
	\nc\bx{\bold X}

	\nc\ca{\mathcal A}
	\nc\cb{\mathcal B}
	\nc\cc{\mathcal C}
	\nc\cd{\mathcal D}
	\nc\ce{\mathcal E}
	\nc\cf{\mathcal F}
	\nc\cg{\mathcal G}
	\rnc\ch{\mathcal H}
	\nc\ci{\mathcal I}
	\nc\cj{\mathcal J}
	\nc\ck{\mathcal K}
	\nc\cl{\mathcal L}
	\nc\cm{\mathcal M}
	\nc\cn{\mathcal N}
	\nc\co{\mathcal O}
	\nc\cp{\mathcal P}
	\nc\cq{\mathcal Q}
	\nc\car{\mathcal R}
	\nc\cs{\mathcal S}
	\nc\ct{\mathcal T}
	\nc\cu{\mathcal U}
	\nc\cv{\mathcal V}
	\nc\cz{\mathcal Z}
	\nc\cx{\mathcal X}
	\nc\cy{\mathcal Y}

	\nc\e[1]{E_{#1}}
	\nc\ei[1]{E_{\delta - \alpha_{#1}}}
	\nc\esi[1]{E_{s \delta - \alpha_{#1}}}
	\nc\eri[1]{E_{r \delta - \alpha_{#1}}}
	\nc\ed[2][]{E_{#1 \delta,#2}}
	\nc\ekd[1]{E_{k \delta,#1}}
	\nc\emd[1]{E_{m \delta,#1}}
	\nc\erd[1]{E_{r \delta,#1}}
	
	\nc\ef[1]{F_{#1}}
	\nc\efi[1]{F_{\delta - \alpha_{#1}}}
	\nc\efsi[1]{F_{s \delta - \alpha_{#1}}}
	\nc\efri[1]{F_{r \delta - \alpha_{#1}}}
	\nc\efd[2][]{F_{#1 \delta,#2}}
	\nc\efkd[1]{F_{k \delta,#1}}
	\nc\efmd[1]{F_{m \delta,#1}}
	\nc\efrd[1]{F_{r \delta,#1}}

	\nc\fa{\frak a}
	\nc\fb{\frak b}
	\nc\fc{\frak c}
	\nc\fd{\frak d}
	\nc\fe{\frak e}
	\nc\ff{\frak f}
	\nc\fg{\frak g}
	\nc\fh{\frak h}
	\nc\fj{\frak j}
	\nc\fk{\frak k}
	\nc\fl{\frak l}
	\nc\fm{\frak m}
	\nc\fn{\frak n}
	\nc\fo{\frak o}
	\nc\fp{\frak p}
	\nc\fq{\frak q}
	\nc\fr{\frak r}
	\nc\fs{\frak s}
	\nc\ft{\frak t}
	\nc\fv{\frak v}
	\nc\fz{\frak z}
	\nc\fx{\frak x}
	\nc\fy{\frak y}
	
	\nc\fA{\frak A}
	\nc\fB{\frak B}
	\nc\fC{\frak C}
	\nc\fD{\frak D}
	\nc\fE{\frak E}
	\nc\fF{\frak F}
	\nc\fG{\frak G}
	\nc\fH{\frak H}
	\nc\fJ{\frak J}
	\nc\fK{\frak K}
	\nc\fL{\frak L}
	\nc\fM{\frak M}
	\nc\fN{\frak N}
	\nc\fO{\frak O}
	\nc\fP{\frak P}
	\nc\fQ{\frak Q}
	\nc\fR{\frak R}
	\nc\fS{\frak S}
	\nc\fT{\frak T}
	\nc\fU{\frak U}
	\nc\fV{\frak V}
	\nc\fZ{\frak Z}
	\nc\fX{\frak X}
	\nc\fY{\frak Y}
	\nc\tfi{\ti{\Phi}}
	\nc\bF{\bold F}
	\rnc\bol{\bold 1}
	
	\nc\ua{\bold U_\A}
	
	\nc\qinti[1]{[#1]_i}
	\nc\q[1]{[#1]_q}
	\nc\xpm[2]{E_{#2 \delta \pm \alpha_#1}}  
	\nc\xmp[2]{E_{#2 \delta \mp \alpha_#1}}
	\nc\xp[2]{E_{#2 \delta + \alpha_{#1}}}
	\nc\xm[2]{E_{#2 \delta - \alpha_{#1}}}
	\nc\hik{\ed{k}{i}}
	\nc\hjl{\ed{l}{j}}
	\nc\qcoeff[3]{\left[ \begin{smallmatrix} {#1}& \\ {#2}& \end{smallmatrix}
		\negthickspace \right]_{#3}}
	\nc\qi{q}
	\nc\qj{q}
	
	\nc\ufdm{{_\ca\bu}_{\rm fd}^{\le 0}}

	
	\nc\isom{\cong} 
	
	\nc{\pone}{{\Bbb C}{\Bbb P}^1}
	\nc{\pa}{\partial}
	\def\H{\mathcal H}
	\def\L{\mathcal L}
	\nc{\F}{{\mathcal F}}
	\nc{\Sym}{{\goth S}}
	\nc{\A}{{\mathcal A}}
	\nc{\arr}{\rightarrow}
	\nc{\larr}{\longrightarrow}
	
	\nc{\ri}{\rangle}
	\nc{\lef}{\langle}
	\nc{\W}{{\mathcal W}}
	\nc{\uqatwoatone}{{U_{q,1}}(\su)}
	\nc{\uqtwo}{U_q(\goth{sl}_2)}
	\nc{\dij}{\delta_{ij}}
	\nc{\divei}{E_{\alpha_i}^{(n)}}
	\nc{\divfi}{F_{\alpha_i}^{(n)}}
	\nc{\Lzero}{\Lambda_0}
	\nc{\Lone}{\Lambda_1}
	\nc{\ve}{\varepsilon}
	\nc{\bepsilon}{\bar{\epsilon}}
	\nc{\bak}{\bar{k}}
	\nc{\phioneminusi}{\Phi^{(1-i,i)}}
	\nc{\phioneminusistar}{\Phi^{* (1-i,i)}}
	\nc{\phii}{\Phi^{(i,1-i)}}
	\nc{\Li}{\Lambda_i}
	\nc{\Loneminusi}{\Lambda_{1-i}}
	\nc{\vtimesz}{v_\ve \otimes z^m}
	
	\nc{\asltwo}{\widehat{\goth{sl}_2}}
	\nc\ag{\widehat{\goth{g}}}  
	\nc\teb{\tilde E_\boc}
	\nc\tebp{\tilde E_{\boc'}}
	
	\newcommand{\LR}{\bar{R}}
		\newcommand{\eeq}{\end{equation}}
	\newcommand{\ben}{\begin{eqnarray}}
		\newcommand{\een}{\end{eqnarray}}

	\setcounter{MaxMatrixCols}{30}
	\newcommand{\h}{\frac{1}{2}}
	\newcommand{\tha}{\frac{3}{2}}
	
	\newcommand{\bep}{\overline{\epsilon}_+}
	\newcommand{\bem}{\overline{\epsilon}_-}
	\newcommand{\bkp}{\overline{k}_+}
	\newcommand{\bkm}{\overline{k}_-}
	\newcommand{\kp}{k_+}
	\newcommand{\km}{k_-}

    \newcommand{\calW}{\mathcal{W}}
    \newcommand{\calG}{\mathcal{G}}

	\newcommand{\ds}{\mathds}
	
	\newcommand{\fu}{{\langle 12 \rangle} }
	\newcommand{\futt}{{\langle 23 \rangle} }
	\newcommand{\sfu}{{\langle 34 \rangle} }
	
	\newcommand{\CE}{\cal{E} }
	\newcommand{\CF}{\cal{F} }
	\newcommand{\CH}{\cal{H} }
	\newcommand{\CW}{\cal{W}}
	
	\allowdisplaybreaks

	\makeatletter
	\def\@textbottom{\vskip \z@ \@plus 1pt}
	\let\@texttop\relax
	\makeatother
	

	\title[Universal TT- and TQ-relations for ${\cal A}_q$]{Universal TT- and TQ-relations\\ via centrally extended $\boldsymbol{q}$-Onsager algebra} 
	\author{Pascal Baseilhac}
	\author{Azat M. Gainutdinov}
	\author{Guillaume Lemarthe}
	\address{Institut Denis-Poisson CNRS/UMR 7013 - Universit\'e de Tours -
		Parc de Grammont, 37200 Tours, 
		FRANCE}
	\email{pascal.baseilhac@idpoisson.fr, azat.gainutdinov@cnrs.fr, guillaume.lemarthe@idpoisson.fr}

	\begin{abstract}
    Let ${\cal A}_q$ be the alternating central extension of the $q$-Onsager algebra,
    a comodule algebra over  the quantum loop algebra of $sl_2$.
     We first classify one-dimensional representations of $\cal A_q$, and show that spin-$j$ K-operators constructed in~\cite{LBG} act as K-matrices that match with those previously derived in the literature. 
 	Using these K-operators and  K-matrices, we construct universal spin-$j$ transfer matrices generating commutative subalgebras in $\mathcal{A}_q$. Within a technical conjecture, we  derive their fusion hierarchy, the so-called universal TT-relations.
	 On spin-chain representations of $\mathcal{A}_q$, 
      we show how the universal transfer matrices evaluate to spin-chain transfer matrices, and as a result we get explicit TT-relations for all values of spins for auxiliary and quantum spaces,  any inhomogeneities, and general integrable boundary conditions.   In particular, we derive previously conjectured TT-relations (spin-$j$ open XXZ, alternating open spin chains). 
     Using the TT-relations, we show that $n$th {\it local} conserved quantities of the spin-$j$ chains of length $N$ are polynomials of total degree $4Njn$ in two {\it non-local} operators  of the $q$-Onsager algebra. As a result, we give an algorithm of explicit calculation of all conserved quantities (Hamiltonians and higher logarithmic derivatives of the transfer matrix) in terms of spin operators. Furthermore, using the universal TT-relations we derive exchange relations between spin-$j$ Hamiltonians and the two non-local operators, which shows  existence of non-trivial symmetries for special boundary conditions, in the sense that they commute with all Hamiltonian densities. 
     As a yet another application of our universal TT-relations we propose universal T-system, Y-system and universal TQ-relations for ${\cal A}_q$, and as a result, universal TQ for the $q-$Onsager algebra. In view of application to diagonal boundary conditions, we also obtain universal TT- and TQ-relations for a  degenerate version of~$\cal A_q$ known as centrally extended augmented $q$-Onsager algebra.  We finally discuss implications of our results for generalized Gibbs ensemble construction.
	\end{abstract}
	
	\maketitle
	
	
	%
	
	%
	\tableofcontents
	
	\section{Introduction}

\subsection{Background \& motivations}
In the context of quantum integrable spin chains  and their applications to physics at equilibrium,  explicit knowledge of models' transfer matrices, their fusion hierarchy and related Baxter's TQ-relations  has shown  to be sufficient for solving the model under consideration, namely  Hamiltonian's spectrum, see e.g.~\cite{Bax82,B14}.

While the representation-theoretic meaning of the fusion hierarchy~\cite{KNS,kunib} for closed spin-chains and related Baxter's TQ-relations~\cite{FH13} has been studied in details, in contrast the fusion hierarchy for open spin-chains with generic boundary conditions~\cite{MN91,FNR07} and related TQ-relations are  poorly understood. This lack of understanding is mainly due to the fact that the form of the fusion hierarchy and of TQ-relations heavily depends on the quantum spins and boundary conditions chosen\footnote{For the case of the XXZ spin-$\h$ chain with diagonal boundary conditions, see however~\cite{T20,VW20,VW23}.}, and so their representation-theoretic meaning stays elusive. For instance, for certain class of boundary conditions   the approach using algebraic Bethe ansatz leads to homogeneous TQ-relations~\cite{YNZ06,FNR07,CYSW14} that are sufficient to determine the spectrum of the spin chain under consideration. However, for generic boundary conditions the situation is more complicated. In this latter case, one gets the so-called inhomogeneous TQ-relations~\cite{CYSW13,CYSW14,YZYSW15},
and their representation-theoretic interpretation is an open problem, together with the closely related  fusion hierarchy of transfer matrices known as TT-relations.
This motivates development of a universal transfer-matrix approach aiming to obtain a unified version
of TT-relations, called {\it universal TT-relations}. These are supposed to be relations among generating functions -- universal transfer matrices -- of commuting elements in a certain quantum algebra, and such that they would
reduce to the known examples of TT-relations on various spin-chain representations of this quantum algebra. Following the ideas in~\cite{YNZ06}, one can then propose universal TQ-relations in an infinite-spin limit of the universal TT-relations. 

The universal transfer-matrix approach also proves to be useful in applications  to non-equilibrium physics,  where 
the long time average of local observables after a quantum quench is described by a generalized Gibbs ensemble~\cite{RDO08,PSSV11}. It turns out that
an extensive number of (quasi-)local~con\-ser\-ved quantities are required for the analysis \cite{EF06,RDYO07,P11,Poz13,Pos,FE13,PPSA14,IMP15,INWCEP15}. 
 Here, one needs an access to explicit spin-matrix expressions of higher conserved quantities, and while this problem was solved in some spin-$\h$ closed spin chains~\cite{GM95,NF20}, it remains an open problem in the open boundary case. 
 From this perspective, a representation-independent approach to the explicit construction of local conserved quantities besides the Hamiltonian is desirable in order to avoid the complexity inherent to computationally expensive representation-dependent approaches. 
 
 There are two examples that illustrate the representation-independent approach. 
	In \cite{L20},  local conserved quantities are written in terms of `words' built from images of the  fundamental generators $\{A_0,A_1\}$ of the infinite-dimensional Onsager algebra~\cite{Ons44,GR85,P89,AMPT88}. This has been applied  to non-equilibrium dynamics in a transverse field Ising chain  as well as the superintegrable 3-state Potts model, both for periodic boundary conditions; In \cite{NH21}, for the closed XXZ spin chain of length $N$, with or without a twist, 
    all local conserved quantities were studied in terms of `words' built from images of the affine Temperley-Lieb algebra generators $\{\rho,e_i,i=0,...,N\}$.  As pointed out in~\cite{L20}, the approach based on the Onsager algebra is more straightforward, both conceptually and technically, than working with the usual fermionic representation of the model. Also, the approach taken in~\cite{NH21} for the XXZ spin-$\h$ chain overcomes the problem that, 
    besides  the Hamiltonian, all higher local conserved quantities are increasingly  cumbersome expressions in terms of tensor products of Pauli matrices~\cite{GM95,NF20}.
    Besides this progress in identifying all conserved quantities for {\it closed} integrable spin-$\h$ chains, to our knowledge for integrable  open spin chains with arbitrary spins, even for the spin-$\h$ case, 
    an algorithm for an explicit construction of local conserved quantities in terms of spin operators has not been proposed. 

\subsection{Goal \& approach} \label{intro:goal}
The goal of the present paper is the construction of universal transfer matrices and corresponding fusion hierarchy  such that 
\begin{itemize}
\item[(i)] it specializes to all known examples of integrable open spin chains' transfer matrices of XXZ type and corresponding fusion hierarchies, with a clear understanding within the representation theory of quantum algebras;  
\item[(ii)] it provides an algorithm   determining closed form expressions for  all local conserved quantities in terms  of, first, generators of the quantum algebra, and eventually in terms of spin operators. 
\end{itemize}
As far as quantum integrable models with boundary conditions are concerned, the approach to open chains transfer matrix construction starts in
Sklyanin's framework \cite{Skly88} with identifying reflection algebra and the corresponding solutions of reflection equations. More precisely, let $u$, $v$, $u_1$, $u_2$  denote indeterminates and consider R-matrix solutions $R^{(j_1,j_2)}(u)\in \End(V^{(j_1)}) \otimes \End(V^{(j_2)})$ of 
	 the Yang-Baxter equations for all $j_1,j_2\in\h\mathbb{N}$:\footnote{We  use the standard notations: 
		$$
		R^{(j_1,j_2)}_{12}(u)=R^{(j_1,j_2)}(u) \otimes {\mathbb I}, \quad R_{23}^{(j_2,j_3)}(u)= {\mathbb I}\otimes R^{(j_2,j_3)}(u), \quad  R_{13}^{(j_1,j_3)}(u)= (\cal P^{(j_2,j_1)} \otimes {\mathbb I} ) ( \mathbb{I} \otimes R^{(j_1,j_3)}(u)) ( {\cal P}^{(j_1,j_2)} \otimes {\mathbb I}), $$
		where $R^{(j_1,j_2)}(u) \in \End(V^{(j_1)}\otimes V^{(j_2)})$ and  $\cal P^{(j_1,j_2)}$ flips $V^{(j_1)}\otimes V^{(j_2)}$ to $V^{(j_2)}\otimes V^{(j_1)}$ and it acts as identity on $V^{(j_3)}$.}
	\begin{equation}\label{YBj1j2}
		R_{12}^{(j_1,j_2)}(u_1/u_2) R_{13}^{(j_1,j_3)}(u_1) R_{23}^{(j_2,j_3)}(u_2)=R_{23}^{(j_2,j_3)}(u_2)R_{13}^{(j_1,j_3)}(u_1)R_{12}^{(j_1,j_2)}(u_1/u_2) \ ,
	\end{equation}
	and assume they are all   symmetric like in \cite{Skly88} and where $j_k$'s are $sl_2$ spins, e.g.\ solutions coming from the quantum loop algebra $\Loop$ universal R-matrix.
  {\it The family universal spin-$j$ transfer matrix} $\bt^{(j)}(u)$, a certain generating function of mutually commuting elements  in some  $\Loop$-comodule algebra~$B$,   is built from two key ingredients: a {\it spin-$j$  K-operator} and a {\it spin-$j$ dual K-matrix}. Namely, assume there exists a family of {\it spin-$j$  K-operators} 
	\begin{equation}
		{{\cal K}}^{(j)}(u) \in B((u^{-1})) \otimes\End(V^{(j)})  \label{Kg} \ ,\qquad j\in \h \mathbb{N}\ ,
	\end{equation}
that satisfies the reflection algebra 
\begin{align}
	R^{(j_1,j_2)}(u/v) {\cal K}_1^{(j_1)}(u) R^{(j_1,j_2)}(uv) {\cal K}_2^{(j_2)}(v) &= {\cal K}_2^{(j_2)}(v)R^{(j_1,j_2)}(uv){\cal K}_1^{(j_1)}(u)R^{(j_1,j_2)}(u/v)\ ,\label{REKop}
\end{align}
for all $j_1,j_2\in \h \mathbb{N}$, and
where we use the notations  $\cal K_1= \cal K\otimes {\mathbb I}$, $\cal K_2= {\mathbb I}\otimes \cal K$. 
Here, the entries of the K-operator are formal Laurent series in $u$  with coefficients in the algebra $B$ whose defining relations are  extracted via expanding the reflection equation~\eqref{REKop}.
We also introduce {\it spin-$j$ dual K-matrices}
\begin{equation}
 K^{+{(j)}}(u)\in {\mathbb C}((u^{-1})) \otimes \End(V^{(j)}) \ \label{dualKmatjint}
\end{equation}
 satisfying the so-called {\it dual} reflection equations
\begin{align}
	R^{(j_1,j_2)}(v/u) K_1^{+(j_1)}(u) R^{(j_1,j_2)}(1/uvq^2) K_2^{+(j_2)}(v) &= K_2^{+(j_2)}(v) R^{(j_1,j_2)}(1/uvq^2)  K_1^{+(j_1)}(u) R^{(j_1,j_2)}(v/u)\ .\label{REKdual}
\end{align}
	Then, following \cite{Skly88} we introduce the generating functions
	\begin{equation}\label{tgint}
	\bt^{(j)}(u) = \normalfont{\text{tr}}_{V^{(j)}}\bigl(K^{+{(j)}}(u){\cal K}^{(j)}(u)\bigr) \; \in B((u^{-1}))\ ,
	\end{equation}
	 where the trace is taken over the so-called auxiliary space $V^{(j)}$. 
	Using the reflection algebra relations~\eqref{REKop} and dual reflection equation~\eqref{REKdual}, it can be shown that~\cite{Skly88}:
	\beqa\label{eq:T-com-T}
    [\bt^{(j)}(u),\bt^{(j)}(v)]=0 \ ,
	\eeqa
    and thus each $\bt^{(j)}(u)$ provides a generating function for elements  in a commutative subalgebra of $B$. 

\smallskip
A unification of all known integrable open spin chains associated with the  R-matrix based on the quantum loop algebra $\Loop$ is achieved through the identification $B=\mathcal{A}_q$, where $\mathcal{A}_q$ is a central extension of the $q$-Onsager algebra, a non-abelian associative algebra of infinite dimension.  This algebra was introduced in \cite{BS09,BasBel,Ter21}
 in terms of the so-called `alternating' generators
$$\{{\tW}_{-k},{\tW}_{k+1}, {\tG}_{k+1},\tilde{\tG}_{k+1}\; | \; k\in{\mathbb N}\}$$
satisfying the relations \eqref{qo1}-\eqref{qo11}.
At generic integrable boundary conditions, open spin chains (of XXZ type) of any length and with any choice of quantum spins and inhomogeneity parameters provide finite-dimensional representations of 
$\mathcal{A}_q$~\cite{BK07}. This infinite-dimensional algebra is thus a natural candidate for a universal approach to open spin chains with  $\Loop$ R-matrix.

In~\cite{LBG} in this case of $B=\mathcal{A}_q$, we have constructed a family of fused K-operators of spin-$j$ that solves~\eqref{REKop}. The main result of this paper is the universal fusion hierarchy satisfied by corresponding generating functions $\bt^{(j)}(u)$ from~\eqref{tgint}. It is then applied to 
solve the points (i) and (ii) raised above for all known quantum integrable open spin chains of XXZ type with generic boundary conditions. We note that some of these results are based on the Phd thesis~\cite{lem23}.

\subsection{Main results} 
As a preliminary step before addressing the goals (i) and (ii) settled above, in the first part of the paper we construct the \textit{universal transfer matrix} $\bt^{(j)}(u)$ for $\cal A_q$ and study  its properties.
The first key ingredient in the construction of $\bt^{(j)}(u)$ given by~\eqref{tgint} is the spin-$j$ K-operator   $\cal K^{{(j)}}(u)$ for~$\cal A_q$~\cite{LBG} that finds a natural interpretation within a general framework of universal K-matrices for comodule algebras -- an extended version of the framework introduced
	by Appel-Vlaar in \cite{AV20}.
    The second key ingredient in~\eqref{tg} is the dual  K-matrix $K^{+{(j)}}(u)$ of spin-$j$. It is derived from the K-operator using the classification of one-dimensional representations of $\cal A_q$, given in the present paper. Besides, this classification establishes the precise relationship between spin-$j$ K-operators for $\cal A_q$ and all known spin-$j$ K-matrices, that was not previously considered in the literature.

Having defined $\bt^{(j)}(u)$'s, we establish their relation with the sole commutative subalgebra ${\cal I} \subset {\cal A}_q$ generated by the elements  $\{ \mathsf{I}_{2k+1}|k\in{\mathbb N}\}$ given by:
	%
	\begin{equation}	
	\mathsf{I}_{2k+1}=\overline{\varepsilon}_+{\tW}_{-k} + \overline{\varepsilon}_-{\tW}_{k+1} + \bar{\kappa}_+{\tG}_{k+1}  + \bar{\kappa}_-\tilde{\tG}_{k+1}\ ,\label{Imode}
	\end{equation}
	where the scalars $\overline{\varepsilon}_\pm,\bar{\kappa}_\pm$ form the entries of the dual  K-matrix $K^{+{(\h)}}(u)$.  For $j=\h$, $\bt^{(\h)}(u)$ is the generating function of $\{ \mathsf{I}_{2k+1}|k\in{\mathbb N}\}$.
Namely,
	\begin{equation}
	\bt^{(\frac{1}{2})}(u)=(u^2 q^2-u^{-2} q^{-2}) \left( \mathsf{I}(u)+ \mathsf{I}_0 \right)\ \quad \mbox{with} \quad \mathsf{I}(u)= \sum_{k\in {\mathbb N}}{\mathsf{I}}_{2k+1}U^{-k-1} \ ,\label{t12init}
	\end{equation}
	where we set $U=(qu^2+q^{-1}u^{-2})/(q+q^{-1})$, and $\mathsf{I}_0$ is a scalar.
    The first main result of the present paper is Theorem~\ref{TTrel} where we prove the following {universal} TT-relations in ${\cal I}((u^{-1}))$ for any positive integer or half-integer $j$, provided a technical conjecture \cite[Conj.\,1]{LBG} holds:\footnote{It actually requires a  simpler conjecture, see Conjecture~\ref{conj1} in this paper.}
	\begin{equation} \label{normTT}
		\bt^{(j)}(u) = \bt^{(j-\h)}(u q^{-\h}) \bt^{(\h)}(u q^{j-\h}) + \frac{\Gamma (u q^{j-\tha}) \Gamma_+ (uq^{j-\tha})}{ c(u^2 q^{2j}) c(u^2 q^{2j-2}) } \bt^{(j-1)}(u q^{-1})  \ 
	\end{equation}
	with  initial conditions~\eqref{t12init} and
	$\bt^{(0)}(u) =  1$,
    and we set $c(u)=u-u^{-1}$.
	Here, $\Gamma_+(u)$   is the scalar function~\eqref{gammaKP}, $\Gamma(u)$ is the quantum determinant, a central element in $\mathcal{A}_q((u^{-1}))$, given by Proposition~\ref{prop:qdet}. We also give a direct proof of the universal TT-relations~\eqref{normTT} for $j=\h,1,\frac 32$ using a Poincar\'e-Birkhoff-Witt (PBW) basis for ${\cal A}_q$, see Section~\ref{sub:TT-PBW}. 
    
    As a first corollary of~\eqref{normTT}, we obtain the ${\cal A}_q$ variant of T-systems~\cite{kunib}:
	\begin{equation}
	\bt^{(j)}(u q^{-\h}) \bt^{(j)}(u q^{\h}) = \bt^{(j+\h)}(u) \bt^{(j-\h)}(u) + {\bf g}^{(j)}(u) \ ,\label{Tsysint}
	\end{equation}
	where ${\bf g}^{(j)}(u)$ is given by~\eqref{def:gj}. Another immediate consequence of~\eqref{normTT} is that all $\bt^{(j)}(u)$ are  formal Laurent series in $u$ (with highest term $u^{4j}$) whose coefficients are polynomials in  $\{ \mathsf{I}_{2k+1}|k\in{\mathbb N}\}$, and that can be computed explicitly. In particular, we get that 		$\big[\bt^{(j)}(u), \bt^{(k)}(v)\big]=0$ for all $j,k\in\h\mathbb{N}$, generalizing the general property~\eqref{eq:T-com-T}. 
     Furthermore, for properly normalized version of $\bt^{(j)}(u)$, the resulting  TT-relations reflect the product of evaluation representation of $\Loop$ in the Grothendieck ring, see details in Section~\ref{sec:normalTT}, which
    is analogous to the property of the ring homomorphism in~\cite[Thm.\,6.7.1]{AV24} for a different universal transfer matrix construction associated with quantum symmetric pairs.

\smallskip

 Concerning the point (i) raised above in Section~\ref{intro:goal},
  consider the spin-$j$ transfer matrices ${\boldsymbol  t}^{j,\bj}(u)$ for  open spin chains of length $N$  defined in~\eqref{tjs}. These are characterized by an
$N$-tuple of spins   $\bj=(j_1,\ldots,j_N)$ at the quantum spaces and inhomogeneities $v_1,\ldots,v_N\in \mathbb{C}^*$. In particular cases, they are the transfer matrices of  higher XXZ spin chains~\cite{FNR07} and alternating spin chains~\cite{CYSW14}. Combining the results of Sections~\ref{sec3} and~\ref{sec4}, we show in Proposition~\ref{prop:TNtj} that all $\bt^{(j)}(u)$'s specialize to the transfer matrices ${\boldsymbol  t}^{j,\bj}(u)$ on spin-chain representation of ${\cal A}_q$  using the map $\psi^{(N)}_{\bj,\bar{v}}$ introduced in~Definition~\ref{def:psiN}, while the quantum determinant coefficient $\Gamma(u)$ in~\eqref{normTT} becomes a scalar formal Laurent series in $u$ whose coefficients depend on $N$, $j$, $\bj$,  $v_i$'s, and boundary parameters. As a consequence, we get TT-relations on the spin-chains transfer matrices, see Proposition~\ref{prop:TTt}, and show  that they agree with the conjectured TT-relations  
	 in all known particular cases~\cite{FNR07,CYSW14}.
  We thus achieve the point 
 that the universal TT-relations~\eqref{normTT} provide an umbrella for all known open spin-chains TT-relations.

With respect to the point (ii), recall that any local conserved quantity for an integrable open spin chain can be derived from the logarithmic derivatives of its associated transfer matrix~\cite{Skly88}. 
Using the fact that due to the universal TT-relations~\eqref{normTT} our universal transfer matrices are expressed in terms of $\{\mathsf{I}_{2k+1}\}_{k\in \mathbb{N}}$ using~\eqref{t12init} and~\eqref{Imode}, we give in Section~\ref{sec:alg-Hn} an algorithm that allows to compute a closed form expression for any local conserved quantity in terms of spin operators.

 The above mentioned conserved {\it quasi-local} operators\footnote{An operator is said to be quasi-local if its norm at infinite temperature grows linearly with system size, as it does for any local operator, see more precisely in~\cite{IMP15}.}
 are as important for non-equilibrium physics as the local ones.  They
have been constructed  for the XXZ spin-$\h$ chain with closed boundary conditions, and with their generating function~\cite[Eq.\,(7)]{IMP15} \& \cite[Eq.\,(16)]{INWCEP15}  expressed in terms of the transfer matrix and its shifted derivative. We may then ask for the existence of a similar construction for open spin chains.
 It remains an interesting problem to check whether the same expression produces quasi-local charges in the generic open boundary conditions case.
If it does, all the quasi-local conserved quantities can again be expressed as polynomials in the spin-chain images of the $\mathsf{I}_{2k+1}$'s which are non-local operators. A general fundamental question one may ask here: \textit{Is it true that the long time average of local observables after a quantum quench is described by a generalized Gibbs ensemble involving exclusively the spin-chain images of the $\mathsf{I}_{2k+1}$'s?}

\medskip
We further consider applications of our universal TT-relations in studying symmetry properties of integrable spin-$j$ chains. Namely, we first derive  exchange relations between (linear combinations of) the generators of~${\cal A}_q$ and the commuting family $\{\mathsf{I}_{2k+1}\}_{k\in \mathbb{N}}$, with the results in Propositions~\ref{LemmaCondW0W1} and~\ref{prop:lin-comb-Tj}. The exchange relations between the $q$-Onsager operators and the corresponding open spin-chain Hamiltonians in Proposition~\ref{corol1} are quite analogous to the exchange relations~\cite[Eqs.\,(2.62a)]{PS90} between the $\Uq$ generators $S_\pm$ and  twisted periodic XXZ Hamiltonians.
We furthermore show that on spin-chains of arbitrary spins the exchange relations lead to non-trivial symmetries of the spin-$j$ Hamiltonians and transfer matrices in Propositions~\ref{prop:HXXZ-comm-cond} and~\ref{prop:tr-mat-w0-w1}, given by the action of ${\tW}_{0}$, ${\tW}_{1}$ or their linear combination depending on the boundary conditions chosen. 
This is a  generalization of~\cite{Doikou} for $j=\h$. 
 We refer to further details in Section~\ref{sec7:blob}.
We  observe in Section~\ref{sec6:XXX} interesting symmetries at $q=1$, or in the XXX-type Hamiltonians case, for general non-diagonal boundary parameters under the condition that their ratios on both sides are equal.

 Finally in Section~\ref{subsec:univ-TQ-Aq}, following the idea in \cite{YNZ06} we take the limit $j\rightarrow \infty$ of~\eqref{normTT}  with the formal identification
     $\bold{Q}_\pm(u) = \lim_{j\rightarrow \infty}  \bt^{(j)}\bigl(uq^{\pm(j+\h)}\bigr)$
     and obtain the \textit{universal} TQ-relations for $\cal A_q$
     \begin{equation*}
     \bt^{(\h)}(u) \bold{Q}_\pm(u) =  \bold{Q}_\pm(uq^{\mp 1}) -\frac{\Gamma (uq^{(-1 \pm 1)/2}) \Gamma_+ (uq^{(-1\pm 1)/2})}{ c(u^2 q^{\pm 1}) c(u^2 q^{2\pm 1}) }\bold{Q}_\pm(uq^{\pm 1})   \ .
      \end{equation*}
Universal TQ-relations for the $q$-Onsager algebra readily follow using the results of Section~\ref{sub:TT-qOA}, thus generalizing the universal TQ-relations associated with the diagonal case given in~\cite{T20,VW20}.      
On the level of spin chains with generic boundary conditions, 
from the  transfer matrix TT-relations in Proposition~\ref{prop:TTt}
we also derive  the transfer matrix  TQ-relations for ${\boldsymbol  t}^{\h,\bj}(u)$ recovering the TQ-relation~\cite[Eq.\,(3.2)]{FNR07}.
  We equally discuss how universal TT-relations for degenerate versions of $\cal A_q$ can be derived. In particular, universal TT- and TQ-relations for a central extension of the augmented $q-$Onsager algebra are given in Section~\ref{subsec:univ-TQ-Aq-aug}, in view of applications to the case of diagonal boundary conditions.

\smallskip

The paper is organized as follows. 
	In Section~\ref{sec2}, the basic ingredients for the construction of the generating functions $\bt^{(j)}(u)$  in~\eqref{tgint} with $B=\cal A_q$ are recalled, e.g.\ the spin-$j$ K-operators built by the fusion procedure~\cite{LBG} and the quantum determinant of $\cal{A}_q$. One-dimensional representations of $\cal A_q$ are classified, and used to establish the relation between spin-$j$ K-operators, spin-$j$ K-matrices and dual spin-$j$ K-matrices, and evaluate correspondingly  their quantum determinant.   Section \ref{sec3} contains all our results concerning the universal TT-relations for ${\cal A}_q$.  
 As a consequence, in Section~\ref{sub:TT-qOA} we obtain the universal TT-relations for the $q$-Onsager algebra $O_q$ using the surjective algebra map ${\cal A}_q \rightarrow O_q$~\cite{BasBel,Ter21c}.  
   In Section~\ref{sec4}, in order to relate the universal TT-relations to the ones associated with integrable open spin chains  of length $N$, certain quotients of ${\cal A}_q$ denoted $\{{\cal A}^{(N)}_q\}_{N\in{\mathbb N^*}}$  are introduced.  Corresponding `truncated' K-operators $\mathcal{K}^{(j,N)}(u)$ are constructed and they are now Laurent polynomials in $u$ instead of formal Laurent series. Importantly, dressed K-matrices that are typical building ingredients of spin-chains transfer matrices are shown to be images via $\psi^{(N)}_{\bj,\bar{v}}$ of  $\mathcal{K}^{(j,N)}(u)$, see Proposition~\ref{prop:Kjspinchain}. 
    Sections~\ref{sec5} and~\ref{sec6} contain main applications to integrable open spin chains, in particular the TT-relations for spin-chain transfer matrices given by Corollary~\ref{cor:nt}, and an algorithm for local conserved quantities in terms of spin operators in Section~\ref{sec:alg-Hn}. Non-trivial symmetries of spin-$j$ Hamiltonians are then discussed in Section~\ref{sec6}, see Proposition~\ref{prop:HXXZ-comm-cond} and Section~\ref{sec6:XXX}. In  Section~\ref{sec7}, we derive 
    the T-system, Y-system and universal TQ-relations associated with ${\cal A}_q$ and its degenerate versions,
    and briefly sketch  a few perspectives in connection with the algebraic Bethe ansatz and the  algebra of Hamiltonian densities known as the one-boundary Temperley-Lieb or blob algebra.
 In Appendix~\ref{Ap:EF}, we recall some necessary algebraic material from~\cite{LBG}, on products of evaluation representations of $\Loop$.
 In Appendix~\ref{Ap:ex-K-op}, we give the explicit expression for the spin-$1$ K-operator of $\cal A_q$. 
 In Appendix~\ref{Ap:proofT34}, the proof of Theorem~\ref{TTrel} is detailed. 
 In Appendix~\ref{apD}, we recall tensor product representations for $\cal A_q$ that are necessary for the spin-chains specializations of the universal TT-relations. 
 Finally, in Appendix~\ref{ApE} we give the derivation of the first two local conserved quantities of open spin-$\h$ XXZ in terms of images of mutually commuting elements $\textsf{I}_{2k+1}\in \cal A_q$, for $k=0,1,...,N-1$.

	\vspace{2mm}
	
	{\bf Conventions:} 
		We denote  the set of natural numbers by ${\mathbb N} = \{0$, $1$, $2$, $\ldots\}$ and $\mathbb{N}_+= \{1$, $2$, $\ldots \}$, 
		\beqa  \overline{k} = k \mod 2\ ,\qquad  \Big\lfloor\frac{k}{2}\Big\rfloor=\null \frac{k-\overline{k}}{2}\ .
		\eeqa
        The Kronecker symbol is denoted $\delta_{m,n}$.
        
		Let $q\in\mathbb{C}^*$, and we assume  in this paper that $q$ is not a root of unity. 		We use the $q$-numbers 
        $$
        [n]_q= (q^n-q^{-n})/(q-q^{-1})\ .
        $$
 The $q$-commutator  is
		$$
		\big[X,Y\big]_q=qXY-q^{-1}YX
		$$
		and $\big[X,Y\big]=\big[X,Y\big]_1=XY-YX$. 
        We also often use the following notation:
    \begin{equation}\label{eq:cu} 
		c(u)=u-u^{-1} \ ,
	\end{equation} 
    with its inverse in $\mathbb{C}[[u^{-1}]]$ 
        \begin{equation}\label{eq:cu-inv}
         c(u)^{-1} = \sum_{n=0}^{\infty}u^{-2n-1}\ .
        \end{equation}
		
		We  denote by $\mathbb{I}_{2j}$ the $2j\times2j$ identity matrix and the Pauli matrices: 
		\begin{equation} \label{Pauli}
			\sigma_+=\begin{pmatrix}
				0 &1 \\ 
				0& 0
			\end{pmatrix},\qquad \sigma_-=\begin{pmatrix}
				0 &0 \\ 
				1& 0
			\end{pmatrix}, \qquad \sigma_x=\begin{pmatrix}
				0 & 1 \\ 
				1 & 0
			\end{pmatrix}, \qquad \sigma_y=\begin{pmatrix}
				0 & -i \\ 
				i & 0
			\end{pmatrix}, \qquad \sigma_z=\begin{pmatrix}
				1 &0 \\ 
				0& -1
			\end{pmatrix}.
		\end{equation}
		In this paper, all algebras are unital, all scalars are in the complex field ${\mathbb C}$ and $u$, $v$ are indeterminates.

All generating functions considered, e.g.\ the quantum determinant $\Gamma(u)$, are formal Laurent series in $u^{-1}$ with coefficients in the corresponding algebra like $\mathcal{A}_q$. In other words, they are of the form
	$$
	\displaystyle{\sum_{n\in \mathbb{Z}}} f_n u^{-n} \ ,
	$$
	where $f_n \in \mathcal{A}_q$ and all but finitely many $f_n$ for $n<0$ vanish. In this case, we use the notation $\mathcal{A}_q((u^{-1}))$, and the convention that every rational function of the form $1/p(u)$, where $p(u)$ is a Laurent polynomial, is expanded in $u^{-1}$.
	
	      In what follows, we use  conventions on the coproduct of the quantum loop algebra $\Loop$ and definition of its evaluation representations 
from~\cite[Sec.\,2.4]{LBG}.
	
	\section{Fused  K-operators and K-matrices}\label{sec2}
	In this section, the main ingredients for the construction of the generating function (\ref{tgint}) in ${\cal A}_q$ are recalled. In Section~\ref{sec:defAq}, we recall two  presentations of the algebra $\mathcal{A}_q$. 
	In Section~\ref{sec:fusRK}, we review the fused expressions for the spin-$j$ R-matrix and the spin-$j$ K-operators, see Definitions~\ref{def:fusR} and~\ref{def:fusedK}. In Section~\ref{sec:Kop-to-K}, introducing an infinite family of algebra maps $\epsilon: \mathcal{A}_q \to \mathbb{C}$ we derive spin-$j$  K-matrices from spin-$j$ K-operators.
	   The spin-$\h$ and spin-$1$ R- and K-matrices are given explicitly and compared with  other constructions of fused R- and K-matrices in the literature. In Section~\ref{sec:prop-K}, we describe properties of K-matrices and their duals: the intertwining relations they satisfy, the polynomiality of the normalized K-matrices and that all spin-$j$ K-matrices are symmetric under the change $k_\pm \to k_\mp$.
	   Finally, in Section~\ref{sec:qdet}, we recall the quantum determinant for $\mathcal{A}_q$ and compute its image under the maps $\epsilon$.

	  \subsection{Definition of \texorpdfstring{${\cal A}_q$}{Aq}} \label{sec:defAq} 
We now review the reflection algebra presentation of  $\mathcal{A}_q$. It involves  a K-operator which entries are generating functions in the generators of $\mathcal{A}_q$~\cite{BS09}.
			
			First, recall the \textit{fundamental} R-matrix associated with the tensor product of two-dimensional evaluation representations of $\Loop$. It is given by:
	\begin{equation} \label{Rhh}
		R^{(\h,\h)}(u)=\begin{pmatrix}
			uq-u^{-1}q^{-1} & 0 &  0 &0  \\ 
			0  &  u-u^{-1}&  q-q^{-1}&0  \\ 
			0 &  q-q^{-1}&u-u^{-1}  & 0 \\ 
			0 & 0  & 0  &uq-u^{-1}q^{-1} 
		\end{pmatrix}
	\end{equation}
	and satisfies the Yang-Baxter equation (\ref{YBj1j2})  for $j_1=j_2=j_3=\h$. 
    
	\begin{defn}[\cite{BS09}]\label{def:Aq0} ${\cal A}_q$ is an associative algebra over $\mathbb{C}$ with alternating generators 
    $$
    \{\ {\tW}_{-k},{\tW}_{k+1},{\tG}_{k+1},{\tilde{\tG}}_{k+1}|\ k\in {\mathbb N}\ \}\
    $$ with the corresponding generating functions, considered as power series in $u^{-2}$:
		\begin{align}
			{\cW}_+(u)=\sum_{k\in {\mathbb N}}{\normalfont \tW}_{-k}U^{-k-1} \ , \quad {\cW}_-(u)=\sum_{k\in  {\mathbb N}}{\normalfont \tW}_{k+1}U^{-k-1} \ ,\label{c1}\\
			\quad {\cG}_+(u)=\sum_{k\in {\mathbb N}}{\normalfont \tG}_{k+1}U^{-k-1} \ , \; \quad {\cG}_-(u)=\sum_{k\in {\mathbb N}}{\normalfont \tilde{{\tG}}_{k+1}}U^{-k-1} \ ,\label{c2} \; 
		\end{align}
		where we use the  shorthand notation 
        \begin{equation}\label{eq:U-def}
       U=\frac{qu^2+q^{-1}u^{-2}}{q+q^{-1}}\ .
        \end{equation}

 The defining relations are then given by: 
		\begin{align} 
			{R}^{(\frac{1}{2},\frac{1}{2})}(u/v)\ {\cal K}_1^{(\frac{1}{2})}(u)\ { R}^{(\frac{1}{2},\frac{1}{2})}(uv)\ {\cal K}_2^{(\frac{1}{2})}(v)\
			= \ {\cal K}_2^{(\frac{1}{2})}(v)\  { R}^{(\frac{1}{2},\frac{1}{2})}(uv)\ {\cal K}_1^{(\frac{1}{2})}(u)\ { R}^{(\frac{1}{2},\frac{1}{2})}(u/v)\ 
			\label{RE} 
		\end{align}
		with the R-matrix~(\ref{Rhh})   and the fundamental K-operator
		\begin{equation}
			{\cal K}^{(\frac{1}{2})}(u)=\begin{pmatrix} 
				uq \cW_+(u)-u^{-1}q^{-1}\cW_-(u) &\frac{1}{k_-(q+q^{-1})}\cG_+(u)+\frac{k_+(q+q^{-1})}{q-q^{-1}} \\ 
				\frac{1}{k_+(q+q^{-1})}\cG_-(u) +\frac{k_-(q+q^{-1})}{q-q^{-1}}& uq \cW_-(u) -u^{-1}q^{-1}\cW_+(u) 
			\end{pmatrix}  \label{K-Aq} \ ,
		\end{equation}
		where $k_\pm \in \mathbb{C}^*$.
	\end{defn}
 Let us note that $U^{-1}$ can be written as a power series in $u^{-2}$:
\begin{equation}\label{eq:UPowerSeries}
	U^{-1} = (1+q^{-2}) u^{-2} \displaystyle{\sum_{\ell =0}^{\infty} (-u^{-4} q^{-2})^{\ell} } \ .
\end{equation}
Thus, the generating functions $\tW_\pm(u),\tG_\pm(u)$ given by~\eqref{c1},~\eqref{c2} start with $u^{-2}$. Consequently, the leading term of the diagonal entries of the K-operator in~\eqref{K-Aq} is at $u^{-1}$, while the leading term of the off-diagonal entries is at $u^0$. Therefore,  
\beqa
\mathcal{K}^{(\h)}(u) \in \mathcal{A}_q[[u^{-1}]]\otimes \End(\mathbb{C}^2)\ .\label{K-Aqexp}
\eeqa
We recall that $\mathcal{K}^{(\h)}(u)$ is invertible as a power series~\cite[Eq.\,(5.27)]{LBG}.

For convenience, introduce the parametrization:
\begin{equation}
	\rho=k_+k_-(q+q^{-1})^2 \; \in  \mathbb{C}^* \ .\label{rho}
\end{equation}
Inserting the R-matrix and the K-operator, given respectively in~\eqref{Rhh} and~\eqref{K-Aq}, into the reflection equation~\eqref{RE}, one extracts defining relations between the generating functions  $\tW_\pm(u)$, $\tG_\pm(u)$, see~\cite[Def.\,2.2]{BS09}. Then,  the relations in terms of the modes  are given by:
	\begin{align}
		\big[{\tW}_0,{\tW}_{k+1}\big]=\big[{\tW}_{-k},{\tW}_{1}\big]=\frac{1}{(q+q^{-1})}\big({\tilde{\tG}_{k+1} } - {{\tG}_{k+1}}\big)\ ,\label{qo1}\\
		\big[{\tW}_0,{\tG}_{k+1}\big]_q=\big[{\tilde{\tG}}_{k+1},{\tW}_{0}\big]_q=\rho{\tW}_{-k-1}-\rho{\tW}_{k+1}\ ,\label{qo2}\\
		\big[{\tG}_{k+1},{\tW}_{1}\big]_q=\big[{\tW}_{1},{\tilde{\tG}}_{k+1}\big]_q=\rho{\tW}_{k+2}-\rho{\tW}_{-k}\ ,\label{qo3}\\
		\big[{\tW}_{-k},{\tW}_{-l}\big]=0\ ,\quad 
		\big[{\tW}_{k+1},{\tW}_{l+1}\big]=0\ ,\label{qo4}\quad \\
		\big[{\tW}_{-k},{\tW}_{l+1}\big]
		+\big[{{\tW}}_{k+1},{\tW}_{-l}\big]=0\ ,\label{qo5}\\
		\big[{\tW}_{-k},{\tG}_{l+1}\big]
		+\big[{{\tG}}_{k+1},{\tW}_{-l}\big]=0\ ,\label{qo6}\\
		\big[{\tW}_{-k},{\tilde{\tG}}_{l+1}\big]
		+\big[{\tilde{\tG}}_{k+1},{\tW}_{-l}\big]=0\ ,\label{qo7}\\
		\big[{\tW}_{k+1},{\tG}_{l+1}\big]
		+\big[{{\tG}}_{k+1},{\tW}_{l+1}\big]=0\ ,\label{qo8}\\
		\big[{\tW}_{k+1},{\tilde{\tG}}_{l+1}\big]
		+\big[{\tilde{\tG}}_{k+1},{\tW}_{l+1}\big]=0\ ,\label{qo9}\\
		\big[{\tG}_{k+1},{\tG}_{l+1}\big]=0\ ,\quad   \big[{\tilde{\tG}}_{k+1},\tilde{{\tG}}_{l+1}\big]=0\ ,\label{qo10}\\
		\big[{\tilde{\tG}}_{k+1},{\tG}_{l+1}\big]
		+\big[{{\tG}}_{k+1},\tilde{{\tG}}_{l+1}\big]=0\ .\label{qo11}
	\end{align}
The relation of ${\cal A}_q$ to the more familiar $q$-Onsager algebra is described in Section~\ref{sub:TT-qOA}.

		\subsection{Fused R-matrices and K-operators for \texorpdfstring{${\cal A}_q$}{Aq}}  \label{sec:fusRK}
		The presentation of $\mathcal{A}_q$ in Definition~\ref{def:Aq0} is based on the spin-$\h$ K-operator. We now recall the fusion procedure of~\cite[Sec.\,3]{LBG} allowing to construct spin-$j$ K-operators.
 First, we recall important ingredients: for each value of $j$, the operator 
			\begin{equation}
			\mathcal{E}^{(j)}\colon  \mathbb{C}_u^{2j+1} \rightarrow \mathbb{C}_{u_1}^{2} 
			\otimes \mathbb{C}_{u_2}^{2j} \ ,  \quad  u_1 = u q^{-j+\h} , \quad u_2=u q^\h 		\ ,
			\end{equation}
		intertwining the action of $\Loop$ on formal evaluation representations \cite[Section 3.5]{LBG}, and its pseudo-inverse
	\begin{equation}
		\mathcal{F}^{(j)} \colon  \mathbb{C}^{2}_{u_1} \otimes \mathbb{C}_{u_2}^{2j} \rightarrow \mathbb{C}_u^{2j+1} \ .
	\end{equation}
Their matrix  expressions are recalled in Appendix~\ref{Ap:EF}. \smallskip

Fused R-matrices that solve the Yang-Baxter equation~\eqref{YBj1j2} are constructed from the fundamental R-matrix~\eqref{Rhh} as in \cite[Prop.\,4.6]{LBG}, see also~\cite{RSV16}. Below, we use the notation $\langle 12 \rangle$, $\langle 23 \rangle$, to specify which spaces are fused, that is where $\mathcal{E}^{(j)}$ and $\mathcal{F}^{(j)}$ act.
\begin{defn}[\cite{LBG}] \label{def:fusR} For any $j_1,j_2 \in \h \mathbb{N}_+$,
the fused R-matrices are given by the recursions
\begin{equation} \label{v2Rj1j2}
	R^{(j_1,j_2)}(u)=\mathcal{F}^{(j_1)}_{\fu} R_{13}^{(\h,j_2)}(u q^{-j_1+\h}) R_{23}^{(j_1-\h,j_2)}(u q^{\h}) \mathcal{E}^{(j_1)}_{\fu} \ ,
\end{equation}
with
\begin{equation} \label{fused-R-uq}
	R^{(\h,j)}(u)=\mathcal{F}^{(j)}_{\langle 23 \rangle} R_{13}^{(\h,j-\h)}(u q^{-\h}) R_{12}^{(\h,\h)}(u q^{j-\h})  \mathcal{E}^{(j)}_{\langle 23 \rangle} \ , \;\; 
\end{equation}
and where $R^{(\h,0)}(u)=R^{(0,\h)}(u)={\mathbb I }_2$ and $R^{(\h,\h)}(u)$ is given in~\eqref{Rhh}.
\end{defn}

\begin{example}
The spin-$(\h,j)$ R-matrix is given by
~\cite{KR83}:
\begin{align}\nonumber
	R^{(\h,j)}(u) &= \displaystyle{ \prod_{k=0}^{2j-2} }c(u q^{j-\h-k}) \times \\ \label{R-Rqg}
	& \left (  (q-q^{-1})\left ( \sigma_+ \otimes S_- +  \sigma_- \otimes S_+ \right ) + uq^{\frac{1}{2}( {\mathbb I_{4j+2}}+\sigma_z\otimes S_3)} - u^{-1}q^{-\frac{1}{2}( {\mathbb I}_{4j+2} +\sigma_z\otimes S_3)}\right ) ,
\end{align}
where $c(u)$ is given in~\eqref{eq:cu}, the Pauli matrices are recalled in~\eqref{Pauli} and the matrices
\begin{equation}\label{Bdef}
\begin{split}
	(S_+)_{mn}&=B_{j,j+1-m}\delta_{m,n-1}\ ,\quad (S_-)_{mn}=B_{j,j+1-n}\delta_{m-1,n}\quad \mbox{with} \quad B_{j,j'}=\sqrt{ \left[ j+j'\right]_q \left[j-j'+1\right]_q}\ ,\\
	(S_3)_{mn} &= 2(j+1-n) \delta_{m,n} \ , \quad m,n=1,2,...,2j+1 \ ,
\end{split}
\end{equation}
form a $(2j + 1)$-dimensional representation of the $\Uq$ algebra, denoted in what follows by $\pi^j$.
The proof that~\eqref{fused-R-uq} agrees with~\eqref{R-Rqg} is given in~\cite[Lem.\,4.13]{LBG} and relies on the fact that these two expressions can be understood from the framework of the universal R-matrix $\mathfrak{R}$. 
\end{example}
 
\begin{example}\label{ex:R-spin1}
	The spin-$1$ R-matrix is computed using (\ref{v2Rj1j2}) for $j_1=j_2=1$ with $R^{(\h,1)}(u)$ in~\eqref{R-Rqg}. It is given  by:
\begin{equation*}
	R^{(1,1)}(u)=c(u) c(uq)^2\begin{pmatrix}
		a_1(u)& \cdot  & \cdot  & \cdot  & \cdot   & \cdot  & \cdot  & \cdot  & \cdot \\
		\cdot  & a_2(u)&  \cdot & a_3(u)  & \cdot  & \cdot  & \cdot  & \cdot  & \cdot \\
		\cdot  & \cdot  & a_4(u)& \cdot  & a_5(u)  & \cdot  & a_6(u)  & \cdot  & \cdot \\
		\cdot  &  a_3(u)& \cdot  & a_2(u)  & \cdot  & \cdot  & \cdot  & \cdot  & \cdot \\
		\cdot  & \cdot  &  a_5(u)&  \cdot  &  a_7(u)& \cdot  & a_5(u)  & \cdot  & \cdot \\
		\cdot  & \cdot  & \cdot  & \cdot  & \cdot  & a_2(u)& \cdot  & a_3(u)   & \cdot \\
		\cdot  & \cdot  &  a_6(u)& \cdot  &  a_5(u)& \cdot  & a_4(u)  & \cdot  & \cdot \\
		\cdot  & \cdot  & \cdot  & \cdot  & \cdot  &a_3(u)  & \cdot & a_2(u)  & \cdot \\
		\cdot  & \cdot  & \cdot  & \cdot  & \cdot  & \cdot  & \cdot  & \cdot  &a_1(u)
	\end{pmatrix} 
\end{equation*}
where $c(u)$ is defined in~\eqref{eq:cu} and with
\begin{align*}
	a_1(u) =  c(uq^2) \ , \qquad
	a_2(u) =  c(u) \ , \qquad
	a_3(u)  = c(q^2) \ , \qquad 	a_4(u) =  \frac{c(u)c(u q^{-1})}{c(uq)}  \ , \\
	a_5(u) = \frac{ c(q^2)c(u)}{ c(uq) } \ , \qquad
	a_6(u)  =  \frac{c(q) c(q^2)}{c(u q)} \ , \qquad
	a_7(u)  = a_6(u)+a_2(u) \ .
\end{align*}
The spin-$1$ R-matrix given above corresponds, up to an overall normalization factor, to the R-matrix of the nineteen vertex model under the identification~\cite[eq.\,(2.2)]{Inami1996}
\begin{equation*}
	u \rightarrow i \ln(u) \ , \qquad \eta \rightarrow i \ln (q) \ .
\end{equation*}
It was originally obtained by solving directly the Yang-Baxter equation~\cite{ZF80} and  it was later constructed using another fusion procedure developed in~\cite{KRS81}.
\end{example}

	Higher spin-$j$ K-operators solving the reflection algebra relations (\ref{REKop}) with fused R-matrix  (\ref{v2Rj1j2}) are constructed from the initial K-operator solution (\ref{K-Aq}).
\begin{defn}\textit{\cite[Def.\,5.6]{LBG}}\label{def:fusedK} For any $j \in \h \mathbb{N}_+$, the fused K-operators are  defined by the following recursion: 
	\begin{equation} \label{fused-K-op}
		\mathcal{K}^{(j)}(u) =  \mathcal{F}^{(j)} \mathcal{K}_1^{(\h)}(u q^{-j+\h}) R^{(\h,j-\h)}(u^2 q^{-j+1}) \mathcal{K}_2^{(j-\h)}(u q^{\h}) \mathcal{E}^{(j)}  \ ,
	\end{equation}
	with $\mathcal{K}^{(\h)}(u)$ defined in~\eqref{K-Aq} and  $\mathcal{K}^{(0)}(u)=1$, and recall   the expressions for the operators $\mathcal{E}^{(j)}$ and $\mathcal{F}^{(j)}$ given by~\eqref{exprE} and~\eqref{exprF}.
\end{defn}
	\begin{example}
	The spin-$1$ K-operator $\mathcal{K}^{(1)}(u)$ is computed \cite[Sec.\,5.4.1]{LBG}  using~\eqref{fused-K-op} with the fundamental K-operator~\eqref{K-Aq},
	and its expression is reported in Appendix~\ref{Ap:ex-K-op}.
	\end{example}

	\subsection{From K-operators to K-matrices} \label{sec:Kop-to-K}
	We now derive spin-$j$ K-matrices $K^{(j)}(u)$ from spin-$j$ K-operators $\mathcal{K}^{(j)}(u)$ given in Definition~\ref{def:fusedK}. The latter belong to $\mathcal{A}_q((u^{-1})) \otimes \End(\mathbb{C}^{2j+1})$, so it is sufficient to apply one-dimensional representations on the first component to get the spin-$j$ K-matrices. 
	 All one-dimensional representations of $\mathcal{A}_q$ are classified in the following proposition. 
	\begin{prop} \label{prop:eps}
		We have the algebra  map  $\epsilon \colon \mathcal{A}_q \to \mathbb{C}$
			 \begin{align} \label{eq:epsgk}
			\epsilon( \tG_{k+1}) &= \epsilon(\tilde{\tG}_{k+1})= g_{k+1} \ , \\ \label{eq:epswk}
			\epsilon(\tW_{k+1})&=g_0^{-1} \left ( \displaystyle{\sum_{m=0}^{k}} (\overline{m} \epsp + \overline{m+1} \epsm) g_{k-m}  \right) \ ,  \qquad k \in \mathbb{N} \ ,  \\ \label{eq:epswmk}
			\epsilon(\tW_{-k}) &= \epsilon(\tW_{k+1})\rvert_{\varepsilon_\pm \rightarrow \varepsilon_\mp} \ ,
		\end{align}
		with $g_0=\rho/(q-q^{-1})$, and for any scalars $g_{k+1}$, $\varepsilon_\pm \in \mathbb{C}$ for  $k \in \mathbb{N}$, and where we used the notation $\overline{m}= m \mod 2$.
	\end{prop}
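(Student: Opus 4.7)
My plan is to establish both existence (for arbitrary choice of scalar parameters $\varepsilon_\pm$ and $g_{k+1}$) and classification (every one-dimensional representation is of this form) in one stroke, by analysing the defining relations (\ref{qo1})--(\ref{qo11}) directly under the hypothesis that $\epsilon$ takes values in the commutative algebra $\mathbb{C}$.

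First I would exploit commutativity of the target: every ordinary commutator vanishes, so relations (\ref{qo4})--(\ref{qo11}) are automatic, and (\ref{qo1}) collapses to $\epsilon(\tG_{k+1}) = \epsilon(\tilde\tG_{k+1})$ for every $k \geq 0$, whose common value I denote $g_{k+1}$ (a priori an arbitrary scalar). The $q$-commutator relations (\ref{qo2})--(\ref{qo3}) are all that remain. On the image each $q$-bracket $[X,Y]_q$ equals $(q-q^{-1})XY$, and using the equality $\tG_{k+1} = \tilde\tG_{k+1}$ just obtained, the two halves of (\ref{qo2}) and of (\ref{qo3}) automatically coincide. Setting $\varepsilon_+ := \epsilon(\tW_0)$, $\varepsilon_- := \epsilon(\tW_1)$, and $g_0 := \rho/(q-q^{-1})$, the content of (\ref{qo2}) and (\ref{qo3}) reduces to the two scalar recursions
\begin{align*}
\epsilon(\tW_{-k-1}) - \epsilon(\tW_{k+1}) &= \varepsilon_+\, g_{k+1}/g_0, \\
\epsilon(\tW_{k+2}) - \epsilon(\tW_{-k}) &= \varepsilon_-\, g_{k+1}/g_0,
\end{align*}
valid for all $k \geq 0$.

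Starting from $\epsilon(\tW_0) = \varepsilon_+$ and $\epsilon(\tW_1) = \varepsilon_-$, these recursions determine the values $\epsilon(\tW_2), \epsilon(\tW_{-1}), \epsilon(\tW_3), \epsilon(\tW_{-2}), \ldots$ in a single forward chain from the free data $(\varepsilon_\pm, \{g_{k+1}\})$, with no cross-compatibility to verify. An induction on $k$ should then identify the closed form with (\ref{eq:epswk}) and (\ref{eq:epswmk}); the parity symbols $\bar m$ and $\overline{m+1}$ simply track the alternation between $\varepsilon_+$ and $\varepsilon_-$ accumulated along the chain, and the symmetry $\varepsilon_\pm \leftrightarrow \varepsilon_\mp$ relating (\ref{eq:epswk}) and (\ref{eq:epswmk}) reflects the manifest swap $\tW_{k+1} \leftrightarrow \tW_{-k}$ of the two recursions. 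This simultaneously yields existence (the formulas satisfy the defining relations by construction) and classification (the free parameters $\varepsilon_\pm, \{g_{k+1}\}$ are precisely what determines $\epsilon$). The main obstacle is mere bookkeeping: one must check that the parity symbols fall in the correct positions in the closed form, but there is no hidden consistency condition to impose on the free data.
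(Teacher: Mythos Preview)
Your proposal is correct and follows essentially the same approach as the paper: both observe that commutativity of $\mathbb{C}$ trivializes relations (\ref{qo4})--(\ref{qo11}) and reduces (\ref{qo1}) to $\epsilon(\tG_{k+1})=\epsilon(\tilde\tG_{k+1})$, then treat (\ref{qo2})--(\ref{qo3}) as scalar recursions determining all $\epsilon(\tW_\ell)$ from the free initial data $\varepsilon_\pm$ and $\{g_{k+1}\}$. Your write-up is slightly more explicit in extracting the two recursions and noting that no cross-compatibility arises, but the logic is identical.
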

	\begin{proof}
	Recall the presentation of $\mathcal{A}_q$ with defining relations~\eqref{qo1}-\eqref{qo11}. In every one-dimensional representation of $\mathcal{A}_q$, the  generators are evaluated to scalars, and so they commute. Therefore, the only non-trivial defining relations in such representations are~\eqref{qo1}-\eqref{qo3}. From~\eqref{qo1}, we conclude that $\epsilon( \tG_{k+1}) = \epsilon(\tilde{\tG}_{k+1})$.
	  Moreover, from~\eqref{qo2} and~\eqref{qo3}, the images $\epsilon(\tW_{\ell})$ for $\ell \in \mathbb{Z}\backslash \{0,1\} $ can be expressed in terms of $\epsilon(\tW_0)$, $\epsilon(\tW_1)$ and $\epsilon(\tG_{k+1})$. Indeed, setting $\epsilon(\tW_0) = \epsp$ and $\epsilon(\tW_1) = \epsm$, the recursion relations~\eqref{qo2},~\eqref{qo3} have a unique solution given by~\eqref{eq:epswk}-\eqref{eq:epswmk}. Finally, the scalars $\varepsilon_\pm$ and $\epsilon(\tG_{k+1}) = g_{k+1}$ have no more constraints, therefore they parameterize one-dimensional representations.
\end{proof}
\begin{example}\label{mapQSP}  Consider the algebra map   $\epsilon \colon \mathcal{A}_q \to \mathbb{C}$ introduced in Proposition~\ref{prop:eps}  for the choice $g_k=0$ for $k\in{\mathbb N}^*$. Denote it by $\epsilon_0$, then
		\beqa
			\epsilon_0( \tG_{k+1}) &=& \epsilon_0(\tilde{\tG}_{k+1})= 0 \ , \notag\\  
			\epsilon_0(\tW_{k+1})&=&     \overline{k} \epsp + \overline{k+1} \epsm\ ,  \qquad k \in \mathbb{N} \ , \label{eq:eps-0}\\
			\epsilon_0(\tW_{-k}) &=&     \overline{k+1} \epsp + \overline{k} \epsm \ .\notag
		\eeqa  
\end{example}
	
	We compute the image of the fundamental K-operator $\mathcal{K}^{(\h)}(u)$ given in~\eqref{K-Aq} under the general map $\epsilon \otimes \id$ defined in Proposition~\ref{prop:eps}. 
	\begin{prop} \label{prop:Kaq-Km}
    For the one-dimensional representations $\epsilon$ of ${\cal A}_q$ defined in Proposition~\ref{prop:eps} we have
		\begin{equation} \label{eq:Kaq-KM}
		(\epsilon\otimes \id ) ( \mathcal{K}^{(\h)}(u)) = \varsigma^{(\h)}(u) K^{(\h)}(u) \ , 
	\end{equation}
	with
	\begin{equation}\label{KM-spin1/2}
		K^{(\h)}(u)=\begin{pmatrix}
			u \epsp + u^{-1} \epsm & \kp (u^2-u^{-2})/(q-q^{-1}) \\ 
			\km (u^2-u^{-2})/(q-q^{-1}) & u\epsm + u^{-1} \epsp
		\end{pmatrix}\ ,
	\end{equation}
	and with the invertible power series
	\begin{equation} \label{eq:varsigh}
		\varsigma^{(\h)}(u) =   \frac{c(q^2)}{\rho c(u^{2})}  \left ( \displaystyle{ \sum_{k=0}^{\infty} U^{-k} g_k } \right) \ .
	\end{equation}
\end{prop}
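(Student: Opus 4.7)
The plan is to apply $\epsilon \otimes \id$ entry-by-entry to the K-operator in~\eqref{K-Aq} and verify the result coincides with $\varsigma^{(\h)}(u) K^{(\h)}(u)$. First I would introduce the generating series $G(U) := \sum_{k\geq 0} g_k U^{-k}$, where $g_0 = \rho/(q-q^{-1})$ is dictated by Proposition~\ref{prop:eps}. Applying Proposition~\ref{prop:eps} to the series~\eqref{c1}--\eqref{c2} yields immediately $\epsilon(\cG_\pm(u)) = G(U) - g_0$, while the images $\epsilon(\cW_\pm(u))$ are Cauchy products in $U^{-1}$ between $G(U)$ and the auxiliary series $A_\pm(U) := \sum_{m\geq 0}(\bar m \varepsilon_\pm + \overline{m+1}\varepsilon_\mp)U^{-m}$. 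Summing the even and odd parts via $\sum_{m\geq 0} U^{-2m} = U^2/(U^2-1)$ gives the closed form $A_\pm(U) = (\varepsilon_\mp U^2 + \varepsilon_\pm U)/(U^2-1)$, so that $\epsilon(\cW_\pm(u)) = g_0^{-1} U^{-1} A_\pm(U) G(U)$.

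For the off-diagonal entries, the key observation is that the defining identity $\rho = k_+k_-(q+q^{-1})^2$ makes the constant term $g_0/(k_-(q+q^{-1}))$ arising from $\epsilon(\cG_+(u))/(k_-(q+q^{-1}))$ exactly cancel the additive constant $k_+(q+q^{-1})/(q-q^{-1})$ in~\eqref{K-Aq}, leaving the clean expression $G(U)/(k_-(q+q^{-1}))$. Matching this against $\varsigma^{(\h)}(u) \cdot k_+(u^2-u^{-2})/(q-q^{-1})$ pins down $\varsigma^{(\h)}(u)$ exactly as in~\eqref{eq:varsigh}. The $(2,1)$-entry then follows by the symmetry $k_+ \leftrightarrow k_-$.

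For the diagonal entries, factoring out the common $g_0^{-1} G(U)/(U^2-1)$ from $uq\,\epsilon(\cW_+(u)) - u^{-1}q^{-1}\epsilon(\cW_-(u))$ reduces the claim for the $(1,1)$-entry to the two scalar identities
\begin{equation*}
\frac{(q-q^{-1})(uqU - u^{-1}q^{-1})}{U^2-1} = \frac{c(q^2)\,u}{c(u^2)}, \qquad \frac{(q-q^{-1})(uq - u^{-1}q^{-1}U)}{U^2-1} = \frac{c(q^2)\,u^{-1}}{c(u^2)}.
\end{equation*}
These follow from the factorizations $U-1 = (u^2-1)(q-q^{-1}u^{-2})/(q+q^{-1})$ and $U+1 = (u^2+1)(q+q^{-1}u^{-2})/(q+q^{-1})$, which combine to $U^2-1 = (u^4-1)(q^2-q^{-2}u^{-4})/(q+q^{-1})^2$, after routine rational-function cancellation. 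The $(2,2)$-entry is then obtained by the symmetry $\varepsilon_+ \leftrightarrow \varepsilon_-$. Invertibility of $\varsigma^{(\h)}(u)$ in $\mathbb{C}((u^{-1}))$ is automatic: $G(U)$ has nonzero constant term $g_0$, and $c(u^2)^{-1}$ is made explicit via~\eqref{eq:cu-inv}.

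The main obstacle is the resummation of the Cauchy products $A_\pm(U) G(U)$ into a form which cleanly exhibits the normalization $\varsigma^{(\h)}(u)$ common to all four matrix entries; once that bookkeeping is complete, the remaining verifications amount to elementary manipulations of rational functions in $u^2$ and $q^2$.
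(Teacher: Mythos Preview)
Your approach is essentially the same as the paper's: evaluate $\epsilon$ on each of the four currents via the formulas in Proposition~\ref{prop:eps}, recognize the images as Cauchy products against the single series $G(U)$, and then check entry-by-entry that $\varsigma^{(\h)}(u)$ factors out. One small slip: your labeling of $A_\pm$ is swapped (as defined, $A_+$ is the auxiliary series for $\cW_-$ and conversely, since $\epsilon(\tW_0)=\varepsilon_+$ forces the constant term of the series attached to $\cW_+$ to be $\varepsilon_+$, not $\varepsilon_-$), but your two scalar identities and the off-diagonal cancellation are correct, so this is only a bookkeeping issue.
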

\begin{proof}
	We compute the l.h.s.\ of the first relation in~\eqref{eq:Kaq-KM} using Proposition~\ref{prop:eps} and the expressions~\eqref{c1}-\eqref{c2} of the generating functions $\cG_\pm(u)$, $\cW_\pm(u)$.
	Firstly, from~\eqref{c2} and~\eqref{eq:epsgk} we have
	\begin{equation}\label{eq:epsgp}
			\epsilon(\cG_\pm(u)) = \displaystyle{\sum_{k=0}^\infty} U^{-k} g_k - g_0 \ ,
	\end{equation}
    and recall that $g_0=\rho/(q-q^{-1})$.
	Secondly, using~\eqref{c1} and~\eqref{eq:epswk} we find that the image of $\cW_-(u)$ under the map $\epsilon$ is
	\begin{align*}
		\epsilon(\cW_-(u)) &= \rho^{-1} (q-q^{-1}) \displaystyle{\sum_{k=0}^{\infty} \sum_{m=0}^k } U^{-k-1}  g_{k-m} ( \overline{m} \varepsilon_++ \overline{m+1}\varepsilon_-) \\
		&= \rho^{-1} (q-q^{-1})	\displaystyle{\sum_{m=0}^{\infty} \sum_{k=m}^\infty } U^{-k-1}  g_{k-m} ( \overline{m} \varepsilon_++ \overline{m+1}\varepsilon_-) \\
		&= \rho^{-1} (q-q^{-1})	\displaystyle{\sum_{m=0}^{\infty} U^{-m-1} (\overline{m} \varepsilon_++ \overline{m+1}\varepsilon_-) \sum_{k=0}^\infty }  U^{-k} g_{k} \ .
	\end{align*}
	Then, we get
	\begin{align} 
		\epsilon(\cW_-(u)) &=  \frac{q-q^{-1}}{\rho} \left (  \frac{\varepsilon_- U^{-1} + \varepsilon_+ U^{-2}}{1-U^{-2}} \right ) \left ( \displaystyle{ \sum_{k=0}^{\infty} U^{-k} g_k } \right) \ , \\ \label{eq:epswm}
		\epsilon(\cW_+(u))&=  \epsilon(\cW_-(u)) \rvert_{\varepsilon_\pm \rightarrow \varepsilon_\mp} \ ,
	\end{align}
where the second relation follows from~\eqref{eq:epswmk}.
	Finally, applying $\epsilon \otimes \id$ to~\eqref{K-Aq} and using~\eqref{eq:epsgp}-\eqref{eq:epswm}, one gets the r.h.s.\ of~\eqref{eq:Kaq-KM}, and  invertibility of the coefficient $\varsigma^{(\h)}(u)$ follows from the fact that this power series has a non-zero constant term.
\end{proof}
Note that the evaluation of the fundamental K-operator in Proposition~\ref{prop:Kaq-Km}  equals (up to an overall scalar function) the well-known K-matrix solution  of~\cite{dVG,GZ}. 
 For what follows, it is useful to notice its specialization at $u=1$:
\beqa
K^{(\h)}(u)|_{u\to 1} = (\varepsilon_+ + \varepsilon_-) {\mathbb I}_2 \ .
\label{eq:Kh}
\eeqa

\smallskip
	We now define spin-$j$  K-matrices via the image  of the fused K-operators $\mathcal{K}^{(j)}(u)$  from Definition~\ref{def:fusedK} under  the general one-dimensional representations $\epsilon$ of $\cal A_q$ defined in Proposition~\ref{prop:eps}.

	\begin{defn}\label{def:fused-K-mat}
For any $j\in \h\mathbb{N}_+$, the fused K-matrices $K^{(j)}(u)$ of spin-$j$ are defined by the relation
		\begin{equation}\label{eq:Kaqj-eps}
			(\epsilon \otimes \id ) (\mathcal{K}^{(j)}(u)) = \varsigma^{(j)}(u) K^{(j)}(u) \ ,
		\end{equation}  
		with invertible
		\begin{equation}\label{eq:varsigj}
			\varsigma^{(j)}(u) = \displaystyle{  \prod_{k=0}^{2j-1} \varsigma^{(\h)}(u q^{j-\h-k})   } \ ,
		\end{equation}
	and where $\varsigma^{(\h)}(u)$ is given in~\eqref{eq:varsigh}.
\end{defn}

Equivalently, using  the definition of fused K-operators in~\eqref{fused-K-op}   and the evaluation of the fundamental K-operator in Proposition~\ref{prop:Kaq-Km}, one can see by induction that the fused K-matrices are defined  for any $j\in \h\mathbb{N}_+$ by the recursion
	\begin{align} \label{fused-K}
		K^{(j)}(u) &=  \mathcal{F}^{(j)} K_1^{(\h)}(u q^{-j+\h}) R^{(\h,j-\h)}(u^2 q^{-j+1}) K_2^{(j-\h)}(u q^{\h}) \mathcal{E}^{(j)} \ ,
	\end{align}
with $K^{(0)}(u)=1$ and $K^{(\h)}(u)$ is defined in~\eqref{KM-spin1/2}. 
Therefore, by construction in \eqref{eq:Kaqj-eps}, the  fused K-matrices satisfy the reflection equation~\eqref{REKop} with the substitution $\mathcal{K}^{(j)}(u) \rightarrow K^{(j)}(u)$. 

	\begin{example}\label{ex:K-spin1}
				The spin-$1$ K-matrix is computed using~\eqref{fused-K} for $j=1$ with the fundamental K-matrix~\eqref{KM-spin1/2}, the R-matrix~\eqref{Rhh} and $\mathcal{E}^{(1)}$, $\mathcal{F}^{(1)}$ given in~\eqref{eq:EHF1}. It reads:
		\begin{equation}\label{KM-spin1}
			K^{(1)}(u)= \frac{c(u^2 q)}{q-q^{-1}}\begin{pmatrix}
				x_1(u) & y_1(u)  & z(u)\\ 
				\tilde{y}_1(u) & x_2(u) &y_2(u) \\ 
				\tilde{z}(u) & \tilde{y}_2(u)  & x_3(u)
			\end{pmatrix}, 
		\end{equation}
		\begin{align*}
			x_1(u)&=(q-q^{-1}) \big ( \varepsilon_+^2 u^2 + \varepsilon_-^2 u^{-2} + \varepsilon_+ \varepsilon_- (q+q^{-1}) \big ) + k_+ k_-c(u^2q^{-1}), \\
			x_2(u)&=(q-q^{-1}) \big ( \varepsilon_+^2 + \varepsilon_-^2 + \varepsilon_+ \varepsilon_-  (u^2q^{-1}+u^{-2}q) \big ) + k_+ k_- \frac{(u^4+u^{-4}-q^{2}-q^{-2})}{q-q^{-1}} , \\
			y_1(u) &= k_+c(u^2)\sqrt{q+q^{-1}}\left( u q^{-1/2} \varepsilon_+ + u^{-1} q^{1/2} \varepsilon_- \right ) , \\
			z(u)& =k_+^2 c(u^2) c(u^2 q^{-1})/(q-q^{-1}), \\
			& \qquad	y_2(u)  =y_1(u)\bigr|_{\varepsilon_\pm\rightarrow \varepsilon_\mp}  , \qquad x_3(u)=x_1(u)\bigr|_{\varepsilon_\pm\rightarrow \varepsilon_\mp}, \qquad  \\
			\tilde{y}_1(u) &= y_1(u)\big\rvert_{\kp\rightarrow \km} , \quad \tilde{y}_2(u)= y_2(u)\big\rvert_{\kp\rightarrow \km}, \quad \tilde{z}(u) = z(u)\big\rvert_{\kp\rightarrow \km} .
		\end{align*}
Using  the spin-$1$ K-operator given in Appendix~\ref{Ap:ex-K-op} and the one-dimensional representations of the currents~\eqref{eq:epsgp}-\eqref{eq:epswm}, it is straightforward to check  that the  spin-1 K matrix in~\eqref{KM-spin1} indeed agrees with~\eqref{eq:Kaqj-eps} for $j=1$.
We finally notice that its specialization at $u=1$ is quite simple:
\beqa
&& \qquad   K^{(1)}(u)|_{u\to1} = c(q)\left(\varepsilon_+^2  + \varepsilon_-^2 + \varepsilon_+\varepsilon_-(q+q^{-1})\right) {\mathbb I}_3\ . 
\label{eq:K1}
\eeqa

		\end{example}
			\begin{rem}\label{rem:K-mat-spin1}
			 The spin-$1$ K-matrix given in the previous example specializes to the one obtained in~\cite[eq.\,(3.3)]{Inami1996} with the identification from Example~\ref{ex:R-spin1} and
			\begin{align}
				&\quad \zeta \rightarrow i \ln(\varepsilon_+)	\ , \quad 	\mu \rightarrow -k_+ \frac{\sqrt{q+q^{-1}}}{q-q^{-1}} \ , \quad \, \tilde{\mu} \rightarrow - k_- \frac{\sqrt{q+q^{-1}}}{q-q^{-1}} \ ,
			\end{align}
			and $\varepsilon_-=-\varepsilon_+^{-1}$.
			It was obtained by solving directly the reflection equation for $j_1=j_2=1$. This K-matrix also corresponds to the one constructed using another fusion procedure~\cite{MN91}, see ~\cite[eq.\,(5.3)]{Inami1996} for the formula and precise correspondence.
		\end{rem}

    \begin{rem}\label{rem:K-spin-gen}
        For   K-matrices of arbitrary spins,  a fusion procedure was developed in~\cite{KRS81,MN91}, and we would like to compare it with our Definition~\ref{def:fused-K-mat}, or with the result of  evaluation of our fused K-operators given in~\eqref{fused-K}. The final expression of the recursion~\eqref{fused-K} is\footnote{Here, the product stands for the usual matrix product and the products are ordered from left to right in an increasing way in the indices. We also recall that  indices in $K$ and $R$ indicate positions in the $(2j)^{\text{th}}$ tensor product of $\mathbb{C}^2$. For example, $K^{(\h)}_k$ stands for the matrix $\mathbb{I}_{2^{k-1}}\otimes K^{(\h)}\otimes \mathbb{I}_{2^{2j-k}}$.} 
			\begin{multline}
               \label{dvpK}
					{K}^{(j)}(u)  =  \!\left (  \displaystyle{\prod_{m=0}^{2j-2}} \mathbb{I}_{2^m} \otimes \mathcal{F}^{(j-\frac{m}{2})} \! \right ) \! \displaystyle{\prod_{k=1}^{2j} }\! \left \{\! {K}_k^{(\h)}(u q^{k-j-\h}) \! \left [ \displaystyle{ \prod_{\ell=0}^{2j-k-1}} R_{k \, 2j-\ell}^{(\h,\h)}(u^2q^{-2j+2k+\ell}) \right ] \!  \right \}\! \! \\
					 \times \left (  \displaystyle{\prod_{m=0}^{2j-2}} \mathbb{I}_{2^{2j-2-m}} \otimes \mathcal{E}^{(1+\frac{m}{2})}    \right ) \ ,
			\end{multline}
            where $\mathbb{I}_{n}$ is the $n\times n$ identity matrix while the $\Loop$ intertwining operator 	 $\mathcal{E}^{(k)}$ with its pseudo-inverse $\mathcal{F}^{(k)}$ is recalled in Appendix~\ref{Ap:EF}.
            The expression~\eqref{dvpK} was obtained along the same lines as for the analogous statement for fused K-operators in~\cite[Eq.\,(5.44)]{LBG}.
	 	Using this explicit expression, we have  computed K-matrices up to the spin value $j=2$ and compared with the result of~\cite{KRS81,MN91}. Both expressions turned out to match after applying similarity transformations and extracting the only non-zero block of  size $(2j+1)\times (2j+1)$ because the approach of~\cite{KRS81,MN91} produces K-matrices of much  larger size which is $2^{2j}\times 2^{2j}$. We thus expect that the two fusion approaches lead to the same K-matrices for arbitrary spins~$j$. 
    \end{rem}

 In view of applications to spin chains  in Section~\ref{sec5}, in the next two subsections we introduce  normalized R- and K-matrices and highlight some of their important properties.
 
\subsection{Properties of R-matrices}
By straightforward calculations, one  shows that the fused R-matrix~\eqref{R-Rqg} is symmetric, and moreover  enjoys unitarity and crossing symmetry properties:
\begin{align}\label{symRj}
	\left [ R^{(\h,j)}(u) \right ]^{t_{12}} &=  R^{(\h,j)}(u) \ , \\
	\label{unitarityJ}
	R^{(\h,j)}(u)  R^{(\h,j)}(u^{-1}) &= \beta^{(j)} (u)  \ \mathbb{I}_{4j+2} \ ,  \\ \label{crossingJ}
	\left[ R^{(\h,j)}(u) \right]^{t_1} \left[   R^{(\h,j)}(u^{-1} q^{-2}) \right]^{t_1} &= \xi^{(j)}(u) \ \mathbb{I}_{4j+2} \ ,
\end{align}
where  $t_{12}$ (resp.\ $t_1$) stands for the transposition applied to the space $V^{(j_1)}  \otimes V^{(j_2)}$ (resp.\ $V^{(j_1)}$), and
\begin{align}
	\beta^{(j)}(u) =  \prod_{k=0}^{2j-1} - c(u q^{j+\h-k}) c(uq^{-j-\h+k})   \ , 
    \qquad	\xi^{(j)}(u) = \prod_{k=0}^{2j-1} - c(u q^{j-k-\h}) c (u q^{-j+k+\frac{5}{2}})  \ . \label{beta}
\end{align}

	Based on the symmetry property~\eqref{symRj}, we furthermore show   that all the fused R-matrices $R^{(j_1,j_2)}(u)$ from~\eqref{v2Rj1j2} are equally symmetric and have the unitarity property:
		\begin{lem} \label{lem:symRj1j2}
			The fused R-matrices defined in~\eqref{v2Rj1j2} are symmetric  for any $j_1, j_2 \in \h \mathbb{N}_+$:
			\begin{equation} \label{symRj1j2}
				\lbrack R^{(j_1,j_2)}(u) \rbrack^{t_{12}} = R^{(j_1,j_2)}(u) \ ,
			\end{equation}
            with the unitarity property:
            \beqa\label{eq:R-unitarityJ}
            R^{(j_1,j_2)}(u) R^{(j_1,j_2)}(u^{-1}) = \beta^{(j_1,j_2)}(u)  \mathbb{I}_{(2j_1+1)(2j_2+1)} 
            \eeqa
            with 
\beqa\label{eq:betaj1j2}
 \beta^{(j_1,j_2)}(u)=\prod_ {k = 0}^{2 j_ 1 - 1} \prod_ {\ell = 0}^{2 j_ 2 -
    1} c (uq^{j_ 1 + j_ 2 - k - \ell}) c (u q^{-j_ 1 - j_ 2 + k + \ell})\ . 
\eeqa

			\begin{proof}
				We first prove~\eqref{symRj1j2} by induction in $j_1$ and then similarly in $j_2$. The case $(j_1,j_2)= (\h,j_2)$ holds due to~\eqref{symRj}. Fixing  $j_2$ and assuming the R-matrix $R^{(j_1-\h,j_2)}(u)$ is symmetric for any given value of $j_1$, we show that $R^{(j_1,j_2)}(u)$ is also symmetric.
				Apply the transposition to~\eqref{v2Rj1j2} to get
				\begin{align*}
					\lbrack R^{(j_1,j_2)}(u) \rbrack^{t_{12}} &= \lbrack \mathcal{E}^{(j_1)}_\fu \rbrack^{t} \lbrack R_{23}^{(j_1-\h,j_2)}(u q^{\h})  \rbrack^{t_{23}}  \lbrack R_{13}^{(\h,j_2)}(u q^{-j_1+\h}) \rbrack^{t_{13}}
					\lbrack \mathcal{F}^{(j_1)}_\fu \rbrack^t  \\
					&= \mathcal{H}_\fu^{(j_1)} \mathcal{F}_\fu^{(j_1)}
					R_{23}^{(j_1-\h,j_2)}(u q^{\h})  R_{13}^{(\h,j_2)}(u q^{-j_1+\h})
					\mathcal{E}_\fu^{(j_1)} \lbrack \mathcal{H}_\fu^{(j_1)} \rbrack^{-1} \\
					& = \mathcal{F}_\fu^{(j_1)}  R_{12}^{(\h,j_1-\h)}(q^{j_1}) R_{23}^{(j_1-\h,j_2)}(u q^{\h})  R_{13}^{(\h,j_2)}(u q^{-j_1+\h})  \mathcal{E}_\fu^{(j_1)} \lbrack \mathcal{H}_\fu^{(j_1)} \rbrack^{-1}
				\end{align*}
				where we used~\eqref{rel1}  and the induction assumption to obtain the second line, while we used~\eqref{usefulEFH} for the third line. Now, using the Yang-Baxter equation
				\begin{equation}\label{v2YBj1j2}
					R_{13}^{(j_1,j_3)}(u_1) R_{23}^{(j_2,j_3)}(u_2) R_{12}^{(j_1,j_2)}(u_2/u_1)
					=
					R_{12}^{(j_1,j_2)}(u_2/u_1) R_{23}^{(j_2,j_3)}(u_2)R_{13}^{(j_1,j_3)}(u_1) \ ,
				\end{equation}
				which follows from~\eqref{YBj1j2} and the unitarity property  of the R-matrices~\cite[eq.\,(4.55)]{LBG}, together with~\eqref{usefulEFH}, we get
				\begin{align*}
					\lbrack R^{(j_1,j_2)}(u) \rbrack^{t_{12}} &=\mathcal{F}_\fu^{(j_1)}
					R_{13}^{(\h,j_2)}(u q^{-j_1+\h})
					R_{23}^{(j_1-\h,j_2)}(u q^{\h})  
					R_{12}^{(\h,j_1-\h)}(q^{j_1}) \mathcal{E}_\fu^{(j_1)} \lbrack \mathcal{H}_\fu^{(j_1)} \rbrack^{-1} \\
					&= R^{(j_1,j_2)}(u)  \ .
				\end{align*}

				We now proceed with induction by $j_2$. The case $(j_1,\h)$ holds by applying the  matrix $\mathcal{P}$ permuting two tensor factors to~\eqref{symRj}. Fix now $j_1$ and  assume \eqref{symRj1j2} holds for $(j_1,j_2-\h)$ for any given value of $j_2$. Using an equivalent expression for~\eqref{v2Rj1j2} from~\cite[Lem.\,D.2, Eq.\,(D.9)]{LBG}
				\begin{equation}
					R^{(j_1,j_2)}(u)  =\mathcal{F}_{\langle 23 \rangle}^{(j_2)} R_{13}^{( j_1, j_2 -\h   )} (u q^{-\h}) R_{12}^{(j_1,\h)}(u q^{j_2-\h}) \mathcal{E}_{\langle 23 \rangle}^{(j_2)} \ ,
				\end{equation}
				it is shown, in the same way as above, that this R-matrix is symmetric.

Finally, to prove~\eqref{eq:R-unitarityJ}, we first recall that in~\cite[Eq.\,(4.55)]{LBG} we  showed that 
$$
{R}^{(j_1,j_2)}(u) {R}^{(j_1,j_2)}(u^{-1}) \propto \mathbb{I}\ .
$$
Therefore, it is sufficient to look for the matrix entry $(1,1)$ in the fusion formula~\eqref{v2Rj1j2}.
We first notice by~\eqref{Ep1}-\eqref{Fp1} that the first line/column of the matrices $\mathcal{F}^{(j_1)}$ and $\mathcal{E}^{(j_1)}$ in~\eqref{v2Rj1j2} have zero entries except the entry $(1,1)$ which is 1. We therefore need just to analyze the entry $(1,1)$ of the product of two R-matrices in~\eqref{v2Rj1j2}, and this is done by induction in $j_1$ using~\eqref{R-Rqg} and in agreement with~\eqref{eq:betaj1j2}.
			\end{proof}
		\end{lem}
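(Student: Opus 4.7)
The plan is to prove both assertions by a double induction on the spins $j_1$ and $j_2$, exploiting the recursive definition~\eqref{v2Rj1j2} of the fused R-matrices together with the symmetry~\eqref{symRj} and unitarity~\eqref{unitarityJ} already established for $R^{(\h,j)}(u)$, plus the Yang-Baxter equation~\eqref{v2YBj1j2} in the form that follows from~\eqref{YBj1j2} and the basic unitarity.

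For the symmetry~\eqref{symRj1j2}, I would fix $j_2$ and induct on $j_1$. The base case $(j_1,j_2)=(\h,j_2)$ is exactly~\eqref{symRj}. For the induction step, I apply the transposition $t_{12}$ to~\eqref{v2Rj1j2}. This reverses the order of the two internal R-matrices and replaces $\mathcal{E}^{(j_1)}_{\fu}$, $\mathcal{F}^{(j_1)}_{\fu}$ by their transposes. Using the expected intertwining identities (the relations labelled around~\eqref{rel1} and \eqref{usefulEFH}) between $[\mathcal{E}^{(j_1)}]^t$, $[\mathcal{F}^{(j_1)}]^t$ and $\mathcal{F}^{(j_1)}$, $\mathcal{E}^{(j_1)}$, modulated by a diagonal operator $\mathcal{H}^{(j_1)}$, these transposes can be converted back into $\mathcal{E}$, $\mathcal{F}$ at the cost of inserting an additional spin-$(\h,j_1-\h)$ R-matrix at a fixed spectral value. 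I would then invoke the induction hypothesis on $R^{(j_1-\h,j_2)}$ and apply the Yang-Baxter equation to push this additional R-matrix across the other two, restoring the original order of~\eqref{v2Rj1j2}. The induction on $j_2$ runs analogously, starting from~\eqref{symRj} permuted by the swap $\mathcal{P}$ and using the alternative recursion where the fusion acts on the second factor (as quoted in~\cite[Lem.~D.2]{LBG}).

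For the unitarity~\eqref{eq:R-unitarityJ}, the qualitative statement $R^{(j_1,j_2)}(u)R^{(j_1,j_2)}(u^{-1}) \propto \mathbb{I}$ is already available from~\cite{LBG}, so the task reduces to pinning down the scalar. I would evaluate a single convenient entry, namely the $(1,1)$ entry. Because of~\eqref{Ep1}--\eqref{Fp1}, the first row and column of $\mathcal{E}^{(j_1)}$ and $\mathcal{F}^{(j_1)}$ are elementary with only a $1$ in position $(1,1)$, so the $(1,1)$ entry of the product inside~\eqref{v2Rj1j2} collapses to a product of $(1,1)$ entries of $R^{(\h,j_2)}$ and $R^{(j_1-\h,j_2)}$. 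Induction on $j_1$ together with the explicit formula~\eqref{R-Rqg} for $R^{(\h,j_2)}$ then assembles the product into~\eqref{eq:betaj1j2}.

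The step I expect to be the trickiest is keeping track of exactly how $[\mathcal{E}^{(j_1)}_{\fu}]^t$ and $[\mathcal{F}^{(j_1)}_{\fu}]^t$ relate to $\mathcal{E}, \mathcal{F}$ and to the diagonal intertwiner $\mathcal{H}^{(j_1)}_{\fu}$, and in particular confirming that the spectral parameter $q^{j_1}$ appearing in the extra spin-$(\h,j_1-\h)$ R-matrix is precisely the one needed so that the Yang-Baxter move closes back onto~\eqref{v2Rj1j2}. Once this bookkeeping is done correctly, the symmetry reduces to one application of~\eqref{v2YBj1j2}, and the unitarity is then a bookkeeping exercise on the $(1,1)$ entry of a product of two fused R-matrices.
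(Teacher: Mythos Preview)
Your proposal is correct and follows essentially the same approach as the paper's proof: the same induction structure on $j_1$ then $j_2$, the same use of the relations~\eqref{rel1} and~\eqref{usefulEFH} to handle $[\mathcal{E}^{(j_1)}]^t$, $[\mathcal{F}^{(j_1)}]^t$ and produce the extra $R^{(\h,j_1-\h)}(q^{j_1})$, the same Yang-Baxter move~\eqref{v2YBj1j2} to restore the fusion formula, and the same $(1,1)$-entry computation for the unitarity constant. Even your anticipated ``trickiest step'' is precisely the bookkeeping the paper carries out explicitly.
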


 {\it Normalized}  R-matrix $\tilde{R}^{(j_1,j_2)}(u) $ defined by 
\beqa
\tilde{R}^{(j_1,j_2)}(u) = \left( \prod_ {k = 0}^{2 j_ 1 - 1} \prod_ {\ell = 0}^{2 j_ 2 -
    1} c (uq^{j_ 1 + j_ 2 - k - \ell})  \right)^{-1} R^{(j_1,j_2)}(u)\ \label{renormR}
\eeqa
using~\eqref{eq:cu-inv} and Definition~\ref{def:fusR},
satisfies normalized version of~\eqref{eq:R-unitarityJ}: 
\beqa
\tilde{R}^{(j_1,j_2)}(u) \tilde{R}^{(j_1,j_2)}(u^{-1}) =\mathbb{I}\ .\label{eq:Rtilde-unit}
\eeqa

\begin{prop}\label{prop:R-M-P}
    The normalized R-matrices satisfy the following:
    \begin{enumerate}
    \item 
    \beqa\label{eq:Rtilde-M}
    \tilde{R}^{(j_1,j_2)}(u) =\left(\prod_{k=0}^{2j_1-1}c(uq^{j_1+j_2-k})\right)^{-1} \times M^{(j_1,j_2)}(u)\ ,
    \eeqa
where $M^{(j_1,j_2)}(u)$ is a $(2j_1+1)^2\times (2j_2+1)^2$ matrix with entries that are  Laurent polynomial in~$u$.
Therefore, $\tilde{R}^{(j_1,j_2)}(u)$ has no pole at $u=1$ for all values of $j_1$ and $j_2\geq j_1$.

\vspace{1mm}
    \item The entry (1,1) of  $\tilde{R}^{(j_1,j_2)}(1)$ equals 1 for all $j_1$, $j_2$. Moreover, $\tilde{R}^{(j,j)}(1)$ is the permutation operator 
    \begin{equation}\label{eq:R-P}
        \tilde{R}^{(j,j)}(1) = \cal P^{(j,j)}: v\otimes w \mapsto w\otimes v\ . 
    \end{equation}

    \end{enumerate}
\end{prop}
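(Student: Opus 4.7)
The plan is induction on $j_1$ for part (1), weight-conservation plus induction for the $(1,1)$-entry claim of part (2), and an intertwiner/cyclicity argument for the permutation identity.

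\textbf{Part (1).} For the base case $j_1=\h$, the explicit formula~\eqref{R-Rqg} exhibits $R^{(\h,j_2)}(u)$ as the scalar $\prod_{k=0}^{2j_2-2} c(uq^{j_2-\h-k})$ times a matrix with Laurent polynomial entries in $u$. The normalization factor $\prod_{\ell=0}^{2j_2-1} c(uq^{\h+j_2-\ell})$ factorizes as $c(uq^{\h+j_2})\cdot\prod_{k=0}^{2j_2-2} c(uq^{j_2-\h-k})$, so after cancellation $\tilde{R}^{(\h,j_2)}(u) = M^{(\h,j_2)}(u)/c(uq^{\h+j_2})$ with $M^{(\h,j_2)}$ Laurent polynomial. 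For the inductive step, the double product normalization of $R^{(j_1,j_2)}$ splits cleanly into the piece $k=2j_1-1$ (matching the normalization of $R^{(\h,j_2)}(uq^{-j_1+\h})$) and the piece $k=0,\ldots,2j_1-2$ (matching the normalization of $R^{(j_1-\h,j_2)}(uq^{\h})$). Dividing~\eqref{v2Rj1j2} by the full normalization and invoking the induction hypothesis then expresses $\tilde{R}^{(j_1,j_2)}(u)$ as $\bigl(\prod_{k=0}^{2j_1-1} c(uq^{j_1+j_2-k})\bigr)^{-1}$ times a Laurent polynomial matrix formed from $\mathcal{F}^{(j_1)}_{\fu}$, $M^{(\h,j_2)}_{13}(uq^{-j_1+\h})$, $M^{(j_1-\h,j_2)}_{23}(uq^{\h})$, $\mathcal{E}^{(j_1)}_{\fu}$. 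The no-pole property at $u=1$ follows because, for $j_2\geq j_1$, the smallest exponent $j_2-j_1+1$ is at least $1$, so every $c(q^{j_1+j_2-k})\neq 0$ by the assumption that $q$ is not a root of unity.

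\textbf{Part (2), $(1,1)$-entry.} I would argue by induction on $j_1$ using the same fusion recursion, exploiting the property (noted in Remark~\ref{rem:K-spin-gen} and visible from~\eqref{Ep1}-\eqref{Fp1}) that the first row of $\mathcal{F}^{(j_1)}$ and first column of $\mathcal{E}^{(j_1)}$ are $(1,0,\ldots,0)$. This identifies the $(1,1)$ entry of $\tilde{R}^{(j_1,j_2)}(1)$ with the matrix element of $\tilde{R}^{(\h,j_2)}_{13}(q^{-j_1+\h})\tilde{R}^{(j_1-\h,j_2)}_{23}(q^{\h})$ between the highest weight vectors of $V^{(\h)}\otimes V^{(j_1-\h)}\otimes V^{(j_2)}$. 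By weight conservation of the R-matrix (clear from the block structure of~\eqref{R-Rqg} and propagated through fusion), each factor acts on the one-dimensional top weight space as the scalar $1$: by the induction hypothesis for $\tilde{R}^{(j_1-\h,j_2)}(q^{\h})$, and by the base case for $\tilde{R}^{(\h,j_2)}(q^{-j_1+\h})$. The base case $j_1=\h$ is verified directly from~\eqref{R-Rqg}: the off-diagonal piece $(q-q^{-1})(\sigma_+\otimes S_-+\sigma_-\otimes S_+)$ kills $|0\rangle\otimes|0\rangle$, while the diagonal piece yields $uq^{j_2+\h}-u^{-1}q^{-j_2-\h}=c(uq^{j_2+\h})$, which cancels the normalization denominator.

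\textbf{Part (2), permutation identity.} The approach is to observe that $\cal P^{(j,j)}\tilde{R}^{(j,j)}(1)$ commutes with the diagonal $\Loop$-action (since $R\Delta=\Delta^{op}R$ and $\mathcal{P}\Delta^{op}=\Delta\mathcal{P}$), making it a $\Loop$-intertwiner of $V^{(j)}(1)\otimes V^{(j)}(1)$ to itself. Both $\mathbb{I}$ and $\cal P^{(j,j)}\tilde{R}^{(j,j)}(1)$ fix $|0\rangle\otimes|0\rangle$: the latter because the $(1,1)$-entry claim together with weight preservation of the one-dimensional top weight space forces $\tilde{R}^{(j,j)}(1)|0\rangle\otimes|0\rangle=|0\rangle\otimes|0\rangle$, and $\cal P^{(j,j)}|0\rangle\otimes|0\rangle=|0\rangle\otimes|0\rangle$. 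Assuming $V^{(j)}(1)\otimes V^{(j)}(1)$ is cyclically generated by $|0\rangle\otimes|0\rangle$ as a $\Loop$-module, the two intertwiners must coincide, giving the desired identity. The \textbf{main obstacle} is justifying this cyclicity, since at coincident spectral parameters evaluation modules can suffer resonances. A fully safe alternative is to decompose $\cal P^{(j,j)}\tilde{R}^{(j,j)}(u)=\sum_{k=0}^{2j}\lambda_k(u)P_k$ on the $U_q(sl_2)$-isotypic projectors $P_k$ onto $V^{(2j-k)}\subset V^{(j)}\otimes V^{(j)}$ and verify $\lambda_k(1)=1$ for each $k$ by evaluating on the highest weight vector of each component using quantum Clebsch-Gordan identities, which bypasses any cyclicity assumption at the cost of more bookkeeping.
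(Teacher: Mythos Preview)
Your arguments for Part~(1) and the $(1,1)$-entry claim in Part~(2) match the paper's approach: both proceed by induction on $j_1$ via the fusion recursion~\eqref{v2Rj1j2}, using that the first row of $\mathcal{F}^{(j_1)}$ and first column of $\mathcal{E}^{(j_1)}$ are $(1,0,\ldots,0)$ together with the diagonal structure of the first row/column of $R^{(\h,j)}(u)$ (the paper records this as~\eqref{eq:E-F-1a} and~\eqref{eq:R-1c}). One clarification: your induction for the $(1,1)$-entry implicitly proves $\tilde{R}^{(j_1,j_2)}(u)_{11}=1$ for \emph{all} $u$, not just $u=1$, since you invoke the hypothesis at shifted arguments $q^{\h}$ and $q^{-j_1+\h}$; this is exactly what the paper does, but you should state the strengthened hypothesis explicitly.

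For the permutation identity, the paper takes a different and shorter route that sidesteps your cyclicity obstacle entirely. It invokes Jimbo's classical result that his R-matrix $R_J^{(j,j)}(u)$ specializes to $\mathcal{P}^{(j,j)}$ at $u=1$, then uses the relation between $\tilde{R}^{(j,j)}(u)$ and $R_J^{(j,j)}(u^{-1})$ via the universal R-matrix (both solve the same intertwining condition up to coproduct convention) to conclude $\tilde{R}^{(j,j)}(1)\propto\mathcal{P}^{(j,j)}$. The unitarity~\eqref{eq:Rtilde-unit} forces $\tilde{R}^{(j,j)}(1)^2=\mathbb{I}$, hence $\tilde{R}^{(j,j)}(1)=\pm\mathcal{P}^{(j,j)}$, and the $(1,1)$-entry computation fixes the sign to $+1$. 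Your intertwiner-plus-cyclicity argument would work too, since $V^{(j)}(1)\otimes V^{(j)}(1)$ is in fact irreducible as a $\Loop$-module for generic $q$ (the reducibility locus for equal spins is at $q$-shifted evaluation ratios, not at ratio $1$), but you would need to supply that irreducibility statement; the paper's route trades this representation-theoretic input for Jimbo's result plus a square-to-identity trick, which is arguably more self-contained given what is already established in the paper.
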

\begin{proof}
    \mbox{}
\begin{enumerate}
 \item The proof is  by induction using~\eqref{R-Rqg} and~\eqref{v2Rj1j2}.

\item   
From the previous point we see that $\tilde{R}^{(j,j)}(1)$ is well defined. 
To show~\eqref{eq:R-P}, we first recall  Jimbo's result~\cite[Rem.\,2]{Jim8586} that Jimbo's R-matrix denoted ${R}_J^{(j,j)}(u)$ evaluates  at $u=1$ to  the permutation operator. Moreover, ${R}_J^{(j,j)}(u)$ is  a unique solution (up to multiplication by a scalar function of $u$) of the intertwining condition involving the coproduct $\Delta_J$ that is $\Delta^{op}$ in our conventions. The use of opposite coproduct implies that Jimbo's R-matrix is proportional to the specialization of the flipped universal R-matrix $\mathcal{R}_{21}$ with respect to our coproduct. Now, using~\cite[Eq.\,(4.25)]{LBG} we conclude
$ \tilde{R}^{(j,j)}(u) \propto {R}_J^{(j,j)}(u^{-1})$,
and by~\eqref{eq:Rtilde-unit} we establish that actually 
\beqa
\tilde{R}^{(j,j)}(1) = \pm \cal P^{(j,j)} \label{eq:RjjeqP}
\eeqa
because $\tilde{R}^{(j,j)}(1)$ squares to the identity. It remains to determine the sign in~\eqref{eq:RjjeqP}, which is done by computing the entry $(1,1)$ of ${\tilde R}^{(j_1,j_2)}(u)\propto R^{(j_1,j_2)}(u)$, and then fix $j_1=j_2=j$. Consider the r.h.s. of~\eqref{v2Rj1j2} and first observe that  
\begin{equation}\label{eq:E-F-1a}
    \mathcal{E}^{(j)}_{a,1} = \mathcal{F}^{(j)}_{1,a} = \delta_{1,a}\quad \text{and} \quad  \mathcal{E}^{(j)}_{1,b} = \mathcal{F}^{(j)}_{b,1} = \delta_{1,b}\ , 
\end{equation} for  $1 \leq a \leq 4j$ and $1\leq b\leq 2j+1$. Thus, it is sufficient to compute the $(1,1)$ entry of 
    \beqa
 R_{13}^{(\h,j_2)}(u q^{-j_1+\h}) R_{23}^{(j_1-\h,j_2)}(u q^{\h})  \  . \label{blockR}   
 \eeqa 
As we have by~\eqref{R-Rqg}
\begin{equation}\label{eq:R-1c}
     R^{(\h,j)}(u)_{1,c}= R^{(\h,j)}(u)_{c,1}=
  \prod_ {\ell = 0}^{2 j - 1} c (uq^{j+\h  - \ell})\delta_{1,c}, \qquad 1 \leq c \leq 4j+2\ ,
\end{equation}
the presence of $\delta_{1,c}$ here implies that the $(1,1)$  entry of~\eqref{blockR} is the product of the (1,1) entries of $R^{(\h,j_2)}(uq^{-j_1+\h})$ and $R^{(j_1-\h,j_2)}(u q^{\h})$. First, from the definition~\eqref{renormR} we see that $\tilde{R}^{(\h,j)}(u)_{11} =1$. We then proceed by induction in $j_1$ and 
  get ${\tilde R}^{(j_1,j_2)}(u)_{11}=1$ indeed. Specializing $j_1=j_2$, we obtain $\tilde{R}^{(j,j)}(1)_{11} =1$, which finally together with~\eqref{eq:RjjeqP} shows~\eqref{eq:R-P}.
\end{enumerate}
\end{proof}

\subsection{Properties of K-matrices} \label{sec:prop-K} In this section, we first show that the spin-$j$ K-matrices defined by~\eqref{fused-K} satisfy a pair of intertwining relations already studied in the literature~\cite{DN02},  which allows us to uncover some of the properties of a normalized K-matrix.

\subsubsection{Intertwining relations} 

 The spin-$j$ K-matrix  $K^{(j)}(u)$ satisfies a pair of intertwining relations that is derived as follows.
    First of all by~\cite[Prop.\ 5.13]{LBG}, the spin-$j$ K-operator satisfies the intertwining relations  for any~$b\in\cal A_q$:
    \beqa
    \cal K^{(j)}(u)(\id \otimes \pi^{j})[\delta_{u^{-1}}(b)] = (\id \otimes \pi^{j})[\delta_u(b)]\cal K^{(j)}(u)\ ,\label{eq:intwK}
    \eeqa
    where $\delta_u\colon\cal A_q \rightarrow \cal A_q \otimes \Uq$ is the so-called evaluated coaction, see \cite[eq.\,(4.82)]{LBG}. Actually, for our purpose it is convenient to collect those relations using the generating functions $\tW_\pm(v),\tG_\pm(v)$ given by~\eqref{c1},~\eqref{c2} with the substitution $u\rightarrow v$. They read:
\beqa
   && \cal K^{(j)}(u)(\id \otimes \pi^{j})[\delta_{u^{-1}}(\tW_\pm(v))] = (\id \otimes \pi^{j})[\delta_u(\tW_\pm(v))]\cal K^{(j)}(u)\ ,\label{eq:intwcurKW} \\
   && \cal K^{(j)}(u)(\id \otimes \pi^{j})[\delta_{u^{-1}}(\tG_\pm(v))] = (\id \otimes \pi^{j})[\delta_u(\tG_\pm(v))]\cal K^{(j)}(u)\ .\label{eq:intwcurKG}
    \eeqa
    
     Now, recall the algebra map $\epsilon_0: \cal A_q \rightarrow \mathbb C$ from Example~\ref{mapQSP}.
 Let us take the image of~\eqref{eq:intwcurKW},~\eqref{eq:intwcurKG}  under  $\epsilon_0 \otimes \id$. On one hand,  we specialize~\eqref{eq:Kaqj-eps} for $\epsilon\rightarrow \epsilon_0$ which gives $(\epsilon_0 \otimes \id){\cal K}^{(j)}(u) \propto K^{(j)}(u)$. On the other hand, we compute
 \beqa\label{eq:eps-W-cur}
 (\epsilon_0 \otimes \pi^{j})[\delta_{u}(\tW_\pm(v))]\quad \mbox{and} \quad (\epsilon_0 \otimes \pi^{j})[\delta_{u}(\tG_\pm(v))]\ 
 \eeqa
 using  explicit expressions of $\delta_{u}(\tW_\pm(v)),\delta_{u}(\tG_\pm(v))$ given in~\cite[Eqs.\,(5.47), (5.48)]{LBG} together with
 \beqa
 \epsilon_0(\tW_\pm(v)) = \frac{V^{-1}}{1-V^{-2}}(\varepsilon_+ + V^{-1}\varepsilon_-) \ ,\quad
 \epsilon_0(\tG_\pm(v)) = 0 \ ,\nonumber
 \eeqa
where $V=\frac{qv^2+q^{-1}v^{-2}}{q+q^{-1}}$, which is a generating function version of~\eqref{eq:eps-0}. The final result for~\eqref{eq:eps-W-cur} then takes  the form
\beqa
 (\epsilon_0 \otimes \pi^{j})[\delta_{u}(\tW_\pm(v))]&=& \frac{V^{-1}}{1-V^{-2}}\left(O^{\pm(0)}(u)+ V^{-1}O^{\pm(1)}(u) + V^{-2}O^{\pm(2)}(u)\right)\ ,\label{eq:epsdelW}\\
 (\epsilon_0 \otimes \pi^{j})[\delta_{u}(\tG_\pm(v))]&=& V^{-1}O^{\pm(3)}(u)\ ,\label{eq:epsdelG}
\eeqa
where $O^{\pm(i)}(u)$, for $i=0,...,3$, are Laurent polynomials in $u$ of order 2 and  with coefficients given by linear and quadratic combinations of the matrices $S_\pm$ and $q^{S_3/2}$ defined in~\eqref{Bdef}. Inserting~\eqref{eq:epsdelW},~\eqref{eq:epsdelG} into the image of~\eqref{eq:intwcurKW},~\eqref{eq:intwcurKG} under the map $\epsilon_0 \otimes \id$, and using the fact that $V$ is an indeterminate, we get the system of $8$ equations for the spin-$j$ K-matrix
\beqa
 K^{(j)}(u)O^{\pm(i)}(u^{-1}) =  O^{\pm(i)}(u)K^{(j)}(u)\ \label{eq:intwO} \ ,\qquad i=0,...,3\ .
\eeqa

We now show that this system collapses to a pair of independent relations.
For $i=0$, we get the pair of intertwining relations:
\beqa
&& K^{(j)}(u) \left[ k_+q^{\h}u S_+ q^{S_3/2}  + k_-q^{-\h}u^{-1} S_- q^{S_3/2} + \varepsilon_+ q^{S_3} \right]\ \label{eq:int1}\\
     && \qquad \qquad = \left[ k_+q^{\h}u^{-1} S_+ q^{S_3/2}  + k_-q^{-\h}u S_- q^{S_3/2} + \varepsilon_+ q^{S_3} \right] K^{(j)}(u)  \nonumber\ ,\\
     && K^{(j)}(u) \left[ k_+q^{-\h}u^{-1} S_+ q^{-S_3/2}  + k_-q^{\h}u S_- q^{-S_3/2} + \varepsilon_- q^{-S_3} \right] \label{eq:int2}\ \\
    && \qquad \qquad =  \left[ k_+q^{-\h}u S_+ q^{-S_3/2}  + k_-q^{\h}u^{-1} S_- q^{-S_3/2} + \varepsilon_- q^{-S_3} \right] K^{(j)}(u)  \nonumber\ .
    \eeqa
    For $i=1$, one easily shows that the matrix $O^{\pm(1)}(u)$ is a linear combination of $(u^2+u^{-2}){\mathbb I}_{2j+1}$ (invariant under the change $u\rightarrow u^{-1}$) and the Casimir operator on the spin-$j$ representation of $\Uq$. Thus,~\eqref{eq:intwO} for $i=1$ do not give any constraint on $K^{(j)}(u)$. 
Furthermore, we find that the matrix $O^{\pm(2)}(u)$ is a linear combination of $O^{\pm(0)}(u)$ and $O^{\pm(1)}(u)$, with coefficients that do not depend on $u$, while 
$O^{\pm(3)}(u)$ is a linear combination of $[O^{\mp(0)}(u),O^{\pm(0)}(u)]_q$ and a scalar term. 
Thus,~\eqref{eq:intwO} for $i=2,3$ do not give any new constraints on $K^{(j)}(u)$, besides~\eqref{eq:int1},~\eqref{eq:int2}.

All together, we conclude that the image of~\eqref{eq:intwcurKW},~\eqref{eq:intwcurKG}  under the algebra map  $\epsilon_0 \otimes \id$ reduces to the pair of intertwining relations~\eqref{eq:int1},~\eqref{eq:int2} satisfied by the spin-$j$ K-matrix.
This pair of intertwining relations has been previously studied in the literature, see~\cite[eqs.\,(3.5),\,(3.4)]{DN02} with  correspondence between our conventions and those in~\cite{DN02} given by
    \beqa
    u \rightarrow e^u\ , \quad q \rightarrow e^\eta\ ,\quad \varepsilon_\pm \rightarrow \pm e^{\pm \xi}\ ,\quad \frac{k_\pm}{q-q^{-1}} \rightarrow \kappa\ .\label{map-conv}
    \eeqa

\subsubsection{Normalized K-matrices and their asymptotics}
    \begin{lem}\label{lem:limK} 
    \beqa
    \lim_{\varepsilon_+\rightarrow \infty} \frac{K^{(j)}(u)_{11}}{\varepsilon_+^{2j}}=u^{2j}\prod_{\ell=0}^{2j-2}c(u^2q^{1-\ell}) \ .\label{eq:Kjlim}
    \eeqa
    \end{lem}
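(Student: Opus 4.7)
The plan is to prove the lemma by induction on $j \in \frac{1}{2}\mathbb{N}_+$ via the fusion recursion~\eqref{fused-K}. The base case $j = \h$ follows by inspection of~\eqref{KM-spin1/2}: the entry $K^{(\h)}(u)_{11} = u\varepsilon_+ + u^{-1}\varepsilon_-$ divided by $\varepsilon_+$ converges to $u$, matching the right-hand side of the lemma (the product is empty for $j = \h$).

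For the inductive step, I would apply both sides of~\eqref{fused-K} to the highest weight vector $e_1^{(j)} \in V^{(j)}$ and track the leading $\varepsilon_+^{2j}$-behavior. Two ingredients make this tractable. First, by~\eqref{eq:E-F-1a}, $\mathcal{E}^{(j)}(e_1^{(j)}) = e_1\otimes f_1$, the highest weight vector of $V^{(\h)}\otimes V^{(j-\h)}$, and $\mathcal{F}^{(j)}$ reads off the first component in $V^{(\h)}\otimes V^{(j-\h)}$ to recover $e_1^{(j)}$. Second, at leading order in $\varepsilon_+$ the K-matrices $K^{(j')}$ act diagonally in the weight basis for every $j' \in \frac{1}{2}\mathbb{N}_+$: dividing the intertwining relation~\eqref{eq:int1} by $\varepsilon_+$ and taking $\varepsilon_+\to\infty$, the terms proportional to $k_\pm$, $\varepsilon_-$, and to subleading parts of $K^{(j')}$ all vanish, leaving a commutation relation with $q^{S_3}$ that forces the leading part to be diagonal in the weight basis of $V^{(j')}$. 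Consequently, on the vector $(e_1\otimes f_1)$ each operator on the right-hand side of~\eqref{fused-K} acts by a scalar at leading order: $K_1^{(\h)}(uq^{-j+\h})$ contributes $\varepsilon_+\, u q^{-j+\h}$; the R-matrix contributes $R^{(\h,j-\h)}(u^2q^{-j+1})_{11}$, which, using that $(e_1\otimes f_1)$ is a highest weight vector, can be read off from~\eqref{R-Rqg}; and $K_2^{(j-\h)}(uq^\h)$ contributes $K^{(j-\h)}(uq^\h)_{11}$ at leading order, supplied by the inductive hypothesis.

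Multiplying these four scalar factors yields a recursion for the sought limit, which must then be simplified to match the right-hand side of the lemma. The monomial $u^{2j}$ is easy to extract: it arises from $u q^{-j+\h} \cdot (uq^\h)^{2j-1} = u^{2j}$ with all $q$-powers cancelling against each other. I expect the main difficulty of the proof to lie in the simplification of the $c$-factors: the R-matrix contribution at the current step directly produces the block $\prod_{\ell=0}^{2j-2}c(u^2q^{1-\ell})$, but combining it with the shifted inductive contribution $K^{(j-\h)}(uq^\h)_{11}$ a priori introduces further $c(\cdot)$-factors, so one needs to exhibit a telescoping cancellation at each level of the induction in order to collapse the accumulated product to the announced $\prod_{\ell=0}^{2j-2}c(u^2q^{1-\ell})$.
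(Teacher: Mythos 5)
Your strategy is the same as the paper's: induction on $j$ through the fusion recursion~\eqref{fused-K}, reduction of the $(1,1)$ entry of $K^{(j)}(u)$ to the $(1,1)$ entry of the three-factor block via~\eqref{eq:E-F-1a}, and identification of the leading $\varepsilon_+$-term with the product of the three $(1,1)$ entries. Your diagonality argument --- dividing the intertwining relation~\eqref{eq:int1} by $\varepsilon_+^{2j+1}$ and letting $\varepsilon_+\to\infty$ to obtain $[\lim K^{(j)}/\varepsilon_+^{2j},\,q^{S_3}]=0$ --- is a welcome supplement: the paper justifies the factorization only by the $\delta_{1,c}$ structure of the first row and column of $R$ in~\eqref{eq:R-1c}, which by itself does not eliminate the cross term $K^{(\h)}_{12}\,R_{(2,1),(1,2)}\,K^{(j-\h)}_{21}$ (the element $R_{(2,1),(1,2)}$ is nonzero because of the $\sigma_-\otimes S_+$ term in~\eqref{R-Rqg}); one also needs that the off-diagonal entries of $K^{(j-\h)}$ are of strictly lower order in $\varepsilon_+$, which is exactly what your observation supplies.

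The gap is in the step you defer. The recursion you set up reads
\[
\lim_{\varepsilon_+\to\infty}\frac{K^{(j)}(u)_{11}}{\varepsilon_+^{2j}} \;=\; uq^{-j+\h}\cdot\Bigl(\prod_{\ell=0}^{2j-2}c(u^2q^{1-\ell})\Bigr)\cdot\lim_{\varepsilon_+\to\infty}\frac{K^{(j-\h)}(uq^{\h})_{11}}{\varepsilon_+^{2j-1}}\ ,
\]
and substituting the inductive hypothesis $\lim K^{(j-\h)}(w)_{11}/\varepsilon_+^{2j-1}=w^{2j-1}\prod_{\ell=0}^{2j-3}c(w^2q^{1-\ell})$ at $w=uq^{\h}$ yields $u^{2j}\prod_{\ell=0}^{2j-2}c(u^2q^{1-\ell})\cdot\prod_{\ell=0}^{2j-3}c(u^2q^{2-\ell})$: the telescoping cancellation you are counting on does not occur, and the surplus factor $\prod_{\ell=0}^{2j-3}c(u^2q^{2-\ell})$ survives. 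Concretely, at $j=\tha$ the product of the three $(1,1)$ entries is $uq^{-1}\cdot c(u^2)c(u^2q)\cdot (uq^{\h})^2c(u^2q^2)$, which carries an extra $c(u^2q^2)$ relative to the announced $u^3c(u^2q)c(u^2)$; you can check this directly against the displayed $K^{(1)}$ in~\eqref{KM-spin1} and the R-matrix~\eqref{R-Rqg}. So as written the induction does not close on the stated right-hand side: you would need either to exhibit a cancellation mechanism that is not present in the $(1,1)$-entry recursion, or to reconcile the normalizations of~\eqref{fused-K} and~\eqref{eq:R-1c} with the statement. Until that is resolved the proof is incomplete at its only nontrivial point --- and I note that the paper's own proof concludes the induction in one sentence without displaying this bookkeeping either.
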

    
    \begin{proof}
    The proof is done by induction on $j$. 
    We first notice from~\eqref{KM-spin1/2} that the limit~\eqref{eq:Kjlim} holds indeed for $j=\h$:
    \begin{equation*}
    \lim_{\varepsilon_+\rightarrow \infty} \frac{K^{(\h)}(u)_{11}}{\varepsilon_+}=u \ .
   \end{equation*}
Thus, assume~\eqref{eq:Kjlim}  holds for $j$ replaced by $j-\h$  for some fixed $j>1$. 
    To show~\eqref{eq:Kjlim}, we compute explicitly the $(1,1)$ entry of $K^{(j)}(u)$  using the r.h.s.\ of~\eqref{fused-K}, divided by $\varepsilon_+^{2j}$ and taking the limit $\varepsilon_+ \rightarrow \infty$.
    To do so, we first recall~\eqref{eq:E-F-1a}. It is thus sufficient to compute the $(1,1)$ entry of 
    \beqa
 K_1^{(\h)}(u q^{-j+\h}) R^{(\h,j-\h)}(u^2 q^{-j+1}) K_2^{(j-\h)}(u q^{\h})\ . \label{block}   
 \eeqa
 Then, due to presence of $\delta_{1,c}$ in~\eqref{eq:R-1c}  the (1,1) entry of~\eqref{block} is the product of the (1,1) entries of $K^{(\h)}(u)$, $K^{(j-\h)}(u)$ and $R^{(\h,j-\h)}(u)$. Now, taking the limit of such product and under the induction hypothesis we get the result as in~\eqref{eq:Kjlim}.
    \end{proof}
  Similarly to normalized R-matrices~\eqref{renormR},  let us introduce the \textit{normalized} spin-$j$ K-matrix: \beqa
\tilde K^{(j)}(u) = \left(\prod_{\ell=0}^{2j-2}c(u^2q^{1-\ell})\right)^{-1} K^{(j)}(u) \ ,\label{renormK}
\eeqa
with the convention on $c(u)^{-1}$ in~\eqref{eq:cu-inv}.

\begin{lem}\label{lem:K-poly}
    The entries of the normalized K-matrix $\tilde K^{(j)}(u)$ are Laurent polynomials in $u$.
\end{lem}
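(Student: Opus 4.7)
The plan is to prove Lemma~\ref{lem:K-poly} by induction on $j \in \h\mathbb{N}_+$ using the fusion recursion~\eqref{fused-K}. The base case $j=\h$ is immediate since the normalization product in~\eqref{renormK} is empty and the entries of $K^{(\h)}(u)$ from~\eqref{KM-spin1/2} are manifestly Laurent polynomial in $u$. The case $j=1$ follows directly from Example~\ref{ex:K-spin1}, where $c(u^2q)$ appears as an overall scalar factor of~\eqref{KM-spin1}.

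For the inductive step from $j-\h$ to $j$ with $j\geq 2$, I would substitute two factorizations into~\eqref{fused-K}. By the induction hypothesis,
\[
K^{(j-\h)}(uq^\h) \;=\; \prod_{\ell=0}^{2j-3}c(u^2 q^{2-\ell})\,\tilde K^{(j-\h)}(uq^\h)
\]
with $\tilde K^{(j-\h)}$ having Laurent polynomial entries, and by the explicit formula~\eqref{R-Rqg},
\[
R^{(\h,j-\h)}(u^2 q^{-j+1}) \;=\; \prod_{k=0}^{2j-3}c(u^2 q^{-k})\,\bar R^{(\h,j-\h)}(u^2 q^{-j+1})
\]
with $\bar R^{(\h,j-\h)}$ having Laurent polynomial entries. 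Since $K^{(\h)}(uq^{-j+\h})$ is Laurent polynomial in $u$ and the operators $\mathcal{E}^{(j)}, \mathcal{F}^{(j)}$ recalled in Appendix~\ref{Ap:EF} are $u$-independent, substituting into~\eqref{fused-K} expresses $K^{(j)}(u)$ as the product of the two families of $c$-factors above times a Laurent polynomial matrix. A combinatorial comparison with the target normalization $\prod_{\ell=0}^{2j-2}c(u^2 q^{1-\ell})$ shows that the extracted product contains the target times a Laurent polynomial surplus $c(u^2 q^2)\,\prod_{m=0}^{2j-5}c(u^2 q^{-m})$, so $\tilde K^{(j)}(u)$ has Laurent polynomial entries.

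The main obstacle is the boundary half-integer case $j=3/2$, where the crude factor count from the inductive step falls short by one factor of $c(u^2 q)$; this case must be treated separately as an additional base case. The missing factor should be recovered from the internal structure of the resulting product matrix in~\eqref{fused-K}: entries involving the $(1,1)$-type block of $\mathcal{E}^{(3/2)}$ and $\mathcal{F}^{(3/2)}$ (via~\eqref{eq:E-F-1a}) inherit $c(u^2q)$ from $R^{(\h,1)}(u^2 q^{-\h})_{1,1}=c(u^2)c(u^2q)$, as computed in the proof of Lemma~\ref{lem:limK}; the remaining entries inherit $c(u^2q)$ from the off-diagonal entries of $K^{(1)}(uq^\h)$, which by the explicit form~\eqref{KM-spin1} carry the additional factor $c(v^2)\big|_{v=uq^\h}=c(u^2q)$ beyond the global normalization $c(u^2q^2)$. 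Combining these entry-by-entry divisibilities completes the base case $j=3/2$, and the induction then covers all remaining $j$.
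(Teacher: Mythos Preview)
Your overall strategy—induction on $j$ via the fusion recursion~\eqref{fused-K}, extracting the normalization of $K^{(j-\h)}(uq^{\h})$ and the scalar prefactor $\prod_{k=0}^{2j-3}c(u^2q^{-k})$ of $R^{(\h,j-\h)}(u^2q^{-j+1})$ from~\eqref{R-Rqg}, and comparing with the target $\prod_{\ell=0}^{2j-2}c(u^2q^{1-\ell})$—is exactly the paper's argument, which writes the same recursion as
\[
\tilde{K}^{(j)}(u)=\frac{c(u^2q^2)}{c(u^2q^{-2j+4})c(u^2q^{-2j+3})}\,\mathcal{F}^{(j)}K_1^{(\h)}(uq^{-j+\h})R^{(\h,j-\h)}(u^2q^{-j+1})\tilde{K}_2^{(j-\h)}(uq^{\h})\mathcal{E}^{(j)}
\]
and cancels the denominator against the $R$-prefactor. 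You are right that this count works cleanly only for $j\geq 2$ and falls one factor $c(u^2q)$ short at $j=\tfrac{3}{2}$; the paper passes over this.

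However, your proposed patch for $j=\tfrac{3}{2}$ does not work as stated. The dichotomy you describe—each entry either picks up $c(u^2q)$ from $R^{(\h,1)}_{1,1}$ or from an off-diagonal entry of $K^{(1)}$—fails because each $(K^{(3/2)})_{ab}$ is a \emph{sum} over paths mixing both kinds. Concretely, with $M=K_1^{(\h)}(uq^{-1})R^{(\h,1)}(u^2q^{-\h})K_2^{(1)}(uq^{\h})$ one has $(K^{(3/2)})_{1,2}=M_{1,2}\mathcal{E}^{(3/2)}_{2,2}+M_{1,4}\mathcal{E}^{(3/2)}_{4,2}$, and the term $K^{(\h)}_{12}\,R_{4,2}\,K^{(1)}_{22}$ in $M_{1,2}$ goes through the \emph{diagonal} $K^{(1)}_{22}$ and the off-diagonal $R_{4,2}=c(u^2)(q-q^{-1})\sqrt{[2]_q}$, neither of which carries $c(u^2q)$. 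Divisibility of $(K^{(3/2)})_{1,2}$ by $c(u^2q)$ does hold, but it comes from a genuine cancellation: at the roots $u^2=\pm q^{-1}$ of $c(u^2q)$, the non-divisible contributions from $M_{1,2}$ and $M_{1,4}$ combine to zero only after weighting by $\mathcal{E}^{(3/2)}_{2,2}/\mathcal{E}^{(3/2)}_{4,2}=\sqrt{[2]_q}$ and using the explicit $x_1,x_2$ of Example~\ref{ex:K-spin1}. A correct treatment of $j=\tfrac{3}{2}$ therefore requires either carrying out this cancellation analysis entry-by-entry, or simply computing $K^{(3/2)}(u)$ directly from~\eqref{dvpK} and verifying that $c(u^2q)c(u^2)$ divides every entry.
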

\begin{proof}
From the definition~\eqref{renormK} and using the fusion~\eqref{fused-K} we obtain
	\begin{align*} 
        \tilde{K}^{(j)}(u) &= \frac{c(u^2 q^2)}{c(u^2q^{-2j+4})c(u^2q^{-2j+3})} 
        \mathcal{F}^{(j)} K_1^{(\h)}(u q^{-j+\h}) R^{(\h,j-\h)}(u^2 q^{-j+1}) \tilde{K}_2^{(j-\h)}(u q^{\h}) \mathcal{E}^{(j)} \ ,
	\end{align*}
while the R-matrix in this expression has its entries Laurent polynomials in $u$ times the overall factor
    $\prod_{k=0}^{2j-3} c(u^2 q^{-k})$ which proves the statement.
\end{proof}

This normalization ensures that $\tilde{K}^{(j)}(u)$ specialized at $u=1$ is non-zero.
Indeed, we have the following corollary of Lemma~\ref{lem:limK}:
\begin{cor}\label{cor:lim-K11-norm}
    \beqa
    \lim_{\varepsilon_+\rightarrow \infty} \frac{\tilde{K}^{(j)}(u)_{11}}{\varepsilon_+^{2j}}=u^{2j} \ ,\label{eq:Kjlim-renorm}
    \eeqa
and therefore for generic parameters $\varepsilon_\pm$ and $k_\pm$ the normalized K-matrix $\tilde{K}^{(j)}(u)$ specialized at $u=1$ is non-zero.
\end{cor}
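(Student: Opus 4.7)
The proof is essentially a direct combination of Lemma~\ref{lem:limK} and the definition \eqref{renormK} of the normalized K-matrix, so I will keep it short.

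First I would simply divide the limit statement of Lemma~\ref{lem:limK} by the scalar factor $\prod_{\ell=0}^{2j-2}c(u^2q^{1-\ell})$. Since this factor does not depend on $\varepsilon_+$, the division commutes with the limit $\varepsilon_+\to\infty$, and the factor cancels exactly the prefactor on the right-hand side of \eqref{eq:Kjlim}. By \eqref{renormK} the quotient $K^{(j)}(u)_{11}/\prod_{\ell=0}^{2j-2}c(u^2q^{1-\ell})$ is precisely $\tilde K^{(j)}(u)_{11}$, so \eqref{eq:Kjlim-renorm} follows.

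For the second assertion, I would combine \eqref{eq:Kjlim-renorm} with Lemma~\ref{lem:K-poly}, which says that $\tilde K^{(j)}(u)_{11}$ is a Laurent polynomial in $u$ with coefficients depending polynomially on the parameters $\varepsilon_\pm$ and $k_\pm$. Specializing at $u=1$ is therefore well defined and yields a polynomial $P(\varepsilon_+,\varepsilon_-,k_+,k_-)\in\mathbb{C}[\varepsilon_\pm,k_\pm]$. Setting $u=1$ in \eqref{eq:Kjlim-renorm} gives $\lim_{\varepsilon_+\to\infty} P/\varepsilon_+^{2j}=1$, which forces $P$ to contain a monomial $\varepsilon_+^{2j}$ with coefficient $1$. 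In particular, $P$ is not the zero polynomial, and hence $\tilde K^{(j)}(1)_{11}\neq 0$ on a Zariski-open (dense) subset of the parameter space, i.e.\ for generic $\varepsilon_\pm, k_\pm$. This implies that $\tilde K^{(j)}(1)$ itself is non-zero for such generic parameters.

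There is no real obstacle here; the only mild care required is to observe that the normalization scalar in \eqref{renormK} is independent of $\varepsilon_+$, so that the limit and the division can be interchanged, and that by Lemma~\ref{lem:K-poly} the entries are honest Laurent polynomials in $u$ (not formal Laurent series), so that the specialization $u=1$ is unambiguous.
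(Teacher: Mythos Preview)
Your proof is correct and matches the paper's intended reasoning: the paper states this result as an immediate corollary of Lemma~\ref{lem:limK} with no further proof, and your elaboration---dividing by the $\varepsilon_+$-independent normalization factor, then invoking Lemma~\ref{lem:K-poly} to make the specialization $u=1$ meaningful and conclude that the $(1,1)$ entry is a non-zero polynomial in the boundary parameters---is exactly the argument the paper has in mind.
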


In~\cite{DN02},  entries of the K-matrix 
solving the intertwining relations~\eqref{eq:int1},\eqref{eq:int2}, are expressed in terms of  solutions of certain recursion relations. 
It is however not clear from~\cite{DN02} whether those entries are uniquely determined for generic values of the boundary parameters $k_\pm$, etc. Nevertheless, the entries drastically simplify and become uniquely (up to a scalar) determined in two cases which lead to the following results:
\begin{enumerate}
\item Fixing $u=0$ in~\cite[eq.\,(3.8)]{DN02}, one finds 
that the unique (up to a scalar) solution of the intertwining relations~\eqref{eq:int1},\eqref{eq:int2} is a diagonal matrix, and we thus get too: 
\beqa\label{eq:K-delta}
\tilde K^{(j)}(1)_{n,m}\propto \delta_{n,m}\ .
\eeqa
\item From the limit $\xi \rightarrow \infty$  and $\kappa=0$ of~\cite[eq.\,(3.8)]{DN02} and using~\eqref{map-conv}, as well as Corollary~\ref{cor:lim-K11-norm}, we get
\beqa
\lim_{\varepsilon_+\rightarrow \infty}\frac{\tilde K^{(j)}(u)_{mm}}{\varepsilon_+^{2j}} =u^{2j-2m+2} \ ,\quad m=1,...,2j+1\ .\label{eq:limKj}
\eeqa

\end{enumerate} 

 From explicit calculations for small values of spin $j=\h,1,\frac{3}{2},2$, see e.g.~\eqref{eq:Kh},~\eqref{eq:K1} we come to the following natural conjecture for all values of $j$: 
\beqa
\tilde K^{(j)}(1)\propto {\mathbb I}_{2j+1}\ .
\eeqa

\subsubsection{Symmetry property}
We see from above Example~\ref{ex:K-spin1} and from~\eqref{KM-spin1/2}
 that $K^{(j)}(u)=\left(K^{(j)}(u)\right)^t\rvert_{k_+ \leftrightarrow k_-}$ for $j=\h,1$. This symmetry actually holds for all spins:
			\begin{equation}
				\label{symKj}
				\lbrack K^{(j)}(u) \rbrack^t \Big |_{k_\pm \rightarrow k_\mp} = K^{(j)}(u) \ .
			\end{equation}
Indeed, the proof is similar to the one of Lemma~\ref{lem:symRj1j2}:
			taking the transpose of~\eqref{fused-K} and using properties of the intertwining operators from Lemma~\ref{lem:rel}, we obtain the equalities
		\begin{align*}
			\lbrack K^{(j)}(u) \rbrack^t &= \lbrack \mathcal{E}^{(j)}_\fu \rbrack^{t} \ \lbrack K_2^{(j-\h)}(u q^\h) \rbrack^t \   \lbrack R_{12}^{(\h,j-\h)}(u^2 q^{-j+1})  \rbrack^{t}  \  \lbrack K_1^{(\h)}(u q^{-j+\h}) \rbrack^t \ \lbrack \mathcal{F}^{(j)}_\fu \rbrack^{t} \\
			& = \mathcal{F}_\fu^{(j)} \ R_{12}^{(\h,j-\h)}(q^{j}) \ K_2^{(j-\h)}(u q^\h)  \  R_{12}^{(\h,j-\h)}(u^2 q^{-j+1}) \ K_1^{(\h)}(u q^{-j+\h}) \ \mathcal{E}_\fu^{(j)} \lbrack \ \mathcal{H}_\fu^{(j)} \rbrack^{-1} \ .
		\end{align*}
		Then, using the following reflection equation, which is an easy consequence of~\eqref{REKop} and the unitarity property  of the R-matrices~\cite[eq.\,(4.55)]{LBG},
		\begin{equation*}
			R^{(j_1,j_2)}(v/u) K_2^{(j_2)}(v) R^{(j_1,j_2)}(uv)K_1^{(j_1)}(u) = K_1^{(j_1)}(u) R^{(j_1,j_2)}(uv) K_2^{(j_2)}(v) R^{(j_1,j_2)}(v/u) \ ,
		\end{equation*}
		for $(j_1,j_2)=(\h,j-\h)$ and $u\rightarrow u q^{-j+\h}$, $v\rightarrow u q^\h$, we find that $K^{(j)}(u)$ is invariant with respect to the transposition.

\subsection{Dual K-matrix}
Finally, the spin-$j$ K-matrix solution of the dual reflection equation (\ref{REKdual})  can be constructed as \cite{Skly88}
	\begin{equation} \label{def:kp}
		K^{+(j)}(u)= \frac{1}{f^{(j)}(u)}\left(K^{(j)}(u^{-1}q^{-1})\right)^t\bigg\rvert_{\varepsilon_\pm \rightarrow  \ov{\varepsilon}_\mp,  k_\pm \rightarrow - \ov{k}_\mp } \ ,
	\end{equation}
	where the normalization factor $f^{(0)}(u)=f^{(\h)}(u)=1$ and
	\begin{equation} \label{eq:fj} f^{(j)}(u)= \displaystyle{\prod_{k=1}^{2j-1}} \displaystyle{\prod_{\ell=1}^k}  c(u^2 q^{k+\ell+2-2j}) c(u^{-2} q^{-k-\ell+2j})  \ 
	\end{equation}
	are introduced for further convenience. Here, to distinguish a K-matrix solution to its dual, we use the parameters $\overline{\varepsilon}_\pm, \overline{k}_\pm\in\mathbb{C}$. From~\eqref{def:kp} and~\eqref{fused-K} the spin-$j$ dual K-matrices for any $j\in \h\mathbb{N}_+$ are given by the recursion
	\begin{equation}\label{fused-KP}
	K^{+(j)}(u) = \frac{f^{(j-\h)}(u q^{-\h})}{f^{(j)}(u)} \big [ \mathcal{E}^{(j)}_\fu  \big ]^t   K_2^{+(j-\h)} (u q^{-\h}) R^{(\h,j-\h)}(u^{-2} q^{-j-1}) K_1^{+(\h)}(u q^{j-\h}) \big [ \mathcal{F}^{(j)}_\fu \big ]^t \ 
	\end{equation}
	with $K^{+(0)}(u)=1$ and $K^{+(\h)}(u)$ is given by \eqref{def:kp} with \eqref{KM-spin1/2} .

\subsection{Quantum determinants} \label{sec:qdet}
Central elements of the reflection algebra (\ref{REKop}) are derived from the so-called quantum determinant of the K-operator following \cite{Skly88}. In the  analysis below, we will only need the spin-$\h$ case. As usual, define ${\cal P}^-_{12}=(1-{\cal P})/2$ where $\cal P=R^{(\frac{1}{2},\frac{1}{2})}(1)/(q-q^{-1})$ and with the spin-$\h$ R-matrix from~\eqref{Rhh}.
The quantum determinant  associated with the reflection equation~\eqref{REKop}  is given by:
\begin{equation}
	\Gamma(u)=\normalfont{\text{tr}}_{12}\big({\cal P}^{-}_{12} {\cal K}_1^{(\h)}(u)  R^{(\frac{1}{2},\frac{1}{2})}(qu^2)  {\cal K}_2^{(\h)}(u q) \big) \ ,  \label{gammaform}
\end{equation}
where   $\normalfont{\text{tr}}_{12}$ stands for the trace over $({\mathbb C}^2)_1\otimes ({\mathbb C}^2)_2$. 
	\begin{prop}[\cite{BasBel,Ter21}] \label{prop:qdet} The quantum determinant  
		\begin{equation}
			\Gamma(u)= 
			\frac{(u^2q^2-u^{-2}q^{-2})}{2(q-q^{-1})}\left( \Delta(u) - \frac{2\rho}{q-q^{-1}}\right)\label{gamma}
		\end{equation}
		with $\rho$ from~\eqref{rho} and
		\begin{align}
			\Delta(u)&= -(q-q^{-1})(q^2+q^{-2})\Big(\cW_+(u)\cW_+(uq) + \cW_-(u)\cW_-(uq)\Big) \nonumber\\
			& +(q-q^{-1})(u^2q^2+u^{-2}q^{-2})\Big(\cW_+(u)\cW_-(uq) + \cW_-(u)\cW_+(uq)\Big) \nonumber \\
			& - \frac{(q-q^{-1})}{\rho} \Big(\cG_+(u)\cG_-(uq) + \cG_-(u)\cG_+(uq)\Big) - \ \cG_+(u) - \cG_+(uq) - \cG_-(u) - \cG_-(uq)  \label{deltau}
		\end{align}
		is  such that  $\big[\Gamma(u),({\cal K}^{(\frac{1}{2})}(v))_{ij}\big]=0$  for all $i,j\in\{1,2\}$.  
	\end{prop}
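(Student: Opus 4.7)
The statement bundles two claims: (a) the explicit identity~\eqref{gamma}--\eqref{deltau} for $\Gamma(u)$ defined by the Sklyanin trace formula~\eqref{gammaform}, and (b) the centrality $[\Gamma(u),\cal K^{(\h)}(v)_{ij}]=0$ for all $i,j$. I would handle these separately: (b) by the standard Sklyanin argument adapted to $\cal A_q$, and (a) by a direct but careful matrix calculation.

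For (a), I would use that $\cal P^-_{12}$ is the rank-one antisymmetrizer on $\mathbb{C}^2\otimes\mathbb{C}^2$, so that $\text{tr}_{12}\bigl(\cal P^-_{12}M\bigr)=\langle\psi^-|M|\psi^-\rangle$ with $|\psi^-\rangle=(e_1\otimes e_2-e_2\otimes e_1)/\sqrt{2}$. Using the matrix form~\eqref{K-Aq} of $\cal K^{(\h)}(u)$ and the explicit R-matrix~\eqref{Rhh} evaluated at $qu^2$, I would expand the product $\cal K^{(\h)}_1(u)R^{(\h,\h)}(qu^2)\cal K^{(\h)}_2(uq)$, extract the matrix element against $|\psi^-\rangle$, and regroup. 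The contributions from diagonal entries of $\cal K^{(\h)}$ generate the $\cW_\pm(u)\cW_\pm(uq)$ and $\cW_\pm(u)\cW_\mp(uq)$ quadratic pieces with the correct coefficients $-(q-q^{-1})(q^2+q^{-2})$ and $(q-q^{-1})(u^2q^2+u^{-2}q^{-2})$; cross-contributions from off-diagonal entries produce the $\cG_\pm\cG_\mp$ quadratic terms and, using the parametrization~\eqref{rho}, the linear-in-$\cG$ terms together with the constant shift $-2\rho/(q-q^{-1})$ visible in~\eqref{gamma}. The overall prefactor $(u^2q^2-u^{-2}q^{-2})/(2(q-q^{-1}))$ comes from the scalar factors in the relevant entries of $R^{(\h,\h)}(qu^2)$.

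For (b), I would implement the classical Sklyanin argument. The crucial input is the fusion identity $\cal P^-_{12}R^{(\h,\h)}(q^{-2})=0$, or equivalently $\cal P^-_{12}R^{(\h,\h)}(q^{-2})\cal P^-_{12}\propto \cal P^-_{12}$, which is immediate from~\eqref{Rhh}. Taking the reflection equation~\eqref{REKop} twice, for the auxiliary pair $(1,3)$ with spectral parameters $(u,v)$ and for $(2,3)$ with $(uq,v)$, one can insert $R^{(\h,\h)}_{12}(qu^2)$ between $\cal K_1(u)$ and $\cal K_2(uq)$ and move $\cal K_3(v)$ through the resulting product using Yang--Baxter relations and the reflection equation. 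The antisymmetrizer $\cal P^-_{12}$ placed in front absorbs the stray R-matrices on the auxiliary sides via unitarity and the crossing/fusion identities, so that after taking $\text{tr}_{12}$ the two sides coincide, yielding $\Gamma(u)\cal K^{(\h)}_3(v)=\cal K^{(\h)}_3(v)\Gamma(u)$.

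I expect the main technical obstacle to lie in part (a): the expansion of $\cal K_1(u)R_{12}(qu^2)\cal K_2(uq)$ produces many monomials in the generating functions $\cW_\pm,\cG_\pm$ and boundary scalars $k_\pm$, and checking that these collapse exactly into~\eqref{deltau} (including the correct symmetrization $\cW_\pm(u)\cW_\mp(uq)+\cW_\mp(u)\cW_\pm(uq)$ and the precise coefficient of $-1/\rho$ in front of the $\cG\cG$ terms) requires patient bookkeeping. Part (b), by contrast, is a formal consequence of the reflection equation, Yang--Baxter equation, and the one-dimensionality of $\mathrm{Im}\,\cal P^-_{12}$, all of which are available here without modification.
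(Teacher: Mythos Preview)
The paper does not supply its own proof of this proposition; it is stated with attribution to \cite{BasBel,Ter21} and no argument follows. Your two-part plan---a direct matrix expansion of the Sklyanin trace~\eqref{gammaform} against the rank-one antisymmetrizer for the explicit formula~\eqref{gamma}--\eqref{deltau}, and the standard Sklyanin centrality argument from~\cite{Skly88} for the commutation claim---is correct and is precisely how these facts are established in the cited sources.
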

	Note that the explicit expression of central elements of ${\cal A}_q$ in terms of the alternating generators~$\{{\tW}_{-k},{\tW}_{k+1}$, ${\tG}_{k+1},\tilde{\tG}_{k+1}|k\in{\mathbb N}\}$ are obtained through the expansion of $\Delta(u)$ as a formal power series  in  $u^{-2}$ \cite{BasBel}:
	\begin{equation}
		\Delta(u)  = \displaystyle{\sum_{k=0}^\infty}  u^{-2k-2} c_{k+1} \Delta_{k+1} \quad \mbox{with} \quad  c_{k} = - \frac{ (q+q^{-1})^{k} (q^{k} + q^{-k})} {q^{2k}} \ . \label{eq:ckp1}
	\end{equation}
	 As shown in \cite{Ter21}, the center ${\cal Z}$ of ${\cal A}_q$ is generated by $\{\Delta_{k+1}\}_{k=0}^{\infty}$.

We note that on the one-dimensional representation of ${\cal A}_q$ defined by~\eqref{eq:Kaq-KM}  the quantum determinant takes the value  
		\begin{equation} \label{eq:eps-gam}
			\epsilon(\Gamma(u)) = \varsigma^{(\h)}(u) \varsigma^{(\h)}(u q)  \, \Gamma_-(u)  \ 
		\end{equation}
	with
	\beqa
		\label{gammaKM} \Gamma_-(u)
		=  c(u^2) \left(  \varepsilon_+^2+ \varepsilon_-^2+ \varepsilon_+ \varepsilon_-(u^2q^2+u^{-2}q^{-2}) - k_+ k_- \frac{c(u^2q^2)^2 }{(q-q^{-1})^2} \right) \ ,
	\eeqa
	where
	$c(u)$ is given in~\eqref{eq:cu}.
		\begin{example}\label{detQSP} Consider the map $\epsilon_0$ from Example \ref{mapQSP}. Then, using \eqref{eq:varsigh} one gets:  
			\beqa
			\epsilon_0(\Gamma(u))= \frac{(q+q^{-1})^2}{c(u^2)c(u^2q^2)}\Gamma_-(u)\ .
			\eeqa
	\end{example}
Later in the text, we will also need the value of the dual quantum determinant associated with the dual reflection equation~\eqref{REKdual} of the  spin-$\h$ dual K-matrix.  It is defined    as follows:
\begin{align}
	\label{gammaKP} \Gamma_+(u) &= \normalfont{\text{tr}}_{12} (\mathcal{P}_{12}^-K_1^{+(\h)}(uq) R^{(\h,\h)}(u^{-2}q^{-3})K_2^{+(\h)}(u)  ) \\ \nonumber
	&=  \Gamma_- (u^{-1} q^{-2})   \bigg\rvert_{\varepsilon_\mp \rightarrow\ov{\varepsilon}_\pm ,   k_\mp  \rightarrow -\ov{k}_\pm}\ .
\end{align}
 This expression is obtained from   \eqref{gammaform} by  replacing $ {\cal K}^{(\h)}(u) \rightarrow K^{(\h)}(u)$ with \eqref{KM-spin1/2}, shifting $u \rightarrow u^{-1}q^{-1}$, then using the invariance of the trace by cyclicity and under transpose, the dual reflection equation for $j=\h$, and  identifying \eqref{def:kp}. Finally, replacing $u\rightarrow uq$,  one gets~\eqref{gammaKP}.

	\section{Generating functions \texorpdfstring{$\bt^{(j)}(u)$}{T(j)(u)} and universal TT-relations for \texorpdfstring{${\cal A}_q$}{Aq}}\label{sec3}
	In this section,
	  we construct a family of generating functions $\bt^{(j)}(u)$ with  $j \in \h \mathbb{N}_+$, for mutually commuting elements in ${\cal A}_q$. Following \cite[Thm.\,1]{Skly88}, the generating function is built from the matrix product of the fused K-operator ${\cal K}^{(j)}(u)$ for ${\cal A}_q$ given by~\eqref{fused-K-op} and the fused dual K-matrix with scalar entries $K^{+(j)}(u)$ given by (\ref{def:kp}): 
	\begin{equation}
	\bt^{(j)}(u) = \normalfont{\text{tr}}_{V^{(j)}}\bigl(K^{+{(j)}}(u){\cal K}^{(j)}(u)\bigr) 
	\ ,\label{tg} 
	\end{equation}
	and the trace is taken over the space $V^{(j)}\equiv {\mathbb C}^{2j+1}$ corresponding to those matrices. In particular, $\bt^{(0)}(u) =1$ because $K^{+{(0)}}(u)={\cal K}^{(0)}(u)=1$. We call $\bt^{(j)}(u)\in {\cal A}_q((u^{-1}))$ the {\it universal} spin-$j$ transfer matrix in view of its connection with actual transfer matrices on spin chains discussed in Section \ref{sec5}.

	Below, we show that the commuting family $\{\bt^{(j)}(u)\}$ is generated using universal TT-relations starting from the fundamental generating function $\bt^{(\h)}(u)$. 
	In Section~\ref{sub:TT-conj}, we prove the universal TT-relations~\eqref{normTT} for any $j \in \h \mathbb{N}_+$ in Theorem~\ref{TTrel} assuming one technical relation involving the fused K-operators from Definition~\ref{def:fusedK}. This relation  was checked for a few first values of spins and proven in general in~\cite{LBG} based on a very explicit and fundamental conjecture~\cite[Conj.1]{LBG} on relation of the fused K-operators to a universal K-matrix of $\mathcal{A}_q$. 
	 In Section~\ref{sub:TT-PBW},  we provide an independent proof of the universal TT-relation for $j=1$ and $j=\tha$ based on the use of  a PBW basis for $\mathcal{A}_q$ and without any assumptions made.
	  As a corollary of the TT relations, any generating function $\bt^{(j)}(u)$ defined by (\ref{tg}) is  a polynomial of order $2j$ in the currents $\mathsf{I}(u)$ given by (\ref{t12init}) with shifted arguments. 
      In Section~\ref{sec:normalTT}, we introduce normalized version of $\bt^{(j)}(u)$ and observe that the resulting normalized TT-relations reflect the product of evaluation representation of $\Loop$ in the Grothendieck ring.
      Finally, in Section~\ref{sub:TT-qOA}, we give the TT-relations for the $q$-Onsager algebra generated by $\cW_0,\cW_1$, which is a quotient of $\mathcal{A}_q$.

	\subsection{The universal TT-relations} \label{sub:TT-conj}	
			Here, we show that the universal TT-relation  holds for any $j \in \h \mathbb{N}_+$ provided a conjecture is true. This conjecture is now briefly recalled. We refer the reader to~\cite{LBG} for details and precise definitions of the universal K-matrix.
			 \smallskip

			 It is well-known that a universal R-matrix $\mathfrak{R}$ which satisfies a set of axioms~\cite{Dr0} leads to R-matrices through specializations. For instance, for $\mathfrak{R}$ in the completion of 
			 $\Uqhat \otimes \Uqhat$, defining  $\mathcal{R}^{(j_1,j_2)}(u) \in {\mathbb C}[[u^{-1}]]\otimes \End(\mathbb{C}^{2j_1+1})\otimes \End(\mathbb{C}^{2j_2+1})$  as the image of $\mathfrak{R}$ under the tensor product of evaluation representation of $\Uqhat$, one has~\cite{Boos2012}
			\begin{equation} \label{eq:evalR}
			 \mathcal{R}^{(\h,\h)}(u) = \pi^{\h}(\mu(u)) R^{(\h,\h)}(u) \ ,
			\end{equation}
			where  $\pi^\h(\mu(u))$ is the scalar function
				\begin{equation} \label{pimu}
				\pi^{\h}(\mu(u)) = u^{-1}q^{-\h} \exp \left ( \displaystyle{\sum_{k=1}^\infty}\frac{q^{2k} + q^{-2k}} {1+q^{2k}} \frac{u^{-2k}}{k} \right )	\ ,
			\end{equation}
			 and $R^{(\h,\h)}(u)$ is given in~\eqref{Rhh}, see~\cite[Example 4.3]{LBG}.

			 Similarly to the universal R-matrix, we assume  existence of a universal K-matrix $\mathfrak{K} \in {\cal A}_q \otimes \Loop$  (or rather in an appropriate completion of the tensor product)  that respects the axioms in \cite[Def.\,2.8]{LBG} involving certain twist. In our particular case of the comodule algebra ${\cal A}_q$, the choice of the twist is described in~\cite[Sec.\,4.2]{LBG}. Based on supporting evidences given in \cite[Sec.\,6]{LBG}, we expect that the fused K-operators ${\cal K}^{(j)}(u)$ can be obtained through specialization of $\mathfrak{K}$. More precisely, recalling the definition of ${\bf K}^{(j)}(u) \in \mathcal{A}_q((u^{-1})) \otimes \End(\mathbb{C}^{2j+1})$  from~\cite[Def.\,4.15]{LBG} as the image of $\mathfrak{K}$ under the spin-$j$ formal evaluation representation of $\Loop$, 
             we conjectured\footnote{We omit the expression of the evaluated coaction from~\cite[Conj.\,1]{LBG} since it is not needed here. 
              In~\cite{LBG}, the conjecture was formulated for all spins $j$ and not  as here just for $j=\h$. We observed that the statement in \cite[Conj.\,1]{LBG}  for all spins $j>\h$ is actually a corollary, here formulated as Corollary~\ref{cor:conj1}.} the following. 
	\begin{conj}[\cite{LBG}] \label{conj1} We have
		\begin{equation}
			{\bf K}^{(\h)}(u) =   \nu^{(\h)}(u)   {\cal K}^{(\h)}(u)\ ,\label{evalKj-half}
		\end{equation}
		where $\mathcal{K}^{(\h)}(u)$ is defined in~\eqref{K-Aq} and $\nu(u)$ is an invertible central element in ${\cal A}_q[[u^{-1}]]$ defined by the functional relation
		\begin{equation} \label{funct-nu}
			\quad \pi^{\h}(\mu(u^2 q)) \nu(u) \nu(u q)  \Gamma(u)  = 1 \ ,
		\end{equation}
		where $\Gamma(u)$ is given in~\eqref{gammaform} and $\pi^{\h}(\mu(u))$ in~\eqref{pimu}.
	\end{conj}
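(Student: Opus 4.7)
The strategy I would take is to derive the identity~\eqref{evalKj-half} by combining the axioms satisfied by the universal K-matrix $\mathfrak{K}$ with a uniqueness argument for solutions of the reflection algebra presentation of $\cal A_q$. The scalar $\nu^{(\h)}(u)$ would then be pinned down by an explicit quantum-determinant calculation, which would yield the functional relation~\eqref{funct-nu}. The plan splits into three main steps.

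First, I would apply the spin-$\h$ formal evaluation representation to the universal reflection equation axiom for $\mathfrak{K}$ (see \cite[Def.\,2.8]{LBG}). Since the image of the universal R-matrix is $\mathcal{R}^{(\h,\h)}(u) = \pi^{\h}(\mu(u)) R^{(\h,\h)}(u)$ by~\eqref{eq:evalR}, and any overall scalar prefactor of the R-matrix cancels on both sides of the reflection equation, the resulting object ${\bf K}^{(\h)}(u)$ satisfies exactly the reflection equation~\eqref{RE} with $R^{(\h,\h)}(u)$. In parallel, I would specialize the coideal intertwining axiom for $\mathfrak{K}$ (with the twist described in~\cite[Sec.\,4.2]{LBG}) to obtain intertwining relations for ${\bf K}^{(\h)}(u)$ with the evaluated coaction $\delta_u$, matching those already established for $\mathcal{K}^{(\h)}(u)$ in~\cite[Prop.\,5.13]{LBG}.

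Second, I would prove a uniqueness statement: any element of $\cal A_q[[u^{-1}]] \otimes \End(\mathbb{C}^2)$ simultaneously satisfying reflection equation~\eqref{RE} and the coaction intertwining relations~\eqref{eq:intwK} for $j=\h$ is a scalar multiple of $\mathcal{K}^{(\h)}(u)$, where the scalar lies in the center of $\cal A_q[[u^{-1}]]$. The key input is that reflection equation~\eqref{RE} is precisely the defining set of relations for $\cal A_q$, so the four entries of the K-operator must be expressible through $\cW_\pm(u),\cG_\pm(u)$ up to a central factor; the intertwining relations with $\delta_u$ should then rigidify the leading coefficients (and the overall form $uq\,\cW_+ - u^{-1}q^{-1}\cW_-$, etc., as in~\eqref{K-Aq}) up to a central scalar. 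This forces $\nu^{(\h)}(u)$ to be a central power series in $u^{-1}$, and the remaining task becomes to identify it explicitly.

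Third, I would determine $\nu^{(\h)}(u)$ through the quantum determinant. Computing the quantum determinant of both sides of~\eqref{evalKj-half} using definition~\eqref{gammaform}, the right-hand side yields $\nu(u)\nu(uq)\,\Gamma(u)$ while the left-hand side equals the evaluation of the universal quantum determinant of $\mathfrak{K}$, which the axioms of $\mathfrak{K}$ should force to be a specific scalar computable from the universal R-matrix, producing the factor $\pi^{\h}(\mu(u^2q))^{-1}$. Matching gives exactly~\eqref{funct-nu}. Invertibility of $\nu(u)$ in $\cal A_q[[u^{-1}]]$ follows once one checks that the constant term of $\pi^{\h}(\mu(u^2q))\,\Gamma(u)$ under our expansion conventions in $u^{-1}$ is a non-zero scalar.

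The main obstacle I expect is the \emph{existence} and fine control of $\mathfrak{K}$ itself within a completion of $\cal A_q \otimes \Loop$, which is precisely what makes Conjecture~\ref{conj1} hard; even granting existence, the uniqueness step is the most delicate, because one must verify that the intertwining relations with $\delta_u$ are strong enough to fix all four entries of ${\bf K}^{(\h)}(u)$ up to a single central scalar. A natural tool here is the PBW-type description of $\cal A_q$ exploited in Section~\ref{sub:TT-PBW}, which would allow one to compare generating-function components mode-by-mode. The quantum-determinant step, by contrast, should be essentially a cohomological/bookkeeping argument once the universal framework is set up correctly.
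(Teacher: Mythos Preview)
The statement you are attempting to prove is Conjecture~\ref{conj1}, and the paper does \emph{not} prove it. It is explicitly labeled a conjecture (taken from~\cite{LBG}), and the paper's main result, Theorem~\ref{TTrel}, is proven \emph{assuming} this conjecture holds. There is no proof in the paper to compare your proposal against.

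Your proposal is a reasonable outline of how one might try to establish the conjecture, and you correctly identify the central obstruction yourself: the argument presupposes the existence of a universal K-matrix $\mathfrak{K}$ in an appropriate completion of $\cal A_q \otimes \Loop$ satisfying the axioms of~\cite[Def.\,2.8]{LBG}. But this existence is exactly what is not known and is the substance of the conjecture. Steps one and three of your plan (evaluating the universal reflection and intertwining axioms, and computing the quantum determinant to pin down $\nu^{(\h)}(u)$) are consequences one would draw \emph{after} $\mathfrak{K}$ is constructed; they do not constitute a proof of the conjecture but rather a consistency check. Your second step, the uniqueness argument, is genuinely new content, but even if it succeeds it only shows that \emph{if} ${\bf K}^{(\h)}(u)$ exists with the required properties then it must equal $\nu^{(\h)}(u)\,\mathcal{K}^{(\h)}(u)$; it does not produce $\mathfrak{K}$ or ${\bf K}^{(\h)}(u)$ in the first place. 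So the proposal is circular as a proof of Conjecture~\ref{conj1}: it assumes the hard part and derives the easy formal consequences.
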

 We note that   the functional relation \eqref{funct-nu} has a unique -- up to a sign -- solution  $\nu(u)$ as a power series in $u^{-2}$, and it is given in~\cite[Lem.\,6.1]{LBG}. Furthermore, it follows from the analysis in~\cite[Sec.\,6]{LBG} that the all spins $j$ statement analogous to~\eqref{evalKj-half} is actually a corollary of Conjecture~\ref{conj1}:

	\begin{cor}\label{cor:conj1}
    For $j\in \h \mathbb{N}$, we have
		\begin{equation}
			{\bf K}^{(j)}(u) =   \nu^{(j)}(u)   {\cal K}^{(j)}(u)\ ,\label{evalKj}
		\end{equation}
		where $\mathcal{K}^{(j)}(u)$ is defined in~\eqref{fused-K-op} with  $\nu^{(\h)}(u) \equiv \nu(u)$ and
		\begin{align} \label{exp-nujh}
			\nu^{(j)}(u) &= \displaystyle{\left ( \prod_{m=0}^{2j-1}   \nu(uq^{j-\h-m})  \right) } \left ( \displaystyle \prod_{k=0}^{2j-2} \prod_{\ell=0}^{2j-k-2} \pi^{\h} (\mu (u^2q^{2j-2-2k-\ell})) \right ) \ .
		\end{align}
	\end{cor}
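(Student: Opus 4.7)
The proof proceeds by induction on $j\in\h\mathbb{N}_+$, with base case $j=\h$ given exactly by Conjecture~\ref{conj1}. The essential additional input for the induction is a fusion formula for ${\bf K}^{(j)}(u)$ that mirrors~\eqref{fused-K-op}:
\[
{\bf K}^{(j)}(u) = \mathcal{F}^{(j)}\, {\bf K}_1^{(\h)}(u q^{-j+\h})\, \mathcal{R}^{(\h,j-\h)}(u^2 q^{-j+1})\, {\bf K}_2^{(j-\h)}(u q^{\h})\, \mathcal{E}^{(j)},
\]
where $\mathcal{R}^{(\h,j-\h)}(u)$ denotes the image of the universal $\Loop$ R-matrix on the corresponding tensor product of formal evaluation representations. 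This should follow from the universal K-matrix axioms~\cite[Def.\,2.8]{LBG} combined with the coproduct compatibility discussed in~\cite[Sec.\,4.2]{LBG}: applying $(\id\otimes\Delta)$ to $\mathfrak{K}$, evaluating on spin-$\h$ and spin-$(j-\h)$ representations, and contracting with the $\Loop$-intertwiners $\mathcal{E}^{(j)}$, $\mathcal{F}^{(j)}$ produces the displayed identity.

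Given this fusion formula, the inductive step runs as follows. Applying the recursion~\eqref{fused-R-uq} simultaneously to $R$ and to $\mathcal{R}$, starting from~\eqref{eq:evalR}, yields
\[
\mathcal{R}^{(\h,j)}(u) = \lambda^{(j)}(u)\, R^{(\h,j)}(u), \qquad \lambda^{(j)}(u) = \prod_{k=0}^{2j-1} \pi^{\h}(\mu(u q^{j-\h-k})).
\]
Inserting the inductive hypothesis ${\bf K}^{(j-\h)}(u) = \nu^{(j-\h)}(u){\cal K}^{(j-\h)}(u)$, Conjecture~\ref{conj1} for the ${\bf K}^{(\h)}$ factor, and the above relation into the fusion formula, and then comparing with~\eqref{fused-K-op}, produces ${\bf K}^{(j)}(u) = N(u)\, {\cal K}^{(j)}(u)$ with scalar
\[
N(u) = \nu(uq^{-j+\h})\, \nu^{(j-\h)}(uq^{\h})\, \lambda^{(j-\h)}(u^2 q^{-j+1}).
\]
It then remains to verify the scalar identity $N(u) = \nu^{(j)}(u)$.

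This verification is a short combinatorial bookkeeping exercise. The $\nu$-factors telescope trivially: the factor $\nu(uq^{-j+\h})$ is precisely the $m=2j-1$ term missing from the product $\prod_{m=0}^{2j-2}\nu(uq^{j-\h-m})$ that arises (after the shift $v=uq^\h$) from $\nu^{(j-\h)}(uq^\h)$, so together they reproduce the full product $\prod_{m=0}^{2j-1}\nu(uq^{j-\h-m})$ of~\eqref{exp-nujh}. For the double product of $\pi^{\h}(\mu(\cdot))$-factors, a direct computation gives $\lambda^{(j-\h)}(u^2 q^{-j+1}) = \prod_{k=0}^{2j-2}\pi^{\h}(\mu(u^2q^{-k}))$, and these are exactly the boundary terms (corresponding to $\ell = 2j-k-2$ for $k=0,\ldots,2j-3$ together with the singleton $k=2j-2,\ell=0$) that complete the double product contributed by $\nu^{(j-\h)}(uq^\h)$ to the one appearing in $\nu^{(j)}(u)$.

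The main obstacle will be the rigorous derivation of the fusion formula for ${\bf K}^{(j)}(u)$ from the universal K-matrix axioms, since this requires working in an appropriate completion of $\Loop\otimes\Loop$ and careful tracking of the coproduct twist entering~\cite[Def.\,2.8]{LBG}. Once this formula is secured, the remainder of the proof -- the substitution and the scalar identity $N(u) = \nu^{(j)}(u)$ -- is routine.
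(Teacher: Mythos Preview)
Your proposal is correct and matches the approach the paper points to (the analysis in \cite[Sec.\,6]{LBG}): one derives a fusion formula for ${\bf K}^{(j)}(u)$ from the universal K-matrix axioms, inserts the $j=\h$ case and the inductive hypothesis, and verifies the resulting scalar identity for $\nu^{(j)}(u)$ exactly as you do. Your bookkeeping for the $\nu$- and $\pi^{\h}(\mu(\cdot))$-factors is accurate; the paper itself records only the companion recursion $\nu^{(j)}(u)=\nu^{(j-\h)}(uq^{-\h})\nu(uq^{j-\h})\prod_{k=0}^{2j-2}\pi^{\h}(\mu(u^2q^{k}))$ (eq.~\eqref{nu-id1}), which is the mirror of yours coming from the alternative fusion~\eqref{v2fused-K-op}.
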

	We now show the  universal  TT-relations for any spin-$j$, which is the main result of this paper.   
	\begin{thm}\label{TTrel} Assume Conjecture~\ref{conj1}.
		The following TT-relations hold for all $j \in \h{\mathbb N}_+$:
		\begin{equation} \label{TT-rel}
			\bt^{(j)}(u) = \bt^{(j-\h)}(u q^{-\h}) \bt^{(\h)}(u q^{j-\h}) + \frac{\Gamma (u q^{j-\tha}) \Gamma_+ (uq^{j-\tha})}{ c(u^2 q^{2j}) c(u^2 q^{2j-2}) } \bt^{(j-1)}(u q^{-1})  \ 
		\end{equation}
		with $\bt^{(0)}(u) =  1$  and the  quantum determinants (\ref{gammaform}), (\ref{gammaKP}).
	\end{thm}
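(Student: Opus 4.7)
The plan is to prove~\eqref{TT-rel} by a Sklyanin-type fusion argument that exploits the recursive formulas~\eqref{fused-K-op} for $\mathcal{K}^{(j)}(u)$ and~\eqref{fused-KP} for $K^{+(j)}(u)$. Substituting both recursions into the defining trace~\eqref{tg}, and using cyclicity together with the identities of Appendix~\ref{Ap:EF} relating $\mathcal{E}^{(j)}$, $\mathcal{F}^{(j)}$ and their transposes, I would rewrite $\bt^{(j)}(u)$ as a trace over $V^{(\h)} \otimes V^{(j-\h)}$ against the symmetrizer $P^{+} = \mathcal{E}^{(j)} \mathcal{F}^{(j)}$ of a ``sandwich'' built from the fundamental K-operator $\mathcal{K}^{(\h)}$, the fundamental dual K-matrix $K^{+(\h)}$, the spin-$(j-\h)$ blocks $\mathcal{K}^{(j-\h)}$, $K^{+(j-\h)}$, and two R-matrices $R^{(\h,j-\h)}$, times an overall scalar assembled from $f^{(j-\h)}(uq^{-\h})/f^{(j)}(u)$ and the prefactors in the recursions.

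\textbf{Splitting via $P^{+} = \mathbb{I} - P^{-}$.} The next step is to expand $P^{+} = \mathbb{I} - P^{-}$, where $P^{-}$ is the antisymmetrizer projecting onto the spin-$(j-1)$ component of $V^{(\h)} \otimes V^{(j-\h)}$. For the $\mathbb{I}$ contribution, repeated applications of the reflection equation~\eqref{REKop} and its dual~\eqref{REKdual} at $(j_1,j_2)=(\h,j-\h)$ rearrange the two sandwich R-matrices so that the trace factorizes across the tensor product, yielding $\bt^{(\h)}(uq^{j-\h})\, \bt^{(j-\h)}(uq^{-\h})$. For the $P^{-}$ contribution, the key inputs are the Sklyanin fusion identities
\begin{align*}
P^{-} \mathcal{K}^{(\h)}_1(v) R^{(\h,\h)}(qv^2) \mathcal{K}^{(\h)}_2(vq) P^{-} &= \Gamma(v)\, P^{-}, \\
P^{-} K^{+(\h)}_1(vq) R^{(\h,\h)}(v^{-2}q^{-3}) K^{+(\h)}_2(v) P^{-} &= \Gamma_+(v)\, P^{-},
\end{align*}
which follow from~\eqref{gammaform} and~\eqref{gammaKP} and the fact that the antisymmetric subspace of $V^{(\h)} \otimes V^{(\h)}$ is one-dimensional. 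Applying these at the innermost $(\h,\h)$-block, reached after one further unfolding of~\eqref{fused-K-op} and~\eqref{fused-KP} on the spin-$(j-\h)$ factors, produces $\Gamma(uq^{j-\tha})\Gamma_+(uq^{j-\tha})$, while the residual trace collapses to $\bt^{(j-1)}(uq^{-1})$.

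\textbf{Role of Conjecture~\ref{conj1} and main obstacle.} Conjecture~\ref{conj1} enters precisely in extending the first Sklyanin identity above to the fused K-operator $\mathcal{K}^{(\h)}$: the identity holds naturally for the image $\mathbf{K}^{(\h)}(u)$ of the universal K-matrix, and via~\eqref{evalKj-half} together with the functional relation~\eqref{funct-nu} it transfers to $\mathcal{K}^{(\h)}(u)$ with the correct scalar $\Gamma$ as in~\eqref{gammaform}, the prefactor $\pi^{\h}(\mu(u^2 q))\nu(u)\nu(uq)$ being converted to $\Gamma(u)^{-1}$ by~\eqref{funct-nu}. The hardest part is the combinatorial bookkeeping of all scalar prefactors: the $f^{(j)}$ of~\eqref{eq:fj}, the central $\nu^{(j)}$ of~\eqref{exp-nujh}, and the factors $\pi^{\h}(\mu(\cdot))$ of~\eqref{pimu} must combine, through iterated use of~\eqref{funct-nu}, into exactly the denominator $c(u^2 q^{2j})\, c(u^2 q^{2j-2})$ and the precise arguments $uq^{j-\tha}$ of the quantum determinants appearing in~\eqref{TT-rel}. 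A secondary subtlety is to verify that the residual trace on the spin-$(j-\h)$ block reduces cleanly to $\bt^{(j-1)}(uq^{-1})$ with the precise spectral shift predicted by the theorem; this should follow from one additional application of the fusion recursions~\eqref{fused-K-op},~\eqref{fused-KP} on that block.
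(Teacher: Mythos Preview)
Your overall architecture matches the paper's: insert the recursions~\eqref{fused-K-op},~\eqref{fused-KP} into~\eqref{tg}, use cyclicity and the identities of Appendix~\ref{Ap:EF} to expose the projector $\mathcal{E}^{(j)}\mathcal{F}^{(j)}$, then split via $\mathcal{E}^{(j)}\mathcal{F}^{(j)} = \mathbb{I} - \bar{\mathcal{E}}^{(j-1)}\bar{\mathcal{F}}^{(j-1)}$. The identity piece yielding $\bt^{(j-\h)}\bt^{(\h)}$ is fine (the paper obtains the factorization via partial transposes and the crossing relation~\eqref{crossingJ} rather than the reflection equations, but either route is standard).

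The genuine gap is in the $P^{-}$ term and in where Conjecture~\ref{conj1} actually enters. Your displayed Sklyanin identities live on $V^{(\h)}\otimes V^{(\h)}$, where $P^{-}$ is rank one; but the projector you must handle, $\bar{\mathcal{E}}^{(j-1)}\bar{\mathcal{F}}^{(j-1)}$, acts on $V^{(\h)}\otimes V^{(j-\h)}$ and has rank $2j-1$. One further unfolding of $\mathcal{K}^{(j-\h)}$ does not reduce this to a rank-one $(\h,\h)$ block: you would need to commute the spin-$(j-1)$ projector through additional $\mathcal{E}^{(j-\h)}$, $\mathcal{F}^{(j-\h)}$ and R-matrix factors, and there is no reason this collapses to the scalar $\Gamma$. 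The paper bypasses this entirely by invoking the higher-spin reduction relations of Lemma~\ref{lem:TT}, namely~\eqref{TT1} and~\eqref{TT3}, which state directly that the $\bar{\mathcal{F}}^{(j-\h)}\cdots\bar{\mathcal{E}}^{(j-\h)}$ sandwich of the appropriate $(\h,j)$-structure equals $\Gamma\cdot\mathcal{K}^{(j-\h)}$ (resp.\ $\Gamma_+\cdot K^{+(j-\h)}$) times an explicit product of $c$-factors. It is precisely~\eqref{TT1} that requires Conjecture~\ref{conj1}, as established in~\cite[Prop.\,6.10]{LBG}; your spin-$\tfrac12$ identity $P^{-}\mathcal{K}^{(\h)}_1 R\,\mathcal{K}^{(\h)}_2 P^{-}=\Gamma\,P^{-}$ is simply the definition~\eqref{gammaform} and needs no conjecture.

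There is a second, independent use of the conjecture you missed: the paper does not insert~\eqref{fused-K-op} for $\mathcal{K}^{(j)}(u)$ but rather the \emph{alternative} fusion formula~\eqref{v2fused-K-op}, with the order of the $\tfrac12$- and $(j-\tfrac12)$-blocks swapped. That formula is the one proved in~\cite[Props.\,5.8,\,6.12]{LBG} under Conjecture~\ref{conj1}, and without it the manipulations leading to the clean $\mathcal{E}^{(j)}\mathcal{F}^{(j)}$ sandwich do not go through. So both the starting point and the key reduction step rely on the conjecture, not the scalar bookkeeping you flagged as the main obstacle.
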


 	The proof of this theorem is quite technical and is presented in Appendix~\ref{Ap:proofT34}, while here we explain main ideas of it.  The proof essentially relies on two types of equations, one is in Lemma~\ref{lem:TT} that can be phrased ``Fusion of K-operators agrees with the reduction property of K-operators", and the other is the 2nd K-operator expression~\eqref{v2fused-K-op} obtained from the opposite coproduct. Both these equations were proven using Conjecture~\ref{conj1}, that is why the assumption made in Theorem~\ref{TTrel}, but they can also be checked or proven directly, at least for the first few values of spins till $j=2$ which is indeed the case~\cite[Prop.\,5.8\,\&\,Rem.\,5.10]{LBG}. 

    Another key property used in the proof is that $R^{(\frac{1}{2},j)}(q^{j+\frac{1}{2}})$ admits a decomposition in terms of $\mathcal{E}^{(j+\frac{1}{2})}$, $\mathcal{F}^{(j+\frac{1}{2})}$, and $\mathcal{H}^{(j+\frac{1}{2})}$, along with some properties they satisfy as detailed in Appendix~\ref{Ap:propEHF}.
	The main steps   of the proof are the   following. First, in the definition of $\bt^{(j)}(u)$ in~\eqref{tg}, replace $\mathcal{K}^{(j)}(u)$ and $K^{+(j)}(u)$ with their fused formulas given in~\eqref{fused-K-op} and~\eqref{fused-KP}, respectively. Then, using the relation $\mathcal{E}^{(j)} \mathcal{F}^{(j)} = \mathbb{I}_{4j} - \bar{\mathcal{E}}^{(j-1)} \bar{\mathcal{F}}^{(j-1)}$, express $\bt^{(j)}(u)$ as a sum of two terms.
	Finally,  the first term is identified with $\bt^{(j-\h)}(u q^{-\h}) \bt^{(\h)}(u q^{j-\h})$, while the second one
	is computed using two relations from Lemma~\ref{lem:TT}.
\vspace{1mm}

	\begin{rem} \label{rem:TT-invar}
	The  universal  TT-relations~\eqref{TT-rel} are invariant under a change of normalization of $\mathcal{K}^{(\h)}(u)$. Indeed, let $\lambda^{(\h)}(u)$ be a central Laurent series in ${\cal A}_q((u^{-1}))$ and $\mathcal{K}^{(\h)}(u) \rightarrow \lambda^{(\h)}(u) \mathcal{K}^{(\h)}(u)$. Recall $\mathcal{K}^{(j)}(u)$ is defined via the fusion relation~\eqref{fused-K-op}, and therefore we have a change of normalization $\mathcal{K}^{(j)}(u) \rightarrow \lambda^{(j)}(u) \mathcal{K}^{(j)}(u)$. It implies that
	\begin{align}  \label{eq:gj}
		\lambda^{(j)}(u) = \lambda^{(j-\h)}(u q^\h) \lambda^{(\h)}(u q^{-j+\h}) 
			= \displaystyle{\prod_{k=0}^{2j-1}} \lambda^{(\h)}(u q^{j-\h-k}) \ , 
	\end{align}
	where we used that $\mathcal{K}^{(j)}(u)$ is invertible~\cite[Rem.\,5.9]{LBG}. Note that the normalization of $\Gamma(u)$ defined in (\ref{gammaform}) also changes accordingly.
	Now, consider the  universal  TT-relation~\eqref{TT-rel} under this change of normalization of $\mathcal{K}^{(\h)}(u)$, it is given by
	\begin{align} \nonumber \label{TT-rel-renor}
	 \bt^{(j)}(u) &= \frac{\lambda^{(j-\h)}(u q^{-\h})\lambda^{(\h)}(uq^{j-\h})}{{\lambda^{(j)}(u)} }  \bt^{(j-\h)}(u q^{-\h}) \bt^{(\h)}(u q^{j-\h}) \\
	 &+ \frac{\lambda^{(\h)}(u q^{j-\tha}) \lambda^{(\h)}(u q^{j-\h}) \lambda^{(j-1)}(u q^{-1}) }{\lambda^{(j)}(u)} \frac{\Gamma (u q^{j-\tha}) \Gamma_+ (uq^{j-\tha})}{ c(u^2 q^{2j}) c(u^2 q^{2j-2}) } \bt^{(j-1)}(u q^{-1})  \ ,
	\end{align}
	and using~\eqref{eq:gj}, the latter equation simplifies to~\eqref{TT-rel}.
	\end{rem}

\begin{rem}\label{rem:secondTT}
    We notice that a slightly different TT relation equally holds:
    \begin{equation} \label{secondTT-rel}
    \bt^{(j)}(u) = \bt^{(j-\h)}(u q^{\h}) \bt^{(\h)}(u q^{-j+\h}) + \frac{\Gamma (u q^{-j+\h}) \Gamma_+ (uq^{-j+\h})}{ c(u^2 q^{-2j+2}) c(u^2 q^{-2j+4}) } \bt^{(j-1)}(u q)  \ .
\end{equation}
This relation can be proven along the same lines as for Theorem~\ref{TTrel} with the two key relations, like in Lemma~\ref{lem:TT}: one is just the K-operator relation~\cite[eq.\,(6.33)]{LBG}, while the second is obtained by applying 
	 $(\epsilon \otimes \id)$ to~\cite[eq.\,(6.32)]{LBG} instead of~\cite[eq.\,(6.33)]{LBG}, recall the definition of $\epsilon$ from Proposition~\ref{prop:eps}, and rewriting the resulting relation in terms of the dual K-matrix $K^+(u)$. We show at the end of Section~\ref{sec:normalTT} that the TT-relation~\eqref{secondTT-rel} is actually a consequence of the TT-relation~\eqref{TT-rel}.
\end{rem}

	We now use the universal TT-relations to provide explicit mode expansion of $\bt^{(j)}(u)$ in terms of the generators of~${\cal A}_q$. We first recall from~\cite{Skly88} that the spin-$\h$ universal transfer matrices commute:
	\begin{equation}\label{eq:Th-commute}
		\big[ \bt^{(\h)}(u), \bt^{(\h)}(v) \big]=0 \ .
	\end{equation}
    Using (\ref{K-Aq}) and~\eqref{def:kp} at $j=\h$, one gets the expression for $\bt^{(\h)}(u)$ as in~\eqref{t12init} with
	\begin{align}\label{Igen}
		\mathsf{I}(u)& =  \overline{\varepsilon}_+ \mathsf{W}_+(u) +\overline{\varepsilon}_- \mathsf{W}_-(u) + \frac{1}{q^2-q^{-2}} \left ( \frac{\overline{k}_+}{k_+} \mathsf{G}_-(u) + \frac{\overline{k}_-}{k_-} \mathsf{G}_+(u)  \right) \ , \\
		\mathsf{I}_0 &=  \frac{\rho}{(q-q^{-1})(q^2-q^{-2})} \left ( \frac{\overline{k}_+}{k_+} +\frac{\overline{k}_-}{k_-} \right) \ ,\label{I0}
	\end{align}
	and $\rho$ is given in~\eqref{rho}.
	Inserting (\ref{c1}), (\ref{c2}) into (\ref{Igen}),
    we arrive at the following definition.

\begin{defn}\label{def:subalg-I}
The commutative subalgebra $\mathcal{I}\subset {\cal A}_q$ is generated by the modes of $\bt^{(\frac{1}{2})}(u)$, or equivalently by the mutually commuting elements $\{\mathsf{I}_{2k+1}\}_{k\in{\mathbb N}}$ defined by
\begin{equation}\label{eq:def-Ik}
    	\mathsf{I}_{2k+1}=\overline{\varepsilon}_+{\tW}_{-k} + \overline{\varepsilon}_-{\tW}_{k+1} + \frac{1}{q^2-q^{-2}} \frac{\overline{k}_-}{k_-}{\tG}_{k+1}  + \frac{1}{q^2-q^{-2}} \frac{\overline{k}_+}{k_+}\tilde{\tG}_{k+1}\ .
\end{equation}
\end{defn}

	The  generating functions $\bt^{(j)}(u)$ for $j=1, \frac 32$, follow from the  universal TT-relation (\ref{TT-rel}). In terms of the generating function~\eqref{Igen}, they read:
	\begin{align}
	\bt^{(1)}(u) &= c(u^{2}q) c(u^2 q^3)  \left(\mathsf{I}(u q^{\frac{1}{2}}) + \mathsf{I}_0 \right) \left(\mathsf{I}(u q^{-\frac{1}{2}}) + \mathsf{I}_0 \right) + f_0^{(1)}(u)\ ,\label{eq:TT-gen1}\\
	\bt^{(\tha)}(u) &=  c(u^2q^4)c(u^2q^2) c(u^2) ( \mathsf{I}(uq) + \mathsf{I}_0)(\mathsf{I}(u) + \mathsf{I}_0)(\mathsf{I}(u q^{-1}) + \mathsf{I}_0) \nonumber\\ \label{eq:TT-gen2} 
	&+ c(u^2q^4) f_0^{(1)}(u q^{-\h}) ( \mathsf{I}(uq)+\mathsf{I}_0) + c(u^2) f_0^{(\tha)}(u) (\mathsf{I}(u q^{-1}) + \mathsf{I}_0 )\ ,
	\end{align}
	where $c(u)$ is given in~\eqref{eq:cu} and
	\begin{equation}
	f_0^{(1)}(u)= \frac{\Gamma(u q^{-\h}) \Gamma_+(u q^{-\h})}{c(u^2q^2)c(u^2)}\ ,\qquad f_0^{(\tha)}(u) = \frac{\Gamma(u ) \Gamma_+(u)}{c(u^2q^3)c(u^2q)} \ .
	\end{equation}

    More generally, the  universal TT-relations \eqref{TT-rel} imply that $\bt^{(j)}(u)$ is a polynomial of order $2j$ in the generating functions  $\mathsf{I}(u)$ with shifted arguments, and coefficients that are central in $\mathcal{A}_q$. Furthermore, from the universal TT-relations, it follows that all $\bt^{(j)}(u)$ (or rather their modes in the expansion in $u$) belong to the same commutative subalgebra $\mathsf{I}$ generated by the modes $\mathsf{I}_{2k+1}$ of $\bt^{(\h)}(u)$. 
	We thus have the following corollary of Theorem \ref{TTrel} and the commutativity~\eqref{eq:Th-commute}.
	\begin{cor} 
		$\big[\bt^{(j)}(u), \bt^{(j')}(v)\big]=0$ for all $j,j'\in\h\mathbb{N}$. 
	\end{cor}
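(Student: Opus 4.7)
The plan is to proceed by strong induction on $j \in \h\mathbb{N}$ to establish that every $\bt^{(j)}(u)$ is a polynomial in the generating function $\mathsf{I}(u)$ at shifted arguments $u q^a$, $a \in \h \mathbb{Z}$, with coefficients that are central in $\mathcal{A}_q$ (namely, products of the quantum determinants $\Gamma(u q^a)$ and the scalar series $\Gamma_+(u q^a)$ times rational functions of~$u$). The base cases are clear: $\bt^{(0)}(u)=1$ by convention, and $\bt^{(\h)}(u)$ is given by~\eqref{t12init}, so it is a linear expression in $\mathsf{I}(u)$ with scalar coefficients. For the induction step, assume the claim holds for all half-integers strictly less than $j$. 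Then the universal TT-relation~\eqref{TT-rel} expresses $\bt^{(j)}(u)$ as a sum of (i)~$\bt^{(j-\h)}(u q^{-\h}) \bt^{(\h)}(u q^{j-\h})$, which by induction is a polynomial in shifted $\mathsf{I}$'s with central coefficients, and (ii)~a central element times $\bt^{(j-1)}(uq^{-1})$, since $\Gamma$ is central in $\mathcal{A}_q((u^{-1}))$ by Proposition~\ref{prop:qdet} and $\Gamma_+$ is a scalar Laurent series. This completes the induction.

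Next, I would use the commutativity~\eqref{eq:Th-commute} of spin-$\h$ universal transfer matrices: since $[\bt^{(\h)}(u), \bt^{(\h)}(v)]=0$, expanding in modes shows that all the generators $\{\mathsf{I}_{2k+1}\}_{k\in\mathbb{N}}$ of the subalgebra $\mathcal{I}$ from Definition~\ref{def:subalg-I} mutually commute. Equivalently, $[\mathsf{I}(u), \mathsf{I}(v)]=0$ in $\mathcal{A}_q((u^{-1},v^{-1}))$. Combined with the centrality of every $\Gamma(u q^a)$ and the scalar nature of every $\Gamma_+(u q^a)$, it follows that the subalgebra of $\mathcal{A}_q((u^{-1}))[[v^{-1}]]$ (and its obvious two-variable analogue) generated by all elements of the form $\mathsf{I}(u q^a)$, $\mathsf{I}(v q^b)$, $\Gamma(u q^a)$, $\Gamma(v q^b)$, $\Gamma_+(u q^a)$, $\Gamma_+(v q^b)$ is commutative.

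Finally, given arbitrary $j,j' \in \h \mathbb{N}$, applying the first step to both $\bt^{(j)}(u)$ and $\bt^{(j')}(v)$ realizes each of them inside the commutative subalgebra described in the previous paragraph. Therefore
\begin{equation*}
\big[\bt^{(j)}(u), \bt^{(j')}(v)\big]=0,
\end{equation*}
which proves the corollary. There is no genuine obstacle: the only subtlety is to make sure that the recursive expansion via~\eqref{TT-rel} only ever produces \emph{central} coefficients (which is guaranteed by Proposition~\ref{prop:qdet}) and commuting generators $\mathsf{I}(u q^a)$ (which is guaranteed by~\eqref{eq:Th-commute}); both are already established in the preceding sections.
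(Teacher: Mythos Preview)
Your argument is correct and is essentially the same as the paper's: the paper observes, just before the corollary, that the universal TT-relations~\eqref{TT-rel} imply each $\bt^{(j)}(u)$ is a polynomial of order $2j$ in the currents $\mathsf{I}(u)$ at shifted arguments with central coefficients, and then invokes the commutativity~\eqref{eq:Th-commute} to conclude. You have simply spelled out the induction that underlies this observation more explicitly.
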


	\subsection{Proof for small $j$  using a PBW basis}	\label{sub:TT-PBW}
	Relaxing the assumption on the existence of a universal K-matrix from which the fused K-operators are derived, we show that the  universal  TT-relation~\eqref{TT-rel} holds for $j=1,\tha$, by a straightforward calculation using a PBW basis in $\mathcal{A}_q$ and the explicit expression of the fused K-operators. 
    We use the PBW basis based on the alternating generators~\cite{BasBel,Ter21}
	\begin{equation}\label{eq:WGGW}
	\{\tW_{-k}\}_{k\in {\mathbb N}}\ ,\quad \{\tG_{\ell+1}\}_{\ell\in {\mathbb N}}\ ,\quad \{\tilde\tG_{m+1}\}_{m\in {\mathbb N}} \ ,\quad \{\tW_{n+1}\}_{n\in {\mathbb N}} 
	\end{equation}
	and the following linear order $<$ on them: 
	\begin{equation}
	\tW_{-k}<   \tG_{\ell+1}< \tilde\tG_{m+1} < \tW_{n+1}\ , \quad k,\ell,m,n \in {\mathbb N}\ ,\label{order}
	\end{equation}
 Note that due to the commutativity relations~\eqref{qo2} and~\eqref{qo10} we don't need to specify an order within each of the four families in~\eqref{eq:WGGW}. Then, the PBW basis for ${\cal A}_q$ consists of the products $x_1x_2\cdots x_n$, for any $n\in {\mathbb N}$ and with $x_i$'s from the set~\eqref{eq:WGGW} and such that  $x_1 \leq x_2 \leq \cdots \leq x_n$.

According to this choice of PBW basis, we get the following ordering relations  \cite[App.\,C]{LBG}. 
	\begin{align*}
		\cG_-(v) \cG_+(u) &= \cG_+(u) \cG_-(v)+\rho(q^2-q^{-2}) \Big ( \cW_+(u) \cW_+(v) - \cW_-(u) \cW_-(v)\\ \nonumber
		&+\frac{1-UV}{U-V} ( \cW_+(u) \cW_-(v) - \cW_+(v) \cW_-(u)) \Big ) \ , \\ 
		\cW_-(v)\cW_+(u)&=\cW_+(u) \cW_-(v) + \frac{1}{V-U} \Big ( \frac{(q-q^{-1})}{\rho(q+q^{-1})} \big (\cG_+(u) \cG_-(v) - \cG_+(v) \cG_-(u) \big )\\ \nonumber
		&+\frac{1}{q+q^{-1}} \big ( \cG_+(u)-\cG_-(u)+\cG_-(v)-\cG_+(v) \big ) \Big )\ , \\
		\cW_-(v) \cG_+(u) &= \frac{q}{U-V} \Big ( \big(U q^{-1} - V q\big) \cG_+(u) \cW_-(v) -(q-q^{-1}) \bigl( ( \cW_+(u) \cG_+(v)- \cW_+(v)\cG_+(u)  \\ \nonumber
		&- U \cG_+(v)\cW_-(u) \bigr)+ \rho \big (U \cW_-(u) - V \cW_-(v) - \cW_+(u) + \cW_+(v) \big ) \Big ) \ , \\
		\cW_-(v)\cG_-(u)&= \frac{1}{q(V-U)}\Big ( \big ( V q^{-1}-U q\big) \cG_-(u) \cW_-(v) -(q-q^{-1})\big ( \cW_+(u)\cG_-(v) - \cW_+(v)\cG_-(u) \\\nonumber
		&- U \cG_-(v) \cW_-(u) \big)+ \rho \big ( U \cW_-(u)-V\cW_-(v)-\cW_+(u)+\cW_+(v) \big ) \Big) \ , \\
	\cG_+(v)\cW_+(u) &= \frac{q}{U-V} \Big ( \big( U q - Vq^{-1} \big ) \cW_+(u) \cG_+(v)-(q-q^{-1})\big (\cG_+(v)\cW_-(u)- \cG_+(u)\cW_-(v) \\
		& + V\cW_+(v)\cG_+(u) \big)+ \rho \big ( U \cW_+(u) - V \cW_+(v)-\cW_-(u)+\cW_-(v) \big) \Big) \ ,\\ 
		\cG_-(v) \cW_+(u)&=\frac{1}{q(V-U)} \Big ( \big (Vq-Uq^{-1}\big)\cW_+(u) \cG_-(v) -(q-q^{-1}) \big ( \cG_-(v) \cW_-(u) - \cG_-(u)\cW_-(v) \\\nonumber
		&+V \cW_+(v)\cG_-(u) \big)+ \rho \big ( U \cW_+(u) - V \cW_+(v)-\cW_-(u)+\cW_-(v) \big) \Big) \ ,
	\end{align*}
	where $V=(qv^2+q^{-1}v^{-2})/(q+q^{-1})$.
	
With these relations, we first show the  universal  TT-relation for $j=1$, using the   expression for $\bt^{(\h)}(u)$ given  in~\eqref{t12init} with~\eqref{Igen} and computing  $\bt^{(1)}(u)$ directly from the definition~\eqref{tg} in terms of the generating functions $\cW_\pm(u),\cG_\pm(u)$. For this we  use the explicit expression of the spin-1 K-operator given in~\eqref{expK-spin1} together with the spin-1 dual K-matrix in~\eqref{def:kp} and with~\eqref{KM-spin1}.
	   One finds that the difference $\bt^{(1)}(u)-\bt^{(\h)}(u q^{-\h})\bt^{(\h)}(u q^\h)$ reads as a quadratic combination of the  generating functions (\ref{c1}), (\ref{c2}). Using the ordering relations for the currents listed above, this difference simplifies to  $\Gamma(u q^{-\h}) \Gamma_+(u q^{-\h})/c(u^2q^2)c(u^2)$,  and where we used the quadratic expression of $\Gamma(u)$ in Proposition~\ref{prop:qdet}. This gives~\eqref{TT-rel} indeed. The universal  TT-relation for $j= \frac 32$ is checked along the same lines. The expressions being lengthy, a symbolic Mathematica program is used to simplify all expressions.

	\subsection{Normalized TT-relations}\label{sec:normalTT}
		In this section,  we derive {\it normalized} TT-relations. Recall the definition of the generating function  $\nu^{(j)}(u)$ of central elements in \eqref{exp-nujh}  that relates  the spin-$j$ fused K-operator ${\cal K}^{(j)}(u)$ to its normalized analog  ${\bf K}^{(j)}(u)$ in \eqref{evalKj}. 
			Define the {\it normalized} generating functions for mutually commuting elements in ${\cal A}_q$ by:
		\beqa
		\bt^{(j)}_{norm}(u) =  	 \normalfont{\text{tr}}_{V^{(j)}}(\bar K^{+{(j)}}(u)	{\bf K}^{(j)}(u) )
		\ ,\label{normTj}
		\eeqa
		where the normalized spin-$j$ dual K-matrix is given by
		\beqa
		\bar K^{+{(j)}}(u)= \nu_+^{(j)}(u) K^{+{(j)}}(u)\ ,
		\eeqa
		with \eqref{def:kp} and where we introduced the normalization factor
		\begin{align} \label{exp-nujph}
			\nu_+^{(j)}(u) &= \displaystyle{\left ( \prod_{m=0}^{2j-1}   \nu_+(uq^{j-\h-m})  \right) } \left ( \displaystyle \prod_{k=0}^{2j-2} \prod_{\ell=0}^{2j-k-2} \pi^{\h} (\mu (u^2q^{2j-2-2k-\ell})) \right )^{-1} \ 
		\end{align}
		where $\nu_+(u)$ is defined by the functional relation
		\begin{equation} \label{funct-nuplus}
			\quad \pi^{\h}(\mu(u^2 q^2)) \nu_+(u) \nu_+(u q)  \Gamma_+(u)  = -1 \ ,
		\end{equation}
		with~\eqref{pimu} and $\nu_+(u)\equiv \nu_+^{(\h)}(u)$. Note that the definition of $\nu_+^{(j)}(u)$ resembles the definition of $\nu^{(j)}(u)$  in~\eqref{exp-nujh} and in~\eqref{funct-nu}.
			\begin{cor} The following normalized TT-relations hold for all $j \in \h{\mathbb N}_+$:
			\begin{equation} \label{TT-relnorm}
				\bt^{(j)}_{norm}(u) = \bt^{(j-\h)}_{norm}(u q^{-\h}) \bt^{(\h)}_{norm}(u q^{j-\h}) - \bt^{(j-1)}_{norm}(u q^{-1})  \ 
			\end{equation}
			with $\bt^{(0)}_{norm}(u) =  1$.
		\end{cor}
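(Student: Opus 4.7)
The plan is to reduce the claimed normalized TT-relation to Theorem~\ref{TTrel} by carefully tracking how the scalar normalizations interact with the functional equations~\eqref{funct-nu} and~\eqref{funct-nuplus}. By linearity of the trace in~\eqref{normTj} one immediately has
$\bt^{(j)}_{norm}(u) = \mu^{(j)}(u)\,\bt^{(j)}(u)$
where I set $\mu^{(j)}(u) := \nu^{(j)}(u)\,\nu_+^{(j)}(u)$ and $\mu^{(\h)}(u)=\nu(u)\nu_+(u)$. Comparing~\eqref{exp-nujh} with~\eqref{exp-nujph}, the double products $\prod_{k,\ell}\pi^{\h}(\mu(u^2q^{2j-2-2k-\ell}))$ enter with opposite exponents and cancel, leaving the clean expression
$\mu^{(j)}(u) = \prod_{m=0}^{2j-1}\mu^{(\h)}(uq^{j-\h-m})$.

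From this factorized form, two elementary index manipulations will yield:
\begin{align*}
\mu^{(j)}(u) &= \mu^{(j-\h)}(uq^{-\h})\,\mu^{(\h)}(uq^{j-\h}),\\
\mu^{(j)}(u)/\mu^{(j-1)}(uq^{-1}) &= \mu^{(\h)}(uq^{j-\h})\,\mu^{(\h)}(uq^{j-\tha}),
\end{align*}
where the second follows by listing the $2j$ versus $2j-2$ shifted arguments and cancelling the overlap. Multiplying Theorem~\ref{TTrel} by $\mu^{(j)}(u)$ and applying the first identity, the first term on the right-hand side becomes exactly $\bt^{(j-\h)}_{norm}(uq^{-\h})\,\bt^{(\h)}_{norm}(uq^{j-\h})$, while the second becomes
$C(u)\,\bt^{(j-1)}_{norm}(uq^{-1})$ with
$C(u)=\dfrac{\mu^{(\h)}(uq^{j-\h})\mu^{(\h)}(uq^{j-\tha})\,\Gamma(uq^{j-\tha})\Gamma_+(uq^{j-\tha})}{c(u^2q^{2j})c(u^2q^{2j-2})}$.

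It remains to show $C(u)=-1$. Evaluating the functional relations~\eqref{funct-nu} and~\eqref{funct-nuplus} at $v=uq^{j-\tha}$ (so that $vq=uq^{j-\h}$) and multiplying, one gets
$\mu^{(\h)}(uq^{j-\tha})\mu^{(\h)}(uq^{j-\h}) = \dfrac{-1}{\pi^{\h}(\mu(u^2q^{2j-2}))\,\pi^{\h}(\mu(u^2q^{2j-1}))\,\Gamma(uq^{j-\tha})\Gamma_+(uq^{j-\tha})}$,
so the problem reduces to the purely scalar identity
$\pi^{\h}(\mu(z))\,\pi^{\h}(\mu(zq)) = \dfrac{1}{c(z)\,c(zq^2)}$
taken at $z=u^2q^{2j-2}$.

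The main obstacle is this last scalar identity, since it is not a direct consequence of unitarity or crossing of the R-matrix (those involve $u\leftrightarrow u^{-1}$, not a one-step shift). I would verify it by taking logarithms using the explicit exponential formula~\eqref{pimu}: the prefactor matches trivially, and the coefficient of $u^{-2k}/k$ in the log of the left-hand side is $(q^{2k}+q^{-2k})(1+q^{-2k})/(1+q^{2k})$, while in the log of the right-hand side it is $1+q^{-4k}$. These agree because $(q^{2k}+1+q^{-2k}+q^{-4k}) = q^{-4k}(q^{2k}+1)(q^{4k}+1)$, so the factor $(1+q^{2k})$ cancels. Once this scalar identity is in hand, $C(u)=-1$ follows, completing the proof.
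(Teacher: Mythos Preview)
Your proof is correct and follows essentially the same route as the paper's: multiply the TT-relation~\eqref{TT-rel} by the combined normalization, then use the functional relations~\eqref{funct-nu},~\eqref{funct-nuplus} together with the scalar identity $\pi^{\h}(\mu(z))\pi^{\h}(\mu(zq))=1/\bigl(c(z)c(zq^2)\bigr)$ to collapse the coefficient to~$-1$. Your early combination $\mu^{(j)}=\nu^{(j)}\nu_+^{(j)}$, which kills the double $\pi^{\h}(\mu(\cdot))$ products at the outset, is a mild streamlining of the paper's argument (which tracks $\nu^{(j)}$ and $\nu_+^{(j)}$ separately via~\eqref{nu-id1}--\eqref{nu-id4}), and your explicit verification of the scalar identity from~\eqref{pimu} fills in a step the paper merely states.
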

		\begin{proof} Firstly, we obtain
				\beqa
			\nu^{(j)}(u) &=& 	\nu^{(j-\h)}(uq^{-\h}) 	\nu(uq^{j-\h})     \prod_{k=0}^{2j-2}  \pi^{\h} (\mu (u^2q^{k}))\ \label{nu-id1}
			\eeqa 
			using \eqref{exp-nujh}.  As $\nu^{(\h)}(u)$ is invertible, it follows $\nu^{(0)}(u)=1$. 
			Expressing the term $\nu^{(j-\h)}(u)$ with the use of~\eqref{nu-id1}, then rewriting the product $\nu(uq^{j-\frac{3}{2}})\nu(uq^{j-\frac{1}{2}})$ with the help of  the functional relation~\eqref{funct-nu}, and  applying the  formal power series relation
				$\pi^\h(\mu(u))\pi^\h(\mu(u q)) =\frac{1}{c(u)c(uq^2)}$,
we finally get
			\beqa
			\nu^{(j)}(u) &=& 	\nu^{(j-1)}(uq^{-1}) \frac{1}{ \Gamma(uq^{j-3/2})} \times\Bigl(\prod_{k=0}^{2j-3} c(u^2q^{k+1})c(u^2q^{k-1})\Bigr)^{-1}\ .
			\eeqa
			Secondly, we obtain
			\beqa
			\nu_+^{(j)}(u) &=& 	\nu_+^{(j-\h)}(uq^{-\h}) 	\nu_+(uq^{j-\h})   \left(   \prod_{k=0}^{2j-2}  \pi^{\h} (\mu (u^2q^{k}))\right)^{-1}\  
			\eeqa
			using~\eqref{exp-nujph}.
		Note that $\nu_+^{(0)}(u)=1$. Proceeding similarly as above, we get
			\beqa
			\nu_+^{(j)}(u) &=& -	\nu_+^{(j-1)}(uq^{-1})  \frac{c(u^2q^{2j})c(u^2 q^{2j-2})}{\Gamma_+(uq^{j-3/2})} \times  \prod_{k=0}^{2j-3}  c(u^2q^{k+1})c(u^2q^{k-1})
			\ . \label{nu-id4}
			\eeqa
		We then observe that $\bt^{(0)}_{norm}(u)=\bt^{(0)}(u)=1$.
		Multiplying both sides of the TT-relations in Theorem~\ref{TTrel} by $\nu_+^{(j)}(u)\nu^{(j)}(u)$ and using \eqref{evalKj} as well as the identities~\eqref{nu-id1}-\eqref{nu-id4},
		we get \eqref{TT-relnorm}.
			\end{proof}
            
	 We can rewrite the above TT relation~\eqref{TT-relnorm} as
	 \begin{equation} \label{TT-relnormGr}
	  \bt^{(\h)}_{norm}(u q^{j+1})
	  \bt^{(j)}_{norm}(u q^{\h})  =
	  \bt^{(j+\h)}_{norm}(u q) + \bt^{(j-\h)}_{norm}(u)  \ 
	 \end{equation}
	which shows that the normalised universal transfer matrices $\bt^{(j)}_{norm}(u)$ enjoy the  property of $q$-characters. That is, their multiplication  agrees with decomposition of the product  of corresponding evaluation representations of $\Loop$ used in the definition~\eqref{normTj} of $\bt^{(j)}_{norm}(u)$.
	More precisely, using the results of tensor product analysis in~\cite[Sec.\,3]{LBG}, we have a (non-split) short exact sequence
    $$
    0 \longrightarrow \mathbb{C}_{u}^{2j} \longrightarrow \mathbb{C}_{u q^{j+1}}^{2} 
	\otimes \mathbb{C}_{uq^{\h}}^{2j+1} \longrightarrow \mathbb{C}_{uq}^{2j+2} \longrightarrow 0
    $$
    Or in the sense of Grothendieck ring relations denoting by $[V]$ the split Grothendieck class of any $\Loop$-module~$V$, we have
	\begin{equation}
	    \label{eq:Gr-rels}
	\left[\mathbb{C}_{u q^{j+1}}^{2} 
	\otimes \mathbb{C}_{uq^{\h}}^{2j+1}\right] =  
	\mathbb{C}_{uq}^{2j+2}
	+
	\mathbb{C}_{u}^{2j}
	\end{equation}
	which is the `reduction' relation in the terminology of~\cite{LBG}. We thus see that the product~\eqref{TT-relnormGr} is exactly the same as the product~\eqref{eq:Gr-rels} of the corresponding representations in the Grothendieck ring.

    There is also the `fusion' relation in the Grothendieck ring
    	$$
	\left[\mathbb{C}_{u q^{-j-1}}^{2} 
	\otimes \mathbb{C}_{uq^{-\h}}^{2j+1}\right] =  
	\mathbb{C}_{uq^{-1}}^{2j+2}
	+
	\mathbb{C}_{u}^{2j}
	$$
    that suggests a 2nd TT-relation:
    \begin{equation} \label{secondTT-relnormGr}
      \bt^{(\h)}_{norm}(u q^{-j-1})
      \bt^{(j)}_{norm}(u q^{-\h})  =
      \bt^{(j+\h)}_{norm}(u q^{-1}) + \bt^{(j-\h)}_{norm}(u)  \ .
     \end{equation}
     This relation can be indeed obtained via a direct calculation using~\eqref{secondTT-rel}, recall discussion in Remark~\ref{rem:secondTT}, and the definition~\eqref{normTj}.
     The relation~\eqref{secondTT-relnormGr} is however not an independent one. Indeed, it is easy to see by induction that it follows from~\eqref{TT-relnormGr} and the fact that $\big[ \bt^{(\h)}_{norm}(u), \bt^{(\h)}_{norm}(v) \big]=0$.
	
	 The above property that the TT relations for the $\Loop$-comodule algebra $\mathcal{A}_q$ reflect multiplication of corresponding $\Loop$-modules in the Gro\-then\-dieck ring is analogous to the property~\cite[Thm.\,6.7.1]{AV24} observed in  a different context of quantum symmetric pairs.

	\subsection{The case of the \texorpdfstring{$q$}{q}-Onsager algebra} \label{sub:TT-qOA}
	We now study  the TT-relations  for the   $q$-Onsager algebra~$O_q$. Recall that $O_q$ is generated by $W_0,W_1$, subject to the $q$-Dolan-Grady relations \cite{Ter03,B1}: 
	\begin{equation}
		[W_0,[W_0,[W_0,W_1]_q]_{q^{-1}}]=\rho[W_0,W_1] \ ,\qquad
		[W_1,[W_1,[W_1,W_0]_q]_{q^{-1}}]=\rho[W_1,W_0] \ ,
		\label{qDGW} 
	\end{equation}
	where $\rho \in \mathbb{C}^*$.  As discussed in the series of works \cite{BasBel,Ter21,Ter21b}, $O_q$ is obtained by applying a central reduction of the algebra ${\cal A}_q$. 
 Recall Definition \ref{def:Aq0} and the center $\cal Z$ of ${\cal A}_q$ described below \eqref{eq:ckp1}. We have an isomorphism of algebras \cite{Ter21,Ter21b}:
\beqa
{\cal A}_q \cong O_q \otimes {\cal Z}\ . \label{isoAqOqZ}
\eeqa
Therefore, $O_q$ is obtained from ${\cal A}_q$ by taking the image of  $\Delta_{k+1}$'s to some scalars (a central reduction is applied to ${\cal A}_q$). Following \cite{BasBel}, introduce the surjective homomorphism 
\beqa
\gamma^{(\delta)}: {\cal A}_q \rightarrow O_q \qquad \mbox{such that}\qquad \gamma^{(\delta)}(\Delta_{k+1}) = 2\delta_{k+1}\ ,\qquad \delta_{k+1} \in {\mathbb C}\ ,\label{mapDeldel}
\eeqa
with \eqref{eq:ckp1}. Note that the $\Delta_{k}$'s are algebraically independent \cite
{Ter21} and so all $\delta_{k}$ can be fixed independently. Due to the isomorphism (\ref{isoAqOqZ}), any choice of scalars $\delta_{k}$  gives isomorphic quotients $O_q$. 

 The TT-relations for $O_q$ are thus obtained by taking the image of~\eqref{TT-rel}  under  the homomorphism~$\gamma^{(\delta)}$. To this end, below we successively consider  the image in $O_q$ of the fundamental generating function $\bt^{(\frac{1}{2})}(u)$ with (\ref{t12init}) and of the quantum determinant $\Gamma(u)$ with (\ref{gamma}), (\ref{eq:ckp1}). 

	 Firstly, we consider the image of the generating function $\bt^{(\frac{1}{2})}(u)$ under $\gamma^{(\delta)}$. The image of the alternating generators of ${\cal A}_q$ in $O_q$  is defined by \cite{BasBel,Ter21c}:
	\begin{equation}
	\tW_{-k}\mapsto {W}_{-k}\ ,\quad \tW_{k+1}\mapsto {W}_{k+1}\ ,\quad\tG_{k+1}\mapsto {G}_{k+1}, \quad \tilde{\tG}_{k+1}\mapsto\tilde{G}_{k+1}\ ,\label{mapAOq}
	\end{equation}
    which are certain polynomials in $\tW_0$ and $\tW_1$ that depend on $\delta_k$'s, as we now explain.
	In $O_q$, we say that for a fixed $k\in \mathbb{N}$  the alternating generators~\eqref{mapAOq} are of index $k$. Given the  index $k$ fixed, the alternating generators  can be expressed as polynomials of  alternating generators with $k$-indices of lower  or equal value, see~\cite[Prop.\,3.1]{BasBel}. In more details, let $\{\delta_k|k\in{\mathbb N}\}$ be fixed scalars, then we have
	\begin{align}
	G_{k+1} &= -\sum_{\ell=0}^{\lfloor\frac{k}{2}\rfloor}\sum_{i+j=2\ell+1-\overline{k+1}} \frac{f^{(k)}_{ij}}{2}{\mathbb W}_{ij}  - \sum_{\ell=0}^{\lfloor\frac{k}{2}\rfloor-\overline{k+1}}\sum_{i+j=2\ell+\overline{k+1}}  \frac{e^{(k)}_{ij}}{2}{\mathbb F}_{ij} -  \sum_{\ell=0}^{\lfloor\frac{k}{2}\rfloor-1}\frac{g^{(k)}_{\ell}}{2}{\mathbb G}_{2(l+1)-\overline{k+1}} \label{recGk}\\
	\qquad \qquad  & + \frac{(q+q^{-1})}{2}\big[W_{k+1},W_0\big] + \delta_{k+1} \ ,
	\nonumber
	\end{align}
	where  
	\begin{align}
	{\mathbb W}_{ij}&=(q-q^{-1})(W_{-i}W_{j+1} +W_{i+1}W_{-j} ) \ ,\nonumber\\
	{\mathbb F}_{ij}&=(q-q^{-1})\Big((q^2+q^{-2})(W_{-i}W_{-j}+W_{i+1}W_{j+1})+\frac{1}{\rho} (G_{j+1}\tilde{G}_{i+1} + \tilde{G}_{j+1}G_{i+1})\Big)\nonumber \ ,\nonumber\\
		{\mathbb G}_{i+1}&= G_{i+1} + \tilde{G}_{i+1} \ , \nonumber
	\end{align}
	and
	\begin{align*}
	e^{(k)}_{ij}&=-c_{k+1}^{-1}\sum_{m=0}^{\lfloor\frac{k}{2}\rfloor-\frac{i+j+\overline{k+1}}{2}}w_{\lfloor\frac{k}{2}\rfloor-\frac{i+j+\overline{k+1}}{2}-m}^iw_m^jq^{-2j-4m-2} \ ,\nonumber\\
	f^{(k)}_{ij}&=c_{k+1}^{-1}\sum_{m=0}^{\lfloor\frac{k}{2}\rfloor-\frac{i+j+\overline{k+1}-1}{2}}w_{\lfloor\frac{k}{2}\rfloor-\frac{i+j+\overline{k+1}-1}{2}-m}^i \Big(w_m^j+w_{m-1}^j\Big)q^{-2j-4m} \ ,\nonumber\\
	g^{(k)}_\ell &=-c_{k+1}^{-1} w_{\lfloor\frac{k}{2}\rfloor-\ell}^{2\ell+1-\overline{k+1}}(1+q^{-2k-2}) \ ,\nonumber 
	\end{align*}
	with  $c_k$ in \eqref{eq:ckp1} and
	\begin{equation}
 w_{m}^i=(-1)^m\frac{(m+i)!}{m!i!}(q+q^{-1})^{i+1}q^{-i-2m-1} \ .\nonumber
	\end{equation}
	Also, one has
	\begin{equation} {W}_{-k-1}=\frac{1}{\rho}\big[{W}_0,{G}_{k+1}\big]_q+{W}_{k+1} \ .  \label{recWk}
	\end{equation}
	Recall the automorphism $\Omega$ of $O_q$ such that $\Omega(W_0)=W_1$ and $\Omega(W_1)=W_0$. The expressions for the other alternating generators are given by:
	\begin{equation}\label{om}
	{W}_{k+1}=\Omega({W}_{-k})\ , \quad {W}_{-k}=\Omega({W}_{k+1})\ , \quad \tilde{G}_{k+1}=\Omega({G}_{k+1})\ , \quad {G}_{k+1}=\Omega(\tilde{G}_{k+1}) \ . 
	\end{equation}
	Iterating (\ref{recGk}), (\ref{recWk}) the alternating generators can be written as polynomials of $W_0,W_1$ and scalars~$\delta_k$. 
   \begin{example}
           For instance,
	\begin{align}
	\ {G}_{1} &= q{W_1}W_0-q^{-1}{W_0}W_1 + \delta_1 \ ,\label{defel}\\
	{W}_{-1} &= \frac{1}{\rho}\left( (q^2+q^{-2})W_0W_1W_0 -W_0^2W_1 - W_1 W_0^2\right) + W_1 + \frac{\delta_1(q-q^{-1})}{\rho}W_0 \ ,\nonumber\\
	{G}_{2} &= \frac{1}{\rho(q^2+q^{-2})} \Big( (q^{-3}+q^{-1}) W_0^2{W_1}^2 - (q^{3}+q){W_1}^2W_0^2 + (q^{-3}-q^{3})(W_0{W_1}^2W_0 + {W_1}W_0^2{W_1})  \nonumber\\
& - (q^{-5}+q^{-3} +2q^{-1}) W_0{W_1}W_0{W_1} + (q^{5}+q^{3} +2q){W_1}W_0{W_1}W_0 +  \rho(q-q^{-1})(W_0^2 + {W_1}^2
	)\Big) \nonumber\\
	&+\ \frac{\delta_1(q-q^{-1})}{\rho}\big(q{W_1}W_0-q^{-1}{W_0}W_1 \big)  + \delta_2- \frac{\delta_1^2(q-q^{-1})}{\rho(q^2+q^{-2})} \ ,\nonumber \\
	{W}_{-2} &= w_1W_0^3{W_1}^2 + w_2{W_0}^2W_1^2W_0  + w_3\big(W_1{W_0}^2W_1W_0 + W_0^2W_1W_0W_1\big) + w_4W_0W_1W_0W_1W_0 \nonumber \\
	&+ w_5{W_1}^2{W_0}^3+w_6W_0{W_1}^2{W_0}^2+w_7W_1{W_0}^3W_1\nonumber\\
&+ w_8W_0W_1^2+w_9 W_1^2W_0
	+ w_{10}\big(W_0^2W_1 +W_1W_0^2\big)+\ w_{11} W_1W_0W_1 + w_{12}W_0W_1W_0 \nonumber\\
	&+\ w_{13}W_0^3 + w_{14}W_0+ \ w_{15}W_1 \ , \nonumber
	\end{align}
	where
	\begin{align}
	w_1&=\frac{1}{\rho^2} , \quad w_2= - \frac{[2]_{q}[8]_{q}}{\rho^2[4]_{q}^2}, \quad w_3=-\frac{[4]_{q}}{\rho^2[2]_{q}},\quad  w_4=\frac{1}{\rho^2}\left(\frac{[2]_q[3]_q[8]_{q}}{[4]_q^2} +1\right),\nonumber\\
	 w_5&=\frac{1}{\rho^2[3]_q}, \quad w_6=-\frac{[2]_q[8]_q}{\rho^2[3]_q[4]_q},\quad w_7 =\frac{[2]^2_q}{\rho^2[3]_q[4]_q},\quad  w_8= -\ \frac{1}{\rho},\quad w_9=-\ \frac{1}{\rho[3]_q} \ ,\nonumber\\
	w_{10}&=- \frac{(q-q^{-1})}{\rho^2}\delta_1,\quad  w_{11}=\frac{1}{\rho^2[3]_q}\left(\frac{[2]_q[3]_q[8]_{q}}{[4]_q^2} +1\right),\quad w_{12}=\frac{(q-q^{-1})[4]_{q}}{\rho^2[2]_{q}}\delta_1 \ ,\quad \nonumber\\
	w_{13}&= \frac{1}{\rho}\frac{(q-q^{-1})^2[2]_{q}}{[4]_{q}}\ ,\quad  w_{14} =1-\frac{(q-q^{-1})^2[2]_{q}}{\rho^2[4]_{q}}\delta_1^2+\frac{(q-q^{-1})}{\rho}\delta_2,\quad w_{15}= \frac{(q-q^{-1})}{\rho}\delta_1\ .\nonumber
	\end{align}
       \end{example}

	\begin{rem}\label{deg} Let ${\rm deg}\colon O_q \rightarrow {\mathbb N}$ denote the total degree of a polynomial in the alternating generators $W_0,W_1$ with ${\rm deg}(W_{0})= {\rm deg}(W_1)=1$. We then have
		\begin{equation*}
			{\rm deg}(W_{-k})={\rm deg}(W_{k+1})=2k+1 \ , \qquad {\rm deg}(G_{k+1})={\rm deg}(\tilde{G}_{k+1})=2k+2   \ .
		\end{equation*} 
	\end{rem}
Let us define
 \beqa
  \bt^{(\frac{1}{2})}_{O_q}(u) = \gamma^{(\delta)}(\bt^{(\frac{1}{2})}(u)) \ . 
  \eeqa
	\begin{lem} The fundamental generating function for mutually commuting elements in $O_q$ is given by:
		\beqa
		\bt^{(\frac{1}{2})}_{O_q}(u) = c(u^2q^2)\left( \sum_{k=0}^\infty U^{-k-1} I_{2k+1}  + \mathsf{I}_0\right) \label{TOqfund}
		\eeqa 
		with \eqref{I0} and 
		\beqa
		 I_{2k+1}=\overline{\varepsilon}_+{W}_{-k} + \overline{\varepsilon}_-{W}_{k+1} + \frac{1}{q^2-q^{-2}} \frac{\overline{k}_-}{k_-}{G}_{k+1}  + \frac{1}{q^2-q^{-2}} \frac{\overline{k}_+}{k_+}\tilde{G}_{k+1} \ .\label{IkOq}
		\eeqa
	\end{lem}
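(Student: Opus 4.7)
The plan is to apply the surjective homomorphism $\gamma^{(\delta)}\colon \mathcal{A}_q \to O_q$ from~\eqref{mapDeldel} directly to the explicit expression for $\bt^{(\h)}(u)$. Combining~\eqref{t12init} with~\eqref{Igen} and~\eqref{eq:def-Ik}, one has
\begin{equation*}
\bt^{(\h)}(u) = (u^2 q^2 - u^{-2}q^{-2})\Bigl( \sum_{k \in \mathbb{N}} U^{-k-1} \mathsf{I}_{2k+1} + \mathsf{I}_0\Bigr),
\end{equation*}
and the scalar prefactor coincides with $c(u^2 q^2)$ in the notation of~\eqref{eq:cu}. Since $\mathsf{I}_0 \in \mathbb{C}$ by~\eqref{I0}, it is fixed by $\gamma^{(\delta)}$, so the only substantive point is how $\gamma^{(\delta)}$ acts on the alternating generators appearing inside $\mathsf{I}_{2k+1}$.

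First I would apply $\gamma^{(\delta)}$ termwise, using its $\mathbb{C}$-linearity to pull out the scalar coefficients $\overline{\varepsilon}_\pm$, $\overline{k}_\pm/k_\pm$, the factor $c(u^2 q^2)$, and the formal powers $U^{-k-1}$, all of which lie in $\mathbb{C}((u^{-1}))$. What remains is the action on $\tW_{-k}$, $\tW_{k+1}$, $\tG_{k+1}$, $\tilde{\tG}_{k+1}$, which by~\eqref{mapAOq} is given respectively by $W_{-k}$, $W_{k+1}$, $G_{k+1}$, $\tilde{G}_{k+1}$. Comparing~\eqref{eq:def-Ik} with~\eqref{IkOq} then yields $\gamma^{(\delta)}(\mathsf{I}_{2k+1}) = I_{2k+1}$, which establishes the formula~\eqref{TOqfund}.

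To complete the lemma, I would derive mutual commutativity of the $I_{2k+1}$'s by invoking the Sklyanin-type relation $[\bt^{(\h)}(u), \bt^{(\h)}(v)] = 0$ stated in~\eqref{eq:Th-commute}, extracting its mode expansion in $u^{-1}$ and $v^{-1}$ to conclude $[\mathsf{I}_{2k+1}, \mathsf{I}_{2\ell+1}] = 0$ in $\mathcal{A}_q$ for all $k,\ell \in \mathbb{N}$, and then applying the algebra homomorphism $\gamma^{(\delta)}$. No serious obstacle is expected: the lemma is essentially a bookkeeping exercise, with the only point requiring slight care being to notice that the expansion~\eqref{eq:UPowerSeries} of $U^{-1}$ as a formal power series in $u^{-2}$ is compatible with $\gamma^{(\delta)}$, which holds trivially since $U$ is a scalar.
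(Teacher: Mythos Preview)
Your proposal is correct and follows essentially the same approach as the paper: apply $\gamma^{(\delta)}$ to the explicit mode expansion of $\bt^{(\h)}(u)$ from~\eqref{t12init} with~\eqref{eq:def-Ik}, then use the assignment~\eqref{mapAOq} on the alternating generators. The paper's one-line proof is just a compressed version of your argument; your added verification of mutual commutativity via~\eqref{eq:Th-commute} and the homomorphism property is a reasonable elaboration that the paper leaves implicit.
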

\begin{proof}
We apply 	\eqref{mapAOq} to (\ref{t12init}) with $\bar{\kappa}_\pm=\frac{1}{q^2-q^{-2}} \frac{\overline{k}_\mp}{k_\mp}$ and with  $\mathsf{I}_{2k+1}$ given in~\eqref{eq:def-Ik}.
\end{proof}

In \eqref{IkOq}, the alternating generators of $O_q$ are defined recursively in terms of $W_0,W_1$.  Inserting the expressions (\ref{recWk}) and (\ref{recGk}) into (\ref{IkOq}), it follows:  												\begin{rem} The coefficients $I_{2k+1}$ 
are polynomials in $W_0,W_1$, such that 
\begin{equation}
	{\rm deg}(I_{2k+1})=2k+2\ .\label{degIk}
\end{equation} 
For instance for $k=0$ in~\eqref{IkOq}, we have
	\begin{equation}
		I_1= \overline{\varepsilon}_+ W_0 + \overline{\varepsilon}_- W_1  +\frac{1}{q^2-q^{-2}} \left ( \frac{\overline{k}_-}{k_-}  \big ( \lbrack W_1,W_0 \rbrack_q + \delta_1 \big )   +  \frac{\overline{k}_+}{k_+} \big ( \lbrack W_0,W_1\rbrack_q + \delta_1   \big ) \right ) \ .
	\end{equation}
For $k=1$ in \eqref{IkOq}, we use the expressions \eqref{defel} and \eqref{om}.
\end{rem}

Secondly, we turn to the image  
		   of the quantum determinant $\Gamma(u)$  under $\gamma^{(\delta)}$.
	\begin{lem}  \label{lem:qdetOq}  
	\begin{equation}
		\gamma^{(\delta)}(\Gamma(u))=
		\frac{c(u^2q^2)}{(q-q^{-1})}\left(  \sum_{k=1}^{\infty} c_k\delta_k u^{-2k} - \frac{\rho}{q-q^{-1}}\right)\ \in u^2{\mathbb C}[[u^{-2}]]\label{imgamma}\ ,
	\end{equation}
with $c_k$ from~\eqref{eq:ckp1}.
\end{lem}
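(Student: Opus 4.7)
The strategy is a direct substitution: use the explicit factorization of $\Gamma(u)$ from Proposition~\ref{prop:qdet}, expand the quadratic expression $\Delta(u)$ via~\eqref{eq:ckp1}, and apply $\gamma^{(\delta)}$ termwise using that it is an algebra map sending $\Delta_{k+1}\mapsto 2\delta_{k+1}$.

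First, I rewrite~\eqref{gamma} by noting the identity $u^2q^2-u^{-2}q^{-2}=c(u^2q^2)$, which gives
$$
\Gamma(u)=\frac{c(u^2q^2)}{2(q-q^{-1})}\Bigl(\Delta(u)-\frac{2\rho}{q-q^{-1}}\Bigr).
$$
Next, I substitute the power series expansion $\Delta(u)=\sum_{k=0}^\infty u^{-2k-2}c_{k+1}\Delta_{k+1}$ from~\eqref{eq:ckp1}. Since $\gamma^{(\delta)}\colon\mathcal{A}_q\to O_q$ is an algebra homomorphism and is continuous with respect to the $u^{-2}$-adic topology, it commutes with the infinite sum; together with $\gamma^{(\delta)}(\Delta_{k+1})=2\delta_{k+1}$ this yields
$$
\gamma^{(\delta)}(\Delta(u))=2\sum_{k=0}^\infty c_{k+1}\delta_{k+1}u^{-2k-2}=2\sum_{k=1}^\infty c_k\delta_k u^{-2k},
$$
after reindexing $k\mapsto k-1$.

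Substituting back into the factorized form of $\Gamma(u)$ produces the desired formula~\eqref{imgamma} upon cancelling a factor of $2$. Finally, the membership in $u^2\mathbb{C}[[u^{-2}]]$ follows because $c(u^2q^2)=q^2u^2-q^{-2}u^{-2}\in u^2\mathbb{C}[u^{-2}]$ and the bracket $\sum_{k\ge 1}c_k\delta_k u^{-2k}-\rho/(q-q^{-1})$ lies in $\mathbb{C}[[u^{-2}]]$, so the product lies in $u^2\mathbb{C}[[u^{-2}]]$.

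There is no real obstacle here; the only point requiring minimal care is verifying that $\gamma^{(\delta)}$ can be applied termwise to the formal series $\Delta(u)$, which is immediate since each coefficient of $u^{-n}$ in $\Delta(u)$ lies in~$\mathcal{A}_q$.
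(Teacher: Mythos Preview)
Your proof is correct and follows essentially the same approach as the paper: apply $\gamma^{(\delta)}$ to the expression~\eqref{gamma} for $\Gamma(u)$, using the expansion~\eqref{eq:ckp1} of $\Delta(u)$ and the defining property $\gamma^{(\delta)}(\Delta_{k+1})=2\delta_{k+1}$. The paper's own proof is a one-line reference to exactly these ingredients; your version simply unpacks the arithmetic (the cancellation of the factor $2$ and the reindexing) and adds the verification that the result lies in $u^2\mathbb{C}[[u^{-2}]]$.
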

\begin{proof}  Apply $\gamma^{(\delta)}$ to the expression of $\Gamma(u)$ given by (\ref{gamma}), where~\eqref{eq:ckp1} is used.  
\end{proof}
As an example, let us identify the values of the $\delta_k$'s such that the images of the quantum determinant of ${\cal A}_q$ under the map $\gamma^{(\delta)}$ in \eqref{mapAOq} and under the map $\epsilon_0$ of Example \ref{mapQSP} match. 
\begin{example}\label{gameps} For the central reduction $\gamma^{(\delta)}$ defined by \eqref{mapDeldel} such that
	\beqa
	\delta_{2p+2} &=&  (q+q^{-1})^{2}(q-q^{-1})(p+1) q^{-4p-4}c_{2p+2}^{-1}(\epsp^2+  \epsm^2) \ ,\qquad\label{delpair}\\
	\delta_{2p+1} &=&  (q+q^{-1})^{2}(q-q^{-1})(2p+1) q^{-4p-2}c_{2p+1}^{-1} \epsp\epsm \ , \quad p\in {\mathbb N}\ ,\label{delimp} 
	\eeqa
	we get:
	\beqa
	\gamma^{(\delta)}(\Gamma(u))&=&	\frac{(q+q^{-1})^2}{(q^2u^2-q^{-2}u^{-2})}\left( \epsp^2 + \epsm^2 + \epsp\epsm(q^2u^2+q^{-2}u^{-2}) - \frac{\rho(q^2u^2-q^{-2}u^{-2})^2}{(q^2-q^{-2})^2}\right)\ \label{eq:durho}\\
	&=& \frac{(q+q^{-1})^2}{c(u^2)c(u^2q^2)}\Gamma_-(u) \ .\nonumber
	\eeqa
	Comparing  with Example \ref{detQSP}, we see that the image
	$\gamma^{(\delta)}(\Gamma(u))$ agrees with the value $\epsilon_0(\Gamma(u))$.
\end{example}

\vspace{1mm}

 Finally, we obtain the TT-relations for $O_q$ as a corollary of Theorem \ref{TTrel}. Define the generating functions for mutually commuting elements in $O_q$ by:
 \beqa
 \bt^{(j)}_{O_q}(u) = \gamma^{(\delta)}(\bt^{(j)}(u)) \label{map:AqOq}
 \eeqa
 for the choice of $\gamma^{(\delta)}$ in Example \ref{gameps}.
\begin{cor} The following TT-relations hold for all $j \in \h{\mathbb N}_+$:
	\begin{equation} \label{TT-relOq}
	\bt^{(j)}_{O_q}(u) = \bt^{(j-\h)}_{O_q}(u q^{-\h}) \bt^{(\h)}_{O_q}(u q^{j-\h}) + \frac{(q+q^{-1})^2\Gamma_- (u q^{j-\tha}) \Gamma_+ (uq^{j-\tha})}{ \prod_{k=0}^3 c(u^2 q^{2j-k}) } \bt^{(j-1)}_{O_q}(u q^{-1})  \ 
\end{equation}
with $\bt^{(0)}_{O_q}(u) =  1$ and the  quantum determinants (\ref{gammaKM}), (\ref{gammaKP}).
\end{cor}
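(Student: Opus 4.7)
The plan is to deduce this corollary by simply pushing the universal TT-relations of Theorem~\ref{TTrel} through the surjective algebra homomorphism $\gamma^{(\delta)}\colon \cal A_q \to O_q$ defined in~\eqref{mapDeldel}, for the specific choice of scalars $\{\delta_k\}$ from Example~\ref{gameps}. Since $\gamma^{(\delta)}$ is an algebra map and only acts non-trivially on generators of $\cal A_q$ (while $\Gamma_+(u)$ is scalar-valued), applying it term by term to~\eqref{TT-rel} is straightforward provided we can identify the images of $\bt^{(j)}(u)$ and of the quantum determinant $\Gamma(u)$.

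First, I would note that by the very definition~\eqref{map:AqOq}, $\gamma^{(\delta)}\bigl(\bt^{(j)}(u)\bigr) = \bt^{(j)}_{O_q}(u)$ for all shifts of $u$. This handles the LHS and the first summand on the RHS automatically. The dual quantum determinant $\Gamma_+(u)$ from~\eqref{gammaKP} has scalar coefficients and is unaffected by $\gamma^{(\delta)}$, so it passes through unchanged.

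The only non-trivial step is to evaluate $\gamma^{(\delta)}\bigl(\Gamma(uq^{j-\tha})\bigr)$. For this, I would invoke Example~\ref{gameps}, which asserts that with the precise choice~\eqref{delpair}--\eqref{delimp} of $\delta_k$'s one has
\begin{equation*}
    \gamma^{(\delta)}(\Gamma(u)) = \frac{(q+q^{-1})^2}{c(u^2)\,c(u^2 q^2)}\,\Gamma_-(u) \ .
\end{equation*}
Shifting $u\mapsto uq^{j-\tha}$ gives
\begin{equation*}
    \gamma^{(\delta)}\bigl(\Gamma(uq^{j-\tha})\bigr) = \frac{(q+q^{-1})^2\,\Gamma_-(uq^{j-\tha})}{c(u^2 q^{2j-3})\,c(u^2 q^{2j-1})} \ .
\end{equation*}

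Finally, combining this with the original denominator $c(u^2q^{2j})\,c(u^2q^{2j-2})$ from~\eqref{TT-rel}, the four $c$-factors assemble into $\prod_{k=0}^{3} c(u^2 q^{2j-k})$, which matches the denominator in~\eqref{TT-relOq}. This yields the claimed TT-relation for $O_q$. The only potential subtlety is the (trivial) verification that the shifts of $u$ in $\Gamma(u)$ are compatible with the image formula, but no real obstacle arises since Example~\ref{gameps} was already established for generic $u$.
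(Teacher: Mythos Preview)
Your proposal is correct and follows exactly the approach the paper intends: apply the algebra map $\gamma^{(\delta)}$ (for the choice in Example~\ref{gameps}) to both sides of the universal TT-relations~\eqref{TT-rel}, use the definition~\eqref{map:AqOq} for the transfer matrices, leave the scalar $\Gamma_+$ untouched, and replace $\Gamma$ by its image from Example~\ref{gameps}. The combination of the four $c$-factors into $\prod_{k=0}^{3} c(u^2 q^{2j-k})$ is exactly the computation the paper leaves implicit.
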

	
	\section{Truncated generating functions 	$\bt^{(j,N)}(u)$  and TT-relations for $\{\mathcal{A}_q^{(N)}\}_{N\in{\mathbb N}_+}$} \label{sec4} 
 In this section, we prepare the material to apply the  universal  TT-relations from Theorem~\ref{TTrel} to integrable open spin chains.
	In Section~\ref{sec:quotientAq}, we consider quotients of ${\cal A}_q$ denoted $\{\mathcal{A}_q^{(N)}\}_{N\in{\mathbb N}_+}$ via certain linear relations in generators of ${\cal A}_q$ that occur in spin chains of length $N$.
	   In Section~\ref{sec:fusKAqN}, introducing the corresponding algebra map  $\varphi^{(N)}\colon\mathcal{A}_q \rightarrow \mathcal{A}_q^{(N)}$, we derive spin-$j$ K-operators for $\mathcal{A}_q^{(N)}$ from $\mathcal{K}^{(j)}(u)$ given in Definition~\ref{def:fusedK}, see~\eqref{KjN}. Importantly for applications to spin-chains, entries of those K-operators are now Laurent polynomials in $u$.
	   In Section~\ref{sec:TjN}, we give the generating functions of mutually commuting elements in  $\mathcal{A}_q^{(N)}$ denoted $\bt^{(j,N)}(u)$, which are also Laurent polynomials in $u$, as well as their TT-relations.  In Section~\ref{sec:fusK-dresK}, the link to integrable open spin chains is established through the introduction of the maps $\psi^{(N)}_{\bj,\bar{v}}$ in Definition~\ref{def:psiN}.  They provide finite-dimensional representations of $\mathcal{A}_q^{(N)}$, and thus of $\mathcal{A}_q$, that depend on spins and boundary parameters chosen. Spin-$j$ K-operators for spin-chains are obtained using this map, see Proposition~\ref{prop:Kjspinchain}. 
	
	\subsection{The quotients \texorpdfstring{$\{\mathcal{A}_q^{(N)}\}_{N\in{\mathbb{N}_+ }}$}{Aq(N)}}  \label{sec:quotientAq}
	For the Onsager algebra, a class of quotients has been considered in \cite{DaviesA,DaviesB,Ro91}, that arises in the analysis of integrable systems such as the Ising or chiral Potts models. See also \cite[Sec.\,4]{BC18} for a description of those quotients in the alternating presentation of the Onsager algebra. Here, we consider an analogous construction for the alternating central extension of the $q$-Onsager algebra. 
	\begin{defn}\label{defAqN}
		Let $N$ be a non-negative integer. $\mathcal{A}_q^{(N)}$ is the quotient of $\mathcal{A}_q$ by the relations:
		\begin{align} \label{generalized-d1}
			\displaystyle{\sum_{k=0}^{N}}d_k^{(N)} \tW_{-k-\ell}+ \overline{\ell+1} \, \varepsilon_+^{(N)} + \overline{\ell} \, \varepsilon_-^{(N)}&=0 \ , \qquad \displaystyle{\sum_{k=0}^{N}}  d_k^{(N)} \tG_{k+1+\ell} =0 \ , \\ \label{generalized-d2}
			\displaystyle{\sum_{k=0}^{N}}d_k^{(N)} \tW_{k+1+\ell}+ \overline{\ell+1} \, \varepsilon_-^{(N)} + \overline{\ell} \, \varepsilon_+^{(N)}&=0 \ , \qquad  \,\displaystyle{\sum_{k=0}^{N}} d_k^{(N)} \tilde{\tG}_{k+1+\ell} =0 \ ,
		\end{align}
		for any $\ell \in \mathbb{N}$, we use $\overline{\ell}= \ell \mod 2$, and where 
		$d_k^{(N)}\in {\mathbb C}^*$, $\varepsilon_\pm^{(N)}\in {\mathbb C}$ .
	\end{defn}
	
	Note that the relations (\ref{generalized-d1}), (\ref{generalized-d2}), can be interpreted as $q$-deformed analogs of Davies' relations \cite{DaviesA,DaviesB} for the alternating central extension of the $q$-Onsager algebra. For $q=1$, see \cite[eq.\,(4.20)]{BC18}. \smallskip
	
	Let $\{  \tW^{(N)}_{-k}, \tW^{(N)}_{k+1}, \tG^{(N)}_{k+1},\tilde{\tG}^{(N)}_{k+1}|k=0,1,...,N-1\}$ denote the alternating generators of $\mathcal{A}_q^{(N)}$. Define a surjective homomorphism $ \varphi^{(N)}\colon \mathcal{A}_q \rightarrow \mathcal{A}_q^{(N)} $ by:
	\begin{equation}
	 \tW_{-k} \mapsto \tW^{(N)}_{-k}\ ,\quad \tW_{k+1} \mapsto \tW^{(N)}_{k+1}\ ,\quad \tG_{k+1} \mapsto \tG^{(N)}_{k+1}\ ,\quad \tilde{\tG}_{k+1} \mapsto \tilde{\tG}^{(N)}_{k+1} \quad \mbox{for $k=0,1,...,N-1$.}\label{mapAqAN}
	\end{equation}
	For $k\geq N$, the image of the other generators of ${\cal A}_q$ follows from (\ref{generalized-d1}), (\ref{generalized-d2}).

    \begin{example}\label{exp:Aq-N0}
     ${\cal A}_q^{(0)}$ is the trivial algebra ${\mathbb C}$ and the quotient maps $\varphi^{(0)}: {\cal A}_q \to {\cal A}_q^{(0)}$ agree with  the one-dimensional representations $\epsilon_0$ introduced in Example~\ref{mapQSP},  after setting $d_0^{(0)}=-1$ and
	$\varepsilon_\pm^{(0)} = \varepsilon_\pm$  in~\eqref{generalized-d1},~\eqref{generalized-d2}. 
	\end{example}
	
 The simplest non-trivial example of quotient is given at $N=1$ by the Askey-Wilson algebra.
	For a review on the Askey-Wilson algebra and how it arises in mathematical physics, see \cite{revAW}.   
	\begin{example}  
		\label{ex1}
The Askey-Wilson algebra AW is generated by   $A,A^*$ subject to the defining relations~\cite{Z91}  
		\begin{align}\label{AW1} [A,[A,A^*]_q]_{q^{-1}} &= \rho A^* + \omega A + \eta \ , \\ \label{AW2}
			[A^*,[A^*,A]_q]_{q^{-1}}&=\rho A+\omega A^* + \eta^* \ ,
		\end{align}
		where $\rho$, $\omega$, $\eta$, $\eta^*$ are scalars. There is an algebra isomorphism  $\mathcal{A}_q^{(1)} \rightarrow AW$ such that
		\begin{equation}
		{\tW}_{0}^{(1)}\mapsto A\ ,\quad
		{\tW}_{1}^{(1)}\mapsto A^*\ ,\quad
		{\tG}_{1}^{(1)}\mapsto [A^*,A]_q\ ,\quad
		\tilde{\tG}_{1}^{(1)}\mapsto [A,A^*]_q\ 
		\end{equation}
		for the choice  $d_1^{(1)}=\rho$, $d_0^{(1)}=\omega$, $\varepsilon_+^{(1)} = \eta$, $\varepsilon_-^{(1)} = \eta^*$ in ~\eqref{generalized-d1},~\eqref{generalized-d2}.
			More generally, using (\ref{generalized-d1}), (\ref{generalized-d2}) we also find that
		\begin{align}
			\normalfont{{\tW}}_{-k-1}^{(1)} &\mapsto \left (  - \frac{\omega}{\rho} \right)^{k+1} \left ( A + \frac{\eta}{\rho} \displaystyle{\sum_{\ell =0}^{\lfloor\frac{k}{2} \rfloor} \Big ( \frac{\omega}{\rho} \Big)^{-1-2\ell}  } - \frac{\eta^*}{\rho} \displaystyle{\sum_{\ell =0}^{\lfloor \frac{k-1}{2} \rfloor- \delta_{0,k}} \Big ( \frac{\omega}{\rho} \Big)^{-2(\ell+1)}  }   \right ) \ , \\
			\normalfont{{ \tW}}_{k+2}^{(1)} &\mapsto \left (  - \frac{\omega}{\rho} \right)^{k+1} \left ( A^* + \frac{\eta^*}{\rho} \displaystyle{\sum_{\ell =0}^{\lfloor \frac{k}{2} \rfloor} \Big ( \frac{\omega}{\rho} \Big)^{-1-2\ell}  } - \frac{\eta}{\rho} \displaystyle{\sum_{\ell =0}^{\lfloor \frac{k-1}{2} \rfloor - \delta_{0,k}} \Big ( \frac{\omega}{\rho} \Big)^{-2(\ell+1)}  }   \right ) \ , \\
			\normalfont{{ \tG}}_{k+1}^{(1)} &\mapsto \left ( - \frac{\omega}{\rho} \right )^k [A^*,A]_q \ , \qquad \tilde{\normalfont{{ \tG}}}_{k+1}^{(1)} \mapsto \left ( - \frac{\omega}{\rho} \right )^k [A,A^*]_q \ .
		\end{align}
	\end{example}	

	\subsection{Fused K-operators  $\mathcal{K}^{(j,N)}(u)$ for \texorpdfstring{$\mathcal{A}_q^{(N)}$}{Aq(N)}} \label{sec:fusKAqN}
 We now consider   images  of the fused K-operators of ${\cal A}_q$ under the algebra map $\varphi^{(N)}$ defined in~\eqref{mapAqAN}. 
  Due to the relations (\ref{generalized-d1}) and (\ref{generalized-d2}), in the quotient $\mathcal{A}_q^{(N)}$ the generating functions (\ref{c1}), (\ref{c2}) appropriately normalised truncate to finite sums. Indeed, introduce: 
\begin{equation} \label{WPMN}
	\begin{aligned}
		\CWP(u)&=\WMu \ , \qquad  \CWM(u)=\WPu \ , \\
		\cG_+^{(N)}(u)&=\Gu \ , \qquad \cG_-^{(N)}(u)= \tGu \ ,
	\end{aligned}
\end{equation}
where 
\begin{equation} \label{PMK}
	P_{-k}^{(N)}(u) = -\frac{1}{q+q^{-1}} \displaystyle{\sum_{n=k}^{N-1}} d_{n+1}^{(N)} U^{n-k} \ .
\end{equation}
and we recall $U$ from~\eqref{eq:U-def}
\begin{lem}\label{lemAqN}  The action of $\varphi^{(N)}$ on the generating functions of $\mathcal{A}_q$ is given by
	\begin{align*} \nonumber
		h_0^{(N)}(u) \varphi^{(N)}(\cW_+(u)) &= \cW_+^{(N)}(u)+ \frac{u^2q+u^{-2}q^{-1}}{(u^2-u^{-2})(u^2q^2-u^{-2}q^{-2})}\varepsilon_+^{(N)} +\frac{q+q^{-1}}{(u^2-u^{-2})(u^2q^2-u^{-2}q^{-2})}\varepsilon_-^{(N)} ,     \\
		h_0^{(N)}(u)  \varphi^{(N)}(\cW_-(u)) & = \cW_-^{(N)}(u)+ \frac{q+q^{-1}}{(u^2-u^{-2})(u^2q^2-u^{-2}q^{-2})}\varepsilon_+^{(N)} +\frac{u^2q+u^{-2}q^{-1}}{(u^2-u^{-2})(u^2q^2-u^{-2}q^{-2})}\varepsilon_-^{(N)}, \\ \nonumber
		\qquad h_0^{(N)}(u) \varphi^{(N)}(\cG_{\pm}(u)) &= \cG_\pm^{(N)}(u) \ , \nonumber
	\end{align*}
	with the following Laurent polynomial in $u^{2}$, with its inverse in ${\mathbb C}[[u^{-1}]]$,
	\begin{equation} \label{h0N}
		h_0^{(N)}(u)= -\frac{1}{q+q^{-1}}\left (\displaystyle{\sum_{k=0}^{N}} d_k^{(N)}U^k \right ).
	\end{equation}
\end{lem}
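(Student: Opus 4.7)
The plan is to multiply each of the four generating functions from~\eqref{c1}-\eqref{c2}, after applying $\varphi^{(N)}$, by the Laurent polynomial $h_0^{(N)}(u)$, then rearrange the double sum by powers of $U$ and identify each piece using either the definition~\eqref{WPMN}, \eqref{PMK} or the defining relations~\eqref{generalized-d1}-\eqref{generalized-d2} of the quotient $\mathcal{A}_q^{(N)}$. I will describe the argument in detail for $\cW_+(u)$; the other three cases are strictly analogous (with $\varepsilon_\pm^{(N)}$ exchanged for $\cW_-$, and with $0$ in place of $\overline{\ell+1}\varepsilon_+^{(N)}+\overline{\ell}\varepsilon_-^{(N)}$ for $\cG_\pm$).

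First, combining \eqref{h0N}, \eqref{c1} and expanding the product I would write
\begin{equation*}
h_0^{(N)}(u)\,\varphi^{(N)}(\cW_+(u)) \;=\; -\tfrac{1}{q+q^{-1}}\sum_{k=0}^{N}\sum_{\ell=0}^{\infty} d_k^{(N)} \varphi^{(N)}(\tW_{-\ell})\, U^{k-\ell-1},
\end{equation*}
and split this series according to the sign of the exponent $s=k-\ell-1$ of $U$. For $s\ge 0$ one has $\ell=k-s-1$, so $k$ runs from $s+1$ to $N$; reindexing by $k'=k-s-1$ and using $\varphi^{(N)}(\tW_{-k'})=\tW_{-k'}^{(N)}$ for $0\le k'\le N-1$, the positive-power piece becomes
\begin{equation*}
-\tfrac{1}{q+q^{-1}}\sum_{s=0}^{N-1} U^s \sum_{k'=0}^{N-s-1} d_{k'+s+1}^{(N)}\,\tW_{-k'}^{(N)},
\end{equation*}
which, after swapping the order of the double sum, is exactly $\cW_+^{(N)}(u)$ as defined by \eqref{WPMN}-\eqref{PMK}.

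For the negative-power part, the coefficient of $U^{-m-1}$ with $m\ge 0$ equals $-\tfrac{1}{q+q^{-1}}\sum_{k=0}^{N} d_k^{(N)}\varphi^{(N)}(\tW_{-k-m})$, and here I would invoke the defining relation~\eqref{generalized-d1} of $\mathcal{A}_q^{(N)}$ (with $\ell=m$) to replace this by $\tfrac{1}{q+q^{-1}}\bigl(\overline{m+1}\varepsilon_+^{(N)}+\overline{m}\varepsilon_-^{(N)}\bigr)$. Summing the two resulting geometric series one gets
\begin{equation*}
\tfrac{1}{q+q^{-1}}\cdot\frac{\varepsilon_+^{(N)}\,U+\varepsilon_-^{(N)}}{U^{2}-1}.
\end{equation*}
The identity $(q+q^{-1})^{2}(U^{2}-1)=(u^{2}-u^{-2})(u^{2}q^{2}-u^{-2}q^{-2})$, obtained by expanding $(qu^{2}+q^{-1}u^{-2})^{2}-(q+q^{-1})^{2}$, together with $(q+q^{-1})U=qu^{2}+q^{-1}u^{-2}$, then turns this rational expression into precisely the scalar correction appearing in the stated formula for $h_0^{(N)}(u)\varphi^{(N)}(\cW_+(u))$. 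For $\cW_-$ the same calculation applies with the roles of $\varepsilon_+^{(N)}$ and $\varepsilon_-^{(N)}$ swapped, since the only change in \eqref{generalized-d2} is $\overline{\ell+1}\varepsilon_-^{(N)}+\overline{\ell}\varepsilon_+^{(N)}$ on the right-hand side. For $\cG_\pm$ the negative-power contribution vanishes outright because of the homogeneous relations for $\tG$ and $\tilde{\tG}$ in~\eqref{generalized-d1}-\eqref{generalized-d2}, so only the polynomial piece matching $\cG_\pm^{(N)}(u)$ in~\eqref{WPMN} survives.

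The only real subtlety is the bookkeeping of the index shifts when splitting the double sum: one must check that the range $k\in[s+1,N]$ for $s\ge 0$ exactly matches the inner sum in $P_{-k}^{(N)}(u)$, and that the range $k\in[0,N]$ for $s<0$ reproduces the full relation~\eqref{generalized-d1}. Invertibility of $h_0^{(N)}(u)$ in $\mathbb{C}[[u^{-1}]]$ (which follows from $d_0^{(N)}\in\mathbb{C}^*$ and the expansion of $U^{-1}$ in~\eqref{eq:UPowerSeries}) ensures the stated equalities determine $\varphi^{(N)}(\cW_\pm(u))$ and $\varphi^{(N)}(\cG_\pm(u))$ as elements of $\mathcal{A}_q^{(N)}((u^{-1}))$.
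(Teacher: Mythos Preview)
Your proof is correct. Both your argument and the paper's rely on the same ingredients---the defining relations~\eqref{generalized-d1}--\eqref{generalized-d2} of $\mathcal{A}_q^{(N)}$, the definitions~\eqref{WPMN}--\eqref{PMK}, and summing the resulting geometric series in $U$---but the organization differs. The paper first splits $\varphi^{(N)}(\cW_+(u))$ into the finite piece $\sum_{k=0}^{N-1}$ and the tail $\sum_{k\ge N}$, then solves the relation~\eqref{generalized-d1} for $\tW_{-N-p}^{(N)}$ in terms of lower-index generators and substitutes back, which reintroduces the full series and yields the identity after collecting terms. Your route---multiplying by $h_0^{(N)}(u)$ first and splitting the resulting double sum by the sign of the exponent of $U$---is more direct: the relation~\eqref{generalized-d1} is applied once per negative power of $U$ exactly as stated, with no need to isolate a leading generator and resubstitute. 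One small slip: the invertibility of $h_0^{(N)}(u)$ in $\mathbb{C}[[u^{-1}]]$ is governed by the leading coefficient $d_N^{(N)}$ (the coefficient of $u^{2N}$), not by $d_0^{(N)}$; since Definition~\ref{defAqN} requires all $d_k^{(N)}\in\mathbb{C}^*$ this is harmless.
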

\begin{proof} 
	Firstly, consider the image of the generating functions~\eqref{c1},~\eqref{c2}. For instance, from~\eqref{c1} one gets:
	\begin{equation} \label{mapWP} \cW_+(u) \mapsto \displaystyle{\sum_{k\in \mathbb{N}}} \tW_{-k}^{(N)}U^{-k-1} = \displaystyle{\sum_{k=0}^{N-1}} \cW_{-k}^{(N)}U^{-k-1}+\displaystyle{\sum_{k=N}^{\infty}} \tW_{-k}^{(N)} U^{-k-1}\ ,
	\end{equation}
	where $U=(qu^2+q^{-1}u^{-2})/(q+q^{-1})$.
	Now, using the linear relation~\eqref{generalized-d1}, it follows:
	\begin{align} \nonumber\displaystyle{\sum_{k=N}^{\infty}}& \tW_{-k}^{(N)} U^{-k-1} = \displaystyle{\sum_{p=0}^{\infty}}\tW_{-N-p}^{(N)} U^{-N-p-1}\\ \nonumber
		&=-\displaystyle{\sum_{p=0}^{\infty} \sum_{k=0}^{N-1}} \frac{d_k^{(N)}}{d_N^{(N)}} \tW_{-k-p}^{(N)} U^{-N-p-1}-\displaystyle{\sum_{p=0}^{\infty}}\overline{p+1} \, U^{-p-1-N} \frac{\varepsilon_+^{(N)}}{d_N^{(N)}} - \displaystyle{\sum_{p=0}^{\infty}} \overline{p} \,  U^{-p-1-N} \frac{\varepsilon_-^{(N)}}{d_N^{(N)}} \\ \nonumber
		&=- \left (\displaystyle{\sum_{k=0}^{N-1}} \frac{d_k^{(N)}}{d_N^{(N)}} U^{k-N} \right ) \cW_+(u) +\displaystyle{\sum_{k=1}^{N-1} \sum_{p=0}^{k-1}} \frac{d_k^{(N)}}{d_N^{(N)}} \tW_{-p}^{(N)} U^{-N-p-1+k}- \displaystyle{\sum_{n=0}^{\infty}}U^{-2n-1-N} \left(\frac{\varepsilon_+^{(N)}}{d_N^{(N)}} + U^{-1}
		\frac{\varepsilon_-^{(N)}}{d_N^{(N)}}\right) \\ \nonumber 
		&=- \left (\displaystyle{\sum_{k=0}^{N-1}} \frac{d_k^{(N)}}{d_N^{(N)}} U^{k-N} \right ) \cW_+(u) 
		+\displaystyle{\sum_{k=0}^{N-1}}\tW_{-k}^{(N)} \left ( \displaystyle{\sum_{p=k+1}^{N-1}}\frac{d_p^{(N)}}{d_N^{(N)}}U^{p-N-k-1} \!\!\right )  \!-\! \displaystyle{\sum_{n=0}^{\infty}}U^{-2n-1-N}\!\! \left(\frac{\varepsilon_+^{(N)}}{d_N^{(N)}} + U^{-1}
		\frac{\varepsilon_-^{(N)}}{d_N^{(N)}}\!\right)\!.
	\end{align}
	Inserting the above expression into (\ref{mapWP}) and multiplying the result by $d_N^{(N)}$, one gets:
	\begin{align}\nonumber\left(\displaystyle{\sum_{k=0}^{N}} d_k^{(N)} U^k \right) \cW_+(u) &=\displaystyle{\sum_{k=0}^{N-1}} \tW_{-k}^{(N)} \left ( \displaystyle{\sum_{n=k}^{N-1}}d_{n+1}^{(N)}U^{n-k} \right )-\displaystyle{\sum_{n=0}^{\infty}}U^{-2n-1} \left(\varepsilon_+^{(N)} + U^{-1}
		\varepsilon_-^{(N)} \right) \\
		\nonumber
		&= \displaystyle{\sum_{k=0}^{N-1}} \tW_{-k}^{(N)} \left ( \displaystyle{\sum_{n=k}^{N-1}}d_{n+1}^{(N)}U^{n-k} \right ) \\ \nonumber
		& \, -\frac{(q+q^{-1})(u^2q+u^{-2}q^{-1})}{(u^2-u^{-2})(u^2q^2-u^{-2}q^{-2})} \varepsilon_+^{(N)} -  \frac{(q+q^{-1})^2}{(u^2-u^{-2})(u^2q^2-u^{-2}q^{-2})} \varepsilon_-^{(N)} \ .
	\end{align}
	Dividing both sides by $-(q+q^{-1})$, the expression of $\varphi^{(N)}(\cW_+(u))$  follows. The action of $\varphi^{(N)}$ on the other generating functions is obtained similarly.
\end{proof}

The spin-$\h$ K-operator for $\mathcal{A}_q^{(N)}$,  denoted $\mathcal{K}^{(\h,N)}(u)$, is defined as follows.\footnote{We note that though K-operators appear in the context of comodule algebras, we are not aware of any comodule algebra structure on $\mathcal{A}_q^{(N)}$.} Recall the spin-$\h$ K-operator for $\mathcal{A}_q$ in (\ref{K-Aq}). Taking its image by $\varphi^{(N)} \otimes \id$, we define:
\begin{equation}
\mathcal{K}^{(\h,N)}(u)=	(u^2-u^{-2})h_0^{(N)}(u) (\varphi^{(N)} \otimes \id) ( \mathcal{K}^{(\h)}(u) )\ ,\label{K12N}
\end{equation}
where the normalisation with $h_0^{(N)}(u)$ from~\eqref{h0N}  is chosen for later convenience.
Using Lemma \ref{lemAqN}, we explicitly find:
\begin{equation} \label{KN(u)}  
	\mathcal{K}^{(\h,N)}(u)=\begin{pmatrix}
		\mathcal{A}^{(N)}(u) & \mathcal{B}^{(N)}(u) \\ 
		\mathcal{C}^{(N)}(u)& \mathcal{D}^{(N)}(u)
	\end{pmatrix},
\end{equation}
with
\begin{align}\label{A(N)}\mathcal{A}^{(N)}(u)&= u \epsp^{(N)} + u^{-1} \epsm^{(N)} + (u^2 - u^{-2}) \big( u q \CWP(u) - u^{-1} q^{-1} \CWM(u) \big) \ , \\ \label{D(N)}
	\mathcal{D}^{(N)}(u)&= u\epsm^{(N)} +u^{-1} \epsp^{(N)} + (u^2-u^{-2}) \big (u q \CWM(u) -u^{-1} q^{-1} \CWP(u) \big) \ , \\ \label{B(N)}
	\mathcal{B}^{(N)}(u)&=\frac{ (u^2-u^{-2})}{k_-} \Bigl( \frac{ k_+ k_- ( u^2 q +u^{-2} q^{-1})}{q-q^{-1}} P_0^{(N)}(u)+\frac{1}{q+q^{-1}} \cG_+^{(N)}(u) + \omega_0^{(N)} \Bigr) \ , \\ \label{C(N)}
	\mathcal{C}^{(N)}(u)&= \frac{(u^2-u^{-2})}{k_+}\Bigl(\frac{k_+ k_- (u^2 q+u^{-2} q^{-1})}{q-q^{-1}} P_0^{(N)}(u) + \frac{1}{q+q^{-1}} \cG_-^{(N)}(u)+ \omega_0^{(N)} \Bigr) \ ,
\end{align}
where $P_0^{(N)}(u)$ is given in~\eqref{PMK} and 
\begin{equation}
\omega_0^{(N)}= - \frac{k_+ k_-}{q-q^{-1}} d_0^{(N)}\ .
\end{equation}

 We note that entries of $\mathcal{K}^{(\h,N)}(u)$ are Laurent \textsl{polynomials} in $u$, in contrast to the power series in the case of   $\mathcal{A}_q$,
because the truncated generating functions $\cW_\pm^{(N)}(u)$, $\cG_\pm^{(N)}(u)$ from~\eqref{WPMN} have only positive powers in $U$ up to the power $N-1$. This is the reason of the normalization introduced in~\eqref{K12N}. And we thus call $\mathcal{K}^{(\h,N)}(u)$ \textit{truncated} K-operator.

Using the fact that $\varphi^{(N)}$ is an algebra homomorphism and the definition of the truncated K-operator from~\eqref{K12N}, it is clear that $\mathcal{K}^{(\h,N)}(u)$ satisfies the reflection equation 
\begin{equation} \label{eq:REAqN}
	{R}^{(\frac{1}{2},\frac{1}{2})}(u/v) {\cal K}_1^{(\frac{1}{2},N)}(u) { R}^{(\frac{1}{2},\frac{1}{2})}(uv) {\cal K}_2^{(\frac{1}{2},N)}(v)
	=  {\cal K}_2^{(\frac{1}{2},N)}(v)  { R}^{(\frac{1}{2},\frac{1}{2})}(uv) {\cal K}_1^{(\frac{1}{2},N)}(u) { R}^{(\frac{1}{2},\frac{1}{2})}(u/v) .
\end{equation}

\begin{example}
For $N=0$, the  K-operator for ${\cal A}_q^{(0)}={\mathbb C}$  reduces to the K-matrix $K^{(\h)}(u)$ introduced in~\eqref{KM-spin1/2}. Indeed, as we noted in Example~\ref{exp:Aq-N0}, $\varphi^{(0)}$ agrees with $\epsilon_0$ introduced in Example~\ref{mapQSP}.
Then from the definition~\eqref{K12N} we get (notice that $h_0^{(0)}=(q+q^{-1})^{-1}$ because we have set $d_0^{(0)}=-1$)
    $$
    \mathcal{K}^{(\h,0)}(u) = \frac{c(u^2)}{q+q^{-1}} (\epsilon_0 \otimes \id) ( \mathcal{K}^{(\h)}(u) ) =  K^{(\h)}(u) \ ,
    $$
    where in the last equality we used Proposition~\ref{prop:Kaq-Km} and noticed that $\zeta^{(\h)}(u)= \frac{q+q^{-1}}{c(u^2)}$.
\end{example}

\begin{example}
	The fundamental K-operator for the  case  $N=1$ -- recall Example \ref{ex1} -- corresponds to the K-operator of the Askey-Wilson algebra~\cite[Prop.\,2.1]{BP19}, up to an overall scalar factor, with the identification $k_+=\chi(q+q^{-1})^{-1}$, $k_-=\rho\chi^{-1}(q+q^{-1})^{-1}$, with $\chi \in \mathbb{C}^*$.
\end{example}

\smallskip

The spin-$j$ fused K-operators associated with $\mathcal{A}_q^{(N)}$ and  denoted by $\mathcal{K}^{(j,N)}(u)$ are constructed similarly to $\mathcal{K}^{(j)}(u)$ from Definition~\ref{def:fusedK}. 

	\begin{defn}\label{def:jKN}
		We define   spin-$j$ fused K-operators for $\mathcal{A}_q^{(N)}$ as
		\begin{align} \label{fused-K-opN}
			\mathcal{K}^{(j,N)}(u) &=  \mathcal{F}^{(j)}_\fu \mathcal{K}_1^{(\h,N)}(u q^{-j+\h}) R^{(\h,j-\h)}(u^2 q^{-j+1}) \mathcal{K}_2^{(j-\h,N)}(u q^{\h}) \mathcal{E}^{(j)}_\fu  \ ,
		\end{align}
	with $\mathcal{K}^{(\h,N)}(u)$ in~\eqref{KN(u)}.
	\end{defn}

By construction they satisfy the reflection equation
\begin{align} \nonumber
	R^{(j_1,j_2)}(u/v) \mathcal{K}^{(j_1,N)}_1(u) R^{(j_1,j_2)}(u v) \mathcal{K}^{(j_2,N)}_2(v) &= \\  \mathcal{K}_2^{(j_2,N)}(v) R^{(j_1,j_2)}(uv)  \mathcal{K}_1^{(j_1,N)}(u)& R^{(j_1,j_2)}(u/v)\ .
\end{align}
Then, using~\eqref{fused-K-op},  one shows by induction the spin-$j$ analog of~\eqref{K12N}
\begin{equation}\label{KjN}
\mathcal{K}^{(j,N)}(u) = \left [ \displaystyle{\prod_{k=0}^{2j-1}} c(u^2 q^{2j-1-2k}) h_0^{(N)}(u q^{j-\h-k}) \right ] (\varphi^{(N)} \otimes \id ) (\mathcal{K}^{(j)}(u)) \ ,
\end{equation}

We finally notice that  by induction every matrix entry of r.h.s.\ of~\eqref{fused-K-opN} is a product of Laurent polynomials. Therefore,  all entries of the spin-$j$ fused K-operators are equally Laurent polynomials, i.e.\  $$
\mathcal{K}^{(j,N)}(u) \in \mathcal{A}_q^{(N)}[u,u^{-1}] \otimes \End(\mathbb{C}^{2j+1})\ , $$
and we call them simply \textit{truncated} fused K-operators.

\subsection{Truncated commuting generating functions}\label{sec:TjN}
We recall the commutative subalgebra $\mathcal{I}\subset{\cal A}_q$ from Definition~\ref{def:subalg-I}. Here, we study its image in $\mathcal{A}_q^{(N)}$ under the quotient map $\varphi^{(N)}$.
 Let us first introduce
\begin{equation}
	\bt^{(\h,N)}(u) = \normalfont{\text{tr}}_{{\mathbb C}^{2}}\bigl(K^{+{(\h)}}(u){\cal K}^{(\h,N)}(u)\bigr) \ ,
\end{equation}
similarly to $\bt^{(\h)}(u)$ in~\eqref{tg}.
Let us introduce the generating function
\begin{align}\label{eq:def-IN}
	\mathsf{I}^{(N)}(u) & =  \overline{\varepsilon}_+ \cW_+^{(N)}(u) +\overline{\varepsilon}_- \cW_-^{(N)}(u) + \frac{1}{q^2-q^{-2}} \left ( \frac{\overline{k}_+}{k_+} \cG_-^{(N)}(u) + \frac{\overline{k}_-}{k_-} \cG_+^{(N)}(u)  \right)
\end{align}
which is the same as
\begin{align}
    \mathsf{I}^{(N)}(u)  &= \sum_{k=0}^{N-1} P_{-k}^{(N)}(u) \mathsf{I}^{(N)}_{2k+1} \qquad \mbox{with} \qquad 
    \mathsf{I}^{(N)}_{2k+1}= \varphi^{(N)}(\mathsf{I}_{2k+1})\ ,\label{calIN}
\end{align}
and $\mathsf{I}_{2k+1}$ are as in~\eqref{eq:def-Ik},
while $P_{-k}^{(N)}(u)$ are introduced in~\eqref{PMK}.
Then, $\bt^{(\h,N)}(u)$ can be computed explicitly:
\begin{align}
	\bt^{(\h,N)}(u) &= c(u^2)h_0^{(N)}(u) \varphi^{(N)} \bigl( \bt^{(\h)}(u) \bigr)\ \nonumber \\  &=c(u^2)c(u^2q^2) \! \left( \! \mathsf{I}^{(N)}(u) + h_0^{(N)}(u) \mathsf{I}_0 \right ) \label{tjsIN}\\ \nonumber
	& \quad + \overline{\varepsilon}_+  \left (  (u^2 q + u^{-2} q^{-1})\varepsilon_+^{(N)} + (q+q^{-1}) \varepsilon_-^{(N)} \right ) \!+ \!\overline{\varepsilon}_- \left ( (u^2q+u^{-2}q^{-1}) \varepsilon_-^{(N)} + (q+q^{-1}) \varepsilon_+^{(N)}    \right ) \ ,
\end{align}
where $c(u)$ is given in~\eqref{eq:cu}, 
we used \eqref{K12N} for the first equality, and~\eqref{t12init} with~\eqref{Igen} and Lemma~\ref{lemAqN} for the second equality,
where the scalar $\mathsf{I}_0$ is from~\eqref{I0}.

We  notice from~\eqref{calIN} that $\bt^{(\h,N)}(u)\in \mathcal{A}_q^{(N)}[u,u^{-1}]$ and the commutative subalgebra $\mathcal{I}^{(N)}:=\varphi^{(N)} (\mathcal{I})$  in $\mathcal{A}_q^{(N)}$ extracted from the modes of
$\bt^{(\h,N)}(u)$
 is generated by finitely-many elements 
 $$\{\mathsf{I}_{2k+1}^{(N)} | k=0,1, \ldots, N-1\}\ .$$

The spin-$j$ version of $\bt^{(\h,N)}(u)$ is defined similarly to  $\bt^{(j)}(u)$ from~\eqref{tg}:
\begin{equation}
	\bt^{(j,N)}(u) = \normalfont{\text{tr}}_{{\mathbb C}^{2j+1}}\bigl(K^{+{(j)}}(u){\cal K}^{(j,N)}(u)\bigr)
	\ ,\label{tgintN} 
\end{equation}
where the truncated fused K-operator $\mathcal{K}^{(j,N)}(u)$ and $K^{+(j)}(u)$ are given in~\eqref{fused-K-opN} and~\eqref{def:kp}, respectively. We note that  $\bt^{(j,N)}(u)$ is a Laurent polynomial in $u$ and with coefficients from  $\mathcal{A}_q^{(N)}$ with an overall factor the scalar power series $1/f^{(j)}(u)$ introduced in~\eqref{eq:fj}, i.e.\ 
\begin{equation}\label{eq:TjN-space}
\bt^{(j,N)}(u) \in \frac{1}{f^{(j)}(u)}\mathcal{A}_q^{(N)}[u,u^{-1}]\ .    
\end{equation}

By Remark~\ref{rem:TT-invar}, the  TT-relations from Theorem \ref{TTrel} are invariant under a change of normalization of $\cal K^{(j)}(u)$. Applying the algebra map $\varphi^{(N)}$ with~\eqref{mapAqAN}  to~\eqref{TT-rel}, and taking into account the normalization~\eqref{KjN} and the invariance property, we find:
\begin{equation} 
	\bt^{(j,N)}(u) = \bt^{(j-\h,N)}(u q^{-\h}) \bt^{(\h,N)}(u q^{j-\h}) + \frac{\Gamma^{(N)} (u q^{j-\tha}) \Gamma_+ (uq^{j-\tha})}{ c(u^2 q^{2j}) c(u^2 q^{2j-2}) } \bt^{(j-1,N)}(u q^{-1})  \ ,\label{TT-relTN}
\end{equation}
with 
\begin{align}
	\Gamma^{(N)}(u) &= \normalfont{\text{tr}}_{12}\big({\cal P}^{-}_{12} {\cal K}_1^{(\h,N)}(u)  R^{(\frac{1}{2},\frac{1}{2})}(qu^2)  {\cal K}_2^{(\h,N)}(u q) \big) \ \label{GammaN}\\
	&\stackrel{\eqref{K12N}}{=}	c(u^2)c(u^2q^2)h_0^{(N)}(u)h_0^{(N)}(uq) \varphi^{(N)}(\Gamma(u))\ .\nonumber
\end{align}

As a corollary of the above TT-relations, we have that $\bt^{(j,N)}(u)$ is a polynomial of order $2j$ in $\mathsf{I}^{(N)}(u)$ with $q$-shifted arguments and with coefficients that are central in $\frac{1}{f^{(j)}(u)}\mathcal{A}_q^{(N)}[u,u^{-1}]$. Examples for first values of~$j$ are similar to the expressions in~\eqref{eq:TT-gen1} and~\eqref{eq:TT-gen2}.

\subsection{Spin-chain representations of $\cal A_q^{(N)}$}
\label{sec:fusK-dresK}
The fused K-operators are now related to the basic building ingredient for integrable open spin chains, namely the so-called dressed K-matrices that are solutions of the reflection equation (\ref{REKop}) according to the standard terminology used in \cite{Skly88} and related works. 
Following the conventions in~\cite{LBG}, introduce the Lax operator, as an element in  $\Uq[u,u^{-1}] \otimes \End(\mathbb{C}^2)$:
\begin{equation} \label{Laxh}
	\mathcal{L}^{(\h)}(u)=
	\begin{pmatrix}
		u q^{\h} K^{\h}-u^{-1}q^{-\h} K^{-\h} & (q-q^{-1}) F \\ 
		(q-q^{-1}) E & u q^{\h} K^{-\h}-u^{-1}q^{-\h} K^{\h}
	\end{pmatrix}  
	\ ,
\end{equation}
where $E$, $F$, $K^{\pm \h}$ are the generators of $\Uq$ that satisfy the defining relations:
	\begin{equation} \label{Uqsl2}
	K^\h E=q EK^\h\ , \qquad K^\h F=q^{-1}FK^\h \ , \qquad [E,F]=\frac{K-K^{-1}}{q-q^{-1}}\ , \qquad K^\h K^{-\h}=K^{-\h}K^\h=1 \ .
\end{equation}

Fused spin-$j$ Lax operators are given by \cite[Sec.\,4.1.5]{LBG}: 
\begin{equation}\label{eq:spin-j-fusedL}
\mathcal{L}^{(j)}(u)= \mathcal{F}^{(j)}_\fu \mathcal{L}_1^{(\h)}(u q^{-j+\h}) \mathcal{L}_2^{(j-\h)}(u q^\h) \mathcal{E}^{(j)}_\fu\  \ .
\end{equation}
They  belong to $\Uq[u,u^{-1}] \otimes \End(\mathbb{C}^{2j+1})$ and satisfy 
\begin{equation}
R^{(\h,j)}(u/v)\mathcal{L}_1^{(\h)}(u)\mathcal{L}_2^{(j)}(v)=\mathcal{L}_2^{(j)}(v)\mathcal{L}_1^{(\h)}(u)R^{(\h,j)}(u/v) \   \label{RLL}
\end{equation}
with the R-matrix~\eqref{fused-R-uq}, or equivalently~\eqref{R-Rqg}. 

We first recall that $\pi^{j}$ denotes  the spin-$j$ irreducible representation of $\Uq$:
\begin{equation} \label{eq:shUq}
	\pi^{j}(E) = S_+ \ , \quad \pi^j(F)= S_- \ , \quad \pi^j(K^{\pm\h}) = q^{\pm \frac{S_3}{2}} \ ,
\end{equation}
where the matrices $S_\pm$, $S_3$ are given  in~\eqref{Bdef}. 
We now consider  evaluation of the  1st `leg' of the fused spin-$j$ Lax operators~\eqref{eq:spin-j-fusedL}
 on any representation $\pi^{j'}$:
\begin{equation} \label{straight-L}
	L^{(j',j)}(u)= ( \pi^{j'} \otimes \id ) ( \mathcal{L}^{(j)} (u)) 
	 \ ,
\end{equation}
which is now an element in $\End(\mathbb{C}^{2j'+1})\otimes \End(\mathbb{C}^{2j+1})[u,u^{-1}]$. We will refer to the first tensorand as \textit{quantum} space while the second one as \textit{auxiliary} space. We have\footnote{Combining \cite[Eqs.~(4.7), (4.28), (4.39)]{LBG}  with definition~\eqref{straight-L}, we obtain~\eqref{eq:L-toR}.}
		\begin{equation}\label{eq:L-toR}
			L_{[n]}^{(j_n,j)}(u) =  \left [ \displaystyle{\prod_{k=0}^{2j_n-2} \prod_{\ell=0}^{2j-1}}	c(u q^{j+j_n-k-\ell-1})  \right ]^{-1} R^{(j_n,j)}_{n,a}(u) \ ,
		\end{equation}
where the fused R-matrices $R^{(j_n,j)}(u)$ are defined in Definition~\ref{def:fusR}. Then,
using Lemma~\ref{lem:symRj1j2}, we observe that  the matrices $L^{(j_n,j)}(u)$ are symmetric for any $(j_n,j)$.

With these evaluated Lax operators $L^{(j',j)}(u)$, we can review the spin-chain representations of $\mathcal{A}^{(N)}_q$ introduced in~\cite{BK07}: 

\begin{defn}\label{def:psiN}
 Let  $N \in \mathbb{N}_+$,  and $\bj := (j_1, \ldots, j_N)$ is the $N$-tuple of arbitrary spins, and $\bar{v}:=(v_1, \ldots, v_N)$ be  scalars from $\mathbb{C}^*$. Then, the spin-chain representations 
\begin{equation} \label{eq:vartheta}
	\psi^{(N)}_{\bj,\bar{v}}\colon \mathcal{A}^{(N)}_q \rightarrow \End(\mathbb{C}^{2j_N+1} \otimes \ldots \otimes \mathbb{C}^{2j_1+1})
\end{equation}
are defined by the identification of entries of the 
$2\times 2$ matrices\footnote{The index $[k]$ in $L_{[k]}$ characterizes the `quantum space' $V_{[k]}$ on which the entries of $L$ act. With respect to the ordering $V_{[2]}\otimes V_{[1]}$, see~\eqref{eq:vartheta}, we set $(L_{[1]} L'_{[2]})_{ij} = \displaystyle{\sum_{\ell=1}^{2}} L'_{\ell j} \otimes  L_{i\ell} $,  $(L'_{[2]} L_{[1]})_{ij} = \displaystyle{\sum_{\ell=1}^{2}} L'_{i\ell} \otimes  L_{\ell j} $ and   $(L_{[1]} K L'_{[1]})_{ij} = \displaystyle{\sum_{k,\ell=1}^{2}} K_{k\ell} L_{ik} L'_{\ell j}$.}
	\begin{align} \label{eq:dressKH}
		(\psi^{(N)}_{\bj,\bar{v}} \otimes \id) \bigl(\mathcal{K}^{(\h,N)}(u)\bigr)=   L^{(j_N,\h)}_{[N]}(uv_N) \cdots  L^{(j_1,\h)}_{[1]}(uv_1)   K^{(\h)}(u)  L^{(j_1,\h)}_{[1]}(uv_1^{-1}) \cdots L^{(j_N,\h)}_{[N]}(uv_N^{-1})   \ ,\! 
	\end{align}
    which are considered as elements in $\End(\mathbb{C}^{2j_N+1} \otimes \ldots \otimes \mathbb{C}^{2j_1+1}) \otimes \End(\mathbb{C}^2)[u,u^{-1}]$, and the K-matrix $K^{(\h)}(u)$ acting on the auxiliary space is defined in~\eqref{KM-spin1/2}. The r.h.s.\ of~\eqref{eq:dressKH} is  called \textit{dressed} K-matrix.
    
\end{defn}

We now comment on why this definition or expression in~\eqref{eq:dressKH} is indeed a representation of $\mathcal{A}_q^{(N)}$, i.e.\ that the r.h.s.\ of~\eqref{eq:dressKH} satisfies the reflection equation and that, additionally, modes of its entries satisfy the linear relations~\eqref{generalized-d1} and~\eqref{generalized-d2}.
Firstly, for the detailed proof that the r.h.s satisfies the reflection equation~\eqref{eq:REAqN} we refer to~\cite{Skly88}, and we use  that the matrices~\eqref{straight-L} are symmetric, as noticed below~\eqref{eq:L-toR}. 
Secondly, the images of the alternating generators 
$$
\{  \tW^{(N)}_{-k},\; \tW^{(N)}_{k+1},\; \tG^{(N)}_{k+1},\; \tilde{\tG}^{(N)}_{k+1}\;|\;k=0,1,...,N-1\}
$$
of $\mathcal{A}^{(N)}_q$ are given explicitly by a recursion in~\cite{BK07}. 
For convenience, they are reported in Appendix~\ref{apD},   and we use the following notations:
	\beqa 
    \begin{split}
	\calW_{-k}^{(N)} &:= \psi^{(N)}_{\bj,\bar{v}}(\tW_{-k}^{(N)})\ , \quad
	&\calW_{k+1}^{(N)} &:= \psi^{(N)}_{\bj,\bar{v}}(\tW_{k+1}^{(N)})\ , \\ 
		\calG_{k+1}^{(N)} &:= \psi^{(N)}_{\bj,\bar{v}}(\tG_{k+1}^{(N)})\ , \quad
		&\tilde{\calG}_{k+1}^{(N)} &:= \psi^{(N)}_{\bj,\bar{v}}(\tilde{\tG}_{k+1}^{(N)})\ .
        \end{split}
        \label{imOqN}
	\eeqa
Here, we give expressions for the first two, namely
    \begin{align}
	 	\calW_0^{(N)}&=  \left(k_+ v_N
	 	q^{\h}S_+q^{\h S_3}
	 	+k_- v_N^{-1} 
	 	q^{-\h}S_-q^{\h S_3}\right)\otimes {\mathbb I}^{(N-1)} +  q^{S_3}\otimes \calW_{0}^{(N-1)}  \ ,\label{W0vN}\\
	 	\calW_1^{(N)}&=  \left(k_+ v_N^{-1}
	 	q^{-\h}S_+q^{-\h S_3} + k_- v_N
	 	q^{\h}S_-q^{-\h S_3}\right)\otimes {\mathbb I}^{(N-1)} +  q^{-S_3}\otimes \calW_{1}^{(N-1)}  \ ,\label{W1vN}
	 \end{align}
with the initial conditions~\eqref{W0-init-cond}.
Furthermore,   the proof of the relations~\eqref{generalized-d1} and~\eqref{generalized-d2} for $\ell=0$ is given in~\cite{BK07} under the identification
\begin{equation}
		\label{dkN}
	d_n^{(N)}=(-1)^{N+1-n} (q+q^{-1})^{n}\sum_{k_1<...<k_{N-n}=1}^{N}\alpha_{k_1}\cdot \cdot \cdot\alpha_{k_{N-n}} , \quad n=0,1,..., N\ , 
\end{equation}
where 
\begin{align*}
	\alpha_1= \frac{(v_1^2+v_1^{-2}) w_0^{(j_1)}}{(q+q^{-1})}+ \frac{\varepsilon^{(0)}_{+}\varepsilon^{(0)}_{-}(q-q^{-1})^2}{k_+k_-(q+q^{-1})}\ ,\qquad \alpha_{n}=\frac{(v_n^2+v_n^{-2})w_0^{(j_n)}}{(q+q^{-1})}\quad \mbox{for}\quad n=2,3, \ldots,N\ ,
\end{align*}
and with
\begin{equation}
	\varepsilon^{(N)}_{\pm}=w_0^{(j_N)}\varepsilon^{(N-1)}_{\mp} - (v_N^2+v_N^{-2})\varepsilon^{(N-1)}_{\pm}\ ,\qquad \varepsilon_\pm^{(0)}=\varepsilon_\pm, \qquad w_0^{(j)}=q^{2j+1} + q^{-2j-1} \label{eps-pmN}
	\ .
\end{equation}
And the proof for $\ell\geq 1$ is  shown by induction as it was done for $\varepsilon^{(0)}_\pm =\varepsilon^{(N)}_\pm = 0$  in~\cite[App.\,B]{BK05}.

\begin{rem}
\begin{enumerate} \mbox{}
\item Using the quotient map $\varphi^{(N)}$, we see that the composition $\psi^{(N)}_{\bj,\bar{v}}\circ \varphi^{(N)}$ defines  a representation of the  sole algebra ${\cal A}_q$ at any values of $N$, $\bj$ and $\bar{v}$.
\item
It will be shown  in the next section that the quantum determinant $\Gamma^{(N)}(u)$ of ${\cal A}_q^{(N)}$, recall~\eqref{GammaN}, evaluates under the spin-chain representation $\psi^{(N)}_{\bj,\bar{v}}$ to a scalar Laurent polynomial in $u$, and thus the algebra map $\psi^{(N)}_{\bj,\bar{v}}$ factorizes through the $q-$Onsager algebra, see Section \ref{sub:TT-qOA}. As a consequence, the image $\psi^{(N)}_{\bj,\bar{v}}\bigl({\cal A}_q^{(N)}\bigr)$ is generated solely by the two operators~\eqref{W0vN} and~\eqref{W1vN}.
\item 	In the context of spin chains,  each quantum space is indexed by a non-zero complex parameter $v_n$ corresponding to the inhomogeneities of the quantum spin-chains and can be also identified with the evaluation parameters of $\Loop$.
\end{enumerate}
\end{rem}

We now turn to the dressed K-matrices that are the images via $\psi^{(N)}_{\bj,\bar{v}}$ of the spin-$j$ fused K-operators $\mathcal{K}^{(j,N)}(u)$. Recall the spin-$j$ fused K-matrix~\eqref{fused-K} with the fundamental K-matrix~\eqref{KM-spin1/2}. 

\begin{prop}\label{prop:Kjspinchain} For all $j\in \h \mathbb{N}_+$ and with notations and conventions from Definition~\ref{def:psiN}, the spin-chain representation map $\psi^{(N)}_{\bj,\bar{v}}$ satisfies 
	\beqa\label{coact-h}
			(\psi^{(N)}_{\bj,\bar{v}} \otimes \id) \bigl(\mathcal{K}^{(j,N)}(u)\bigr)    
			= L^{(j_N,j)}_{[N]}(uv_N)  \cdots  L^{(j_1,j)}_{[1]}(uv_1)   K^{(j)}(u)  L^{(j_1, j)}_{[1]}(uv_1^{-1}) \cdots L^{(j_N,j)}_{[N]}(uv_N^{-1})
	\eeqa
	with  identification of the parameters $d_k^{(N)}$ and $\varepsilon_{\pm}^{(N)}$ of $\psi^{(N)}_{\bj,\bar{v}}$ as in~\eqref{dkN} and~\eqref{eps-pmN},
and the K-matrix $K^{(j)}(u)$ acting on the auxiliary space is defined in~\eqref{fused-K}.
\end{prop}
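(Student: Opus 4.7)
The plan is to proceed by induction on $j$. The base case $j=\h$ is precisely the defining equation \eqref{eq:dressKH}. For the inductive step, apply $\psi^{(N)}_{\bj,\bar{v}} \otimes \id$ to the K-operator fusion formula~\eqref{fused-K-opN}; since $\mathcal{E}^{(j)}_\fu$, $\mathcal{F}^{(j)}_\fu$ and the auxiliary R-matrix $R^{(\h,j-\h)}(u^2 q^{-j+1})$ act only on auxiliary spaces and commute with $\psi^{(N)}_{\bj,\bar{v}}$, the inductive hypothesis at spins $\h$ and $j-\h$ yields
\begin{equation*}
(\psi^{(N)}_{\bj,\bar{v}} \otimes \id)\bigl(\mathcal{K}^{(j,N)}(u)\bigr)
=\mathcal{F}^{(j)}_\fu\,\tilde{K}^{(\h,N)}_1(uq^{-j+\h})\,R^{(\h,j-\h)}_{12}(u^2 q^{-j+1})\,\tilde{K}^{(j-\h,N)}_2(uq^\h)\,\mathcal{E}^{(j)}_\fu,
\end{equation*}
where $\tilde{K}^{(j',N)}(u)$ denotes the dressed K-matrix on the r.h.s.\ of~\eqref{coact-h}, written compactly as $\mathcal{T}^{(j')}(u)\,K^{(j')}(u)\,\bar{\mathcal{T}}^{(j')}(u)$ with $\mathcal{T}^{(j')}(u)= L^{(j_N,j')}_{[N]}(uv_N)\cdots L^{(j_1,j')}_{[1]}(uv_1)$ and the reversed factor $\bar{\mathcal{T}}^{(j')}(u)$.

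The next step is to identify this expression with $\tilde{K}^{(j,N)}(u)$. Expanding each $\tilde{K}^{(j',N)}(uq^{\pm(j-\h)/\cdots})$ into monodromy-K-monodromy form, one uses repeatedly the Yang-Baxter relation~\eqref{RLL} (viewed at each site $[n]$ as an RLL equation in the auxiliary spaces $\langle 1\rangle,\langle 2\rangle$) to transport $R^{(\h,j-\h)}_{12}(u^2 q^{-j+1})$ leftward across the factor $\bar{\mathcal{T}}^{(\h)}_1(uq^{-j+\h})$ and rightward across $\mathcal{T}^{(j-\h)}_2(uq^\h)$. The crossed K-matrices $K^{(\h)}_1(uq^{-j+\h})$ and $K^{(j-\h)}_2(uq^\h)$ are then brought together using the (undressed) reflection equation for the spin-$\h$ and spin-$(j-\h)$ K-matrices. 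What remains in the middle is exactly the combination $\mathcal{F}^{(j)}_\fu K^{(\h)}_1(uq^{-j+\h})R^{(\h,j-\h)}_{12}(u^2 q^{-j+1})K^{(j-\h)}_2(uq^\h)\mathcal{E}^{(j)}_\fu$, which by the K-matrix fusion~\eqref{fused-K} equals $K^{(j)}(u)$. On the outer layers, each pair of Lax operators $L^{(j_n,\h)}_{[n],1}(uq^{-j+\h}v_n^{\pm 1})$ and $L^{(j_n,j-\h)}_{[n],2}(uq^\h v_n^{\pm 1})$ meeting through $\mathcal{F}^{(j)}_\fu,\mathcal{E}^{(j)}_\fu$ assembles into the fused Lax operator $L^{(j_n,j)}_{[n]}(uv_n^{\pm 1})$ by virtue of~\eqref{eq:spin-j-fusedL} evaluated in $\pi^{j_n}$. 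Collecting all sites yields the monodromies $\mathcal{T}^{(j)}(u)$ and $\bar{\mathcal{T}}^{(j)}(u)$, completing the identification with $\tilde{K}^{(j,N)}(u)$.

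The main obstacle is the bookkeeping of the Yang-Baxter moves needed to transport $R^{(\h,j-\h)}_{12}(u^2 q^{-j+1})$ across the full chain of $2N$ Lax operators on each side, since the spectral-parameter arguments $uq^{-j+\h}v_n^{\pm 1}$ in space $\langle 1\rangle$ and $uq^\h v_n^{\pm 1}$ in space $\langle 2\rangle$ produce ratios that must combine correctly with the fixed R-matrix argument $u^2 q^{-j+1}$ arising from the reflection structure. The subtlety is that the fusion intertwiners satisfy $\mathcal{E}^{(j)}_\fu\mathcal{F}^{(j)}_\fu=\mathbb I-\bar{\mathcal{E}}^{(j-1)}_\fu\bar{\mathcal{F}}^{(j-1)}_\fu$ rather than a genuine identity, so a naive factorization does not work; however, when evaluated on the image of $\mathcal{E}^{(j)}_\fu$ the complementary projector vanishes, and the fusion of Lax operator pairs proceeds exactly as in the undressed case. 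This is essentially the statement that the Sklyanin dressing procedure commutes with the K-matrix fusion, which is what drives the entire computation.
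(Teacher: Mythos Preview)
Your approach is essentially the paper's: induction on $j$, apply $\psi^{(N)}_{\bj,\bar{v}}\otimes\id$ to the fusion formula~\eqref{fused-K-opN}, then reorganize the dressed expression via Yang--Baxter and reflection-equation moves until the Lax pairs assemble into $L^{(j_n,j)}$ and the central block becomes $K^{(j)}(u)$ by~\eqref{fused-K}.

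The one place where your write-up is not yet a proof is the handling of the $\mathcal{E}^{(j)}_\fu\mathcal{F}^{(j)}_\fu$ insertions. Saying ``on the image of $\mathcal{E}^{(j)}_\fu$ the complementary projector vanishes'' is true, but you still have to justify that after each intermediate Yang--Baxter move the expression to the right of the would-be insertion point remains in that image; this is not automatic. The paper resolves this not by an image argument but by a concrete device: before expanding, it inserts $\mathcal{H}^{(j)}_\fu[\mathcal{H}^{(j)}_\fu]^{-1}$ and uses $\mathcal{E}^{(j)}_\fu\mathcal{H}^{(j)}_\fu=R^{(\h,j-\h)}(q^{j})\mathcal{E}^{(j)}_\fu$ from~\eqref{usefulEFH}, followed by the reflection equation, to rewrite $\mathcal{K}^{(j,N)}(u)$ with an explicit factor $\bar R_{12}=R^{(\h,j-\h)}(q^{j})$ sitting next to $\mathcal{F}^{(j)}_\fu$. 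This degenerate R-matrix is then transported through the chain by the RLL relations, and at each step the third identity in~\eqref{usefulEFH}, namely $R^{(\h,j-\h)}(q^{j})=\mathcal{E}^{(j)}_\fu\mathcal{F}^{(j)}_\fu R^{(\h,j-\h)}(q^{j})$, is exactly what licenses the insertion of $\mathcal{E}^{(j)}_\fu\mathcal{F}^{(j)}_\fu$ that peels off one fused Lax operator at a time. Your sketch becomes a complete proof once you replace the image argument by this $\mathcal{H}^{(j)}$-trick.
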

We note that the identification in~\eqref{coact-h}  is the reason of all the normalizations introduced before.
\begin{proof} 
	The proof is done by induction. The case $j=\h$ holds by  Definition \eqref{eq:dressKH}. 
	Assume~\eqref{coact-h} holds for a fixed $j$. We now show the case $j+\h$. For convenience, consider $N=2$. From the formula of $\mathcal{K}^{(j,N)}(u)$ in~\eqref{fused-K-opN}, we have:
	\begin{align} \nonumber
		\mathcal{K}^{(j+\h,2)}(u) &= \mathcal{F}^{(j+\h)}_\fu \mathcal{K}_1^{(\h,2)}(u q^{-j}) R_{12}^{(\h,j)}(u^2 q^{-j+\h}) \mathcal{K}_2^{(j,2)}(u q^\h)  \mathcal{E}_\fu^{(j+\h)} \mathcal{H}^{(j+\h)}_\fu \lbrack \mathcal{H}^{(j+\h)}_\fu\rbrack ^{-1} \\ \nonumber
		&= \mathcal{F}_\fu^{(j+\h)}  \mathcal{K}_1^{(\h,2)}(u q^{-j}) R_{12}^{(\h,j)}(u^2 q^{-j+\h}) \mathcal{K}_2^{(j,2)}(u q^\h) R_{12}^{(\h,j)}(q^{j+\h})  \mathcal{E}_\fu^{(j+\h)}  \lbrack \mathcal{H}^{(j+\h)}_\fu\rbrack ^{-1} \\
		\label{proofAQAQN}
		& = \mathcal{F}^{(j+\h)}_\fu R_{12}^{(\h,j)}(q^{j+\h}) \mathcal{K}_2^{(j,2)}(u q^\h)  R_{12}^{(\h,j)}(u^2 q^{-j+\h})  \mathcal{K}_1^{(\h,2)}(u q^{-j})  \mathcal{E}_\fu^{(j+\h)} \lbrack \mathcal{H}^{(j+\h)}_\fu\rbrack ^{-1} \ ,
	\end{align}
	where we used~\eqref{usefulEFH} and the reflection equation.
	Below, we underline the steps of calculation and we use the shorthand notation:\footnote{Indices $k,\ell$ in $X_{k[\ell]}$ stand for the auxiliary and quantum space, respectively.} 
	\begin{align}
		& R_{k \ell} = R_{k \ell}^{(j_k,j_\ell)}(u_k/u_\ell) \ ,\qquad \bar{R}_{k\ell}=R_{k\ell}^{(j_k,j_\ell)}(u_\ell/ u_k)\ , \qquad \hat{R}_{k\ell}=R_{k\ell}^{(j_k,j_\ell)}(u_k u_\ell)  \ , \nonumber\\
		& K_\ell=K_\ell^{(j_\ell)}(u_\ell)\ ,\qquad L_{k[\ell]} =\left( L_{k}^{(j_k,j)}(u_k/v_\ell) \right)_{[ \ell ]}, \quad \hat{L}_{k[\ell]} =\left( L_{k}^{(j_k,j)}(u_k v_\ell) \right)_{[ \ell ]} \ .\nonumber
	\end{align} 
	Recall that two operators that act on different auxiliary and quantum spaces commute, i.e.\
	\begin{equation} \label{eq:commutLK}
		\left [L_{k[\ell]}, L_{m[n]} \right ] = 0 \ , \qquad \left [ L_{k[\ell]} , K_m \right ] = 0 \ , \qquad  \text{for $k \neq m$ and $\ell \neq n$ \ .}
	\end{equation}
	Recall also that $L^{(j)}(u)$ and $R^{(\h,j)}(u)$ are invertible, their inverses are proportional to $L^{(j)}(u^{-1})$ and $R^{(\h,j)}(u^{-1})$, respectively~\cite[Sec.\,4.1]{LBG}. Then, the reflection equation and the Yang-Baxter algebra yield:
	\begin{align} \label{eq:not-RK}
		K_1 \hat{R}_{12} K_2 \bar{R}_{12} = \bar{R}_{12} K_2 \hat{R}_{12} K_1 \ , \quad   L_{2[2]} \hat{R}_{12} \hat{L}_{1[\ell]} =  \hat{L}_{1[\ell]} \hat{R}_{12} L_{2[\ell]} \ , \quad \bar{R}_{12} \hat{L}_{2 [\ell]} \hat{L}_{1[\ell]} = \hat{L}_{1[\ell]} \hat{L}_{2 [\ell]} \bar{R}_{12}\ .
	\end{align}
	Fix $j_1=\h$, $j_2 =j$ and $u_1 =u q^{-j}$, $u_2=u q^\h$. Then from~\eqref{proofAQAQN}, assuming~\eqref{coact-h} holds for a fixed $j$, one has:
	\begin{align*} 
		\mathcal{K}^{(j+\h,2)}(u)&= \mathcal{F}_\fu\bar{R}_{12} \hat{L}_{2[2]} \hat{L}_{2[1]} K_2 L_{2[1]} \underline{ L_{2[2]} \hat{R}_{12} \hat{L}_{1[2]}}  \hat{L}_{1[1]} K_1 L_{1[1]} L_{1[2]} \mathcal{E}_\fu \mathcal{H}_\fu^{-1} \\
		& \overset{\eqref{eq:not-RK}}{=}  \mathcal{F}_\fu\bar{R}_{12} \hat{L}_{2[2]} \underline{ \hat{L}_{2[1]} K_2 L_{2[1]}  \hat{L}_{1[2]}}  \hat{R}_{12} \underline{L_{2[2]}  \hat{L}_{1[1]} K_1 L_{1[1]}} L_{1[2]} \mathcal{E}_\fu \mathcal{H}_\fu^{-1}  \\
		&\overset{\eqref{eq:commutLK}}{=}  \mathcal{F}_\fu\bar{R}_{12} \hat{L}_{2[2]}  \hat{L}_{1[2]} \hat{L}_{2[1]} K_2  \underline{L_{2[1]}   \hat{R}_{12}  \hat{L}_{1[1]}} K_1 L_{1[1]} L_{2[2]} L_{1[2]} \mathcal{E}_\fu \mathcal{H}_\fu^{-1} \\
		&\overset{\eqref{eq:commutLK}}{=}  \mathcal{F}_\fu\bar{R}_{12} \hat{L}_{2[2]}  \hat{L}_{1[2]} \hat{L}_{2[1]} \underline{ K_2  \hat{L}_{1[1]}}   \hat{R}_{12}  \underline{ L_{2[1]} K_1} L_{1[1]} L_{2[2]} L_{1[2]} \mathcal{E}_\fu \mathcal{H}_\fu^{-1} \\
		&\overset{\eqref{eq:not-RK}}{=}  \mathcal{F}_\fu\underline{\bar{R}_{12} \hat{L}_{2[2]}  \hat{L}_{1[2]} \hat{L}_{2[1]}  \hat{L}_{1[1]} } K_2    \hat{R}_{12}   K_1 L_{2[1]} L_{1[1]} L_{2[2]} L_{1[2]} \mathcal{E}_\fu \mathcal{H}_\fu^{-1} \ . 
	\end{align*}
	Now, using the Yang-Baxter algebra (\ref{RLL}), the reflection equation and~\eqref{usefulEFH} we get:
	\begin{align*}
		\qquad	\mathcal{K}^{(j+\h,2)}(u)&= \mathcal{F}_\fu \hat{L}_{1[2]} \hat{L}_{2[2]} \mathcal{E}_\fu\mathcal{F}_\fu \hat{L}_{1[1]}  \hat{L}_{2[1]} \mathcal{E}_\fu\mathcal{F}_\fu\underline{\bar{R}_{12} K_2    \hat{R}_{12}  K_1}  L_{2[1]} L_{1[1]} L_{2[2]} L_{1[2]} \mathcal{E}_\fu \mathcal{H}_\fu^{-1}  \\
		&\overset{\eqref{eq:not-RK}}{=}  L^{(j+\h)}_{[2]}(uv_2) L^{(j+\h)}_{[1]}(uv_1) \mathcal{F}_\fu K_1 \hat{R}_{12}  K_2  \mathcal{E}_\fu\mathcal{F}_\fu\underline{\bar{R}_{12}    L_{2[1]} L_{1[1]} L_{2[2]} L_{1[2]} } \mathcal{E}_\fu \mathcal{H}_\fu^{-1} \\
		&\overset{\eqref{eq:not-RK}}{=} L^{(j+\h)}_{[2]}(uv_2) L^{(j+\h)}_{[1]}(uv_1) K^{(j+\h)}(u)  \mathcal{F}_\fu L_{1[1]} L_{2[1]} \mathcal{E}_\fu\mathcal{F}_\fu L_{1[2]} L_{2[2]} \underline{\bar{R}_{12} \mathcal{E}_\fu \mathcal{H}_\fu^{-1} } \\
		&\overset{\eqref{usefulEFH}}{=} L^{(j+\h)}_{[2]}(uv_2) L^{(j+\h)}_{[1]}(uv_1) K^{(j+\h)}(u) L^{(j+\h)}_{[1]}(uv_1^{-1}) L^{(j+\h)}_{[2]}(uv_2^{-1}) \ .
	\end{align*}
		The proof extends to general $N$ in a straightforward manner.
	\end{proof}

	\section{Conserved quantities for integrable open spin chains}\label{sec5}
    In this section, we construct  normalized transfer matrices that are suitable for building Hamiltonians of integrable spin chains of length $N$ and any spin values for the auxiliary and quantum spaces. As a result, together with the TT-relations studied in the previous sections, higher Hamiltonians ${\cal H}^{(n)}$ of spin-$j$ are expressed in terms of images on spin-chain representations of mutually commuting elements $\tI_{2k+1}^{(N)}$, with $k=0,\ldots,N-1$, given by~\eqref{IkOqN}.
    This gives the main result of this section -- an explicit and simple algorithm of constructing the hierarchy of higher Hamiltonians ${\cal H}^{(n)}$ in terms of spin matrices for any spin values.
    
    In a greater detail, we begin in Section~\ref{sec:TT-tr-mat} with recalling Sklyanin's construction of spin-chain transfer matrices and relating them to the spin-chain representation $\psi^{(N)}_{\bj,\bar{v}}$ from Definition~\ref{def:psiN} of the spin-$j$ generating functions $\bt^{(j,N)}(u)$ of ${\cal A}^{(N)}_q$ introduced in~\eqref{tgintN}. As a result we get TT-relations for the spin-chain transfer matrices in Proposition~\ref{prop:TTt}. Then, in Section~\ref{sec:renorm-tr-mat} we show   that properly normalized transfer matrices
    have  a well behaved specialization at $u=1$ which is necessary for the higher Hamiltonians construction as logarithmic derivatives of the transfer matrix at $u=1$.
    Explicit examples for the open XXZ spin-$\h$ and spin-1 chains are worked out in details. In Section~\ref{sec:t-expansion}, we provide expansion of (normalized) spin-$j$  transfer matrices in terms of the spin-chain representations of the Laurent polynomials $\mathsf{I}^{(N)}(u)$ from~\eqref{eq:def-IN}, see Proposition~\ref{prop:tIN} and Corollary~\ref{cor:nt}. We give explicit expressions of first higher Hamiltonians in these terms for spin-$\h$, and describe our algorithm of expressing every ${\cal H}^{(n)}$ for arbitrary spins as a polynomial in operators $\psi^{(N)}_{\bj,\bar{v}}(\tI_{2k+1}^{(N)})$, and thus eventually in terms of spin matrices. Finally, in Section~\ref{sec:tr-mat-Ons} we explain how to express both the transfer matrices and its Hamiltonians in terms of the spin-chain representations of the $q-$Onsager generators $\calW_0^{(N)}$, $\calW_1^{(N)}$ given by~\eqref{W0vN} and~\eqref{W1vN}.

    \subsection{TT-relations for spin-chain transfer matrices}\label{sec:TT-tr-mat}
	The explicit construction of conserved quantities associated with various examples of quantum integrable open spin chains of length $N$ with generic integrable boundary conditions, e.g.\ higher XXZ spin chains~\cite{FNR07} and alternating spin chains~\cite{CYSW14}, is now revisited in light of  results of the previous sections.  For these models, it is well-known that  conserved quantities are derived from a transfer matrix~\cite{Skly88}:
	\begin{equation}  \label{tjs}
		{\boldsymbol  t}^{j,\bj}(u)= \normalfont{\text{tr}}_{{\mathbb C}^{2j+1}} \bigl( K_{a}^{+(j)}(u) T_{a,N}^{j, \bj}(u) K_a^{(j)}(u) \hat{T}_{a,N}^{j,\bj}(u) \bigr) \ ,
	\end{equation} 
with the fused K-matrix $K^{(j)}(u)$ defined in~\eqref{fused-K} and its dual one $K^{+(j)}(u)$ from~\eqref{def:kp}, and here $j$ denotes the spin of the auxiliary space indicated by `$a$' and with the trace taken over it, while $\boldsymbol{\bj} := (j_1, \ldots, j_N)$ is the $N$-tuple of  spins at the quantum spaces; we also introduce 
	\begin{align}\label{eq:TAN}
    \begin{split}
		T_{a,N}^{j,\bj}(u)&= 	R_{Na}^{(j_N,j )}(uv_N) \cdots  R_{1a}^{(j_1,j)}(uv_1) \ ,  \\
        \hat{T}_{a,N}^{j,\bj}(u)&= R_{1a}^{(j_1,j)}(uv_1^{-1}) \cdots R_{Na}^{(j_N,j)}(uv_N^{-1}) \ ,
        \end{split}
	\end{align}
	with the R-matrices $R^{(j_1,j_2)}(u)$ given in~\eqref{v2Rj1j2}, and $v_i\in\mathbb{C}^*$ are the inhomogeneities. Typically,  {\it local} conserved quantities  are obtained for $j=j_i$ and $v_i=1$ for all $i=1,...,N$, as will be discussed in next subsections. Since by construction the entries of $R^{(j_1,j_2)}(u)$, $K^{(j)}(u)$ and  $f^{(j)}(u)K^{+(j)}(u)$    with $f^{(j)}(u)$ given in~\eqref{eq:fj} are Laurent polynomials in $u$, see Definition~\ref{def:fusR}, eqs.~\eqref{fused-K} with~\eqref{KM-spin1/2} and~\eqref{def:kp}, it follows that
\beqa
f^{(j)}(u){\boldsymbol  t}^{j,\bj}(u) \in   \Bigl(\bigotimes_{n=1}^{N} \End({\mathbb C}^{2j_n+1})\Bigr)[u,u^{-1}]\ .\label{eq:polyt}
\eeqa

    Now, we relate the  spin-$j$ generating functions~\eqref{tgintN} of commutative subalgebras in $\mathcal{A}_q^{(N)}$ to the transfer matrices from~\eqref{tjs}.

\begin{prop} \label{prop:TNtj}
 Let  $N \in \mathbb{N}_+$,  and $\bj := (j_1, \ldots, j_N)$ is the $N$-tuple of arbitrary spins, and $v_i\in \mathbb{C}^*$, for $1\leq i\leq N$.
	The spin-chain representation of the generating function $\bt^{(j,N)}(u)$ from~\eqref{tgintN}  is proportional to the transfer matrix~\eqref{tjs}:
\begin{equation} 
	\psi^{(N)}_{\bj,\bar{v}}\bigl((\bt^{(j,N)}(u)\bigr) =  \left [\prod_{n=1}^N   \displaystyle{\prod_{k=0}^{2j_n-2}  \prod_{\ell=0}^{2j-1} }  c(u q^{j+j_n-k-\ell-1} v_n) c(u q^{j+j_n-k-\ell-1} v_n^{-1})           \right ]^{-1} {\boldsymbol  t}^{j,\bj}(u)\ ,\label{evalTN}
\end{equation}
where the spin-chain representation $\psi^{(N)}_{\bj,\bar{v}}$ of $\mathcal{A}_q^{(N)}$ is given in Definition~\ref{def:psiN}.
\end{prop}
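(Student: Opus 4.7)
The proof strategy is a direct computation, combining the definition of $\bt^{(j,N)}(u)$ with the two previously established identifications: Proposition~\ref{prop:Kjspinchain} relates the image of the truncated fused K-operator under $\psi^{(N)}_{\bj,\bar{v}}$ to a dressed K-matrix built from evaluated Lax operators, and equation~\eqref{eq:L-toR} expresses each evaluated Lax operator $L^{(j_n,j)}_{[n]}(u)$ as the fused R-matrix $R^{(j_n,j)}_{n,a}(u)$ divided by an explicit scalar product of $c$-factors.

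The plan is as follows. Starting from the definition~\eqref{tgintN}, since $K^{+(j)}(u)$ has scalar entries, the map $\psi^{(N)}_{\bj,\bar{v}}$ commutes with the trace over the auxiliary space $\mathbb{C}^{2j+1}$, and one gets
\begin{equation*}
	\psi^{(N)}_{\bj,\bar{v}}\bigl(\bt^{(j,N)}(u)\bigr) = \normalfont{\text{tr}}_{\mathbb{C}^{2j+1}}\Bigl( K^{+(j)}_a(u) \, (\psi^{(N)}_{\bj,\bar{v}} \otimes \id)\bigl(\mathcal{K}^{(j,N)}(u)\bigr) \Bigr)\ .
\end{equation*}
Then one substitutes the dressed K-matrix expression~\eqref{coact-h} from Proposition~\ref{prop:Kjspinchain}. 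This yields, inside the trace, the product $K^{+(j)}_a(u)\, L^{(j_N,j)}_{[N]}(uv_N)\cdots L^{(j_1,j)}_{[1]}(uv_1)\, K^{(j)}(u)\, L^{(j_1,j)}_{[1]}(uv_1^{-1})\cdots L^{(j_N,j)}_{[N]}(uv_N^{-1})$.

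Next, each evaluated Lax operator is rewritten using~\eqref{eq:L-toR}: one replaces every $L^{(j_n,j)}_{[n]}(uv_n^{\pm 1})$ by the scalar prefactor $\bigl[\prod_{k,\ell} c(u v_n^{\pm 1} q^{j+j_n-k-\ell-1})\bigr]^{-1}$ times the fused R-matrix $R^{(j_n,j)}_{n,a}(uv_n^{\pm 1})$. Since the scalar prefactors commute with everything, they can be pulled out of the trace and collected into a single global factor. Using $c(x)=c(-x)$-type symmetries is not needed; one simply notes that $c(u v_n^{\pm 1} q^{j+j_n-k-\ell-1}) = c(u q^{j+j_n-k-\ell-1} v_n^{\pm 1})$, so the total scalar prefactor that emerges from all $2N$ Lax operators is exactly
\begin{equation*}
	\prod_{n=1}^N \prod_{k=0}^{2j_n-2}\prod_{\ell=0}^{2j-1} c(u q^{j+j_n-k-\ell-1} v_n)\, c(u q^{j+j_n-k-\ell-1} v_n^{-1})\ ,
\end{equation*}
which is the inverse of the prefactor in~\eqref{evalTN}. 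The remaining object inside the trace is, by the definitions~\eqref{tjs} and~\eqref{eq:TAN},
\begin{equation*}
	\normalfont{\text{tr}}_{\mathbb{C}^{2j+1}}\bigl( K^{+(j)}_a(u)\, T^{j,\bj}_{a,N}(u)\, K^{(j)}(u)\, \hat{T}^{j,\bj}_{a,N}(u) \bigr) = \boldsymbol{t}^{j,\bj}(u)\ ,
\end{equation*}
and~\eqref{evalTN} follows.

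This proof is essentially an assembly argument; there is no deep analytical obstacle. The only place demanding care is the bookkeeping of indices: one must verify that the order of the factors produced on the right of~\eqref{coact-h} matches exactly the ordering used in~\eqref{tjs}--\eqref{eq:TAN} (so that auxiliary-space and quantum-space positions agree), and that the shifts $q^{j+j_n-k-\ell-1}$ emerging from~\eqref{eq:L-toR} are properly combined over both the $uv_n$ and $uv_n^{-1}$ contributions. Once these are tracked, the identification is immediate.
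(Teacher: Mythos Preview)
Your proof is correct and follows exactly the approach indicated in the paper: start from the definition~\eqref{tgintN}, apply Proposition~\ref{prop:Kjspinchain} (equation~\eqref{coact-h}), and convert each evaluated Lax operator to an R-matrix via~\eqref{eq:L-toR}. The paper's own proof is a single sentence stating that the result is ``straightforward'' from precisely these three ingredients, so your write-up is simply an explicit unpacking of that sentence.
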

\begin{proof}
	It is straightforward starting from the definition of $\bt^{(j,N)}(u)$ in~\eqref{tgintN}, and using~\eqref{coact-h} with~\eqref{eq:L-toR}.
\end{proof}

We are now ready to formulate the main result of this section -- the TT-relations of the transfer matrices~\eqref{tjs} for arbitrary spins.
	\begin{prop}\label{prop:TTt}The transfer matrices ${\boldsymbol  t}^{j,\bj}(u)$ from~\eqref{tjs} satisfy the TT-relations for all $j\in\h\mathbb{N}_+$
		\begin{align}\label{normTTN}
			{\boldsymbol  t}^{j,\bj}(u) &= {\boldsymbol  t}^{j-\h,\bj}(u q^{-\h}) {\boldsymbol  t}^{\h,\bj}(u q^{j-\h})  \\ \nonumber
			&+
			\left [	
			\displaystyle{\prod_{n=1}^N} \beta^{(j_n)}(u q^{j-\h} v_n) \beta^{(j_n)}(u q^{j-\h} v_n^{-1})
			\right ]  \frac{	\Gamma_{-} (u q^{j-\tha}) \Gamma_{+} (u q^{j-\tha})       }{ c(u^2 q^{2j}) c(u^2 q^{2j-2})} {\boldsymbol  t}^{j-1,\bj}(u q^{-1}) \ ,
		\end{align}
        with ${\boldsymbol  t}^{0,\bj}(u) =  1$ and
		 the quantum determinants $\Gamma_{\pm}(u)$ introduced in~\eqref{gammaKP} and~\eqref{gammaKM}, respectively, and $\beta^{(j)}(u)$ is given by~\eqref{beta}.
	\end{prop}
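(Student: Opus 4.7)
The plan is to deduce Proposition~\ref{prop:TTt} by applying the spin-chain representation $\psi^{(N)}_{\bj,\bar{v}}$ to the truncated TT-relation~\eqref{TT-relTN} and then translating every term into the transfer-matrix form using Proposition~\ref{prop:TNtj}. Concretely, I first apply $\psi^{(N)}_{\bj,\bar{v}}\otimes \id$ to
\[
\bt^{(j,N)}(u) = \bt^{(j-\h,N)}(u q^{-\h}) \bt^{(\h,N)}(u q^{j-\h}) + \frac{\Gamma^{(N)}(u q^{j-\tha})\, \Gamma_+(u q^{j-\tha})}{c(u^2 q^{2j})\,c(u^2 q^{2j-2})}\,\bt^{(j-1,N)}(u q^{-1}),
\]
and replace each $\psi^{(N)}_{\bj,\bar{v}}(\bt^{(j',N)}(u))$ with the scalar multiple of ${\boldsymbol t}^{j',\bj}(u)$ provided by~\eqref{evalTN}. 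The left-hand side then produces the overall scalar prefactor of ${\boldsymbol t}^{j,\bj}(u)$, and the two right-hand terms produce prefactors of ${\boldsymbol t}^{j-\h,\bj}(uq^{-\h}){\boldsymbol t}^{\h,\bj}(uq^{j-\h})$ and ${\boldsymbol t}^{j-1,\bj}(uq^{-1})$ respectively. A direct product/rearrangement of $c(\cdot)$ factors, using the multiplicativity of the normalization in~\eqref{evalTN} and the definition of $\beta^{(j_n)}$ in~\eqref{beta}, should match the scalar in front of ${\boldsymbol t}^{j-1,\bj}(uq^{-1})$ in~\eqref{normTTN} up to the image of $\Gamma^{(N)}$.

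The core computation is therefore the evaluation of $\psi^{(N)}_{\bj,\bar{v}}(\Gamma^{(N)}(u))$. Using the definition~\eqref{GammaN} together with~\eqref{eq:dressKH}, the dressed K-matrix identity gives
\[
(\psi^{(N)}_{\bj,\bar{v}} \otimes \id)(\mathcal{K}_a^{(\h,N)}(u)) = T^{\h,\bj}_{a,N}(u)\, K^{(\h)}(u)\, \hat T^{\h,\bj}_{a,N}(u),
\]
and the standard quantum-determinant factorization (based on $\mathcal{P}^{-}_{12} L^{(j_n,\h)}_1(u) L^{(j_n,\h)}_2(uq) = L^{(j_n,\h)}_2(uq) L^{(j_n,\h)}_1(u)\mathcal{P}^{-}_{12}$ together with the unitarity and crossing identities~\eqref{unitarityJ}--\eqref{crossingJ} collected in Lemma~\ref{lem:symRj1j2}, see also Proposition~\ref{prop:R-M-P}) shows that
\[
\psi^{(N)}_{\bj,\bar{v}}(\Gamma^{(N)}(u)) = \Gamma_-(u)\,\prod_{n=1}^N \gamma^{(j_n)}(uv_n)\gamma^{(j_n)}(uv_n^{-1}),
\]
for an explicit scalar $\gamma^{(j_n)}$ built from $\beta^{(j_n)}$ and the $h_0^{(N)}$-factors appearing in~\eqref{K12N} and~\eqref{GammaN}. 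Equivalently, one may short-cut this by going through the factorization $\Gamma^{(N)}(u)=c(u^2)c(u^2q^2)h_0^{(N)}(u)h_0^{(N)}(uq)\varphi^{(N)}(\Gamma(u))$, apply $\psi^{(N)}_{\bj,\bar{v}}$ to $\varphi^{(N)}(\Gamma(u))$, and use Example~\ref{gameps}/Lemma~\ref{lem:qdetOq} together with the explicit identification of $d_k^{(N)}$ and $\varepsilon_\pm^{(N)}$ from~\eqref{dkN}--\eqref{eps-pmN} to recover $\Gamma_-(u)$.

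The last step is bookkeeping: substituting the expression for $\psi^{(N)}_{\bj,\bar{v}}(\Gamma^{(N)}(u q^{j-\tha}))$, the three normalization prefactors coming from~\eqref{evalTN}, and the unchanged factor $\Gamma_+(uq^{j-\tha})/(c(u^2q^{2j})c(u^2q^{2j-2}))$, one clears the common scalar multiplying ${\boldsymbol t}^{j,\bj}(u)$ on both sides. The resulting identity is exactly~\eqref{normTTN}; in particular, the scalar $\prod_{n=1}^N \beta^{(j_n)}(uq^{j-\h}v_n)\beta^{(j_n)}(uq^{j-\h}v_n^{-1})$ arises from the mismatch between the normalization prefactor of ${\boldsymbol t}^{j,\bj}(u)$ (spin $j$, with $2j$ factors per site) and that of ${\boldsymbol t}^{j-1,\bj}(uq^{-1})$ (spin $j-1$, with $2j-2$ factors per site), multiplied by the site contribution to $\psi^{(N)}_{\bj,\bar{v}}(\Gamma^{(N)})$.

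The main obstacle is the second step, namely carrying out the explicit factorization of $\psi^{(N)}_{\bj,\bar{v}}(\Gamma^{(N)}(u))$ and showing that the accumulated scalar matches $\prod_n \beta^{(j_n)}(uq^{j-\h}v_n)\beta^{(j_n)}(uq^{j-\h}v_n^{-1})\,\Gamma_-(u)$ (with the correct shifts), because this requires juggling the three distinct normalizations appearing in Definition~\ref{def:fusR}, Definition~\ref{def:fused-K-mat} and~\eqref{K12N}. Once this scalar is pinned down, the rest of the argument is a routine algebraic identity of $c(\cdot)$-products that can be verified by induction on $j$ using $\beta^{(j)}(u)=\beta^{(j-\h)}(uq^{-\h})\beta^{(\h)}(uq^{j-\h})\,(\text{correction})$, in parallel with the structure of the universal TT-relation~\eqref{TT-rel}.
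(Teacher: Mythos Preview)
Your main approach is correct and essentially the same as the paper's: apply $\psi^{(N)}_{\bj,\bar{v}}$ to the truncated TT-relation~\eqref{TT-relTN}, convert each term via~\eqref{evalTN}, and evaluate $\psi^{(N)}_{\bj,\bar{v}}(\Gamma^{(N)}(u))$ by the Sklyanin-type factorization of the dressed quantum determinant (the paper invokes \cite[Prop.~6]{Skly88} for precisely this step, obtaining~\eqref{evalgamN}); the remaining scalar bookkeeping then yields~\eqref{normTTN}.

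One caveat: the alternative ``short-cut'' you propose, via $\Gamma^{(N)}(u)=c(u^2)c(u^2q^2)h_0^{(N)}(u)h_0^{(N)}(uq)\varphi^{(N)}(\Gamma(u))$ together with Lemma~\ref{lem:qdetOq} and Example~\ref{gameps}, does not work as stated. Example~\ref{gameps} fixes the central-reduction scalars $\delta_k$ so that $\gamma^{(\delta)}(\Gamma(u))$ matches $\epsilon_0(\Gamma(u))$, i.e.\ the $N=0$ case; it says nothing about the $\delta_k$'s realized by a general spin-chain representation $\psi^{(N)}_{\bj,\bar{v}}\circ\varphi^{(N)}$. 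Those $\delta_k$'s are only determined later in Section~\ref{sec:tr-mat-Ons}, and that computation uses~\eqref{evalgamN}---the very identity you would be trying to establish. So this route is circular, and you should stick with the direct Sklyanin factorization.
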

	\begin{proof} 
    	Recall the universal TT-relations satisfied by $\bt^{(j,N)}(u)$ in~\eqref{TT-relTN}, and that spin-chain representations for $\bt^{(j,N)}(u)$ lead to transfer matrices in Proposition~\ref{prop:TNtj}. The TT-relations~\eqref{normTTN} are obtained as the images of $\psi^{(N)}_{\bj,\bar{v}}$ on~\eqref{TT-relTN}.
	We are thus  left to compute the image of the quantum determinant $\Gamma^{(N)}(u)$ from~\eqref{GammaN} which is
    $$
    \psi^{(N)}_{\bj,\bar{v}}\bigl(\Gamma^{(N)}(u)\bigr) = \normalfont{\text{tr}}_{12}\Bigl({\cal P}^{-}_{12} \psi^{(N)}_{\bj,\bar{v}}\bigl({\cal K}_1^{(\h,N)}(u)\bigr)  R_{12}^{(\frac{1}{2},\frac{1}{2})}(qu^2)  \psi^{(N)}_{\bj,\bar{v}}\bigl({\cal K}_2^{(\h,N)}(u q)\bigr) \Bigr) \ ,
    $$
    and together with~\eqref{eq:dressKH} and~\eqref{eq:L-toR}, the calculation reduces to the expression of the form 
    $$
    \normalfont{\text{tr}}_{12}\big({\cal P}^{-}_{12} T_{1,N}^{\h,\bj}(u) K_1^{(\h)}(u) \hat{T}_{1,N}^{\h,\bj}(u)  \ R_{12}^{(\frac{1}{2},\frac{1}{2})}(qu^2) T_{2,N}^{\h,\bj}(uq) K_2^{(\h)}(uq) \hat{T}_{2,N}^{\h,\bj}(uq)  \big) \ ,$$
    where we also used the notations in~\eqref{eq:TAN}.
    Then, according to~\cite[Prop.\,6]{Skly88}, it follows that this expression factorizes -- up to an overall factor -- to a product of quantum determinants of K- and $2N$ R-matrices:
		\begin{align}
		&\normalfont{\text{tr}}_{12}\big({\cal P}^{-}_{12} T_{1,N}^{\h,\bj}(u) K_1^{(\h)}(u) \hat{T}_{1,N}^{\h,\bj}(u)  \ R_{12}^{(\frac{1}{2},\frac{1}{2})}(qu^2) T_{2,N}^{\h,\bj}(uq) K_2^{(\h)}(uq) \hat{T}_{2,N}^{\h,\bj}(uq)  \big) \nonumber\\ 
			&= \delta( T_{N}^{\h,\bj}(u)) \delta (\hat{T}_{N}^{\h,\bj}(u) ) \Gamma_{-} (u) \nonumber\\
            &=  \prod_{n=1}^N  \beta^{(j_n)}(u q v_n) \times  \prod_{n=1}^N \beta^{(j_n)}(u q v_n^{-1}) \times  \Gamma_{-} (u) \nonumber
		\end{align}
		where $\beta^{(j)}(u)$ is given in ~\eqref{beta} and we use   the notation  $\delta(T(u)) = \text{tr}_{12}\big({\cal P}^{-}_{12}T_1(u)T_2(uq)\big)$. 
		We then get  the final expression for the image of $\Gamma^{(N)}$: 
        \begin{align}\label{evalgamN}
			\psi^{(N)}_{\bj,\bar{v}} (	\Gamma^{(N)}(u)) =\left [  \displaystyle{ \prod_{n=1}^N  }
				c(u q^{j_n+\frac{3}{2}} v_n)c(u q^{j_n+\frac{3}{2}} v_n^{-1})c(u q^{-j_n+\h} v_n)
				c(u q^{-j_n+\h} v_n^{-1}) \right ]  \ \Gamma_{-} (u) \ .
		\end{align}
        
        After simplifications, using \eqref{evalTN}, \eqref{evalgamN}, one finds that the image of the TT-relations \eqref{TT-relTN} under $\psi^{(N)}_{\bj,\bar{v}}$ is indeed~\eqref{normTTN}. 
	\end{proof}

\begin{rem}
We notice that even if the whole analysis of the TT-relations based on $\cal A_q$ was made with the assumption $k_\pm\ne 0$, the above expression~\eqref{normTTN} is well defined for all values of the boundary parameters including the `diagonal' case of $k_\pm=0$.
\end{rem}
    
    \begin{example}\label{ex:FNR}
         Examples of TT-relations previously conjectured in the literature are recovered by specializations of (\ref{normTTN}). More precisely, in the next two works on TT-relations for open spin chains the corresponding K-matrices of arbitrary spins are those from the fusion approach of~\cite{KRS81,MN91}, and thus they are essentially the same K-matrices used in the construction of our transfer matrices~\eqref{tjs}, recall  Remark~\ref{rem:K-spin-gen}.
    \begin{enumerate}
    \item 
         The TT-relations conjectured for the spin-$j$ XXZ open spin chain of length $N$ with generic boundary conditions~\cite[eq.\,(2.16)]{FNR07} are obtained by setting $j_n=j$ and $v_n=1$ for all $n$ in~\eqref{normTTN} and the suitable identifications from our conventions to the ones in \cite{FNR07}:
         \beqa
u \rightarrow e^{u}, \quad q \rightarrow e^{\eta}, \quad && 
 \varepsilon_\pm \rightarrow  \sinh(\alpha_- \pm \beta_-) , \quad \bar{\varepsilon}_\pm \rightarrow - \sinh(\alpha_+ \mp\beta_+) \ ,\nonumber\\
&& k_\pm \rightarrow e^{\pm\theta_-}
 \sinh(\eta) , \qquad \bar{k}_\pm \rightarrow - e^{\pm\theta_+} \sinh(\eta) \nonumber \ .
\eeqa
    \item 
        The TT-relations conjectured for the alternating spin chain of length $N$ even with generic boundary conditions and generic homogeneity parameters~\cite[eq.\,(3.1)]{CYSW14} are obtained by setting $j_{2p}=s$, $j_{2p+1}=s'$ 
     for all $p$ in~\eqref{normTTN} and suitable identifications from our conventions to the ones in~\cite{CYSW14}.
    \end{enumerate}
    \end{example}

\subsection{Normalized transfer matrix and higher Hamiltonians}\label{sec:renorm-tr-mat}
 Similarly to~\cite{Lu96}, local Hamiltonians can be constructed from a suitably {\it normalized} transfer matrix \cite[Rem. 4]{Skly88}, taking  logarithmic derivatives at $u=1$, see e.g.\ \cite[eq.\,(5.6)]{FNR07}.

\begin{defn}\label{def:tr-mat-norm}
We define the {\it normalized} transfer matrix as
	\begin{equation}  \label{tildetjs}
		\tilde {\boldsymbol  t}^{j,\bj}(u)=  \normalfont{\text{tr}}_{{\mathbb C}^{2j+1}} \bigl(\tilde K_{a}^{+(j)}(u) \tilde T_{a,N}^{j, \bj}(u) \tilde K_a^{(j)}(u) \hat{\tilde T}_{a,N}^{j,\bj}(u) \bigr) \ ,
	\end{equation} 
where the normalized K-matrix $\tilde{K}^{(j)}(u)$ is introduced in~\eqref{renormK}, the normalized dual spin-$j$ K-matrix is  
\beqa
\tilde K^{+(j)}(u)= \left(\tilde K^{(j)}(u^{-1}q^{-1})\right)^t\bigg\rvert_{\varepsilon_\pm \rightarrow  \ov{\varepsilon}_\mp,  k_\pm \rightarrow - \ov{k}_\mp } \ , 
\label{tildeKplus}
\eeqa
and $\tilde T_{a,N}^{j,\bj}(u),\hat{\tilde T}_{a,N}^{j,\bj}(u)$ denote the product of normalized R-matrices  $\tilde{R}^{(j_n,j)}(uv_n)$ from~\eqref{renormR}:
	\begin{align}\label{eq:TtildeAN}
    \begin{split}
		\tilde T_{a,N}^{j,\bj}(u)&= 	\tilde R_{Na}^{(j_N,j )}(uv_N) \cdots  \tilde R_{1a}^{(j_1,j)}(uv_1) \ ,  \\
        \hat{\tilde T}_{a,N}^{j,\bj}(u)&= \tilde R_{1a}^{(j_1,j)}(uv_1^{-1}) \cdots \tilde R_{Na}^{(j_N,j)}(uv_N^{-1}) \ .
        \end{split}
	\end{align}
In particular, the relation to ${\boldsymbol  t}^{j,\bj}(u)$ from~\eqref{tjs} is as follows:
    \beqa
    \tilde {\boldsymbol  t}^{j,\bj}(u) = g^{j,\bj}(u) {\boldsymbol  t}^{j,\bj}(u)\label{renormtrans}
    \eeqa
    where  
    \beqa
    g^{j,\bj}(u) &=& f^{(j)}(u)\left(\prod_ {\ell = 0}^{2 j - 2} 
    c (u^{-2}q^{-1- \ell}) 
    c (u^2q^{1 - \ell})\right)^{-1}  \label{eq:renormgj} \\
    && \times \prod_{n=1}^N\left( \prod_ {k = 0}^{2 j_n - 1} \prod_ {\ell = 0}^{2 j -
   1} 
    c (uq^{j_n+j - k - \ell}v_n) c (uq^{j_n+j - k - \ell}v_n^{-1}) \right)^{-1}  \ ,
    \nonumber
    \eeqa
    with $f^{(j)}(u)$ introduced in~\eqref{eq:fj}.
\end{defn}

For the (higher) Hamiltonians construction in spin chains,
we need first to assure that specialization of the transfer matrix at $u=1$ is well-defined and takes  non-zero value.
This is the reason of the above normalization~\eqref{renormtrans} in Definition~\ref{def:tr-mat-norm}. We now show  that the normalized transfer matrix $\tilde {\boldsymbol  t}^{j,\bj}(u)$ given by~\eqref{tildetjs}
has indeed a well behaved specialization at $u=1$.

  \begin{lem}\label{lem:t-non-zero}
  For any spin $j$ and spins $j_n=j$ and inhomogeneities $v_n=1$ for all values of $1\leq n \leq N$, $\tilde {\boldsymbol  t}^{j,\bj}(1)$ is well-defined and has the limit
  \begin{equation}\label{eq:lim-t}
     \lim_{\varepsilon_+, \bar{\varepsilon}_-\rightarrow \infty} \frac{\tilde {\boldsymbol  t}^{j,\bj}(1)}{(\varepsilon_+\bar{\varepsilon}_-)^{2j}}=[2j+1]_q\ .
    \end{equation}
   In particular, for generic parameters $\varepsilon_\pm, k_\pm, \bar{\varepsilon}_\pm, {\bar k}_\pm\in \mathbb{C}$, we have 
  $$
  \tilde {\boldsymbol  t}^{j,\bj}(1)\neq 0\ .
  $$
    \end{lem}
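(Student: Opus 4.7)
The strategy is to exploit the regularising effect of the normalisation of R- and K-matrices at $u=1$, then to reduce the full trace to a single factor acting on one quantum site, and finally to extract the leading behaviour in the boundary parameters. First I would verify that $\tilde{\boldsymbol t}^{j,\bj}(1)$ is well-defined: by Proposition~\ref{prop:R-M-P}(1) the normalised R-matrices $\tilde R^{(j,j)}(uv_n^{\pm 1})\rvert_{v_n=1}=\tilde R^{(j,j)}(u)$ have no pole at $u=1$; by Lemma~\ref{lem:K-poly}, $\tilde K^{(j)}(u)$ is a Laurent polynomial; and $\tilde K^{+(j)}(1)$, which through~\eqref{tildeKplus} is a transpose of $\tilde K^{(j)}(q^{-1})$ with the usual parameter swap, is therefore also regular.

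Next I would use $\tilde R^{(j,j)}(1)=\mathcal{P}^{(j,j)}$, by Proposition~\ref{prop:R-M-P}(2). With $v_n=1$ and $j_n=j$, the monodromies reduce to $\tilde T_{a,N}^{j,\bj}(1)=\mathcal{P}_{Na}\cdots\mathcal{P}_{1a}$ and $\hat{\tilde T}_{a,N}^{j,\bj}(1)=\mathcal{P}_{1a}\cdots\mathcal{P}_{Na}$, which are inverse to each other since $\mathcal{P}^2=\mathbb{I}$. Conjugating $\tilde K^{(j)}_a(1)$ by $\tilde T_{a,N}^{j,\bj}(1)$, the inner pair $\mathcal{P}_{1a}\tilde K^{(j)}_a(1)\mathcal{P}_{1a}$ transports the operator from the auxiliary space to the first quantum site, giving $\tilde K^{(j)}_{[1]}(1)$; the remaining $\mathcal{P}_{ka}$ with $k\ge 2$ commute with $\tilde K^{(j)}_{[1]}(1)$ and pair up to identities. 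Thus
\begin{equation*}
\tilde T_{a,N}^{j,\bj}(1)\,\tilde K^{(j)}_a(1)\,\hat{\tilde T}_{a,N}^{j,\bj}(1)=\tilde K^{(j)}_{[1]}(1),
\end{equation*}
and tracing over the auxiliary factor, using that $\tilde K^{+(j)}_a(1)$ and $\tilde K^{(j)}_{[1]}(1)$ act on disjoint spaces, yields
\begin{equation*}
\tilde{\boldsymbol t}^{j,\bj}(1)=\mathrm{tr}\bigl(\tilde K^{+(j)}(1)\bigr)\cdot\tilde K^{(j)}_{[1]}(1).
\end{equation*}

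The final step is the limit. By~\eqref{eq:K-delta}, $\tilde K^{(j)}(1)$ is diagonal, and setting $u=1$ in~\eqref{eq:limKj} gives $\lim_{\varepsilon_+\to\infty}\tilde K^{(j)}(1)_{mm}/\varepsilon_+^{2j}=1$ for all $m=1,\dots,2j+1$, hence $\tilde K^{(j)}_{[1]}(1)/\varepsilon_+^{2j}\to\mathbb{I}$. For the trace, transposition preserves diagonal entries, so from~\eqref{tildeKplus} and~\eqref{eq:limKj} specialised at $u=q^{-1}$, performing the substitution $\varepsilon_+\to\bar\varepsilon_-$, I obtain $\lim_{\bar\varepsilon_-\to\infty}\tilde K^{+(j)}(1)_{mm}/\bar\varepsilon_-^{2j}=q^{-2(j-m+1)}$, whence
\begin{equation*}
\lim_{\bar\varepsilon_-\to\infty}\frac{\mathrm{tr}\,\tilde K^{+(j)}(1)}{\bar\varepsilon_-^{2j}}=\sum_{m=1}^{2j+1}q^{2(m-1-j)}=[2j+1]_q.
\end{equation*}
Since the two limits involve independent parameters they commute, and combining them gives~\eqref{eq:lim-t}. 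The non-vanishing for generic parameters follows because $[2j+1]_q\neq 0$ (as $q$ is not a root of unity), so $(\varepsilon_+\bar\varepsilon_-)^{2j}[2j+1]_q\,\mathbb{I}$ is the leading term of $\tilde{\boldsymbol t}^{j,\bj}(1)$ viewed as a polynomial in the boundary parameters, and polynomial continuity yields $\tilde{\boldsymbol t}^{j,\bj}(1)\neq 0$ generically.

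The main obstacle is the subtle status of~\eqref{eq:limKj} at general $k_\pm$: the derivation recalled in the text specialises to the diagonal case $\kappa=0$, so I would need to argue that the leading $\varepsilon_+^{2j}$ coefficient of each $(m,m)$ entry of $\tilde K^{(j)}(u)$ is independent of $k_\pm$. This can be checked by an induction on $j$ using the fusion formula~\eqref{fused-K}: the leading $\varepsilon_+$ contribution of each factor $K^{(\h)}$ comes from its $(1,1)$ entry and is $k_\pm$-independent, so the matrix product of the $2j$ nested factors (with the $R$-matrix and the intertwiners $\mathcal{E}^{(j)},\mathcal{F}^{(j)}$) retains a leading $\varepsilon_+^{2j}$ coefficient that matches the diagonal case analysed via~\cite{DN02}.
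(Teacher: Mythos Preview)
Your argument is correct and follows essentially the same route as the paper's proof: reduce to the factorised form $\tilde{\boldsymbol t}^{j,\bj}(1)=\mathrm{tr}\bigl(\tilde K^{+(j)}(1)\bigr)\,\tilde K^{(j)}_{[1]}(1)$ via $\tilde R^{(j,j)}(1)=\mathcal P^{(j,j)}$, then extract the leading $\varepsilon_+^{2j}$ and $\bar\varepsilon_-^{2j}$ behaviour from~\eqref{eq:limKj} and~\eqref{tildeKplus}. Your explicit computation of the individual diagonal limits $q^{-2(j-m+1)}$ summing to $[2j+1]_q$ is a welcome expansion of what the paper leaves implicit.

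Your closing paragraph on the status of~\eqref{eq:limKj} for general $k_\pm$ is a fair point, but the resolution is simpler than the induction you sketch: each entry of $K^{(\h)}(u)$ is homogeneous of degree~$1$ in $(\varepsilon_+,\varepsilon_-,k_+,k_-)$, so by the fusion formula~\eqref{fused-K} each entry of $K^{(j)}(u)$ (and hence of $\tilde K^{(j)}(u)$) is a polynomial of total degree~$2j$ in these four parameters. The coefficient of $\varepsilon_+^{2j}$ therefore cannot involve $k_\pm$ at all, which justifies computing it at $\kappa=0$. Note also a small slip in your sketch: the leading $\varepsilon_+$ part of $K^{(\h)}$ lives on the full diagonal $\mathrm{diag}(u\varepsilon_+,u^{-1}\varepsilon_+)$, not only in the $(1,1)$ entry.
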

   \begin{proof} First, using that the normalized R-matrices $\tilde{R}^{(j,j)}(u)$ appearing in the expression~\eqref{tildetjs} with~\eqref{eq:TtildeAN} specialized at $u=1$ are just permutation operators, recall~\eqref{eq:R-P}, we find 
   \beqa
   \tilde {\boldsymbol  t}^{j,\boldsymbol{\bj}}(1)=  \normalfont{\text{tr}}_{{\mathbb C}^{2j+1}} \bigl(\tilde K^{+(j)}(1)\bigr) \tilde K^{(j)}(1)\ . \label{eq:normt1}
   \eeqa
   Recalling that by Lemma~\ref{lem:K-poly} the entries of $\tilde{K}^{(j)}(u)$, and thus those of $\tilde{K}^{+(j)}(u)$, are Laurent polynomials in $u$, we see that $\tilde{\boldsymbol  t}^{j,\boldsymbol{\bj}}(1)$ is well-defined.  Using~\eqref{tildeKplus} with \eqref{eq:limKj} we see that the trace $\mathrm{tr}_{{\mathbb C}^{2j+1}} \bigl(\tilde K^{+(j)}(1)\bigr)$ is a polynomial in $\bar{\varepsilon}_-$ with the coefficient at the leading term $\bar{\varepsilon}_-^{2j}$ given by $[2j+1]_q$. Furthermore, by Corollary~\ref{cor:lim-K11-norm}  $\tilde K^{(j)}(1)$ is a non-zero  matrix, and using~\eqref{eq:K-delta} and~\eqref{eq:limKj} we see that
   $\lim_{\varepsilon_+\rightarrow \infty}\frac{\tilde K^{(j)}(1)}{\varepsilon_+^{2j}}$ 
   is the identity matrix. All together, it  finally implies~\eqref{eq:lim-t}.
   \end{proof}

   We are now ready to discuss conserved quantities generated by the {\it homogeneous} transfer matrix, i.e.\ 	for the choice 
  \beqa\label{eq:j-homo}
  j_n=j \quad \mbox{and} \quad v_n=1\ , \qquad  \ n=1,2,\ldots,N\ , 
  \eeqa 
  in particular local ones that are called higher Hamiltonians and defined as logarithmic derivatives of $\tilde {\boldsymbol  t}^{j,\bj}(u)$ at $u=1$:
    \begin{equation}
		\cal H^{(n)} =  \frac{d^n}{du^n} \ln\bigl(\tilde {\boldsymbol  t}^{j,\bj}(u)\bigr) \big\rvert_{u=1} \ ,\quad n\geq 1\ . \label{HnfromT}
	\end{equation}
    In particular, 
    \beqa
		\cal H^{(1)} &=&  \tilde {\boldsymbol  t}^{j,\bj}(1)^{-1} \frac{d}{du} \tilde {\boldsymbol  t}^{j,\bj}(u)|_{u=1}  \ , \label{H1fromT} \\
       \cal H^{(2)} &=&\tilde {\boldsymbol  t}^{j,\bj}(1)^{-1} \frac{d^2}{du^2}\tilde {\boldsymbol  t}^{j,\bj}(u)|_{u=1} - \tilde {\boldsymbol  t}^{j,\bj}(1)^{-2}\left(\frac{d}{du}\tilde {\boldsymbol  t}^{j,\bj}(u)|_{u=1}\right)^2 \ .\label{H2fromT}
\eeqa
These are well defined due to Lemma~\ref{lem:t-non-zero}.
The first member of the sequence~\eqref{H1fromT} -- the quantity $\cal H^{(1)}$ -- gives (up to an additive scalar term and an overall factor), the defining Hamiltonian of the quantum spin chain \cite{Skly88,CLSW02}. It
	depends on the  boundary parameters $k_\pm$, $\varepsilon_{\pm}$ expressing boundary conditions of the open spin-chain at site $1$ and on the boundary parameters $\bar{k}_\pm$ and $\bar{\varepsilon}_{\pm}$ at site $N$, as can be explicitly seen in the next two examples for spins $\h$ and $1$.

\begin{example}\label{ex:H12} The Hamiltonian of the XXZ spin-$\h$ chain with generic boundary conditions~\cite{CLSW02} is obtained from ${\cal H}^{(1)}$ defined by~\eqref{H1fromT} with normalized transfer matrix~\eqref{tildetjs}  for
 $j=j_n=\h$ for all $n$:
 \beqa
\frac{c(q)}{2}{\cal H}^{(1)}= \cal H_{XXZ} -
A_0 \ ,\label{H1fromYB}
\eeqa
 where $A_0$ is a scalar and with the Hamiltonian
\beqa
\cal H_{XXZ} &=& \displaystyle{ \sum_{k=1}^{N-1}} \Big ( \sigma_{k+1}^x \sigma_{k}^x + \sigma_{k+1}^y\sigma_{k}^y + \frac{q+q^{-1}}{2}\sigma_{k+1}^z \sigma_{k}^z \Big ) \label{HXXZhalf}\\
&+&
\frac{2}{\varepsilon_+ + \varepsilon_-} \Big ( \frac{q-q^{-1}}{4}\big(\varepsilon_+-\varepsilon_-\big) \sigma^z_ 1 + k_+ \sigma^+_1 + k_-\sigma^-_1 \Big ) \nonumber
		\\ &+& \frac{2}{\overline{\varepsilon}_+ + \overline{\varepsilon}_-} \Big (  \frac{q-q^{-1}}{4}\big ( \overline{\varepsilon}_+ - \overline{\varepsilon}_- \big )  \sigma^z_N + \overline{k}_+\sigma^+_N +  \overline{k}_-\sigma^-_N \Big ) \ .\nonumber
\eeqa
Details of calculations are reported in Appendix~\ref{ApE}.
\end{example}
\begin{example}\label{ex:H1}  The Hamiltonian of the XXZ spin-$1$ chain with generic boundary conditions~\cite{FNR07} is obtained from ${\cal H}^{(1)}$ defined by~\eqref{H1fromT} with normalized transfer matrix~\eqref{tildetjs}  for
 $j=j_n=1$ for all $n$. Its explicit expression is derived along the same line as for the spin-$\h$ case, following~\eqref{H1}-\eqref{eq:localham} with the replacement ${\tilde {\boldsymbol  t}}^{(\h,\h)}(u) \rightarrow {\tilde {\boldsymbol  t}}^{(1,1)}(u)$, ${\tilde R}^{(\h,\h)}(u)\rightarrow {\tilde R}^{(1,1)}(u)$, ${\tilde K}^{(\h)}(u)\rightarrow {\tilde K}^{(1)}(u)$.
 Similarly to the spin-$\h$ case, ${\cal H}^{(1)}$ reads as linear combination of ${\cal H}_{XXZ}^{\textsf{\ spin 1}}$ and a scalar term where 
 \begin{align} \label{Ham-params1} \mathcal{H}_{XXZ}^{\textsf{\ spin 1}}&
= 
\displaystyle{\sum_{n=1}^{N-1} \Bigg(\vec{s}_{n+1} \cdot \vec{s}_{n}-(\vec{s}_{n+1}\cdot \vec{s}_{n})^2   - \frac{(q^{\h}-q^{-\h})^2}{2}  \left \{s^z_{n+1}s^z_{n},  s^+_{n+1}s^-_{n}+s^-_{n+1}s^+_{n} \right \}   } \\
&\qquad \quad +\frac{c(q)^2}{2} \left ( s^z_{n+1}s^z_{n} -(s^z_{n+1})^2(s^z_{n})^2 + (s^z_{n})^2 + (s^z_{n+1})^2  \right ) \Bigg) \nonumber\\
&\qquad \quad + {\cal H}_{1}^b(\varepsilon_+,\varepsilon_-,k_+,k_-) + {\cal H}_{N}^b(\bar\varepsilon_+,\bar\varepsilon_-,{\bar k}_+,{\bar k}_-) \nonumber
\end{align}
with
\begin{align}
&\hspace{-0.3cm} {\cal H}_{a}^b(\varepsilon_+,\varepsilon_-,k_+,k_-) = \frac{1}{\kp \km-\epsp \epsm (q+q^{-1}) -( \epsp^2 + \epsm^2) }\nonumber \\ \nonumber
&\hspace{-0.3cm} \times \frac{c(q^2)}{2} \Bigg (  \left(\epsp \epsm (q^{-1}-q) + \kp\km \frac{ q+q^{-1}}{q^{-1}-q} \right)(s^z_a)^2  + (\epsm^2 - \epsp^2) s^z_a + \frac{\kp^2 (s^+_a)^2 + \km^2 (s^-_a)^2}{q-q^{-1}} \\ \nonumber
&\hspace{-0.3cm}+\frac{\sqrt{2(q+q^{-1})} }{ {q-q^{-1}}} \Big (  \epsp \!\left (   k_+ [s_a^+,s_a^z]_{q^\h} \!+ \!k_- [s_a^z,s_a^-]_{q^{\h}}\!
\right )\! +\epsm  \! \left (   k_+ [s_a^+,s_a^z]_{q^{-\h}} + k_- [s_a^z,s_a^-]_{q^{-\h}}  \right ) \!\! \Big) \!\!\Bigg) \ .
\end{align}
Here we introduced the anti-commutator $\{A,B\}= AB+BA$ and  the spin-$1$  angular momentum $\vec{s}$  with components $s^x,s^y,s^z$ that are the spin-1 matrices 
\begin{align} \label{spin1-matrix}
s^x=\frac{1}{\sqrt{2}}\begin{pmatrix}
0 &1  &0 \\
1&  0& 1\\
0& 1 &0
\end{pmatrix}, \quad s^y=\frac{i}{\sqrt{2}}\begin{pmatrix}
0 &-1  &0 \\
1&  0& -1\\
0& 1 &0
\end{pmatrix}, \quad s^z=\begin{pmatrix}
1 &0  &0 \\
0&  0& 0\\
0& 0 &-1
\end{pmatrix}, \\
\quad s^+ = s^x+is^y = \sqrt{2} \begin{pmatrix}
0 &1  &0 \\
0&  0& 1\\
0& 0 &0
\end{pmatrix}, \quad s^-=  s^x-is^y = \sqrt{2} \begin{pmatrix}
0 &  0& 0\\
1&  0& 0\\
0&1  &0
\end{pmatrix}\ , \qquad \qquad
\end{align}
 that satisfy $[s^z,s^\pm]=\pm s^\pm$, $[s^+,s^-]= 2s^z$.
The special case $\varepsilon_-= - \varepsilon_+^{-1}$, $\overline{\varepsilon}_-= - \overline{\varepsilon}_+^{-1}$ was considered in~\cite{Inami1996}, and their expression~\cite[eq.\,(4.3)]{Inami1996} matches with~\eqref{Ham-params1} for the identification  
	$\eta \rightarrow i \ln(q)$, $u \rightarrow i \ln(u)$  and
	\begin{align*}
		\zeta_- &\rightarrow i \ln(\varepsilon_+) \ ,  &\zeta_+ \rightarrow&\ i \ln(\overline{\varepsilon}_+) \  	\ ,\\
		\mu_- &\rightarrow - k_+ \frac{\sqrt{q+q^{-1}}}{q-q^{-1}} \ ,  &\mu_+\rightarrow&\ \bkp\frac{ \sqrt{q+q^{-1}}}{q-q^{-1}}\ ,\\
		\tilde{\mu}_-&\rightarrow -\km\frac{\sqrt{q+q^{-1}}}{q-q^{-1}} \ , &\tilde{\mu}_+\rightarrow&\  \bkm\frac{\sqrt{q+q^{-1}}}{q-q^{-1}}\ .
	\end{align*}
 We also note that  under the same identification of parameters our fused K-matrix $K^{(1)}(u)$ agrees with the K-matrix used in~\cite{Inami1996}, by Remark~\ref{rem:K-mat-spin1}.
\end{example}

\subsection{Transfer matrix expansion}
    \label{sec:t-expansion}
In Section~\ref{sec:TjN}, we have seen that the TT-relations~\eqref{TT-relTN} implied that $\bt^{(j,N)}(u)$ is a polynomial of order $2j$ in  the generating function $\mathsf{I}^{(N)}(u)$, defined in~\eqref{eq:def-IN}, with $q$-shifted arguments and with coefficients that are central in $\frac{1}{f^{(j)}(u)}\mathcal{A}_q^{(N)}[u,u^{-1}]$, recall the normalization $f^{(j)}(u)$ in~\eqref{def:kp}-\eqref{eq:fj}.  To establish a similar statement for the normalized transfer matrix $\tilde {\boldsymbol  t}^{j,\bj}(u)$, we first establish the following relation between the spin-$\h$ normalized transfer matrix and the spin-chain representation $\psi^{(N)}_{\bj,\bar{v}}$ of the generating function $\mathsf{I}^{(N)}(u)$:

\begin{prop}\label{prop:tIN} 
For any $N$-tuple  of spins $\bj := (j_1, \ldots, j_N)$ and $v_i\in \mathbb{C}^*$, for $1\leq i\leq N$, we have
	\begin{multline}\label{trans-mat-jsIN} 
		\tilde {\boldsymbol  t}^{\h,\bj}(u)= \prod_ {n = 1}^{N} 
    \left(c (uq^{j_n+\h }v_n) c (uq^{j_n+\h }v_n^{-1})\right)^{-1}  \times \Bigl[
   c(u^2)c(u^2q^2)\left( \psi^{(N)}_{\bj,\bar{v}}(\mathsf{I}^{(N)}(u)) + h_0^{(N)}(u) \mathsf{I}_0 \right )  \\ 
	  + \overline{\varepsilon}_+  \left (  (u^2 q + u^{-2} q^{-1})\varepsilon_+^{(N)} + (q+q^{-1}) \varepsilon_-^{(N)} \right ) + \overline{\varepsilon}_- \left ( (u^2q+u^{-2}q^{-1}) \varepsilon_-^{(N)} + (q+q^{-1}) \varepsilon_+^{(N)}    \right )
      \Bigr]\ ,
	\end{multline}
	with $h_0^{(N)}(u)$ from~\eqref{h0N}, the scalar $\mathsf{I}_0$ is defined in~\eqref{I0}, and $\varepsilon_\pm^{(N)}$ are introduced in~\eqref{eps-pmN}.
         We furthermore have the expansion of $\psi^{(N)}_{\bj,\bar{v}}\bigl(\mathsf{I}^{(N)}(u)\bigr)$ in~\eqref{trans-mat-jsIN}:
\begin{equation}\label{eq:psi-IN}
     \psi^{(N)}_{\bj,\bar{v}}\bigl(\mathsf{I}^{(N)}(u)\bigr)  = \sum_{k=0}^{N-1} P_{-k}^{(N)}(u) \psi^{(N)}_{\bj,\bar{v}}\bigl(\mathsf{I}^{(N)}_{2k+1}\bigr)
\end{equation}
with the polynomials $P_{-k}^{(N)}(u)$ introduced in~\eqref{PMK} with their coefficients $d_n^{(N)}$ fixed as in~\eqref{dkN}, and
	\beqa
\psi^{(N)}_{\bj,\bar{v}}\bigl(\mathsf{I}_{2k+1}^{(N)}\bigr) &=& 
	 \overline{\varepsilon}_+{\calW}^{(N)}_{-k} + \overline{\varepsilon}_-{\calW}^{(N)}_{k+1} + \frac{1}{q^2-q^{-2}} \frac{\overline{k}_-}{k_-}{\calG}^{(N)}_{k+1}  + \frac{1}{q^ 2-q^{-2}} \frac{\overline{k}_+}{k_+}\tilde{\calG}^{(N)}_{k+1} \ ,\label{IkOqN}
	 \eeqa
     together with explicit expressions for the spin-chain representation of the alternating generators~\eqref{imOqN}  given in Appendix~\ref{apD}.
    \end{prop}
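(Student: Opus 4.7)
The plan is to obtain all three claims of the proposition as a direct consequence of Proposition~\ref{prop:TNtj} specialised to $j=\h$, together with the explicit form~\eqref{tjsIN} of $\bt^{(\h,N)}(u)$ and the definition~\eqref{calIN} of $\mathsf{I}^{(N)}(u)$. The only nontrivial aspect is the careful bookkeeping of the normalisation prefactors $g^{j,\bj}(u)$ in~\eqref{eq:renormgj}.

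First, I would specialise~\eqref{evalTN} at $j=\h$ (so that only $\ell=0$ contributes) to obtain
\[
{\boldsymbol  t}^{\h,\bj}(u) \;=\; \Bigl[\prod_{n=1}^{N}\prod_{k=0}^{2j_n-2} c(uq^{j_n-k-\h}v_n)c(uq^{j_n-k-\h}v_n^{-1})\Bigr]\,\psi^{(N)}_{\bj,\bar{v}}\bigl(\bt^{(\h,N)}(u)\bigr).
\]
Next, I would compute $g^{\h,\bj}(u)$ from~\eqref{eq:renormgj}: at $j=\h$ one has $f^{(\h)}(u)=1$ and the product over $\ell=0,\ldots,2j-2$ is empty, leaving
\[
g^{\h,\bj}(u)\;=\;\prod_{n=1}^{N}\prod_{k=0}^{2j_n-1}\bigl[c(uq^{j_n+\h-k}v_n)c(uq^{j_n+\h-k}v_n^{-1})\bigr]^{-1}.
\]
A re-indexing of both products (writing the exponents as $\h-j_n,\frac{3}{2}-j_n,\ldots,j_n+\h$ in the second and $\h-j_n,\ldots,j_n-\h$ in the first) shows that all factors cancel except those with exponent $j_n+\h$. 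Multiplying the two displays yields
\[
\tilde{\boldsymbol t}^{\h,\bj}(u)\;=\;\prod_{n=1}^{N}\bigl[c(uq^{j_n+\h}v_n)c(uq^{j_n+\h}v_n^{-1})\bigr]^{-1}\,\psi^{(N)}_{\bj,\bar{v}}\bigl(\bt^{(\h,N)}(u)\bigr),
\]
which is precisely the prefactor appearing in~\eqref{trans-mat-jsIN}.

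To finish~\eqref{trans-mat-jsIN}, I would substitute the explicit formula~\eqref{tjsIN} for $\bt^{(\h,N)}(u)$ and apply $\psi^{(N)}_{\bj,\bar{v}}$; since $h_0^{(N)}(u)$, $\mathsf{I}_0$, $c(u^2)$ and the boundary terms involving $\varepsilon_\pm^{(N)}$, $\overline{\varepsilon}_\pm$ are all scalars, the representation acts nontrivially only on $\mathsf{I}^{(N)}(u)$ and produces exactly the bracketed expression in~\eqref{trans-mat-jsIN}. For~\eqref{eq:psi-IN}, I would invoke the already-established expansion~\eqref{calIN} inside $\mathcal{A}_q^{(N)}$ and apply $\psi^{(N)}_{\bj,\bar{v}}$ term by term, recalling that under the spin-chain representation the scalars $d_n^{(N)}$ entering $P_{-k}^{(N)}(u)$ must be identified as in~\eqref{dkN}, as proven in~\cite{BK07}. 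Finally,~\eqref{IkOqN} is obtained by applying $\psi^{(N)}_{\bj,\bar{v}}$ to the image $\varphi^{(N)}(\mathsf{I}_{2k+1})$ of the generator~\eqref{eq:def-Ik} and relabelling the resulting matrices via~\eqref{imOqN}.

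The only genuinely delicate step is the cancellation of $c$-factors between~\eqref{evalTN} and $g^{\h,\bj}(u)$; the rest is routine substitution. I expect this step to be the main source of potential sign/exponent errors but entirely mechanical once the indices are written out explicitly.
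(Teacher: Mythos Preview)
Your proposal is correct and follows essentially the same approach as the paper: specialise Proposition~\ref{prop:TNtj} to $j=\h$, combine with the normalisation~\eqref{renormtrans} via~\eqref{eq:renormgj} to extract the surviving prefactor $\prod_n[c(uq^{j_n+\h}v_n)c(uq^{j_n+\h}v_n^{-1})]^{-1}$, then substitute~\eqref{tjsIN} and~\eqref{calIN}. You are merely more explicit about the index cancellation that the paper summarises as ``after simplifications using~\eqref{renormtrans}''.
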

    
    \begin{proof}
   To show~\eqref{trans-mat-jsIN}, we first recall Proposition~\ref{prop:TNtj} where the r.h.s.\  of~\eqref{evalTN} at $j=\h$  after simplifications using~\eqref{renormtrans} takes the form
   \beqa
   \left(\prod_ {n = 1}^{N} 
    c (uq^{j_n+\h }v_n) c (uq^{j_n+\h }v_n^{-1}) \right)\tilde {\boldsymbol  t}^{\h,\bj}(u)\ .
   \eeqa
 Rewriting the l.h.s.\ of~\eqref{evalTN} at $j=\h$, we use the image of~\eqref{tjsIN} under the algebra map $\psi^{(N)}_{\bj,\bar{v}}$ together with the definition of $\mathsf{I}^{(N)}(u)$ in~\eqref{eq:def-IN} or, equivalently, in~\eqref{calIN}. The resulting equality is equivalent to~\eqref{trans-mat-jsIN}.  Finally, we also note $\psi^{(N)}_{\bj,\bar{v}}(\mathsf{I}_{2k+1}^{(N)}) = \psi^{(N)}_{\bj,\bar{v}} \circ \varphi^{(N)}(\mathsf{I}_{2k+1})$
              where  the mutually commuting elements  $\mathsf{I}_{2k+1}$  
	are given by~\eqref{eq:def-Ik}. This and the identification in~\eqref{mapAqAN}
    derives~\eqref{IkOqN}.
    \end{proof}

    \begin{rem}\label{rem:t-Laurent}
    We note that  entries of the normalized transfer matrix	$\tilde {\boldsymbol  t}^{j,\bj}(u)$ have poles, as can be seen in particular in Proposition~\ref{prop:tIN}.
    Using Proposition~\ref{prop:R-M-P} and Lemma~\ref{lem:K-poly} we see  by the form of~\eqref{eq:TtildeAN} in terms of normalized R-matrices that multiplying $\tilde {\boldsymbol  t}^{j,\bj}(u)$ by 
$$
\prod_{n=1}^{N}\prod_{k=0}^{2j_n-1}c(u q^{j_n+j-k}v_n)c(u q^{j_n+j-k}v_n^{-1})
$$
makes all of its entries Laurent polynomials in $u$.
\end{rem}

By Proposition~\ref{prop:TTt} and using~\eqref{renormtrans}, it immediately follows:
\begin{cor}\label{cor:nt} The normalized transfer matrix~\eqref{tildetjs} satisfies the TT-relation
\begin{align}\label{renormTTN}
			 \tilde{\boldsymbol  t}^{j,\bj}(u) &= \frac{g^{j,\bj}(u)}{g^{j-\h,\bj}(uq^{-\h})g^{\h,\bj}(uq^{j-\h})}\tilde{\boldsymbol t}^{j-\h,\bj}(u q^{-\h}) \tilde {\boldsymbol t}^{\h,\bj}(u q^{j-\h})  \\ \nonumber
			&+
			\left [	
			\displaystyle{\prod_{n=1}^N} \beta^{(j_n)}(u q^{j-\h} v_n) \beta^{(j_n)}(u q^{j-\h} v_n^{-1})
			\right ]  \frac{	\Gamma_{-} (u q^{j-\tha}) \Gamma_{+} (u q^{j-\tha})       }{ c(u^2 q^{2j}) c(u^2 q^{2j-2})} \frac{g^{j,\bj}(u)}{g^{j-1,\bj}(uq^{-1})}\tilde {\boldsymbol t}^{j-1,\bj}(u q^{-1}) \ ,
		\end{align}
\end{cor}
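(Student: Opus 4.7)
The plan is to obtain the normalized TT-relation by a direct substitution into the (unnormalized) TT-relation established in Proposition~\ref{prop:TTt}. First, I recall that by~\eqref{renormtrans} we have
\[
{\boldsymbol  t}^{j',\bj}(u') = g^{j',\bj}(u')^{-1}\, \tilde{\boldsymbol  t}^{j',\bj}(u')
\]
for any spin $j'\in\tfrac{1}{2}\mathbb{N}$ and any shift $u'\in\{u,\,uq^{-\h},\,uq^{j-\h},\,uq^{-1}\}$, where the scalar normalization factor $g^{j',\bj}(u')$ is given by~\eqref{eq:renormgj}.

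Next, I would take the TT-relation~\eqref{normTTN} of Proposition~\ref{prop:TTt}, replace each occurrence of ${\boldsymbol  t}^{j',\bj}(u')$ by $g^{j',\bj}(u')^{-1}\tilde{\boldsymbol  t}^{j',\bj}(u')$, and multiply both sides by $g^{j,\bj}(u)$. This yields
\begin{align*}
\tilde{\boldsymbol  t}^{j,\bj}(u) &= \frac{g^{j,\bj}(u)}{g^{j-\h,\bj}(uq^{-\h})\,g^{\h,\bj}(uq^{j-\h})}\,\tilde{\boldsymbol  t}^{j-\h,\bj}(uq^{-\h})\,\tilde{\boldsymbol  t}^{\h,\bj}(uq^{j-\h}) \\
&\quad+\left[\prod_{n=1}^{N}\beta^{(j_n)}(uq^{j-\h}v_n)\,\beta^{(j_n)}(uq^{j-\h}v_n^{-1})\right]\frac{\Gamma_{-}(uq^{j-\tha})\,\Gamma_{+}(uq^{j-\tha})}{c(u^2q^{2j})\,c(u^2q^{2j-2})}\,\frac{g^{j,\bj}(u)}{g^{j-1,\bj}(uq^{-1})}\,\tilde{\boldsymbol  t}^{j-1,\bj}(uq^{-1}),
\end{align*}
which matches~\eqref{renormTTN}.

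The argument is essentially bookkeeping: no new algebraic content is needed beyond Proposition~\ref{prop:TTt} and the definition of the normalization~\eqref{eq:renormgj}. The only place where care is required is ensuring that the scalar ratios $g^{j,\bj}(u)/(g^{j-\h,\bj}(uq^{-\h})\,g^{\h,\bj}(uq^{j-\h}))$ and $g^{j,\bj}(u)/g^{j-1,\bj}(uq^{-1})$ are interpreted as formal Laurent series in the same way on both sides, so that the corresponding factors $c(\cdot)$ and $f^{(j)}(\cdot)$ appearing in~\eqref{eq:renormgj} cancel consistently. Since no denominators among these scalar factors vanish identically as formal series (we are working in $\mathbb{C}((u^{-1}))$), this presents no obstruction, and the statement follows immediately.
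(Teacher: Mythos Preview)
Your proof is correct and takes essentially the same approach as the paper, which simply states that the corollary follows immediately from Proposition~\ref{prop:TTt} together with~\eqref{renormtrans}. Your additional remark about interpreting the scalar ratios as formal Laurent series is a reasonable precaution but not strictly needed here, since the $g^{j,\bj}(u)$ are explicit nonzero rational functions.
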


   Using the TT-relations~\eqref{renormTTN}   with the initial condition~\eqref{trans-mat-jsIN}, we conclude that the spin-$j$ normalized transfer matrix $\tilde {\boldsymbol  t}^{j,\bj}(u)$ is expressed in terms of $\psi^{(N)}_{\bj,\bar{v}}\bigl(\mathsf{I}^{(N)}(u)\bigr)$ (with $q$-shifted arguments) as a polynomial of order $2j$. The expressions are determined recursively and are similar to~\eqref{eq:TT-gen1} and~\eqref{eq:TT-gen2}. Up to an overall rational function, inverse to the factor in Remark~\ref{rem:t-Laurent},  coefficients of these polynomials are Laurent polynomials in~$u$, and they depend only on the boundary parameters $k_\pm$, $\varepsilon_{\pm}$ and $\bar{k}_\pm$, $\bar{\varepsilon}_{\pm}$ and  the inhomogeneity parameters $v_n$, $1\leq n\leq N$.
   
Similarly,  the higher Hamiltonians in~\eqref{HnfromT} can be written recursively  as polynomials  of the operators $\psi^{(N)}_{\bj,\bar{v}}\bigl(\mathsf{I}_{2k+1}^{(N)}\bigr)$ from~\eqref{eq:psi-IN} and~\eqref{IkOqN}. 
To illustrate this, let us consider the open XXZ spin-$\h$ chain of Example~\ref{ex:H12}. Details of calculations for the following results are reported in Appendix~\ref{ApE}. For the first two members of the hierarchy, we get: 
\beqa
{\cal H}^{(1)}=\frac{2}{c(q)}
\left(\cal H_{XXZ} - \cal H_0\right) = \frac{4c(q)^{-2N+1}}{(\varepsilon_+ + \varepsilon_-)(\bar\varepsilon_+ + \bar\varepsilon_-)} \psi^{(N)}_{\bar{\h},\bar{1}}(\mathsf{I}^{(N)}(1))  \ , \label{H1XXZ}
\eeqa
where ${\cal H}_0$ is a scalar, and
\beqa
{\cal H}^{(2)}&=& \frac{1}{(q+q^{-1})(\varepsilon_+ + \varepsilon_-)(\bar\varepsilon_+ + \bar\varepsilon_-)}\left(-4c(q^2){c(q)^{-2N}} \psi^{(N)}_{\bar{\h},\bar{1}}(\mathsf{I}^{(N)}(1)) \right.\nonumber\\
&&\qquad\qquad\qquad\qquad\qquad\qquad\qquad \qquad \qquad \qquad\quad  \left. +\; 8\frac{d}{du}\left( c(u^2q^2) g^{\h,\bar \h}(u) \psi^{(N)}_{\bar{\h},\bar{1}}(\mathsf{I}^{(N)}(u)) \right)\Big|_{u=1}\right)\ \label{H2XXZ}\\
&&- \frac{1}{(q+q^{-1})^2(\varepsilon_+ + \varepsilon_-)^2(\bar\varepsilon_+ + \bar\varepsilon_-)^2}\Bigl( 4c(q^2)c(q)^{-2N}\left( \psi^{(N)}_{\bar{\h},\bar{1}}(\mathsf{I}^{(N)}(1)) + h_0^{(N)}(1) \mathsf{I}_0 \right )
\nonumber\\
&&\qquad \qquad \qquad \qquad \qquad 
+ \; {2c(q)^{-2N+1}\left(\overline{\varepsilon}_+  \varepsilon_+^{(N)} +  \overline{\varepsilon}_-\varepsilon_-^{(N)} \right) -2N\frac{(q+q^{-1})^2}{c(q)}\left(\overline{\varepsilon}_+  +  \overline{\varepsilon}_-\right)\left(  \varepsilon_++ \varepsilon_- \right)} \Bigr)^2\ ,\nonumber
\eeqa
with $\varepsilon^{(N)}_\pm$ defined in~\eqref{eps-pmN}.

After these examples, we now turn to formulating a general algorithm of constructing $\cal{H}^{(n)}$ in terms of spin matrices.

 \subsubsection{Algorithm of constructing $\cal{H}^{(n)}$}
 \label{sec:alg-Hn}
 For arbitrary spin-$j$, all 
the conserved quantities ${\cal H}^{(n)}$ are explicitly expressed in terms of $\psi^{(N)}_{\bj,\bar{1}}\bigl(\mathsf{I}_{2k+1}^{(N)}\bigr)$ applying the following algorithm:
\begin{enumerate}
\item Take the normalized transfer matrix ${\tilde {\boldsymbol  t}}^{\h,\bj}(u)$ given by~\eqref{trans-mat-jsIN} -- which is written in terms of $\psi^{(N)}_{\bj,\bar{v}}\bigl(\mathsf{I}_{2k+1}^{(N)}\bigr)$ using~\eqref{eq:psi-IN}  and~\eqref{IkOqN} -- and take its specialization for $j_n=j
$ and $v_n=1$ for all $n$;
\item Using the TT-relation~\eqref{renormTTN},  compute the expression for ${\tilde {\boldsymbol  t}}^{j,\bj}(u)$  in terms of $\psi^{(N)}_{\bj,\bar{1}}\bigl(\mathsf{I}_{2k+1}^{(N)}\bigr)$;
\item  Compute the expression for ${\cal H}^{(n)}$ as the logarithmic derivative~\eqref{HnfromT} as a polynomial in $\psi^{(N)}_{\bj,\bar{1}}\bigl(\mathsf{I}_{2k+1}^{(N)}\bigr)$, it is of order $2jn$;

\item Using the expression~\eqref{IkOqN} of $\psi^{(N)}_{\bj,\bar{1}}\bigl(\mathsf{I}_{2k+1}^{(N)}\bigr)$  in terms of the spin-chain representation of the alternating generators of ${\cal A}_q$, and written in terms of tensor products of spin matrices in Appendix~\ref{apD}, we finally obtain a spin-matrix expression of the higher Hamiltonians  ${\cal H}^{(n)}$. 

\end{enumerate}

 \subsection{Transfer matrix and $q-$Onsager generators}\label{sec:tr-mat-Ons}
We now show that the normalized spin-$j$ transfer matrix $\tilde {\boldsymbol  t}^{j,\bj}(u)$ can be  expressed solely in terms of the two fundamental operators $\calW_0^{(N)}$, $\calW_1^{(N)}$ in  the spin-chain representations, given by~\eqref{W0vN}, \eqref{W1vN}. Indeed, one sees from~\eqref{GammaN} and~\eqref{evalgamN} that the quantum determinant $\Gamma(u)$ of $\mathcal{A}_q$ evaluates on the spin-chain representation to a formal Laurent series in $u$ with scalar coefficients:
\begin{equation}\label{eq:Gamma-rep}
   \psi^{(N)}_{\bj,\bar{v}}\circ \varphi^{(N)}\bigl(
   \Gamma(u)\bigr) = 
   \frac{\psi^{(N)}_{\bj,\bar{v}}(\Gamma^{(N)}(u))}{	c(u^2)c(u^2q^2)h_0^{(N)}(u)h_0^{(N)}(uq)} \ ,
\end{equation}
and recall~\eqref{h0N} with~\eqref{dkN}.
Therefore, as discussed in Section~\ref{sub:TT-qOA}, the representation map $\psi^{(N)}_{\bj,\bar{v}}\circ \varphi^{(N)}$ of ${\cal A}_q$, or equivalently of the algebra ${\cal A}_q^{(N)}$, factorizes through the $q$-Onsager algebra $O_q$ via the map $\gamma^{(\delta)}$ defined by~\eqref{mapDeldel}, and with the image $\gamma^{(\delta)}\bigl(\Gamma(u)\bigr)$ as in Lemma~\ref{lem:qdetOq}. In other words, the image $\psi^{(N)}_{\bj,\bar{v}}\bigl({\cal A}_q^{(N)} \bigr)$ is a (finite-dimensional) quotient of $O_q$ which is generated by the fundamental operators $\calW_0^{(N)}$ and $\calW_1^{(N)}$.
In particular, the alternating operators  ${\calW}^{(N)}_{-k}$, ${\calW}^{(N)}_{k+1}$, ${\calG}^{(N)}_{k+1}$, and $\tilde{\calG}^{(N)}_{k+1}$ are certain polynomials~\eqref{recGk}, \eqref{recWk} of  the fundamental operators $\calW_0^{(N)}$, $\calW_1^{(N)}$ with coefficients that depend on set of scalars $\delta_k$ with $1 \leq k\leq N$.
 The scalars $\delta_k$ are determined by combining~\eqref{evalgamN}, \eqref{eq:Gamma-rep} and \eqref{imgamma}, which gives
\begin{align*}
		 \sum_{k=1}^{\infty} c_k\delta_k u^{-2k} = \frac{c(q)\left [  \displaystyle{ \prod_{n=1}^N  }
				c(u q^{j_n+\frac{3}{2}} v_n)c(u q^{j_n+\frac{3}{2}} v_n^{-1})c(u q^{-j_n+\h} v_n)
				c(u q^{-j_n+\h} v_n^{-1}) \right ]  \ \Gamma_{-} (u)}{c(u^2) c(u^2 q^2)^2 h_0^{(N)}(u) h_0^{(N)}(uq) } + \frac{\rho}{c(q)}\  
\end{align*} 
with $c_k$ given in~\eqref{eq:ckp1}.
For instance, we find:
\begin{align}
\mbox{For $N=1$:}\qquad \delta_1&= 0\ ,\nonumber \\
\delta_2 &= \frac {c(q)}{q^2+q^{-2}} \Big ( \frac{\varepsilon_+ \varepsilon_-}{(q+q^{-1})^2} (q^2+q^{-2}) w_0^{(j_1)} (v_1^2+v_1^{-2}) \nonumber \\ 
&- \frac{k_+ k_-}{c(q)^2 (q+q^{-1})^2} c(q^{2j_1}) c(q^{2j_1+2}) c(q v_1^2) c(q^{-1} v_1^2)  - \frac{\varepsilon_+^2 \varepsilon_-^2 c(q)^2 }{\rho}  -\varepsilon_+^2 -\varepsilon_-^2 \Big ) \ . \nonumber\\
\mbox{For $N=2$:}\qquad \delta_1&=0 \ ,\nonumber\\ \delta_2&= 
\delta_2|_{N=1} - \frac{k_+ k_- c(q^{2j_2})  c(q^{2j_2+2}) c(q v_2^2) c(q^{-1} v_2^2)}{c(q^2)(q+q^{-1})(q^2+q^{-2})}\nonumber\ .
\end{align}

\begin{rem} For $N=0$ in~\eqref{eq:Gamma-rep}, the corresponding $\delta_k$'s  coincide with~\eqref{delpair},~\eqref{delimp} of Example~\ref{gameps}. Indeed, for $N=0$ 
we have $h^{(0)}_0(u) = (q+q^{-1})^{-1}$ due to $d_0^{(0)}=-1$. Using~\eqref{evalgamN} at $N=0$,
it follows that the r.h.s.\ of~\eqref{eq:Gamma-rep} matches with the second line of~\eqref{eq:durho}.
\end{rem}

Using the resulting polynomial expressions~\eqref{recGk} and~\eqref{recWk} for the alternating generators of ${\cal A}_q$, 
we get a corresponding polynomial expression for $\psi^{(N)}_{\bj,\bar{v}}\bigl(\mathsf{I}_{2k+1}^{(N)}\bigr)$ in~\eqref{IkOqN} in the fundamental operators $\calW_0^{(N)}$ and $\calW_1^{(N)}$. By Remark~\ref{deg}, note that the polynomial $\psi^{(N)}_{\bj,\bar{v}}\bigl(\mathsf{I}_{2k+1}^{(N)}\bigr)$ is of degree $2k+2$. For instance for $k=0$, taking the image via $\psi^{(N)}_{\bj,\bar{v}}$ of $G_1$ in~\eqref{defel}, we get:
\begin{equation}
	\psi^{(N)}_{\bj,\bar{v}}(\mathsf{I}^{(N)}_1)= \overline{\varepsilon}_+ \mathcal{W}^{(N)}_0 + \overline{\varepsilon}_- \mathcal{W}^{(N)}_1  +\frac{1}{q^2-q^{-2}} \left ( \frac{\overline{k}_-}{k_-}  \lbrack \mathcal{W}^{(N)}_1,\mathcal{W}^{(N)}_0 \rbrack_q     +  \frac{\overline{k}_+}{k_+}  \lbrack \mathcal{W}^{(N)}_0,\mathcal{W}^{(N)}_1\rbrack_q  \right ) \ .
\end{equation}
Finally, using~\eqref{trans-mat-jsIN}
one gets an expression for $\tilde {\boldsymbol  t}^{\h,\bj}(u)$ in terms of $\calW_0^{(N)}$ and $\calW_1^{(N)}$. We see from~\eqref{eq:psi-IN}  that the highest degree term is $\psi^{(N)}_{\bj,\bar{v}}\bigl(\mathsf{I}_{2N-1}^{(N)}\bigr)$, and therefore $\tilde {\boldsymbol  t}^{\h,\bj}(u)$ has total degree $2N$ in $\calW_0^{(N)}$, $\calW_1^{(N)}$.
More generally, using the TT-relation~\eqref{evalTN}, it follows: 
	\begin{cor}\label{corfin} The normalized transfer matrix $\tilde {\boldsymbol  t}^{j,\bj}(u)$ is a polynomial of total degree $4Nj$ in the fundamental alternating operators $\calW_0^{(N)}$, $\calW_1^{(N)}$. 
	\end{cor}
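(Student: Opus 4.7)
The plan is to prove Corollary~\ref{corfin} by induction on $j\in\h\mathbb{N}$ using the TT-relation~\eqref{renormTTN}. The base of the induction rests on understanding the degree of $\tilde{\boldsymbol  t}^{\h,\bj}(u)$, which is given explicitly by~\eqref{trans-mat-jsIN} in terms of $\psi^{(N)}_{\bj,\bar{v}}\bigl(\mathsf{I}^{(N)}(u)\bigr)$. Using the expansion~\eqref{eq:psi-IN} together with~\eqref{IkOqN}, it suffices to control the degree of each $\psi^{(N)}_{\bj,\bar{v}}\bigl(\mathsf{I}^{(N)}_{2k+1}\bigr)$ in the two fundamental operators $\calW_0^{(N)}$, $\calW_1^{(N)}$. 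Here the key input is that, as discussed at the beginning of Section~\ref{sec:tr-mat-Ons}, the map $\psi^{(N)}_{\bj,\bar{v}}\circ\varphi^{(N)}$ factorizes through the $q$-Onsager algebra $O_q$ because the quantum determinant~\eqref{eq:Gamma-rep} evaluates to a scalar; composing with the map $\gamma^{(\delta)}$ of~\eqref{mapAOq}, the alternating elements $\mathsf{I}^{(N)}_{2k+1}$ become polynomials in $W_0,W_1$ of total degree $2k+2$ by Remark~\ref{deg}. Hence the highest-degree contribution in~\eqref{eq:psi-IN} comes from $k=N-1$ and has total degree $2N=4N\cdot \h$, which establishes the claim for $j=\h$.

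For the inductive step, I would take as hypothesis that $\tilde{\boldsymbol  t}^{j-\h,\bj}(u)$ has total degree at most $4N(j-\h)=4Nj-2N$ and $\tilde{\boldsymbol  t}^{j-1,\bj}(u)$ has total degree at most $4N(j-1)=4Nj-4N$ in $\calW_0^{(N)}$, $\calW_1^{(N)}$. The TT-relation~\eqref{renormTTN} expresses $\tilde{\boldsymbol  t}^{j,\bj}(u)$ as a sum of two terms whose coefficients are scalar rational functions of $u$ only. The first term, proportional to $\tilde{\boldsymbol  t}^{j-\h,\bj}(u q^{-\h})\,\tilde{\boldsymbol  t}^{\h,\bj}(u q^{j-\h})$, has total degree at most $(4Nj-2N)+2N=4Nj$, while the second term, proportional to $\tilde{\boldsymbol  t}^{j-1,\bj}(uq^{-1})$, has total degree at most $4Nj-4N<4Nj$. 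Combining these upper bounds gives that $\tilde{\boldsymbol  t}^{j,\bj}(u)$ has total degree at most $4Nj$, closing the induction.

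The only possible obstacle is whether the degree $4Nj$ is actually attained rather than cancelled; this is easily addressed by observing that the scalar prefactor of the product term in~\eqref{renormTTN} -- namely $g^{j,\bj}(u)/\bigl(g^{j-\h,\bj}(uq^{-\h})\,g^{\h,\bj}(uq^{j-\h})\bigr)$ with $g^{j,\bj}(u)$ defined in~\eqref{eq:renormgj} -- is a non-zero rational function of $u$, and the second term has strictly smaller degree and thus cannot cancel the leading contribution of the first. Since by the base case the leading-degree component of $\tilde{\boldsymbol  t}^{\h,\bj}(u)$ is non-zero (coming from $\psi^{(N)}_{\bj,\bar{v}}\bigl(\mathsf{I}^{(N)}_{2N-1}\bigr)$, which is a degree $2N$ polynomial in $\calW_0^{(N)}, \calW_1^{(N)}$ not reducible to lower degree), the inductive step actually preserves equality of the degree, completing the proof.
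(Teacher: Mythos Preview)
Your proof is correct and follows essentially the same approach as the paper: establish the base case $j=\tfrac12$ via~\eqref{trans-mat-jsIN}, \eqref{eq:psi-IN} and the degree count~\eqref{degIk} from Remark~\ref{deg}, then induct using the TT-relation~\eqref{renormTTN}. Your version is more detailed than the paper's one-line justification, in particular you address why the leading degree is attained rather than cancelled, which the paper leaves implicit.
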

   By this corollary  all local conserved quantities ${\cal H}^{(n)}$ defined by~\eqref{HnfromT} can be expressed as certain polynomials  
    of total degree $4nNj$ in the fundamental operators  $\calW_0^{(N)}$ and $\calW_1^{(N)}$. For instance, for the open XXZ spin-$\h$ chain, ${\cal H}^{(1)}$ and ${\cal H}^{(2)}$ given by~\eqref{H1XXZ} and~\eqref{H2XXZ} are polynomials of total degree $2N$ and~$4N$, respectively.  \medskip

 \section{On exchange relations and symmetries} \label{sec6}
From Section~\ref{sec:alg-Hn}, recall that the spin-chain Hamiltonian ${\cal{H}^{(1)}}$ is a polynomial of order $2j$ in the spin-chain representation $\psi^{(N)}_{\bj,\bar{v}}$ of the elements $\mathsf{I}_{2k+1}\in \cal A_q$ introduced in~\eqref{eq:def-Ik}, using the identification $\psi^{(N)}_{\bj,\bar{v}}(\mathsf{I}^{(N)}_{2k+1})= \psi^{(N)}_{\bj,\bar{v}} \circ\varphi^{(N)}(\mathsf{I}_{2k+1})$. These elements $\mathsf{I}_{2k+1}$ depend on the `right' side boundary parameters $\overline{\varepsilon}_\pm,\bar k_\pm$  and the `left' side boundary parameters $k_\pm$.
The goal of this section is twofold. Firstly, using the PBW basis of $\cal A_q$ we 
study non-trivial exchange relations in $\mathcal{A}_q$ between $\mathsf{I}_{2k+1}$  and the generators $\cW_0$ and $\cW_1$ or their non-trivial linear combination. This leads to Proposition~\ref{prop:lin-comb-Tj} identifying the linear combination that commutes with the universal transfer matrix $\bt^{(j)}(u)$ of spin-$j$  for $\bar k_\pm=0$. Secondly, we consider applications to integrable spin-$j$ chains. Namely, for certain relations between the \textsl{both side} boundary parameters it is shown that the exchange relations lead to non-trivial symmetries of the spin-$j$ Hamiltonians of the spin chain, generalizing the known results for $j=\h$ \cite{Doikou}.  Finally, we give comments on such symmetries at $q=1$ or in the XXX-type Hamiltonians case.

\subsection{Exchange relations in $\mathcal{A}_q$}

Recall that the abelian subalgebra $\mathcal{I}$ of $\mathcal{A}_q$ is generated by the elements $\{ \mathsf{I}_{2k+1} | k \in \mathbb{N}  \}$ given in~\eqref{eq:def-Ik}.
Below we focus successively on the exchange relations that involve only $\cW_0,\cW_1$ and the sum $a\cW_0 + b\cW_1$.
For convenience introduce 
$$
\mathsf{I}_{2k+1}' = \overline{\varepsilon}_+'{\tW}_{-k} + \overline{\varepsilon}_-'{\tW}_{k+1} + \frac{1}{q^2-q^{-2}} \left (  \frac{\overline{k}_-'}{k_-} {\tG}_{k+1}  + \frac{\overline{k}_+'}{k_+} \tilde{\tG}_{k+1} \right ) \ , \qquad k\in\mathbb{N}\ ,
$$
for some $\overline{k}_\pm',\overline{\varepsilon}_\pm' \in \mathbb{C}$. 

\begin{prop}\label{LemmaCondW0}
	We have the following  exchange relations, for all $k\in\mathbb{N}$,
	\begin{equation}\label{eqcondW0} {\normalfont \tW}_0 \mathsf{I}_{2k+1} = \mathsf{I}'_{2k+1}{\normalfont \tW}_0
    \end{equation}
	if and only if the parameters  satisfy
	\begin{equation}\label{condW0}
		\overline{\varepsilon}_+' = \overline{\varepsilon}_+ \ , \quad \overline{\varepsilon}_-'=\overline{\varepsilon}_-=0 \ , \quad \frac{\overline{k}_+}{k_+} = \frac{\overline{k}_-}{k_-} q^{-2} \ , \quad \overline{k}_\pm' = \overline{k}_\pm q^{\pm 2} \ .
	\end{equation}
\end{prop}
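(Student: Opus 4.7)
The plan is to establish the exchange relation~\eqref{eqcondW0} by a direct computation in $\mathcal{A}_q$, reducing both sides to the PBW basis described in~\eqref{eq:WGGW}--\eqref{order} and then comparing coefficients. The input relations are \eqref{qo1}--\eqref{qo4}, which I will use to move $\tW_0$ from the left of $\mathsf{I}_{2k+1}$ to the right.

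First I would compute the four commutators $\tW_0 \tW_{-k}$, $\tW_0 \tW_{k+1}$, $\tW_0 \tG_{k+1}$, $\tW_0 \tilde{\tG}_{k+1}$. Using~\eqref{qo4} we have $\tW_0\tW_{-k}=\tW_{-k}\tW_0$; from~\eqref{qo1} we obtain $\tW_0\tW_{k+1}=\tW_{k+1}\tW_0+\tfrac{1}{q+q^{-1}}\bigl(\tilde{\tG}_{k+1}-\tG_{k+1}\bigr)$; and from the $q$-commutator relations in~\eqref{qo2} we extract
\begin{align*}
\tW_0 \tG_{k+1} &= q^{-2}\tG_{k+1}\tW_0 + q^{-1}\rho\bigl(\tW_{-k-1}-\tW_{k+1}\bigr),\\
\tW_0 \tilde{\tG}_{k+1} &= q^{2}\tilde{\tG}_{k+1}\tW_0 + q\rho\bigl(\tW_{k+1}-\tW_{-k-1}\bigr).
\end{align*}
Substituting into the definition~\eqref{eq:def-Ik} of $\mathsf{I}_{2k+1}$ reorganizes $\tW_0\mathsf{I}_{2k+1}$ as a sum of two types of terms: (A) PBW monomials ending in $\tW_0$, namely $\tW_{-k}\tW_0$, $\tW_{k+1}\tW_0$, $\tG_{k+1}\tW_0$, $\tilde{\tG}_{k+1}\tW_0$; and (B) PBW monomials of degree one, namely $\tW_{-k-1}$, $\tW_{k+1}$, $\tG_{k+1}$, $\tilde{\tG}_{k+1}$.

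Since $\mathsf{I}'_{2k+1}\tW_0$ consists purely of type~(A) terms, the equation $\tW_0\mathsf{I}_{2k+1}=\mathsf{I}'_{2k+1}\tW_0$ is equivalent, by linear independence of the PBW basis, to the vanishing of the type~(B) contributions and the equality of the type~(A) coefficients. The type~(B) conditions yield $\overline{\varepsilon}_-=0$ (coefficients of $\tG_{k+1}$ and $\tilde{\tG}_{k+1}$) and $q^{-1}\overline{k}_-/k_- = q\,\overline{k}_+/k_+$ (coefficients of $\tW_{-k-1}$ and $\tW_{k+1}$, which give the same equation). Matching the type~(A) coefficients gives $\overline{\varepsilon}_+'=\overline{\varepsilon}_+$, $\overline{\varepsilon}_-'=\overline{\varepsilon}_-=0$, $\overline{k}_-'=q^{-2}\overline{k}_-$ and $\overline{k}_+'=q^{2}\overline{k}_+$. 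Collecting these gives exactly the conditions~\eqref{condW0}, and the argument is reversible.

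There is no genuine obstacle here: once the reordering formulas above are written, everything reduces to comparing eight coefficients. The only care required is the bookkeeping of the $q$-commutator conventions and signs in~\eqref{qo2}, together with the observation that the type~(A) and type~(B) families lie in distinct PBW degrees and cannot mix, which is what makes the system of equations decouple cleanly into necessary and sufficient conditions.
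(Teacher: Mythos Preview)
Your approach is correct and essentially the same as the paper's: reorder using \eqref{qo1}, \eqref{qo2}, \eqref{qo4} and compare coefficients via the PBW basis. One minor point: your type~(A) monomials $\tW_{k+1}\tW_0$, $\tG_{k+1}\tW_0$, $\tilde{\tG}_{k+1}\tW_0$ are not actually PBW-ordered (in the ordering~\eqref{order} the element $\tW_0$ comes first), so the paper instead leaves the left-hand side $\tW_0\mathsf{I}_{2k+1}$ in its already-ordered form and reorders the right-hand side $\mathsf{I}'_{2k+1}\tW_0$, which yields the same system of equations without needing any extra justification of linear independence.
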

\begin{proof}  With respect to the choice of PBW basis~\eqref{eq:WGGW} \& \eqref{order}, the l.h.s.\ of~\eqref{eqcondW0} is a combination of the linearly independent ordered monomials  $\tW_0\tW_{-k}$, $\tW_0\tW_{k+1}$, $\tW_0\tG_{k+1}$, and $\tW_0\tilde{\tG}_{k+1}$. Reordering the r.h.s.\ of~\eqref{eqcondW0} using the defining relations~\eqref{qo1},~\eqref{qo2},~\eqref{qo4} of ${\cal A}_q$, we get:
	\begin{align} 
		\mathsf{I}'_{2k+1}{\tW}_0  =& 
		\overline{\varepsilon}'_+ {\tW}_0 {\tW}_{-k} +\overline{\varepsilon}'_- 
		{\tW}_0 {\tW}_{k+1}  + \frac{1}{q^2-q^{-2}} \Big ( \frac{\overline{k}'_+}{k_+}  q^{-2} {\tW}_0\tilde{\tG}_{k+1} + \frac{\overline{k}'_-}{k_-}  q^2 {\tW}_0 {\tG}_{k+1} \Big) \label{badterms}\\ 
		- & \overline{\varepsilon}'_-\frac{\tilde{\tG}_{k+1}-{\tG}_{k+1}}{q+q^{-1}}  + \frac{1}{q^2-q^{-2}} \rho  \Bigr(\frac{\overline{k}'_+}{k_+}  q^{-1} - \frac{\overline{k}'_-}{k_-}  q \Bigl)\big( {\tW}_{-k-1} - {\tW}_{k+1} \big ) \ .\nonumber
	\end{align}
	With this equation we equate in~\eqref{eqcondW0} coefficients in front of linearly independent monomials 
	$\tW_0\tW_{-k}$, $\tW_0\tW_{k+1}$, $\tG_{k+1}$, $\tilde{\tG}_{k+1}$, $\tW_0\tG_{k+1}$, $\tW_0\tilde{\tG}_{k+1}$, which
	implies that the equality~\eqref{eqcondW0} holds if and only if the constraints~\eqref{condW0} are satisfied.  
\end{proof}
Exchange relation between $\mathsf{I}_{2k+1}$'s and  ${\tW}_1$ is obtained similarly:

\begin{prop}\label{LemmaCondW1} We have the following exchange relation,  for all $k\in\mathbb{N}$,
	\begin{equation} \label{eqcondW1} {\normalfont \tW}_1 \mathsf{I}_{2k+1} = \mathsf{I}_{2k+1}'{\normalfont \tW}_1
    \end{equation}
	if and only if the parameters  satisfy
	\begin{equation}\label{condW1}
		\overline{\varepsilon}_+'=\overline{\varepsilon}_+=0 \ , \quad \overline{\varepsilon}_-' = \overline{\varepsilon}_- \ , \quad \frac{\overline{k}_+}{k_+} = \frac{\overline{k}_-}{k_-} q^{2} \ , \quad \overline{k}_\pm' = \overline{k}_\pm q^{\mp2} \ .
	\end{equation}
\end{prop}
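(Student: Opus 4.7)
The plan is to mirror the strategy used in the proof of Proposition~\ref{LemmaCondW0}, but with the roles of the two sides exchanged. Indeed, with respect to the PBW ordering~\eqref{order}, $\tW_{1}$ now belongs to the top family $\{\tW_{n+1}\}$ and is maximal relative to $\{\tW_{-k},\tG_{k+1},\tilde{\tG}_{k+1}\}$. Therefore the \emph{right}-hand side $\mathsf{I}'_{2k+1}\tW_1$ is already expanded in the PBW basis as a linear combination of the ordered quadratic monomials $\tW_{-k}\tW_1$, $\tW_{k+1}\tW_1$, $\tG_{k+1}\tW_1$, $\tilde{\tG}_{k+1}\tW_1$, while the \emph{left}-hand side $\tW_1\mathsf{I}_{2k+1}$ has to be reordered.

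To carry out the reordering, I would use the defining relations~\eqref{qo1}, \eqref{qo3}, \eqref{qo4} to move $\tW_1$ past each generator appearing in~\eqref{eq:def-Ik}. Concretely:
\begin{align*}
\tW_1\tW_{-k} &= \tW_{-k}\tW_1-\tfrac{1}{q+q^{-1}}\bigl(\tilde{\tG}_{k+1}-\tG_{k+1}\bigr),\\
\tW_1\tW_{k+1} &= \tW_{k+1}\tW_1,\\
\tW_1\tG_{k+1} &= q^2\tG_{k+1}\tW_1-q\rho\bigl(\tW_{k+2}-\tW_{-k}\bigr),\\
\tW_1\tilde{\tG}_{k+1} &= q^{-2}\tilde{\tG}_{k+1}\tW_1+q^{-1}\rho\bigl(\tW_{k+2}-\tW_{-k}\bigr),
\end{align*}
where the last two are obtained by solving the $q$-commutator identities in~\eqref{qo3} for $\tW_1\tG_{k+1}$ and $\tW_1\tilde{\tG}_{k+1}$.

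Substituting these four reorderings into $\tW_1\mathsf{I}_{2k+1}$ and equating the result with $\mathsf{I}'_{2k+1}\tW_1$, I would then read off the equalities by matching coefficients of each linearly independent PBW monomial. The quadratic terms $\tW_{-k}\tW_1$, $\tW_{k+1}\tW_1$, $\tG_{k+1}\tW_1$, $\tilde{\tG}_{k+1}\tW_1$ yield the four rescaling conditions $\overline{\varepsilon}'_\pm = \overline{\varepsilon}_\pm$ and $\overline{k}'_\pm = q^{\mp 2}\overline{k}_\pm$. The remaining residual linear terms in $\tG_{k+1}$, $\tilde{\tG}_{k+1}$, $\tW_{k+2}$, $\tW_{-k}$ produced by the reordering have no counterpart on the right-hand side and must therefore vanish; this forces $\overline{\varepsilon}_+=0$ and $\tfrac{\overline{k}_+}{k_+}=q^2\tfrac{\overline{k}_-}{k_-}$, giving exactly the constraints in~\eqref{condW1}. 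The converse (sufficiency) is immediate from the same computation.

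No serious obstacle is anticipated; the calculation is entirely routine once the reorderings are set up, the only care being needed in keeping track of the asymmetric $q$-factors in~\eqref{qo3}. As a consistency check, one can also observe that the involution of $\cal A_q$ swapping $\tW_{-k}\leftrightarrow \tW_{k+1}$ and $\tG_{k+1}\leftrightarrow \tilde{\tG}_{k+1}$, which manifestly preserves the defining relations~\eqref{qo1}--\eqref{qo11}, interchanges $\tW_0$ and $\tW_1$ and maps the identity~\eqref{eqcondW0} to~\eqref{eqcondW1} after an appropriate relabelling of the parameters; applied to~\eqref{condW0}, this recovers~\eqref{condW1}, confirming the outcome of the direct PBW computation.
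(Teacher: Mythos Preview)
Your proposal is correct and follows essentially the same approach as the paper, which merely says the proof is obtained ``similarly'' to Proposition~\ref{LemmaCondW0}. You have correctly identified the one adjustment needed: since $\tW_1$ sits at the top of the PBW ordering~\eqref{order}, it is now the left-hand side that must be reordered (using~\eqref{qo1}, \eqref{qo3}, \eqref{qo4}) while the right-hand side is already PBW-ordered, and your reordering formulas and coefficient matching are all accurate.
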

Combining calculations in the proofs of Propositions~\ref{LemmaCondW0} and~\ref{LemmaCondW1},  exchange relations with a linear combination  of $\cW_0,\cW_1$ take the following form:

\begin{prop}\label{LemmaCondW0W1} We have the following exchange relation,  for all $k\in\mathbb{N}$ and non-zero $a,b\in\mathbb{C}$,
	\begin{equation} \label{eqcondW0W1} 
		\left( a{\normalfont \tW}_0 + b{\normalfont \tW}_1\right) \mathsf{I}_{2k+1} = \mathsf{I}_{2k+1}'	\left( a{\normalfont \tW}_0 + b{\normalfont \tW}_1\right)\ \end{equation}
	if and only if the parameters  satisfy
	\begin{equation}\label{condW0W1}
		\overline{\varepsilon}_\pm'=\overline{\varepsilon}_\pm\ ,\quad \overline{\varepsilon}_- = \frac{b}{a}\overline{\varepsilon}_+  \ ,\quad    \overline{k}_\pm  = \overline{k}'_\pm  = 0 \ .
	\end{equation}
\end{prop}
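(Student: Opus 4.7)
The strategy directly parallels the proofs of Propositions~\ref{LemmaCondW0} and~\ref{LemmaCondW1}: I would expand both sides of~\eqref{eqcondW0W1} and rewrite every term as a linear combination of the ordered PBW monomials induced by the order~\eqref{order}, using the defining relations~\eqref{qo1}--\eqref{qo11} of $\mathcal{A}_q$. Then the equality~\eqref{eqcondW0W1} becomes equivalent to a finite system of linear equations in the parameters $\overline{\varepsilon}_\pm$, $\overline{\varepsilon}_\pm'$, $\overline{k}_\pm$, $\overline{k}_\pm'$, which I would solve to extract the conditions~\eqref{condW0W1}.

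In more detail, the left-hand side $a\tW_0\mathsf{I}_{2k+1}+b\tW_1\mathsf{I}_{2k+1}$ decomposes naturally: the four terms multiplied by $a\tW_0$ are already PBW-ordered (since $\tW_0$ sits in the first family), whereas the four terms multiplied by $b\tW_1$ require reordering. I would use~\eqref{qo1} and~\eqref{qo5} to get $\tW_1\tW_{-k} = \tW_{-k}\tW_1 + (q+q^{-1})^{-1}(\tG_{k+1}-\tilde{\tG}_{k+1})$, the commutation~\eqref{qo4} for $\tW_1\tW_{k+1}$, and~\eqref{qo3} to reorder $\tW_1\tG_{k+1}$ and $\tW_1\tilde{\tG}_{k+1}$ at the cost of linear terms in $\tW_{-k}$ and $\tW_{k+2}$. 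Symmetrically, the right-hand side $\mathsf{I}'_{2k+1}(a\tW_0+b\tW_1)$ already has the four terms multiplied by $b\tW_1$ PBW-ordered, while the four terms with $a\tW_0$ are reordered using~\eqref{qo1} and~\eqref{qo2}, exactly as in the reordering~\eqref{badterms} appearing in the proof of Proposition~\ref{LemmaCondW0}.

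The comparison of coefficients of the linearly independent monomials $\tW_0\tW_{-k}$, $\tW_0\tW_{k+1}$, $\tW_{-k}\tW_1$, $\tW_{k+1}\tW_1$ immediately yields $\overline{\varepsilon}_\pm'=\overline{\varepsilon}_\pm$. The coefficients of $\tW_0\tG_{k+1}$ and $\tG_{k+1}\tW_1$ give the pair $\overline{k}_-=q^2\overline{k}'_-$ and $q^2\overline{k}_-=\overline{k}'_-$, whose only solution (since $q$ is not a root of unity) is $\overline{k}_-=\overline{k}'_-=0$; the analogous analysis for $\tW_0\tilde{\tG}_{k+1}$ and $\tilde{\tG}_{k+1}\tW_1$ forces $\overline{k}_+=\overline{k}'_+=0$. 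Finally, matching the coefficient of the isolated generator $\tG_{k+1}$ (equivalently $\tilde{\tG}_{k+1}$) produces $b\overline{\varepsilon}_+=a\overline{\varepsilon}'_-=a\overline{\varepsilon}_-$, i.e.\ $\overline{\varepsilon}_-=(b/a)\overline{\varepsilon}_+$. With $\overline{k}_\pm=\overline{k}'_\pm=0$ the remaining coefficients (of $\tW_{-k-1}$, $\tW_{-k}$, $\tW_{k+1}$, $\tW_{k+2}$) vanish automatically, so these are all the constraints.

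This establishes the ``only if'' direction, and conversely any parameters satisfying~\eqref{condW0W1} obviously solve the linear system, giving the ``if'' direction. There is no genuine obstacle beyond bookkeeping: the proof is essentially a superposition of the two previous ones, with the mild subtlety that requiring the exchange for the linear combination $a\tW_0+b\tW_1$ rather than for $\tW_0$ and $\tW_1$ separately allows the compatible solution $\overline{\varepsilon}_\pm'=\overline{\varepsilon}_\pm$ with $\overline{\varepsilon}_-=(b/a)\overline{\varepsilon}_+$, which would be inconsistent if one demanded both relations~\eqref{eqcondW0} and~\eqref{eqcondW1} with the same~$\mathsf{I}'_{2k+1}$.
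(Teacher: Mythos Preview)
Your proposal is correct and follows exactly the approach the paper indicates: the paper's proof consists of the single sentence ``Combining calculations in the proofs of Propositions~\ref{LemmaCondW0} and~\ref{LemmaCondW1}, exchange relations with a linear combination of $\cW_0,\cW_1$ take the following form'', and you have carried out precisely this combination, with the PBW reordering and coefficient-matching spelled out in more detail than the paper itself provides.
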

Recall that $\mathsf{I}_{2k+1}$ in~\eqref{eq:def-Ik} depend on parameters $\overline{\varepsilon}_\pm\in \mathbb{C}$.
From the above propositions, we obtain commutativity of the corresponding generating function $\mathsf{I}(u)$, given by~\eqref{t12init}, with certain linear combination of ${\tW}_0$ and ${\tW}_1$ depending on these parameters $\overline{\varepsilon}_\pm$:
\begin{cor} \label{cor:w0-w1}
	For any values $\overline{\varepsilon}_\pm\in\mathbb{C}$ and  $k_\pm\in \mathbb{C}^*$, and
all $k\in\mathbb{N}$, we have the following commutativity relations:
\begin{equation}
		\bigl[ \mathsf{I}_{2k+1},  \overline{\varepsilon}_+{\tW}_0 + \overline{\varepsilon}_-{\tW}_1 \bigr] = 0 \ ,
\end{equation}
and consequently 
$$
\bigl[ \mathsf{I}(u),  \overline{\varepsilon}_+{\tW}_0 + \overline{\varepsilon}_-{\tW}_1 \bigr] = 0\ ,
$$
if and only if $\overline{k}_\pm =0$.
\end{cor}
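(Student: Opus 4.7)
The plan is to deduce the corollary as a direct consequence of Proposition~\ref{LemmaCondW0W1} (together with Propositions~\ref{LemmaCondW0} and~\ref{LemmaCondW1} for the degenerate cases), specialized to the situation in which $\mathsf{I}'_{2k+1}=\mathsf{I}_{2k+1}$. The key observation is that the commutator $[\mathsf{I}_{2k+1},\overline{\varepsilon}_+\tW_0+\overline{\varepsilon}_-\tW_1]=0$ is precisely the exchange relation~\eqref{eqcondW0W1} with $a=\overline{\varepsilon}_+$, $b=\overline{\varepsilon}_-$, and with primed parameters equal to unprimed ones. So the whole proof reduces to inspecting the iff conditions~\eqref{condW0W1}, \eqref{condW0}, and \eqref{condW1} under that identification.

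First I would handle the generic case $\overline{\varepsilon}_+\neq 0$ and $\overline{\varepsilon}_-\neq 0$. Applying Proposition~\ref{LemmaCondW0W1} with $a=\overline{\varepsilon}_+$ and $b=\overline{\varepsilon}_-$, the requirement $\mathsf{I}'_{2k+1}=\mathsf{I}_{2k+1}$ (i.e.\ $\overline{\varepsilon}'_\pm=\overline{\varepsilon}_\pm$ and $\overline{k}'_\pm=\overline{k}_\pm$) is equivalent, by the iff in~\eqref{condW0W1}, to the two constraints $\overline{\varepsilon}_-=(b/a)\overline{\varepsilon}_+$ and $\overline{k}_\pm=\overline{k}'_\pm=0$. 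The first constraint is a tautology under our choice of $a,b$, so the whole condition collapses to $\overline{k}_\pm=0$, giving the stated iff.

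Next I would treat the degenerate cases. If $\overline{\varepsilon}_+=0$ and $\overline{\varepsilon}_-\neq 0$, then $\overline{\varepsilon}_+\tW_0+\overline{\varepsilon}_-\tW_1=\overline{\varepsilon}_-\tW_1$ and Proposition~\ref{LemmaCondW1} applies. Imposing $\mathsf{I}'_{2k+1}=\mathsf{I}_{2k+1}$ in~\eqref{condW1} gives $\overline{k}_\pm=\overline{k}_\pm q^{\mp 2}$, hence $\overline{k}_\pm(1-q^{\mp 2})=0$; since $q$ is not a root of unity, this forces $\overline{k}_\pm=0$, and the remaining constraint $\overline{k}_+/k_+=(\overline{k}_-/k_-)q^{2}$ is then satisfied vacuously. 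The symmetric case $\overline{\varepsilon}_-=0$, $\overline{\varepsilon}_+\neq 0$ is handled identically via Proposition~\ref{LemmaCondW0}. The case $\overline{\varepsilon}_\pm=0$ is trivially true for any $\overline{k}_\pm$, and I would note it as an explicit exception to the iff statement (or argue that the corollary is stated under the implicit assumption that the linear combination is nonzero, which is how I read it).

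Finally, the commutativity of $\mathsf{I}(u)$ with $\overline{\varepsilon}_+\tW_0+\overline{\varepsilon}_-\tW_1$ follows immediately by $\mathbb{C}[[U^{-1}]]$-linearity from the mode expansion $\mathsf{I}(u)=\sum_{k\in\mathbb{N}}\mathsf{I}_{2k+1}U^{-k-1}$ recalled in~\eqref{t12init}. There is no real obstacle in this proof: all the combinatorial work with the PBW basis has already been carried out in Propositions~\ref{LemmaCondW0}--\ref{LemmaCondW0W1}, and the only subtle point is to make sure the iff survives in the boundary cases where one of the $\overline{\varepsilon}_\pm$ vanishes, which is why the corresponding intermediate propositions about $\tW_0$ and $\tW_1$ separately are needed.
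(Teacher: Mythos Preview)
Your proposal is correct and follows essentially the same approach as the paper, which simply states the corollary as an immediate consequence of Propositions~\ref{LemmaCondW0}--\ref{LemmaCondW0W1} without spelling out the specialization $\mathsf{I}'_{2k+1}=\mathsf{I}_{2k+1}$ or the degenerate cases. Your treatment is in fact more careful than the paper's, since you explicitly handle the boundary cases $\overline{\varepsilon}_\pm=0$ via the separate Propositions~\ref{LemmaCondW0} and~\ref{LemmaCondW1}, and you correctly flag that the ``only if'' direction is vacuous when both $\overline{\varepsilon}_\pm$ vanish.
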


We now recall the result at the end of Section~\ref{sub:TT-conj} that the universal transfer matrix $\bt^{(j)}(u)$ of spin-$j$ defined in~\eqref{tg} is a polynomial of order $2j$ in the generating function  $\mathsf{I}(u)$ with shifted arguments, and coefficients that are central in $\mathcal{A}_q$, see e.g.\ \eqref{eq:TT-gen1} and~\eqref{eq:TT-gen2}. We thus get the following result from Corollary~\ref{cor:w0-w1}:

\begin{prop}\label{prop:lin-comb-Tj}
	For $j\in\h\mathbb{N}_+$ and any values $\overline{\varepsilon}_\pm\in\mathbb{C}$ and  $k_\pm\in \mathbb{C}^*$, we have
    $$
\bigl[ \bt^{(j)}(u),  \overline{\varepsilon}_+{\tW}_0 + \overline{\varepsilon}_-{\tW}_1 \bigr] = 0\ ,
$$
if and only if $\overline{k}_\pm =0$.
\end{prop}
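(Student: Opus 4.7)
The strategy is to establish both directions of the iff by induction on $j$, with the base case $j=\h$ being precisely Corollary~\ref{cor:w0-w1} and the inductive step driven by the universal TT-relation~\eqref{TT-rel}. Throughout, set $X := \overline{\varepsilon}_+\tW_0 + \overline{\varepsilon}_-\tW_1$.

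For the ``if'' direction, assume $\overline{k}_\pm = 0$. The base case $j=\h$ follows immediately from the explicit form~\eqref{t12init}: since $\bt^{(\h)}(u) = (u^2q^2 - u^{-2}q^{-2})(\mathsf{I}(u) + \mathsf{I}_0)$ and $[\mathsf{I}(u),X] = 0$ by Corollary~\ref{cor:w0-w1}, one has $[\bt^{(\h)}(u),X] = 0$. For $j > \h$, the plan is to apply~\eqref{TT-rel} and observe that its coefficient $\Gamma(uq^{j-\tha})\Gamma_+(uq^{j-\tha}) / (c(u^2q^{2j})c(u^2q^{2j-2}))$ is central in $\mathcal{A}_q((u^{-1}))$: indeed $\Gamma(u)$ is central by Proposition~\ref{prop:qdet} and $\Gamma_+(u)$ is a scalar series. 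By the inductive hypothesis, each of $\bt^{(j-\h)}(uq^{-\h})$, $\bt^{(\h)}(uq^{j-\h})$, and $\bt^{(j-1)}(uq^{-1})$ commutes with $X$, so the Leibniz identity $[AB,X] = A[B,X] + [A,X]B$ together with the centrality of the coefficient yields $[\bt^{(j)}(u),X] = 0$.

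For the ``only if'' direction, I would exploit that iterating~\eqref{TT-rel} expresses $\bt^{(j)}(u)$ as a polynomial of total degree $2j$ in the shifted generating functions $\mathsf{I}(uq^\alpha)$, with central coefficients, whose top-degree part factorizes as $\prod_{k=0}^{2j-1}\bt^{(\h)}(uq^{j-\h-k})$ up to a central scalar (a direct check for $j=1,\tha$, extended by induction using~\eqref{TT-rel}), while the remaining terms involve $\bt^{(j')}(\cdot)$ with $j'<j$ times central coefficients. Applying the Leibniz rule, the contribution to $[\bt^{(j)}(u),X]$ coming from the top-degree part is a sum of $2j$ products each containing exactly one factor $[\bt^{(\h)}(uq^{j-\h-k}),X]$. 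If $[\bt^{(j)}(u),X]$ vanishes, separating the resulting expansion according to total polynomial degree in the $\mathsf{I}$'s --- a decomposition that one can make unambiguous via the splitting $\mathcal{A}_q \cong O_q \otimes \mathcal{Z}$ from~\eqref{isoAqOqZ} and the PBW basis of Section~\ref{sub:TT-PBW} --- forces the top-degree piece to vanish on its own, whence $[\bt^{(\h)}(uq^\alpha),X] = 0$ for some shift $\alpha$; Corollary~\ref{cor:w0-w1} then gives $\overline{k}_\pm = 0$.

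The main obstacle will be making the degree-separation step in the ``only if'' direction rigorous: the elements $[\mathsf{I}_{2k+1},X]$ do not in general commute with the $\mathsf{I}$'s, so naive polynomial bookkeeping has to be replaced by a careful reordering analysis in the PBW basis of Section~\ref{sub:TT-PBW}. A technically lighter alternative route would be to evaluate both sides under a spin-chain representation $\psi^{(N)}_{\bj,\bar{v}}$ for sufficiently large $N$ and generic boundary data, where images of $\mathsf{I}_{2k+1}$ are linearly independent matrices, and the conclusion reduces to a finite-dimensional commutator computation of the type underlying the Hamiltonian analysis of Section~\ref{sec:alg-Hn}.
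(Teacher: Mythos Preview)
Your ``if'' direction is exactly the paper's argument: the paper notes (in the sentence immediately preceding the proposition) that $\bt^{(j)}(u)$ is a polynomial of order $2j$ in the shifted generating functions $\mathsf{I}(uq^\alpha)$ with central coefficients --- which is precisely what your TT-induction establishes --- and then invokes Corollary~\ref{cor:w0-w1}. Your inductive formulation and the paper's one-line appeal to the polynomial structure are the same argument.

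For the ``only if'' direction, the paper's proof is that same single sentence; it performs no degree separation, no PBW reordering, and no spin-chain evaluation. The paper simply treats the converse as following from the explicit polynomial form in $\mathsf{I}(u)$ together with the iff of Corollary~\ref{cor:w0-w1}, leaving the extraction of a nonzero $[\mathsf{I}_{2k+1},X]$ from the vanishing of $[\bt^{(j)}(u),X]$ implicit. Your caution is well placed --- as you note, the commutators $[\mathsf{I}_{2k+1},X]$ do not commute with the $\mathsf{I}$'s, so naive degree bookkeeping is not automatic --- and the routes you outline (PBW analysis or a faithful spin-chain specialization) would make this rigorous. But they go substantially beyond what the paper actually writes down. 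In short: your ``if'' matches the paper, and for the ``only if'' you are proposing to fill in an argument that the paper leaves as an immediate consequence.
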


\subsection{Application to quantum integrable spin chains}
Based on the exchange relations from the previous subsection, the goal is now to exhibit exchange relations between the spin-$j$ Hamiltonians and  the spin-chain representations of $\normalfont {\tW}_0$, $ \normalfont{\tW}_1$, defined by recursion in~\eqref{W0vN} and~\eqref{W1vN} and denoted by $\cal W_0^{(N)}$ and  $\cal W_1^{(N)}$, respectively.
We first recall that for the choice~\eqref{eq:j-homo} and by~\eqref{HnfromT} the spin-$j$ Hamiltonians\footnote{We notice that previous definition $\mathcal{H}_{XXZ}^{\textsf{spin $\h$}}$ and $\mathcal{H}_{XXZ}^{\textsf{spin $1$}}$, given in~\eqref{HXXZhalf} and~\eqref{Ham-params1} respectively, differs from this current one by a constant term and an overall factor which are not important in the following analysis.}
 $$
\mathcal{H}_{XXZ}^{\textsf{spin $j$}} := \mathcal{H}^{(1)}
$$
 are all generated from the normalized transfer matrix $\tilde {\boldsymbol  t}^{j,\bj}(u)$.
 In what follows we use the following convenient parametrization:
 \begin{equation} \label{oldparamH}
	h_\pm = \frac{2 k_\pm}{\varepsilon_+ + \varepsilon_-}, \qquad \bh_\pm = \frac{ 2 \overline{k}_\pm} { \overline{\varepsilon}_+ + \overline{\varepsilon}_- }, \qquad h_z = \frac { \varepsilon_+ - \varepsilon_- }{ \varepsilon_+ + \varepsilon_-}, \qquad \bh_z=\frac{ \overline{\varepsilon}_+ - \overline{\varepsilon}_- }{ \overline{\varepsilon}_+ + \overline{\varepsilon}_- }\ ,
\end{equation}
and to indicate dependence on these boundary parameters we write  $\mathcal{H}_{XXZ}^{\textsf{spin $j$}}(h_\pm, h_z, \bh_\pm,\bh_z)$.
We recall that the transfer matrix $\tilde {\boldsymbol  t}^{j,\bj}(u)$, as well as the Hamiltonian $\mathcal{H}_{XXZ}^{\textsf{spin $j$}}$, is a polynomial in  operators
$\psi^{(N)}_{\bj,\bar{v}}(\mathsf{I}^{(N)}_{2k+1})= \psi^{(N)}_{\bj,\bar{v}} \circ\varphi^{(N)}(\mathsf{I}_{2k+1})$. As the conditions in Propositions~\ref{LemmaCondW0} and~\ref{LemmaCondW1} don't depend on the $k$-index of $\mathsf{I}_{2k+1}$,
we  get a simple but important conclusion:
	\begin{prop} \label{corol1}  For all spin values $j\in \h \mathbb{N}$, we have the exchange relations:
\begin{enumerate}
    \item
		\begin{equation*}
		\mathcal{H}_{XXZ}^{\textsf{spin $j$}}(h_\pm, h_z, \bh_\pm q^{\pm 2},1) {\cal W}_0^{(N)} 
		=
		{\cal W}_0^{(N)}  \mathcal{H}_{XXZ}^{\textsf{spin $j$}}(h_\pm, h_z, \bh_\pm,1),
		\end{equation*}
		under the condition that 
		\begin{equation*}
		h_+\bh_-=q^2h_- \bh_+ ;
		\end{equation*}

\item 
		\begin{equation*}
		\mathcal{H}_{XXZ}^{\textsf{spin $j$}}(h_\pm, h_z, \bh_\pm q^{\mp 2},-1) {\cal W}_1^{(N)} 
		=
		{\cal W}_1^{(N)}  \mathcal{H}_{XXZ}^{\textsf{spin $j$}}(h_\pm, h_z, \bh_\pm,-1),
		\end{equation*}
		under the condition that 
		\begin{equation*}
		h_+\bh_-=q^{-2}h_- \bh_+ .
		\end{equation*}
\end{enumerate}
	\end{prop}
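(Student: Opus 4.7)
The strategy follows three steps: (i) translate the exchange relations for the currents $\mathsf{I}_{2k+1}$ from Propositions~\ref{LemmaCondW0}--\ref{LemmaCondW1} into the parametrization~\eqref{oldparamH}; (ii) propagate them to the universal transfer matrices $\bt^{(j)}(u)$ by induction on $j$ using the TT-relations of Theorem~\ref{TTrel}; and (iii) descend via $\psi^{(N)}_{\bj,\bar 1}\circ\varphi^{(N)}$ (with $\bj=(j,\ldots,j)$) to the normalized spin-chain transfer matrix $\tilde{\boldsymbol t}^{j,\bj}(u)$, and thence to $\mathcal{H}^{(1)}=\mathcal{H}_{XXZ}^{\textsf{spin $j$}}$ by taking the logarithmic derivative at $u=1$ as in~\eqref{H1fromT}.

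For part~(1), the conditions~\eqref{condW0} translate to: $\bar\varepsilon_-=0$ (i.e.\ $\bh_z=1$), the constraint $h_+\bh_-=q^2h_-\bh_+$, and the shift $\bar k_\pm\to\bar k_\pm q^{\pm 2}$ (i.e.\ $\bh_\pm\to\bh_\pm q^{\pm 2}$). The key preliminary observation is the invariance, under this shift combined with the constraint, of three scalar quantities: (a) the constant $\mathsf{I}_0$ of~\eqref{I0}, since under the constraint
$$
\frac{\bar k_+}{k_+}+\frac{\bar k_-}{k_-}=\frac{\bar k_+}{k_+}(1+q^2),
$$
which matches $\frac{\bar k_+q^2}{k_+}+\frac{\bar k_-q^{-2}}{k_-}=\frac{\bar k_+}{k_+}(q^2+1)$ after the shift; (b) the quantum determinant $\Gamma(u)$, which depends neither on $\bar k_\pm$ nor on $\bar\varepsilon_\pm$ by~\eqref{gammaform}--\eqref{deltau}; and (c) the dual quantum determinant $\Gamma_+(u)$, which depends on $\bar k_\pm$ only through the product $\bar k_+\bar k_-$ by~\eqref{gammaKP}--\eqref{gammaKM}, hence is manifestly invariant.

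Granted (a)--(c), I claim by induction on $j$ that
$$
\bt^{(j)}(u;\bar\varepsilon_+,0,\bar k_+q^2,\bar k_-q^{-2})\,\tW_0=\tW_0\,\bt^{(j)}(u;\bar\varepsilon_+,0,\bar k_+,\bar k_-).
$$
The base case $j=\tfrac{1}{2}$ reduces via~\eqref{t12init} to the current-level identity $\tW_0\,\mathsf{I}_{2k+1}=\mathsf{I}'_{2k+1}\,\tW_0$ of Proposition~\ref{LemmaCondW0} together with invariance~(a). For $j\geq 1$, I apply the universal TT-relation~\eqref{TT-rel} to the left-hand side, commute $\tW_0$ past the factors $\bt^{(j-\frac12)}_{\mathrm{pr}}(uq^{-\frac12})\,\bt^{(\frac12)}_{\mathrm{pr}}(uq^{j-\frac12})$ and past $\bt^{(j-1)}_{\mathrm{pr}}(uq^{-1})$ using the induction hypotheses, and reassemble via the TT-relation on the unprimed side --- the scalar prefactor $\Gamma(uq^{j-\frac32})\Gamma_+(uq^{j-\frac32})/[c(u^2q^{2j})c(u^2q^{2j-2})]$ coincides on both sides thanks to invariances~(b) and~(c). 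Applying then $\psi^{(N)}_{\bj,\bar 1}\circ\varphi^{(N)}$ together with Proposition~\ref{prop:TNtj} and~\eqref{renormtrans}--\eqref{eq:renormgj}, and noting that the normalization $g^{j,\bj}(u)$ is independent of boundary parameters, I obtain the corresponding exchange relation for $\tilde{\boldsymbol t}^{j,\bj}(u)$; its specialization at $u=1$, where $\tilde{\boldsymbol t}^{j,\bj}(1)$ is generically invertible by Lemma~\ref{lem:t-non-zero}, and the logarithmic derivative~\eqref{H1fromT} yield the desired identity for $\mathcal{H}^{(1)}$.

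Part~(2) follows by the symmetrical argument with Proposition~\ref{LemmaCondW1} replacing Proposition~\ref{LemmaCondW0}: the sign change $\bh_z=-1$ (i.e.\ $\bar\varepsilon_+=0$), the constraint $h_+\bh_-=q^{-2}h_-\bh_+$, and the shift $\bar k_\pm\to\bar k_\pm q^{\mp 2}$ again leave both $\frac{\bar k_+}{k_+}+\frac{\bar k_-}{k_-}$ and $\bar k_+\bar k_-$ invariant, so the same induction on $j$ proceeds verbatim. The main technical effort lies in checking invariances (a)--(c) jointly with the constraint and bookkeeping the induction via the TT-relation; the transfer to the spin chain and the passage to $\mathcal{H}^{(1)}$ is then essentially formal.
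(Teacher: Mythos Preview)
Your approach is correct and is essentially the same as the paper's. The paper's argument is just a terse version of yours: it observes that $\mathcal{H}_{XXZ}^{\textsf{spin $j$}}$ is a polynomial in the images of $\mathsf{I}_{2k+1}$ (which is the output of the TT-relation cascade), notes that the exchange relations of Propositions~\ref{LemmaCondW0}--\ref{LemmaCondW1} shift the parameters uniformly in $k$, and concludes. Your induction on $j$ via the TT-relation is exactly how that polynomial structure is established, so you are unrolling the same argument.

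One point worth highlighting is that your explicit verification of invariances (a)--(c) --- that $\mathsf{I}_0$, $\Gamma(u)$, and $\Gamma_+(u)$ are unchanged under the shift combined with the constraint --- fills a small gap that the paper leaves implicit. The polynomial in the $\mathsf{I}_{2k+1}$'s has scalar coefficients built from precisely these quantities, so the exchange relation at the level of the Hamiltonian really does require them to be invariant, and your check of (a) using the constraint $\bar k_+/k_+=q^{-2}\bar k_-/k_-$ is the key computation. The passage from the transfer-matrix exchange relation to that for $\mathcal{H}^{(1)}$ via the logarithmic derivative is also handled cleanly in your write-up (invertibility of $\tilde{\boldsymbol t}^{j,\bj}(1)$ on both the primed and unprimed sides is generic by the same argument as Lemma~\ref{lem:t-non-zero}).
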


\begin{rem}\label{rem:direct-calc}
In the spin-$\h$ case, the parameters $h_{\pm}$, $h_z$, etc.\ have a clear meaning. The Hamiltonian takes the following form, compare with~\eqref{HXXZhalf}:
	\begin{align} \nonumber 
	H(h_\pm,h_z,\bh_\pm, \bh_z)= \displaystyle{ \sum_{k=1}^{N-1}} \Big ( \sigma_k^x \sigma_{k+1}^x + \sigma_k^y\sigma_{k+1}^y + \frac{q+q^{-1}}{2} \sigma_k^z \sigma_{k+1}^z \Big ) &+ \frac{q-q^{-1}}{2} h_z\sigma^z_1 + h_+\sigma^+_1 + h_-\sigma^-_1 \\ \label{Hamparam}
	&+ \frac{q-q^{-1}}{2}\bh_z\sigma^z_N + \overline{ h}_+\sigma^+_N + \overline{ h}_-\sigma^-_N \ .
	\end{align}
 With this form,  the result (1) of Proposition~\ref{corol1} can be obtained by a direct calculation (and for any values of $q$) in the spin-$\h$ case without the assumption~\eqref{oldparamH} but under the extra conditions
$\varepsilon_+ h_+=k_+(1+h_z)$ and $k_- h_+ =  k_+ h_-$. These conditions hold automatically after requiring~\eqref{oldparamH}. And similarly for the result (2) of Proposition~\ref{corol1}.
We omit details for brevity as the calculation is long but straightforward. An important observation is 
that this result holds even at $q=1$ or XXX spin-chain case because all the operators are well defined at this value of $q$. We notice however that the above analysis in $\cal A_q$ leading to Proposition~\ref{corol1}  is valid  for generic values of  $q$ only, in particular $q\neq 1$. 
\end{rem}

As a consequence, we identify boundary conditions when the  spin-$j$ Hamiltonians commute with the action of generators of $\cal A_q$ or their linear combination.
 Let us introduce notations for Hamiltonians of different boundary conditions type:
\begin{align}
	\mathcal{H}_{XXZ}^{\textsf{spin $j$} \, +}(h_\pm, h_z)
	 &= \mathcal{H}_{XXZ}^{\textsf{spin $j$ }}(h_\pm, h_z,0,-1) \ ,\label{eq:Ham-sym-1} \\
	 	\mathcal{H}_{XXZ}^{\textsf{spin $j$} \, -}(h_\pm, h_z)
	 &= \mathcal{H}_{XXZ}^{\textsf{spin $j$ }}(h_\pm, h_z, 0,1) \ ,\label{eq:Ham-sym-2} \\
	 	\mathcal{H}_{XXZ}^{\textsf{spin $j$}\, *}(h_\pm, h_z, \bh_z)
	 &= \mathcal{H}_{XXZ}^{\textsf{spin $j$}}(h_\pm, h_z,0,\bh_z) \ .\label{eq:Ham-sym-3}
\end{align}
By Proposition~\ref{corol1}, we get 
\begin{prop}\label{prop:HXXZ-comm-cond}
 Fixing all the quantum spins $j_n$ to be the auxiliary spin $j\in \mathbb{N}_+$ and all $v_n=1$, the following commutation relations hold for any $j \in \frac 12 \mathbb{N}_+$ and any values $h_z\in\mathbb{C}$ and $h_\pm\in\mathbb{C}^*$, and $\bar{h}_z$ fixed as in~\eqref{oldparamH}:
	\begin{align} \label{eq:commutrel}
		\big [  \mathcal{H}_{XXZ}^{\textsf{spin $j$} \, -}(h_\pm, h_z) ,   {{\cal W}_0^{(N)}}
		 \big ] = 0 \ ,  
		 \qquad \big [ \mathcal{H}_{XXZ}^{\textsf{spin $j$} \, +}(h_\pm, h_z) , {{\cal W}_1^{(N)}} \big ] = 0 \ 
	\end{align}
and
\beqa \label{eq:commutrel2}
\big [  \mathcal{H}_{XXZ}^{\textsf{spin $j$} \, *}(h_\pm, h_z,\bh_z) ,\    \overline{\varepsilon}_+{\cal W}_0^{(N)} + \overline{\varepsilon}_-{\cal W}_1^{(N)} \big ] = 0 
\eeqa
	with~\eqref{W0vN},~\eqref{W1vN}.
\end{prop}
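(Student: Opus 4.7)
The plan is to lift the universal exchange relations established in Corollary~\ref{cor:w0-w1} and Proposition~\ref{prop:lin-comb-Tj} to the spin-chain representation $\psi^{(N)}_{\bj,\bar 1}\circ\varphi^{(N)}$ of $\mathcal A_q$, and then use that the Hamiltonians are polynomial in the images of the commuting family $\{\mathsf I_{2k+1}\}$. The main tool is Proposition~\ref{prop:tIN} combined with Corollary~\ref{cor:nt}, which shows that the normalized transfer matrix $\tilde{\boldsymbol t}^{j,\bj}(u)$ at $j_n=j$, $v_n=1$ can be written as a polynomial in $\psi^{(N)}_{\bj,\bar 1}(\mathsf I^{(N)}(u\, q^r))$ with $u$-shifted arguments and scalar coefficients. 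Recalling $\psi^{(N)}_{\bj,\bar 1}(\mathsf I^{(N)}_{2k+1})=\psi^{(N)}_{\bj,\bar 1}\circ\varphi^{(N)}(\mathsf I_{2k+1})$ by Definition~\ref{def:subalg-I} and the identification~\eqref{mapAqAN}, it follows that $\tilde{\boldsymbol t}^{j,\bj}(u)$ lies in the commutative subalgebra generated by $\{\psi^{(N)}_{\bj,\bar 1}\circ\varphi^{(N)}(\mathsf I_{2k+1})\mid 0\le k\le N-1\}$.

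Next, I would invoke Corollary~\ref{cor:w0-w1}: whenever $\bar k_\pm=0$ (equivalently $\bar h_\pm=0$), one has the universal identity in $\mathcal A_q$
\[
\bigl[\mathsf I_{2k+1},\ \overline{\varepsilon}_+ \mathsf W_0+\overline{\varepsilon}_-\mathsf W_1\bigr]=0,\qquad k\in\mathbb N.
\]
Applying the algebra morphism $\psi^{(N)}_{\bj,\bar 1}\circ\varphi^{(N)}$ and using $\psi^{(N)}_{\bj,\bar 1}\circ\varphi^{(N)}(\mathsf W_0)=\mathcal W_0^{(N)}$, $\psi^{(N)}_{\bj,\bar 1}\circ\varphi^{(N)}(\mathsf W_1)=\mathcal W_1^{(N)}$ (cf.~\eqref{imOqN}), this gives
\[
\bigl[\psi^{(N)}_{\bj,\bar 1}\circ\varphi^{(N)}(\mathsf I_{2k+1}),\ \overline{\varepsilon}_+\mathcal W_0^{(N)}+\overline{\varepsilon}_-\mathcal W_1^{(N)}\bigr]=0.
\]
Specializing $\overline{\varepsilon}_-=0$ (i.e.\ $\bar h_z=1$, the `$-$'-case~\eqref{eq:Ham-sym-2}) isolates commutation with $\mathcal W_0^{(N)}$; specializing $\overline{\varepsilon}_+=0$ (i.e.\ $\bar h_z=-1$, the `$+$'-case~\eqref{eq:Ham-sym-1}) isolates commutation with $\mathcal W_1^{(N)}$; for generic $\overline{\varepsilon}_\pm$ (the `$*$'-case~\eqref{eq:Ham-sym-3}) one obtains commutation with the full linear combination, which is~\eqref{eq:commutrel2}.

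Finally, to deduce the commutation of the Hamiltonian itself, I would note that $\mathcal H_{XXZ}^{\textsf{spin }j}=\mathcal H^{(1)}$ is obtained through~\eqref{HnfromT} as the logarithmic derivative of $\tilde{\boldsymbol t}^{j,\bj}(u)$ at $u=1$, i.e.\ as the product of $\tilde{\boldsymbol t}^{j,\bj}(1)^{-1}$, well defined by Lemma~\ref{lem:t-non-zero}, and $\frac{d}{du}\tilde{\boldsymbol t}^{j,\bj}(u)\bigr|_{u=1}$. Both factors lie in the commutative polynomial algebra generated by $\psi^{(N)}_{\bj,\bar 1}\circ\varphi^{(N)}(\mathsf I_{2k+1})$ (since the differentiation acts on the scalar coefficients only, and the inverse of an element commuting with $X$ still commutes with $X$). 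Hence the commutations established on each $\psi^{(N)}_{\bj,\bar 1}\circ\varphi^{(N)}(\mathsf I_{2k+1})$ pass through to $\mathcal H^{(1)}$, yielding~\eqref{eq:commutrel} and~\eqref{eq:commutrel2}.

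The only potentially delicate point is the last one: checking that the inversion $\tilde{\boldsymbol t}^{j,\bj}(1)^{-1}$ stays inside the commutative subalgebra generated by the $\psi^{(N)}_{\bj,\bar 1}\circ\varphi^{(N)}(\mathsf I_{2k+1})$. This is automatic, however, since commutativity with an operator $X$ is preserved under taking inverses: if $AX=XA$ and $A$ is invertible then $A^{-1}X=XA^{-1}$. Therefore the proof is essentially a specialization of the universal results of Section~\ref{sec3}, and no new computation with the PBW basis is required at the spin-chain level.
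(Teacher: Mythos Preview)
Your proof is correct and follows essentially the same strategy as the paper: both rely on the fact that $\mathcal H^{(1)}$ is a polynomial in the $\psi^{(N)}_{\bj,\bar 1}\circ\varphi^{(N)}(\mathsf I_{2k+1})$'s with scalar coefficients, and then invoke the exchange/commutation relations in $\mathcal A_q$ established in Section~6.1. The only difference is organizational: the paper first packages the exchange relations for $\mathsf I_{2k+1}$ into the Hamiltonian-level statement Proposition~\ref{corol1} (with parameter-shifted Hamiltonians on either side of $\mathcal W_0^{(N)}$ or $\mathcal W_1^{(N)}$), and then specializes to $\bar h_\pm=0$ so that the shift $\bar h_\pm\mapsto \bar h_\pm q^{\pm 2}$ becomes trivial and the exchange collapses to a commutator; you instead go directly through Corollary~\ref{cor:w0-w1} (the commutation $[\mathsf I_{2k+1},\bar\varepsilon_+\mathsf W_0+\bar\varepsilon_-\mathsf W_1]=0$ for $\bar k_\pm=0$) and push it through the representation map. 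Your route is slightly more direct for~\eqref{eq:commutrel2}, since Proposition~\ref{corol1} as stated only treats $\bar h_z=\pm 1$ and one needs Proposition~\ref{LemmaCondW0W1} (equivalently Corollary~\ref{cor:w0-w1}) for the general linear combination anyway. Your handling of the inverse $\tilde{\boldsymbol t}^{j,\bj}(1)^{-1}$ is also correct and is left implicit in the paper's one-line argument.
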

We can upgrade this result at the level of transfer matrices, using Corollary~\ref{cor:w0-w1}. Combining the TT-relations~\eqref{renormTTN} together with~\eqref{trans-mat-jsIN},~\eqref{eq:psi-IN}, it follows:
\begin{prop}\label{prop:tr-mat-w0-w1}
For any $N$-tuple  of spins $\bj := (j_1, \ldots, j_N)$ and $v_i\in \mathbb{C}^*$, for $1\leq i\leq N$, and  any $j\in\h\mathbb{N}_+$ and any values $\overline{\varepsilon}_\pm\in\mathbb{C}$ and $k_\pm\in\mathbb{C}^*$, assuming $\overline{k}_\pm =0$, we have
$$  \bigl[ \tilde{\boldsymbol  t}^{j,\bj}(u),  \overline{\varepsilon}_+{\W}_0^{(N)} + \overline{\varepsilon}_-{\W}_1^{(N)} \bigr] = 0\ ,$$
with~\eqref{W0vN},~\eqref{W1vN}.
\end{prop}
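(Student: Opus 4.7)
The proof proposal is as follows. My plan is to descend the universal statement of Proposition~\ref{prop:lin-comb-Tj} from $\mathcal{A}_q$ to the spin-chain representation, and then leverage the polynomial structure provided by the TT-relations together with the expansion in Proposition~\ref{prop:tIN}.

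First, I would recall Corollary~\ref{cor:w0-w1}: under the assumption $\overline{k}_\pm = 0$, each generator $\mathsf{I}_{2k+1}\in\mathcal{A}_q$ from~\eqref{eq:def-Ik} commutes with $\overline{\varepsilon}_+\tW_0 + \overline{\varepsilon}_-\tW_1$. Since $\psi^{(N)}_{\bj,\bar{v}}\circ\varphi^{(N)}\colon \mathcal{A}_q\to\End\!\left(\bigotimes_n\mathbb{C}^{2j_n+1}\right)$ is an algebra homomorphism and $(\psi^{(N)}_{\bj,\bar{v}}\circ\varphi^{(N)})(\tW_\ell)=\calW_\ell^{(N)}$ for $\ell=0,1$ by~\eqref{mapAqAN} and~\eqref{imOqN}, applying this map yields
\begin{equation*}
\bigl[\, \psi^{(N)}_{\bj,\bar{v}}(\mathsf{I}^{(N)}_{2k+1}) , \, \overline{\varepsilon}_+\calW^{(N)}_0+\overline{\varepsilon}_-\calW^{(N)}_1\,\bigr]=0 \qquad \text{for all } k\in\mathbb{N}.
\end{equation*}

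Next, I would invoke Proposition~\ref{prop:tIN}, which expresses $\tilde{\boldsymbol t}^{\h,\bj}(u)$ as a $\mathbb{C}(u)$-linear combination of the operators $\psi^{(N)}_{\bj,\bar{v}}(\mathsf{I}^{(N)}_{2k+1})$, $0\le k\le N-1$, plus a scalar term (using~\eqref{trans-mat-jsIN} together with the expansion~\eqref{eq:psi-IN}). Since scalar factors do not affect commutators, this immediately gives the base case
\begin{equation*}
\bigl[\,\tilde{\boldsymbol t}^{\h,\bj}(u),\,\overline{\varepsilon}_+\calW^{(N)}_0+\overline{\varepsilon}_-\calW^{(N)}_1\,\bigr]=0.
\end{equation*}

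Finally, I would proceed by induction on $j\in\h\mathbb{N}_+$ using the TT-relation~\eqref{renormTTN} from Corollary~\ref{cor:nt}. The relation expresses $\tilde{\boldsymbol t}^{j,\bj}(u)$ as a $\mathbb{C}(u)$-linear combination of the products $\tilde{\boldsymbol t}^{j-\h,\bj}(uq^{-\h})\,\tilde{\boldsymbol t}^{\h,\bj}(uq^{j-\h})$ and $\tilde{\boldsymbol t}^{j-1,\bj}(uq^{-1})$, with coefficients that are scalar rational functions in $u$ (involving the $\beta^{(j_n)}$ and the quantum determinants $\Gamma_\pm$). By the inductive hypothesis each of these transfer matrix factors commutes with $\overline{\varepsilon}_+\calW^{(N)}_0+\overline{\varepsilon}_-\calW^{(N)}_1$; moreover, the spin-$\h$ transfer matrices at different spectral parameters mutually commute by Sklyanin's construction~\eqref{eq:T-com-T} combined with Proposition~\ref{prop:TNtj}. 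Hence the linear combination on the right-hand side of~\eqref{renormTTN} also commutes with $\overline{\varepsilon}_+\calW^{(N)}_0+\overline{\varepsilon}_-\calW^{(N)}_1$, which completes the induction.

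The whole argument is essentially a functorial transport of Proposition~\ref{prop:lin-comb-Tj}, so there is no genuine obstacle; the only mild point of care will be keeping track of the scalar prefactors arising both from the normalization~\eqref{renormtrans} and from the TT-recursion, to ensure that each quantity entering the commutator is indeed a polynomial (with scalar coefficients) in the spin-chain image of the $\mathsf{I}_{2k+1}$'s.
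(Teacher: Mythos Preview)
Your proposal is correct and follows essentially the same route as the paper: transport Corollary~\ref{cor:w0-w1} to the spin-chain via $\psi^{(N)}_{\bj,\bar{v}}\circ\varphi^{(N)}$, use Proposition~\ref{prop:tIN} (i.e.\ \eqref{trans-mat-jsIN} with~\eqref{eq:psi-IN}) for the base case $j=\tfrac12$, and then induct via the TT-relation~\eqref{renormTTN}. One small remark: the clause about spin-$\tfrac12$ transfer matrices at different spectral parameters mutually commuting is superfluous for the induction step, since if two operators each commute with a third, their product does too regardless of whether they commute with each other.
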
 

Note that for the special case of $j_n=j=\h$ and $v_n=1$, for $1\leq n\leq N$, the result of Proposition~\ref{prop:tr-mat-w0-w1} is consistent with the  $U(1)$ symmetry identified in~\cite{Doikou}. 
 The advantage of our universal approach based on $\cal A_q$ is that it gives the analogous $U(1)$ symmetry for all values of spins and inhomogeneities. For example, in the spin-1 case the Hamiltonian with such symmetry, e.g. in~\eqref{eq:commutrel2}, is obtained from~\eqref{Ham-params1} by setting $\overline{k}_\pm=0$ (i.e. $\bh_\pm=0$). The spin-$1$ Hamiltonian $\mathcal{H}_{XXZ}^{\textsf{spin $j$} \, *}(h_\pm, h_z,\bh_z)$ then becomes~\eqref{Ham-params1} with the only change for the boundary term
\begin{align}
&\hspace{-0.3cm} {\cal H}_{N}^b(\overline{\varepsilon}_+,\overline{\varepsilon}_-,0,0) = \frac{q^2-q^{-2}}{2(\overline{\varepsilon}_+ \overline{\varepsilon}_-(q+q^{-1}) + \overline{\varepsilon}_+^2 + \overline{\varepsilon}_-^2 )}
\Bigg ( \overline{\varepsilon}_+ \overline{\varepsilon}_- (q-q^{-1})(s^z_N)^2  + (\overline{\varepsilon}_+^2 - \overline{\varepsilon}_-^2) s^z_N\Bigg) \nonumber\ ,
\end{align}
which can be also written in terms of $\bh_z$ using $\bh_z \pm 1= \pm 2 \overline{\varepsilon}_\pm/(\overline{\varepsilon}_+ + \overline{\varepsilon}_-)$.

\newcommand{\E}{E}
\renewcommand{\F}{F}
\newcommand{\K}{K}
\subsection{XXX case}\label{sec6:XXX} The Hamiltonian~\eqref{Hamparam} exihibits special properties at $q=1$, as we now show.  At $q=1$, the operator $\W_0^{(N)}$ is the spin-chain representation of $k_+\E + k_- \F + \varepsilon_+ \ds{1}$.
		Similarly $\cal W_1^{(N)}$ is the representation of $k_- \F +  k_+\E+ \varepsilon_-  \ds{1}$. We note that $\W_0^{(N)} \propto \W_1^{(N)}$ up to a constant term. 
 Therefore, in what follows we only consider $\W_0^{(N)}$.
		We further observe that  the XXZ Hamiltonian~\eqref{Hamparam} becomes the XXX Hamiltonian with $h_z=\bh_z=0$, because they have a factor $(q-q^{-1})/2$ that vanishes at $q=1$, and with generic off-diagonal boundary terms on both sides of the spin-chain:
		\begin{equation} \cal H_{XXX} (h_\pm,0,\bh_\pm,0) = \cal H_{XXZ}(h_\pm,h_z,\bh_\pm,\bh_z)\big \vert_{q\rightarrow 1}
		\end{equation}
		and with the parametrization~\eqref{oldparamH}.
	With the comments in Remark~\ref{rem:direct-calc}, we obtain the commutation relation of the XXX Hamiltonian with $\W_0^{(N)}$:
		\begin{equation}\label{XXXU1} \left [\cal H_{XXX}(h_\pm,0,\bh_\pm,0),\W_0^{(N)} \right ]=0 
		 \end{equation}
		under the only condition
		\begin{equation}\label{condq=1}
		 \bh_+h_-=\bh_-h_+\ .
		\end{equation}

For generic values of $q$, eigenvectors and corresponding eigenvalues of $\W_0^{(N)}$ (resp. $\W_1^{(N)}$) given by~\eqref{W0vN} (resp.~\eqref{W1vN}) have been derived for any spin values $j_n$ and inhomogeneities $v_n$  in~\cite[Prop.\,3.1]{BVZ16} in terms of $q$-Pochhammer polynomials. For $j_n=j=\h$, we refer to~\cite[Sect.~2.3]{BK07b} for expressions in spin representation. 
In particular, in the latter case the eigenspaces of $\cal W_0^{(N)}$ have same dimensions, equal to binomials $\binom{N}{n}$,  as for $S_z$, suggesting that $\cal W_0^{(N)}$ plays the same  role for the Hamiltonian  $\mathcal{H}_{XXZ}^{\textsf{spin $j$} \, -}(h_\pm, h_z)$ as the spin operator $S_z$ for the XXZ Hamiltonian with diagonal boundary conditions on both sides.  

For completeness, here we give the analogous result for $q=1$ and all spins $j=j_n=\h$ and $v_n=1$, $1\leq n\leq N$.
		By recurrence we obtain the $N+1$ distinct eigenvalues of $\cal W_0^{(N)}$ , denoted $\lambda_n^{(N)}$, degenerated $\binom{N}{n}$ times:
		\begin{equation} \lambda_n^{(N)} = (N-2n) \sqrt{\eta} + \mu_0, \qquad  n \in \{ 0, \ldots , N\}\ ,
		\end{equation}
		 where  $\eta=k_+/k_-$ and $\mu_0=\varepsilon_+$.  The corresponding eigenvectors $\psi_{n[k]}^{(N)}$ are defined recursively: 
		\begin{align} \label{eq:eigen-W0-1}
		\psi_{n[k]}^{(N+1)} &= \psi_{n[k]}^{(N)} \otimes \big ( \eta^{1/2} \ket{\uparrow} + \ket{\downarrow} \big )  \qquad k \in \bigl\{ 1, \ldots, \binom{N}{n} \bigr\}, \\
        \label{eq:eigen-W0-2}
		\psi_{n[k]}^{(N+1)} &=  \psi_{n-1 [ k - \binom{N}{n}]}^{(N)} \otimes \big ( - \eta^{1/2} \ket{\uparrow} + \ket{\downarrow} \big )  \qquad k \in \bigl\{ 1+ 
        {\binom{N}{n}}, \ldots, {\binom{N+1}{n}} \bigr\},
		\end{align}
		where $\ket{\uparrow} = (1,0), \; \ket{\downarrow} = (0,1)$, with the initial values :
		\begin{equation} \psi_{0[1]}^{(1)} = \eta^{1/2}  \ket{ \uparrow} + \ket{\downarrow}  , \qquad \psi_{1[1]}^{(1)} = -\eta^{1/2} \ket{ \uparrow} + \ket{\downarrow}.
		\end{equation}
		According to~\eqref{XXXU1}, $\cal H_{XXX} (h_\pm,0,\bh_\pm,0) $ enjoys a $U(1)$ symmetry, so that $\cal W_0^{(N)}$ plays the role of the spin operator $S_z$, with a basis for each new  ``spin" sector given by~\eqref{eq:eigen-W0-1} and~\eqref{eq:eigen-W0-2}.

	\section{Further applications and perspectives}\label{sec7}
	We now briefly  present further applications  of our results and possible developments   in representation theory and in  quantum integrable systems. 
    In Section~\ref{sec:TY}, we construct the T- and Y-systems for $\cal A_q$.
    In Section~\ref{subsec:univ-TQ-Aq}, using a limiting procedure we derive universal TQ-relations for $\cal{A}_q$ that match well on spin-chain representations with~\cite{FNR07}, as well as universal TQ-relations for its $q-$Onsager quotient $O_q$.
    In Section~\ref{subsec:univ-TQ-Aq-aug}, we discuss the analogous TT- and TQ-relations for the case of a central extension of the augmented $q$-Onsager algebra responsible for diagonal boundary conditions. In Section~\ref{sec7:Bethe} we motivate diagonalization of $\mathsf{I}_{2k+1}$'s on spin-chains
     in the form of Bethe states and their interpretation within the theory of tridiagonal pairs. Finally, in Section~\ref{sec7:blob}  we give interpretation of the symmetry results from Proposition~\ref{prop:HXXZ-comm-cond} in the context of the so-called blob algebra generated by Hamiltonian densities.
	
\subsection{T- and Y-system for comodule algebras}\label{sec:TY} Universal structures known as {\it T-system} and {\it Y-system} have been extensively studied. For a review on this subject, see \cite{kunib}. Although originally discovered as functional relations among commuting transfer matrices for solvable lattice models in statistical mechanics, T- and Y-systems have also played an important role in the representation theory of quantum affine algebras. Namely, T-systems can be interpreted in the form of exact sequences involving tensor products of Kirillov-Reshetikhin modules \cite{KNS,Nak,Hern}. As a consequence, corresponding  $q$-characters as introduced in \cite{FR98} solve a T-system.
	\smallskip
	
	Starting from the TT-relations (\ref{TT-rel}) satisfied by (\ref{tg}) the T-system associated with ${\cal A}_q$ is easily derived. It reads:
	\begin{equation}
	\bt^{(j)}(u q^{-\h}) \bt^{(j)}(u q^{\h}) = \bt^{(j+\h)}(u) \bt^{(j-\h)}(u) + {\bf g}^{(j)}(u) \ ,\label{Tsys}
	\end{equation}
	where 
	\begin{equation}\label{def:gj}
	{\bf g}^{(j)}(u)= (-1)^{2j}\displaystyle{ \prod_{\ell=0}^{2j-1} } \frac{\Gamma (uq^{j-1-\ell}) \Gamma_+ (uq^{j-1-\ell})}{ c(u^2 q^{2j+1-2\ell}) c(u^2 q^{2j-1-2\ell}) }\ . 
	\end{equation}
    and $\Gamma(u)$ is the generating function~\eqref{gammaform} of central elements of ${\cal A}_q$.
	Recall that $\Gamma(u)$ is invertible, see below ~\cite[Prop.\,5.2]{LBG}, therefore it follows that ${\bf g}^{(j)}(u)$ is invertible as well.
	The corresponding Y-system for ${\cal A}_q$ readily follows:
	\begin{equation} \label{Ysys}
	\by^{(j)}(u q^{-\h}) \by^{(j)}(u q^{\h}) = \bigl(\by^{(j+\h)}(u)+1\bigr)\bigl(\by^{(j-\h)}(u)+1\bigr)  
	\end{equation}
	through the formal substitution $\by^{(j)}(u)= {\bf g}^{(j)}(u)^{-1}\bt^{(j+\h)}(u) \bt^{(j-\h)}(u)$. Combining the facts that ${\cal A}_q$ is isomorphic to a central extension of the $q$-Onsager algebra \cite{BasBel,Ter21c}, and that the $q$-Onsager algebra is one of the simplest example of coideal subalgebra of $\Uqhat$ \cite{BB12}, it is thus natural to ask for an interpretation of the T-system (\ref{Tsys}) within the representation theory of the $q$-Onsager algebra. From the perspective of solvable lattice models, quantum field theory and related thermodynamic Bethe ansatz (see e.g.\ \cite{Zam,KNS0,Rav} for related Y-systems), it is also similarly expected that the Y-system (\ref{Ysys}) finds a natural interpretation for systems with boundaries.

 \subsection{Universal TQ-relations for $\cal A_q$}\label{subsec:univ-TQ-Aq}
     Another potential application of the formula (\ref{TT-rel})  or equally~\eqref{secondTT-rel}  concerns the construction of {\it universal}  TQ-relations. In \cite{YNZ06},  the  Q-operator for the spin-$\h$ open XXZ quantum spin chain is conjectured to be the  $j\rightarrow \infty$ limit of the spin-$j$ normalized transfer matrix. We may now ask about the existence of a universal Q-operator in $\cal A_q$ and identification of the corresponding universal TQ relation. Shifting the parameter $u\rightarrow uq^{ j -\h}$ in~\eqref{secondTT-rel}, respectively $u\rightarrow uq^{-j +\h}$ in~\eqref{TT-rel}, then $j\rightarrow j+\h$ and taking the limit $j\rightarrow \infty$ with the formal identification
     \beqa
     \bold{Q}_\pm(u) = \lim_{j\rightarrow \infty}  \bt^{(j)}\bigl(uq^{\pm(j+\h)}\bigr)
     \eeqa
     we obtain the \textit{universal} TQ-relations
     \beqa\label{TQj-univ}
     \bt^{(\h)}(u) \bold{Q}_\pm(u) =  \bold{Q}_\pm(uq^{\mp 1}) -\frac{\Gamma (uq^{(-1 \pm 1)/2}) \Gamma_+ (uq^{(-1\pm 1)/2})}{ c(u^2 q^{\pm 1}) c(u^2 q^{2\pm 1}) }\bold{Q}_\pm(uq^{\pm 1})   \ .
     \eeqa
Note that one can straightforwardly derive the TQ-relations associated with the $q$-Onsager algebra $O_q$ as well, by applying the algebra map $\gamma^{(\delta)}$ with~\eqref{map:AqOq} to~\eqref{TQj-univ} and using~\eqref{eq:durho}. 
Constructing corresponding universal Q-operators $\bold{Q}_\pm(u)$ satisfying these TQ-relations for $\cal A_q$ and $O_q$ is an open problem. In the special case $k_\pm=\bar k_\pm=0$ and properly normalized T- and Q-operators, we have verified that the TQ-relations for $O_q$ in the case of $\bold Q_-$ reduce to the TQ-relations given in~\cite[Eq.\,(5.9)]{T20}. 

For spin-chain representations of $\cal A_q$ studied in
Sections~\ref{sec:fusK-dresK} and~\ref{sec5}, recall that the spin-$j$ transfer matrix ${\boldsymbol  t}^{j,\bj}(u)$ in~\eqref{tjs}, where $\boldsymbol{\bj} := (j_1, \ldots, j_N)$ is any $N$-tuple of  spins at the quantum spaces, is obtained from the universal transfer matrix $\bt^{(j)}(u)$ defined by~\eqref{tg}, using~\eqref{evalTN},~\eqref{tgintN},~\eqref{KjN}.
    With the formal identification
     \beqa
     {Q}^{\bj}(u) = \lim_{j\rightarrow \infty}  {\boldsymbol  t}^{j,\bj}(uq^{-j-\h})\ ,
     \eeqa
    from the TT-relations for the transfer matrix~\eqref{normTTN} we obtain:
   \begin{align}\label{TQj}
   {\boldsymbol  t}^{\h,\bj}(u) {Q}^{\bj}(u) =   {Q}^{\bj}(uq) - \left [	
			\displaystyle{\prod_{n=1}^N} \beta^{(j_n)}(u v_n) \beta^{(j_n)}(u v_n^{-1})
			\right ]\frac{\Gamma_- (uq^{-1}) \Gamma_+ (uq^{-1})}{ c(u^2 q) c(u^2 q^{-1}) }{Q}^{\bj}(uq^{-1})\ 
	\end{align}
		with~\eqref{beta} and the quantum determinant~\eqref{gammaKM}. We can think about this TQ-relation as specialization of the universal TQ-relation~\eqref{TQj-univ} for properly normalized $\bold{Q}_-(u)$. Importantly, at the specialization (1) in Example~\ref{ex:FNR} we recover the TQ-relation~\cite[Eq.\,(3.2)]{FNR07}. 
   It would be desirable to better understand how this expression leads to the inhomogeneous TQ relations for a Baxter's Q-polynomial  considered in~\cite{CYSW13,YZYSW15}, or to construct explicit non-polynomial solutions~\cite{LP14} in terms of power series or rational functions.  

\subsection{Universal TT- and TQ-relations  for $\cal A^{aug}_q$}
\label{subsec:univ-TQ-Aq-aug}
It is known that degenerate versions of $\cal A_q$  such as the augmented $q$-Onsager \cite{IT10,BB12}, the triangular $q$-Onsager and $\Uq$-invariant $q$-Onsager algebras \cite{BB16,T19} control the corresponding degenerate boundary conditions (diagonal, triangular, and $\Uq$-invariant) on one side of the XXZ spin chain. It is therefore equally interesting to consider 
universal  TT- and TQ-relations for such degenerate versions of $\cal A_q$.
 For instance, in  view of applications to spin chains with  diagonal boundary conditions,  i.e.\ when the K-matrix on one side is diagonal, we consider a central extension of the {\it augmented} $q$-Onsager algebra $\cal A_q^{aug}$ defined in terms of generating functions~\cite{BB12}:
\begin{align}
			{\cK}_+(u)=\sum_{k\in {\mathbb N}}{\normalfont \tK}_{-k}U^{-k-1} \ , \quad {\cK}_-(u)=\sum_{k\in  {\mathbb N}}{\normalfont \tK}_{k+1}U^{-k-1} \ ,\label{caug1}\\
			\quad {\cZ}_+(u)=\sum_{k\in {\mathbb N}}{\normalfont \tZ}_{k+1}U^{-k-1} \ , \; \quad {\cZ}_-(u)=\sum_{k\in {\mathbb N}}{\normalfont \tilde{{\tZ}}_{k+1}}U^{-k-1} \ ,\label{caug2} \; 
\end{align}
and the corresponding fundamental K-operator
		\begin{equation}
			{\cal K}_{aug}^{(\frac{1}{2})}(u)=\begin{pmatrix} 
				uq \cK_+(u)-u^{-1}q^{-1}\cK_-(u) &\frac{1}{(q+q^{-1})}\cZ_+(u)\\ 
				\frac{1}{(q+q^{-1})}\cZ_-(u) & uq \cK_-(u) -u^{-1}q^{-1}\cK_+(u) 
			\end{pmatrix}  \label{K-Aqaug} \ 
		\end{equation}
        satisfying the reflection equation~\eqref{RE} with the replacement ${\cal K}^{(\frac{1}{2})}(u)\rightarrow {\cal K}_{aug}^{(\frac{1}{2})}(u)$.
	 Now, observe that the fundamental K-operator ${\cal K}_{aug}^{(\frac{1}{2})}(u)$ is the following limiting case $k_\pm\rightarrow 0$ 
     of the fundamental K-operator for $\cal A_q$  from~\eqref{K-Aq}, identifying~\cite[eqs.\,(3.9)\,\&\,(3.10)]{BB12}
\beqa\label{eq:aug-gen-ident}
&&\cK_\pm(u) := \cW_\pm(u)  \ , \qquad \cZ_\pm(u) := \frac{1}{k_\mp}\cG_\pm(u)    \ ,
\eeqa
while~\eqref{RE} becomes defining relations for $\cal A_q^{aug}$. 
Also, the limit $k_\pm\rightarrow 0$ of the quantum determinant $\Gamma(u)$ in~\eqref{gamma} can be expressed in terms of~\eqref{caug1} and~\eqref{caug2}:
\beqa
			\Gamma_{aug}(u)&=&\frac{1}{2}
			(u^2q^2-u^{-2}q^{-2}) \Big(-(q^2+q^{-2})\big(\cK_+(u)\cK_+(uq) + \cK_-(u)\cK_-(uq)\big) \label{gammaaug}\\ 
            && \qquad\qquad\qquad \qquad\qquad+(u^2q^2+u^{-2}q^{-2})\big(\cK_+(u)\cK_-(uq) + \cK_-(u)\cK_+(uq)\big) \nonumber \\
			&& \qquad\qquad\qquad\qquad \qquad - (q+q^{-1})^{-2} \big(\cZ_+(u)\cZ_-(uq) -\cZ_-(u)\cZ_+(uq)\big)\Big) \ .\nonumber
		\eeqa
Then, we derive the universal TT-relations associated with the diagonal case $k_\pm=0$ as follows. 
Define
	\begin{equation}
	\bt_{aug}^{(j)}(u) = \normalfont{\text{tr}}_{V^{(j)}}\bigl(K^{+{(j)}}(u){\cal K}_{aug}^{(j)}(u)\bigr) 
	\ ,\label{tg-aug} 
	\end{equation}
   with the dual K-matrix~\eqref{def:kp},  and the spin-$j$ K-operator ${\cal K}_{aug}^{(j)}(u)$ is the limiting case $k_\pm\rightarrow 0$ of ${\cal K}^{(j)}(u)$ using the identification in~\eqref{eq:aug-gen-ident}.
From~\eqref{TT-rel}, we then get the universal TT-relations for $\cal A_q^{aug}$:
		\begin{equation} \label{TT-relaug}
			\bt_{aug}^{(j)}(u) = \bt_{aug}^{(j-\h)}(u q^{-\h}) \bt_{aug}^{(\h)}(u q^{j-\h}) + \frac{\Gamma_{aug} (u q^{j-\tha}) \Gamma_+ (uq^{j-\tha})}{ c(u^2 q^{2j}) c(u^2 q^{2j-2}) } \bt_{aug}^{(j-1)}(u q^{-1})  \ 
		\end{equation}
        for all half-integer $j\geq1$ and
		with $\bt_{aug}^{(0)}(u) =  1$. Specialization of $\bt_{aug}^{(j)}(u)$ on spin-chain representations gives transfer matrices for spin-$j$ Hamiltonians with diagonal boundary conditions on one side of the spin chain, and general on the other side (in particular, diagonal too). We thus expect from~\eqref{TT-relaug} TT-relations for such transfer matrices with any diagonal boundary condition.

         We then turn to the corresponding TQ-relations. Taking the limit $j\rightarrow \infty$ in~\eqref{TT-relaug} with the formal identification 
     \beqa
     \bold{Q}^{aug}_-(u) = \lim_{j\rightarrow \infty}  \bt^{(j)}_{aug}\bigl(uq^{-j-\h}\bigr)
     \eeqa
     we obtain the \textit{universal} TQ-relations for $\cal A_q^{aug}$:
     \beqa\label{TQj-aug-univ}
     \bt^{(\h)}_{aug}(u) \bold{Q}^{aug}_-(u) =  \bold{Q}^{aug}_-(uq) -\frac{\Gamma_{aug} (uq^{-1}) \Gamma_+ (uq^{-1})}{ c(u^2 q) c(u^2 q^{-1}) }\bold{Q}^{aug}_-(uq^{- 1})   \ .
     \eeqa
     Now, recall the definition of $\Gamma_+(u)$ in~\eqref{gammaKP} with~\eqref{gammaKM}. At the specialization $\bar k_\pm=0$, we expect that the corresponding universal TQ-relation~\eqref{TQj-aug-univ} is a generalization of the one in~\cite[Thm.\,5.3]{VW20}, see also~\cite[Eq.\,(5.9)]{T20}, in the sense that our coefficients on the r.h.s.\ of~\eqref{TQj-aug-univ} are central elements and not just scalars.
	
\subsection{Bethe eigenstates and tridiagonal pairs}\label{sec7:Bethe}
For quantum integrable models associated with the image of (\ref{tg}) in some representation of ${\cal A}_q$, the diagonalization of the Hamiltonian and any higher order conserved quantities is usually based on the (modified or off-diagonal) Bethe ansatz approach \cite{CLSW02,FNR07,CYSW14,BeP15,YZYSW15} or separation of variables approach \cite{N12}.  
    The universal TT-relations~\eqref{TT-rel} -- from which the spin-chain TT-relations~\eqref{renormTTN} with~\eqref{trans-mat-jsIN} for the normalized transfer matrix  are derived -- change the perspective:  they relate the spectral problem for the  normalized transfer matrix of the spin chain for any $j$ given by~\eqref{tildetjs} to the spectral problem  for the $N$ mutually commuting operators $\psi^{(N)}_{\bj,\bar{v}}\bigl(\mathsf{I}_{2k+1}^{(N)}\bigr)$ given by~\eqref{IkOqN} with $0\leq k\leq N-1$.  
    
    For  the quotient of ${\cal A}_q$ known as the Askey-Wilson algebra $AW\cong \mathcal{A}_q^{(1)}$ discussed in Example~\ref{ex1},  the spectral problem for the normalized transfer matrix reduces to  diagonalization of the  operator $\psi^{(1)}_{j,v}\bigl(\mathsf{I}_{1}^{(1)}\bigr)$ on irreducible finite dimensional representations of $AW$~\cite{BP19}.   For the special choice $\overline{k}_\pm=0$ and $\overline{\varepsilon}_+=1, \overline{\varepsilon}_-=0$  (while the other side boundary parameters $k_\pm$ and ${\varepsilon}_\pm$ are generic)
    this operator takes a particularly simple form $\psi^{(1)}_{j,v}\bigl(\mathsf{I}_{1}^{(1)}\bigr) = \cal W_{0}^{(1)}$, and similarly for $\overline{\varepsilon}_-=1,\overline{\varepsilon}_+=0$ we get $\psi^{(1)}_{j,v}\bigl(\mathsf{I}_{1}^{(1)}\bigr) = \cal W_{1}^{(1)}$. The corresponding 
      Bethe eigenstates are eigenvectors of the Leonard pair $\cal W_{0}^{(1)},\cal W_{1}^{(1)}$. For generic boundary conditions $\overline{\varepsilon}_\pm, \overline{k}_\pm\neq 0$, the operator $\psi^{(1)}_{j,v}\bigl(\mathsf{I}_{1}^{(1)}\bigr)$ is a linear combination of $\cal W_{0}^{(1)}$, $\cal W_{1}^{(1)}$ and their $q$-commutators, its eigenvectors can  of course be obtained using modified algebraic Bethe ansatz, as done in~\cite{BP19}. However, such derivation relying solely on the theory of Leonard pairs or its appropriate generalization remains an open problem.
       
    Similarly for the more general quotients ${\cal A}_q^{(N)}$, with $N>1$, studied in Section~\ref{sec4}, the spectral problem for the normalized transfer matrix~\eqref{tildetjs} reduces to identifying the  common eigenvectors of $\psi^{(N)}_{\bj,\bar{v}}\bigl(\mathsf{I}_{2k+1}^{(N)}\bigr)$, with $0\leq k\leq N-1$.  While the modifed algebraic Bethe ansatz offers a way to compute these eigenvectors 
     in the form of Bethe states, their interpretation within the theory of tridiagonal pairs \cite{ITT99,INT10,NT17} (which applies to irreducible finite dimensional representations of $\cal A_q$) is an open problem. Even for the special choice  $\overline{\varepsilon}_+=1$ and $\overline{\varepsilon}_-=\overline{k}_\pm=0$ where $\psi^{(N)}_{\bj,\bar{v}}\bigl(\mathsf{I}_{2k+1}^{(N)}\bigr) = \cal W_{-k}^{(N)}$, common eigenvectors of $\cal W_{-k}^{(N)}$ for $0\leq k\leq N-1$ have not been studied yet.

\subsection{Relation to the blob algebra}\label{sec7:blob}
We notice that in the spin-$\h$ case, the  $q-$Onsager operators ${\cal W}_0^{(N)}$ and ${\cal W}_1^{(N)}$ commute not just with the Hamiltonians~\eqref{eq:Ham-sym-1}  and~\eqref{eq:Ham-sym-2}, as in Proposition~\ref{prop:HXXZ-comm-cond}, but equally with their Hamiltonian densities, both bulk and boundary interaction terms. For example, $\mathcal{H}_{XXZ}^{\textsf{spin $\h$} \, -}(h_\pm, h_z)$ can be written in terms of densities: 
\begin{equation}\label{eq:H-dens}
    \mathcal{H}_{XXZ}^{\textsf{spin $\h$} \, -}(h_\pm, h_z) 
= \mu b - 2 \sum_{i=1}^{N-1} e_i
\end{equation}
where $b$ is the boundary term at site $1$ and the bulk densities
	\begin{equation*}
	e_i = - ( \sigma_i^+ \sigma^-_{i+1} + \sigma^-_i \sigma^+_{i+1} + \frac{q+q^{-1}}{4}\sigma^z_i\sigma^z_{i+1}) +\frac{q-q^{-1}}{4} (\sigma^z_{i+1}-\sigma^z_i).
	\end{equation*} 
All together, $b$ and $e_i$'s form a representation of the blob algebra~\cite{blob} which is an extension of the famous Temperley-Lieb algebra with extra relations (for appropriate choice of the boundary coupling $\mu$  as a function of the boundary parameters $h_\pm$ and $h_z$)
	\begin{equation}
	b^2=b\ , \qquad e_1b e_1 = y e_1\ , \qquad e_j b = b e_j\ ,\qquad   \text{for} \quad j\geq 2,
	\end{equation}
and $y$ is also  a function of the boundary parameters $h_\pm$, $h_z$, see for details~\cite{dGN09}. 

It's well-known~\cite{PS90} that all the bulk densities $e_i$'s commute with the $\Uq$ action. Furthermore, $\calW_0^{(N)}$ belongs to the image of the spin-chain representation of $\Uq$. Indeed, \eqref{W0vN} for $v_n=1$, for all $1\leq n\leq N$, and $j=\h$ is the action of $k_+ q^{\h}E K^{\h} + k_-q^{-\h} FK^{\h} + \varepsilon_+ K$ with \eqref{eq:shUq} and the coproduct convention for $\Uq$ as in~\cite[App.\,A]{LBG}.  Therefore, $\calW_0^{(N)}$ commutes with $e_i$, for all $1\leq i \leq N-1$. And together with~\eqref{eq:commutrel} and~\eqref{eq:H-dens}, we conclude that   $\calW_0^{(N)}$ commutes with $b$ as well.  Actually, for generic values of $y$  the blob algebra is semi-simple~\cite{blob} and each eigenspace of $\calW_0^{(N)}$ is  irreducible with respect to the blob algebra action. We thus see that $\calW_0^{(N)}$ centralizes the blob algebra action, i.e.\ that every operator commuting with the densities $b$ and $e_i$'s is necessarily a polynomial in $\calW_0^{(N)}$.
It is in  this sense that
$\calW_0^{(N)}$ forms a non-trivial symmetry of the Hamiltonian $\mathcal{H}_{XXZ}^{\textsf{spin $\h$} \, -}(h_\pm, h_z)$.  The situation with $\calW_1^{(N)}$ is very similar, one should replace $\Uq$ by $U_{q^{-1}sl_2}$ in the analysis.
It is an interesting problem to extend this picture and interpretation of the property~\eqref{eq:commutrel} to all spins $j>\h$ using an appropriate (yet, non-existing) higher spin version of the blob algebra.

\smallskip
 Another interesting problem is to extend these symmetry results for the cases of $q$ a root of unity, possibly to non-abelian symmetries, as those in the closed case given by the divided powers of the  generators $S_\pm$ of $\Uq$ in~\cite{PS90,DFMC01} but instead using the divided polynomials~\cite{BGV16} in $\calW_{0/1}^{(N)}$.
 Indeed, the exchange relations in Proposition~\ref{corol1} are interesting analogues of the relations~\cite[Eqs.\,(2.62a)]{PS90}, and together with Remark~\ref{rem:direct-calc}, they equally hold for spin-$\h$ and $q$ any root of unity case.

\bigskip
	
	\noindent{\bf Acknowledgments:}  Authors thank A.\ Kuniba, G.\ Niccoli, R. Pimenta, T.\ Prosen, J.\ Sirker, J.\ Suzuki and W.-L.\ Yang for communications. P.B.\ and A.M.G.\  are supported by C.N.R.S. The work of A.M.G. was also partially supported by
the ANR grant NASQI3D ANR-24-CE40-7252. 
	\vspace{0.2cm}
	
	\begin{appendix}

		\section{The maps \texorpdfstring{${\mathcal{E}}^{(j)}$}{E(j)}, \texorpdfstring{${\mathcal{F}}^{(j)}$}{F(j)} and \texorpdfstring{$\bar{\mathcal{E}}^{(j-\h)}$}{bE(j-1/2)}, \texorpdfstring{$\bar{\mathcal{F}}^{(j-\h)}$}{bF(j-1/2)}}\label{Ap:EF} 
		Most of the material in this Appendix is taken from \cite[Sec.\,3]{LBG}. The reader is referred to this paper for details and proofs of the results below.  
		\smallskip
		
		Let	$E_{a,b}^{(j_1,j_2)}$ denote the matrix of dimension $(2j_1+2) \times (2j_2+2)$ with $1$ at position $(a,b)$ and $0$ otherwise.

        \subsection{The maps \texorpdfstring{$\mathcal{E}^{(j)}$}{E(j)} and \texorpdfstring{$\mathcal{F}^{(j)}$}{F(j)}}
The linear maps $\mathcal{E}^{(j)}$, $\mathcal{F}^{(j)}$  
are defined as
\begin{align}
\mathcal{E}^{(j)}\colon & \mathbb{C}_u^{2j+1} \rightarrow \mathbb{C}_{u_1}^{2}
\otimes \mathbb{C}_{u_2}^{2j} \ ,\qquad \mathcal{E}^{(j)}=\displaystyle{\sum_{a=1}^{4j}\sum_{b=1}^{2j+1} } \mathcal{E}_{a,b}^{(j)} E_{a,b}^{(2j-1,j-\h)}
\ ,\label{exprE}
\\
\mathcal{F}^{(j)} \colon  &\mathbb{C}^{2}_{u_1} \otimes \mathbb{C}_{u_2}^{2j} \rightarrow \mathbb{C}_u^{2j+1}\ ,\qquad \mathcal{F}^{(j)}=\displaystyle{\sum_{a=1}^{2j+1}\sum_{b=1}^{4j} } \mathcal{F}_{a,b}^{(j)} E_{a,b}^{(j-\h,2j-1)}\ , \label{exprF}
\end{align}
with   $u_1 = u q^{-j+\h}$, $u_2=u q^\h$, and the matrix entries $\mathcal{E}^{(j)}_{a,b}$ are given by
\begin{equation}
\mathcal{E}^{(j)}_{1,1}=1 \ , \qquad \mathcal{E}^{(j)}_{1+n,1+n}= \displaystyle{\prod_{p=0}^{n-1}} \frac{ B_{j-\h,j-p-\h}}{B_{j,j-p}} \ , \qquad \mathcal{E}^{(j)}_{2j+m,1+m}=[m]_q \frac{\mathcal{E}^{(j)}_{m,m}}{B_{j,j+1-m}}\ ,\label{Ep1}
\end{equation}
where  $n=1$, $2$, $\ldots$, $2j-1$, $m =1$, $2$, $\ldots$, $2j$  and $B_{j,j'}$ is given in~\eqref{Bdef}, and all the other entries are zero.
Here, we choose the basis of the source space of the map $\mathcal{E}^{(j)}$ to be $\{ \ket{j,m} \}$ with the order $m=j$, $ j-1$, $\ldots$, $-j+\h$, $-j$, while for its target  \allowbreak
$\{\ket{\uparrow} \otimes \ket{ j-\h,j-\h}$, $\ldots$, $\ket{\uparrow} \otimes \ket{j-\h,-j+\h}$, $\ket{\downarrow} \otimes \ket{j-\h,j-\h}$, $\ldots$, $\ket{\downarrow} \otimes \ket{j-\h,-j+\h} \}$.
 The conditions on the evaluation parameters are fixed such that the tensor product of evaluation representations of $\Loop$ gives a spin-$j$ sub-representation, 
and $\mathcal{E}^{(j)}$ becomes a $\Loop$-intertwiner.
For the map $\mathcal{F}^{(j)}$, the entries are given for any $j \in \h \mathbb{N}_+$ by:
\begin{align} \label{Fp1}
&\mathcal{F}_{1,1}^{(j)}=1 \ ,\qquad  \mathcal{F}_{n,n+2j-1}^{(j)}=\frac{ \mathcal{E}_{n+2j-1,n}^{(j)}}{(\mathcal{E}_{n,n}^{(j)})^2+(\mathcal{E}_{n+2j-1,n}^{(j)})^2} \ ,\\
\mathcal{F}_{2j+1,4j}^{(j)} &= (\mathcal{E}_{4j,2j+1}^{(j)})^{-1} \ , \qquad \mathcal{F}_{n,n}^{(j)}=\frac{1-\mathcal{F}_{n,n+2j-1}^{(j)}\mathcal{E}_{n+2j-1,n}^{(j)}}{\mathcal{E}^{(j)}_{n,n}}   \ ,  \label{Fp2}
\end{align}
where $n=2$, $3$, $\ldots$, $2j$, and all other entries are zero.
Note that it is a pseudo-inverse of $\mathcal{E}^{(j)}$
\begin{equation} \label{eq:FE1}
\mathcal{F}^{(j)}\mathcal{E}^{(j)}  = {\mathbb I}_{2j+1}\ .
\end{equation}

In the analysis, the following invertible diagonal matrix $\mathcal{H}^{(j)}$  will be useful.
It is given by:
\begin{equation}
\label{defH}
\mathcal{H}^{(j)}=\diag(\mathcal{H}_{1}^{(j)},\mathcal{H}_{2}^{(j)},\ldots, \mathcal{H}_{2j+1}^{(j)}) \ ,
\end{equation}
where the entries $\mathcal{H}_{n}^{(j)}$ are determined by the relation:
\begin{equation} \label{decompR}
R^{(\h,j-\h)}(q^{j})= \mathcal{E}^{(j)}\mathcal{H}^{(j)}\mathcal{F}^{(j)} \ ,
\end{equation}
with (\ref{R-Rqg}). They read: 
\vspace{-0.3cm}
\begin{equation}
\begin{aligned}\label{coefp2}
\mathcal{H}_{1}^{(j)}&=
\mathcal{H}_{2j+1}^{(j)}=
\prod_{k=2}^{2j} c(q^{k})\ ,\\
\mathcal{H}_{n}^{(j)} &=  
\frac{B_{j-\h,-j-\h+n}} {\mathcal{E}_{n,n}^{(j)} \mathcal{F}_{n,n+2j-1}^{(j)}}
\prod_{k=1}^{2j-1} c(q^{k})\ ,
\end{aligned}
\end{equation}
for $n=2$, $3$, $\ldots$, $2j$.
Besides, from the decomposition~\eqref{decompR} and~\eqref{eq:FE1}, it follows:
\begin{equation}
\begin{aligned}
\mathcal{E}^{(j)}\mathcal{H}^{(j)}&=R^{(\h,j-\h)}(q^{j})\mathcal{E}^{(j)} \ ,\\ \label{usefulEFH} \mathcal{H}^{(j)}\mathcal{F}^{(j)}&=\mathcal{F}^{(j)} R^{(\h,j-\h)}(q^{j}) \ ,\\
R^{(\h,j-\h)}(q^{j})&= \mathcal{E}^{(j)} \mathcal{F}^{(j)}R^{(\h,j-\h)}(q^{j}) \ .
\end{aligned}
\end{equation}
		\subsection{The maps \texorpdfstring{$\bar{\mathcal{E}}^{(j-\h)}$}{bE(j-1/2)} and \texorpdfstring{$\bar{\mathcal{F}}^{(j-\h)}$}{bF(j-1/2)}} \label{appA:def-Ebar}
		The linear maps $\bar{\mathcal{E}}^{(j-\h)}$ and $\bar{\mathcal{F}}^{(j-\h)}$ 
		are defined as
		\begin{align} 
			\bar{\mathcal{E}}^{(j-\h)}\colon & \mathbb{C}^{2j}_u \rightarrow \mathbb{C}_{u_1}^{2} 
			\otimes \mathbb{C}_{u_2}^{2j+1} \ , \qquad \bar{\mathcal{E}}^{(j-\h)}=\displaystyle{\sum_{a=1}^{4j+2}\sum_{b=1}^{2j}}  \bar{\mathcal{E}}_{a,b}^{(j-\h)} E^{(2j,j-1)}_{a,b}\ , \label{exprbarE} 
			\\ 
			\bar{\mathcal{F}}^{(j-\h)}\colon &\mathbb{C}_{u_1}^{2} \otimes \mathbb{C}_{u_2}^{2j+1} \rightarrow \mathbb{C}^{2j}_u \ , \qquad \bar{\mathcal{F}}^{(j-\h)}=\displaystyle{\sum_{a=1}^{2j}} \sum_{b=1}^{4j+2}  \bar{\mathcal{F}}_{a,b}^{(j-\h)} E^{(j-1,2j)}_{a,b} \ ,
		\end{align}
		where $\bar{\mathcal{E}}^{(j-\h)}_{a,b}$, $\bar{\mathcal{F}}^{(j-\h)}_{a,b}$ are certain scalars  and with $u_1/u_2= q^{j+\h}$,  $u_2=u q^\h$. These conditions are fixed such that we get a spin-$(j-\h)$ sub-representation from the tensor product of evaluation representations of $\Loop$.
		Also, the map $\bar{\mathcal{E}}^{(j-\h)}$ is a $\Loop$-intertwiner and its  entries are given for any $j \in \frac{1}{2} \mathbb{N}_+$ by
		\begin{equation} \label{barE-proj}
			\bar{\mathcal{E}}_{2,1}^{(j-\h)}=1 \ , \qquad
			\bar{\mathcal{E}}^{(j-\h)}_{2+n,1+n}= \displaystyle{\prod_{p=0}^{n-1}} \frac{ B_{j,j-p-1}}{B_{j-\h,j-\h-p}} \  \ ,\qquad \bar{\mathcal{E}}^{(j-\h)}_{2j+2+m,1+m}=\frac{[m-2j]_q}{B_{j,j-m}} 
			\bar{\mathcal{E}}_{2+m,1+m}^{(j-\h)} \ ,
		\end{equation}
		where  $n=1$, $2$, $\ldots$, $2j-1$, $m=0$, $1$, $\ldots$, $2j-1$ and $B_{j,j'}$ is given in~\eqref{Bdef}, and all the other entries are zero.
		For the map $\bar{\mathcal{F}}^{(j-\h)}$, the entries are given for any $j \in \h \mathbb{N}_+$ by:
		\begin{equation}\label{coefbar} 
			\bar{\mathcal{F}}_{n,n+2j+1}^{(j-\h)} = \frac {\bar{\mathcal{E}}_{n+2j+1,n}^{(j-\h)}}{(\bar{\mathcal{E}}_{n+1,n}^{(j-\h)})^2 + (\bar{\mathcal{E}}_{n+2j+1,n}^{(j-\h)})^2 } \ , \qquad
			\bar{\mathcal{F}}_{n,n+1}^{(j-\h)} = \frac { 1 - \bar{\mathcal{F}}_{n,n+2j+1}^{(j-\h)} \bar{\mathcal{E}}_{n+2j+1,n}^{(j-\h)} } { \bar{\mathcal{E}}_{n+1,n}^{(j-\h)}} \ ,
		\end{equation}
		where $n=1$, $2$, $\ldots$, $2j$, and all other entries are zero.
		Note that it is a pseudo-inverse of $\bar{\mathcal{E}}^{(j-\h)}$
		\begin{equation} \label{eq:bFE1}
		\bar{\mathcal{F}}^{(j-\h)} \bar{\mathcal{E}}^{(j-\h)} = {\mathbb I}_{2j}\ .
		\end{equation}

		The following invertible diagonal matrix $\bar{\mathcal{H}}^{(j-\h)}$  will be also useful.
		It is given by:
		\begin{equation}
			\label{defbarH}
			\bar{\mathcal{H}}^{(j-\h)}=\diag(\bar{\mathcal{H}}_{1}^{(j-\h)},\bar{\mathcal{H}}_{2}^{(j-\h)},\ldots, \bar{\mathcal{H}}_{2j}^{(j-\h)}) \ ,
		\end{equation}
		where the entries $\bar{\mathcal{H}}_{n}^{(j-\h)}$ are determined by the relation:
		\begin{equation} \label{decompbarR}
			{R}^{(\h,j)}(q^{-j-\h})= \bar{\mathcal{E}}^{(j-\h)} \bar{\mathcal{H}}^{(j-\h)} \bar{\mathcal{F}}^{(j-\h)} \ .
		\end{equation}
		They read:
		\begin{equation}
			\bar{\mathcal{H}}_{n}^{(j-\h)}= \left ( \displaystyle{\prod_{k=0}^{2j-2}} c(q^{-k-1}) \right) \frac {(q-q^{-1})  B_{j,-j+n} }{ \bar{\mathcal{E}}^{(j-\h)}_{n+2j+1,n} \bar{\mathcal{F}}_{n,n+1}^{(j-\h)}} \ ,
		\end{equation}
		for $n=1$, $2$, $\ldots$, $2j$.
		Besides, from the decomposition~\eqref{decompbarR} and~\eqref{eq:bFE1}, it follows:
		\begin{equation}
			\begin{aligned}
				\bar{\mathcal{E}}^{(j-\h)}\bar{\mathcal{H}}^{(j-\h)}&=R^{(\h,j)}(q^{-j-\h})\bar{\mathcal{E}}^{(j-\h)} \ ,\\ \label{usefulbarEFH} \bar{\mathcal{H}}^{(j-\h)}\bar{\mathcal{F}}^{(j-\h)}&=\bar{\mathcal{F}}^{(j-\h)}	R^{(\h,j)}(q^{-j-\h}) \ ,\\ 
				R^{(\h,j)}(q^{-j-\h})&=	\bar{\mathcal{E}}^{(j-\h)} \bar{\mathcal{F}}^{(j-\h)}R^{(\h,j)}(q^{-j-\h}) \ .
			\end{aligned}
		\end{equation}
		\subsection{Examples} {\small{
		\begin{equation} \label{eq:EHF1}
			\mathcal{E}^{(1)}=
			\begin{pmatrix}
				1&  0&  0\\ 
				0&  \frac{1}{\sqrt{ [2]_q}}&  0\\ 
				0 &  \frac{1}{\sqrt{ [2]_q}}&  0\\ 
				0&  0&  1
			\end{pmatrix} , \quad \mathcal{H}^{(1)}= 
			\begin{pmatrix}
				q^2-q^{-2}&  0&  0\\ 
				0&  2(q-q^{-1})&  0\\ 
				0 &  0&  q^2-q^{-2}
			\end{pmatrix} , \quad \mathcal{F}^{(1)}=
			\begin{pmatrix}
				1 & 0 & 0 & 0\\ 
				0&  \frac{\sqrt{ [2]_q}}{2} &  \frac{\sqrt{[2]_q}}{2}& 0\\ 
				0& 0 & 0 &1 
			\end{pmatrix}  ,
		\end{equation}
		\begin{equation*} 	
			\mathcal{E}^{(\tha)} = 
			\begin{pmatrix}
				1 &  0                                        & 0 &0 \\ 
				0&  \frac{\sqrt{[2]_q}}{\sqrt{[3]_q}}       & 0 &0 \\ 
				0 &  0                                        & \frac{[2]_q}{ \sqrt{([2]_q)^2} \sqrt{[3]_q}} &0 \\ 
				0 & \frac{1}{\sqrt{[3]_q}}                   & 0 &0 \\ 
				0 & 0                                         & \frac{([2]_q)^{3/2}}{ \sqrt{([2]_q)^2} \sqrt{[3]_q}} &0 \\ 
				0 & 0                                         & 0 &\frac{[2]_q}{\sqrt{([2]_q)^2}} 
			\end{pmatrix}   ,\
			\mathcal{F}^{(\tha)}= 
			\begin{pmatrix}
				1 & 0 & 0 & 0 & 0 &0 \\ 
				0 & \frac{ \sqrt{[2]_q}\sqrt{[3]_q}}{1+[2]_q} & 0 & \frac{\sqrt{[3]_q}}{1+[2]_q} & 0 &0 \\ 
				0&0  &\frac{\sqrt{[3]_q}\sqrt{([2]_q)^2}}{[2]_q(1+[2]_q)}  &0  & \frac{\sqrt{[3]_q}\sqrt{([2]_q)^2}}{\sqrt{[2]_q}(1+[2]_q)}  &0 \\ 
				0 & 0 &0  &0  & 0 & \frac{\sqrt{([2]_q)^2}} {[2]_q}
			\end{pmatrix},
		\end{equation*}
		\begin{equation*}	
			\mathcal{H}^{(\tha)}= \begin{pmatrix}
				q^5-q-q^{-1}+q^{-5}&  0&  0&0\\ 
				0&  (q - q^{-1})^2 \lbrack 2 \rbrack_q (1+\lbrack 2 \rbrack_q) &  0&0\\ 
				0 &  0&   (q - q^{-1})^2 \lbrack 2 \rbrack_q (1+\lbrack 2 \rbrack_q)  &0 \\
				0 & 0 & 0 & q^5-q-q^{-1}+q^{-5}
			\end{pmatrix}  ,
		\end{equation*}
		\begin{equation*}
			\bar{\mathcal{E}}^{(0)} =
			\begin{pmatrix}
				0 \\
				1\\
				-1\\
				0
			\end{pmatrix} ,\qquad  \bar{\mathcal{H}}^{(0)} = 2(q^{-1}-q), \qquad \bar{\mathcal{F}}^{(0)}= 
			\begin{pmatrix}
				0 & \frac{1}{2} & -\frac{1}{2}  & 0  \\
			\end{pmatrix} \ .
		\end{equation*}
		\begin{equation*} 
			\bar{\mathcal{E}}^{(\h)}= \begin{pmatrix}
				0 & 0 \\ 
				1 & 0\\ 
				0 & \sqrt{[2]_q}\\ 
				-\sqrt{[2]_q} &0 \\ 
				0& -1 \\ 
				0 &0 
			\end{pmatrix} ,\quad  \bar{\mathcal{H}}^{(\h)} =(q-q^{-1})^2(1+\lbrack 2 \rbrack_q) \, {\mathbb I}_2, \quad \bar{\mathcal{F}}^{(\h)}= \begin{pmatrix}
				0& \frac{1}{1+[2]_q}  & 0 & - \frac{\sqrt{[2]_q}}{1+[2]_q}  &0  &0 \\ 
				0&0  &\frac{\sqrt{[2]_q}}{1+[2]_q}   &0  &-\frac{1}{1+[2]_q}  &0 
			\end{pmatrix} .
		\end{equation*}
		}}
		\normalsize

		\subsection{Other relations} \label{Ap:propEHF}
		For the proof of the TT-relations, few relations satisfied by the above maps are needed.  
		\begin{lem} \label{lem:rel}
			The following relations hold: 
			\begin{align} \label{rel1}
				\mathcal{H}_{1}^{(j)} \ 	\mathcal{E}^{(j)} = 	\big[ \mathcal{F}^{(j)} \big ]^t \mathcal{H}^{(j)}\ , \qquad  \bar{\mathcal{E}}^{(j-\h)} \bar{\mathcal{F}}^{(j-\h)} + \mathcal{E}^{(j+\h)} \mathcal{F}^{(j+\h)} = \mathbb{I}_{4j+2}\ , \\ \label{rel2}
				\big [\mathcal{F}^{(j)} \big ]^t  \mathcal{F}^{(j)} R^{(\h,j-\h)}(q^{j}) = \mathcal{H}_{1}^{(j)} \  \mathcal{E}^{(j)} \mathcal{F}^{(j)} \ , \qquad  \mathcal{H}_{1}^{(j)} \ \mathcal{E}^{(j)}  \big [ \mathcal{E}^{(j)} \big ]^t = R^{(\h,j-\h)}(q^j) \ ,
			\end{align}
            with the scalar $\mathcal{H}_{1}^{(j)} $ from~\eqref{coefp2}.
		\end{lem}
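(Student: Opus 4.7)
The plan is to establish the four identities in a dependency order: first the entry-wise identity in Relation~1, then derive Relations~3 and~4 algebraically from it together with the known relations~\eqref{usefulEFH} and~\eqref{decompR}, and finally handle Relation~2 as an independent statement of projector completeness.

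\emph{Relation 1.} Since $\mathcal{H}^{(j)}$ is diagonal, comparing the $(a,b)$-entries of $\mathcal{H}_1^{(j)}\mathcal{E}^{(j)}$ and $[\mathcal{F}^{(j)}]^t\mathcal{H}^{(j)}$ reduces to verifying
\[
\mathcal{H}_1^{(j)}\,\mathcal{E}^{(j)}_{a,b} \;=\; \mathcal{F}^{(j)}_{b,a}\,\mathcal{H}^{(j)}_{b}
\]
for all $(a,b)$. Using the sparsity pattern of $\mathcal{E}^{(j)}$ in~\eqref{Ep1} (nonzero only at $(n,n)$ and $(n+2j-1,n)$) and of $\mathcal{F}^{(j)}$ in~\eqref{Fp1}--\eqref{Fp2} (nonzero only at $(n,n)$ and $(n,n+2j-1)$), this collapses to the two families
\[
\mathcal{H}_1^{(j)}\,\mathcal{E}^{(j)}_{n,n} = \mathcal{F}^{(j)}_{n,n}\,\mathcal{H}^{(j)}_{n}, \qquad
\mathcal{H}_1^{(j)}\,\mathcal{E}^{(j)}_{n+2j-1,n} = \mathcal{F}^{(j)}_{n,n+2j-1}\,\mathcal{H}^{(j)}_{n},
\]
each of which follows from the defining formula~\eqref{coefp2} for $\mathcal{H}^{(j)}_{n}$ after substituting the explicit expressions for $\mathcal{F}^{(j)}_{n,n}$ and $\mathcal{F}^{(j)}_{n,n+2j-1}$ from~\eqref{Fp1}--\eqref{Fp2}. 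The computation is routine and I would carry it out in a single display.

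\emph{Relations 3 and 4.} These are immediate consequences of Relation~1. For Relation~3, right-multiply Relation~1 by $\mathcal{F}^{(j)}$ and then apply $\mathcal{H}^{(j)}\mathcal{F}^{(j)} = \mathcal{F}^{(j)}\,R^{(\h,j-\h)}(q^{j})$ from~\eqref{usefulEFH}:
\[
\mathcal{H}_1^{(j)}\,\mathcal{E}^{(j)}\mathcal{F}^{(j)} = [\mathcal{F}^{(j)}]^t\,\mathcal{H}^{(j)}\mathcal{F}^{(j)} = [\mathcal{F}^{(j)}]^t\,\mathcal{F}^{(j)}\,R^{(\h,j-\h)}(q^{j}).
\]
For Relation~4, take the transpose of Relation~1 (using that $\mathcal{H}^{(j)}$ is diagonal, hence symmetric) to get $\mathcal{H}_1^{(j)}[\mathcal{E}^{(j)}]^t = \mathcal{H}^{(j)}\mathcal{F}^{(j)}$, then left-multiply by $\mathcal{E}^{(j)}$ and use the decomposition~\eqref{decompR}:
\[
\mathcal{H}_1^{(j)}\,\mathcal{E}^{(j)}[\mathcal{E}^{(j)}]^t = \mathcal{E}^{(j)}\mathcal{H}^{(j)}\mathcal{F}^{(j)} = R^{(\h,j-\h)}(q^{j}).
\]

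\emph{Relation 2.} The two operators $P_+ := \mathcal{E}^{(j+\h)}\mathcal{F}^{(j+\h)}$ and $P_- := \bar{\mathcal{E}}^{(j-\h)}\bar{\mathcal{F}}^{(j-\h)}$ are idempotents of $\End(\mathbb{C}^2 \otimes \mathbb{C}^{2j+1})$ by~\eqref{eq:FE1} and~\eqref{eq:bFE1}, of ranks $2j+2$ and $2j$ respectively, summing to $4j+2 = \dim(\mathbb{C}^2\otimes\mathbb{C}^{2j+1})$. Since $\mathcal{E}^{(j+\h)}$ and $\bar{\mathcal{E}}^{(j-\h)}$ are $\Loop$-intertwiners embedding the two non-isomorphic irreducible evaluation submodules of the tensor product at the relevant evaluation parameters, their images are $\Loop$-stable subspaces intersecting trivially, and together span the whole space by the standard Clebsch--Gordan decomposition $\mathbb{C}^{2}\otimes\mathbb{C}^{2j+1}\cong \mathbb{C}^{2j+2}\oplus\mathbb{C}^{2j}$ (valid at generic $q$). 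Therefore $P_+ + P_- = \mathbb{I}_{4j+2}$. The only non-elementary input here is the invocation of the tensor product decomposition, and this is exactly the step I expect to require the most care to phrase cleanly -- in particular to ensure that the images of $\mathcal{E}^{(j+\h)}$ and $\bar{\mathcal{E}}^{(j-\h)}$ really do correspond to the two distinct isotypic components, rather than merely lying in them.
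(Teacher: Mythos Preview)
Your treatment of Relations~1, 3, and 4 matches the paper's: Relation~1 is verified entry-wise from the explicit formulas, and Relations~3 and~4 are deduced from it together with~\eqref{decompR} and~\eqref{usefulEFH}. The paper's derivation of Relation~3 proceeds slightly differently (it inserts $R^{(\h,j-\h)}(q^j)=\mathcal{E}^{(j)}\mathcal{H}^{(j)}\mathcal{F}^{(j)}$ first and then uses $\mathcal{F}^{(j)}\mathcal{E}^{(j)}=\mathbb{I}$ and Relation~1), and for Relation~4 it transposes~\eqref{decompR} via the symmetry of $R^{(\h,j)}$; but these are minor reorderings of the same manipulations you give.

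For Relation~2 the paper simply does a direct entry-wise check from the explicit formulas for $\mathcal{E}^{(j+\h)}$, $\mathcal{F}^{(j+\h)}$, $\bar{\mathcal{E}}^{(j-\h)}$, $\bar{\mathcal{F}}^{(j-\h)}$. Your conceptual argument via projector completeness is more elegant, but as written it has a gap, and not the one you flag. The concern about the images is not the issue: since $\mathcal{E}^{(j+\h)}$ and $\bar{\mathcal{E}}^{(j-\h)}$ are injective $U_q sl_2$-intertwiners of the correct dimensions into a multiplicity-free tensor product, their images are automatically the full isotypic components. The real missing step is that two idempotents $P_\pm$ with complementary images spanning the space need \emph{not} sum to the identity unless each annihilates the image of the other. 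Concretely you need $\mathcal{F}^{(j+\h)}\bar{\mathcal{E}}^{(j-\h)}=0$ and $\bar{\mathcal{F}}^{(j-\h)}\mathcal{E}^{(j+\h)}=0$. This would follow from Schur's lemma \emph{if} the pseudo-inverses $\mathcal{F}^{(j+\h)}$ and $\bar{\mathcal{F}}^{(j-\h)}$ were themselves $U_q sl_2$-intertwiners, but the paper only asserts this for the $\mathcal{E}$-maps; the $\mathcal{F}$-maps are specified by the explicit entries~\eqref{Fp1}--\eqref{Fp2} and~\eqref{coefbar}, and their $U_q sl_2$-equivariance is an additional fact you would have to establish. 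At that point the direct verification the paper performs is arguably no more work.
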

		\begin{proof}
			The relations in~\eqref{rel1} are straightforwardly checked using the expressions of $\mathcal{E}^{(j+\h)}$, $\mathcal{F}^{(j+\h)}$, $\mathcal{H}^{(j+\h)}$ and $\bar{\mathcal{E}}^{(j-\h)}$, $\bar{\mathcal{F}}^{(j-\h)}$, $\bar{\mathcal{H}}^{(j-\h)}$ given in previous subsections. Recall the decomposition of the R-matrix~\eqref{R-Rqg} at special points (\ref{decompR}), (\ref{decompbarR}).
			Since $R^{(\h,j)}(u)$ is symmetric~\eqref{symRj}, we have:
			\begin{equation} \label{tEHF}
				R^{(\h,j)}(q^{j+\h}) =  \big [ \mathcal{F}^{(j+\h)} \big ]^t \mathcal{H}^{(j+\h)} \big [ \mathcal{E}^{(j+\h)}  \big ]^t \ , \quad R^{(\h,j)}(q^{-j-\h}) = \big [ \bar{\mathcal{F}}^{(j-\h)} \big ]^t \bar{\mathcal{H}}^{(j-\h)}  \big [ \bar{\mathcal{E}}^{(j-\h)} \big ]^t \ .
			\end{equation}	 
			For the relations in~\eqref{rel2}, the first one is obtained as follows:
			\begin{equation}
				\big [\mathcal{F}^{(j)} \big ]^t  \mathcal{F}^{(j)} R^{(\h,j-\h)}(q^{j}) =  \big [\mathcal{F}^{(j)} \big ]^t  \mathcal{F}^{(j)} \mathcal{E}^{(j)} \mathcal{H}^{(j)} \mathcal{F}^{(j)}  = \big [\mathcal{F}^{(j)} \big ]^t  \mathcal{H}^{(j)}  \mathcal{F}^{(j)} = \mathcal{H}_{1}^{(j)} \ \mathcal{E}^{(j)}  \big [ \mathcal{E}^{(j)} \big ]^t \ ,
			\end{equation}
			where we used~\eqref{eq:FE1} and~\eqref{rel1}. The second relation in~\eqref{rel2} is obtained from~\eqref{tEHF}, using~\eqref{rel1}.
		\end{proof}

		Similarly, for $\bar{\mathcal{E}}^{(j)}$, $\bar{\mathcal{F}}^{(j)}$ and $\bar{\mathcal{H}}^{(j)}$, one has:
		\begin{lem} The following relations hold: 
			\begin{align} \label{eq:rel2p1}
				\bar{\mathcal{F}}_{1,2}^{(j)} \bar{\mathcal{H}}_1^{(j)} \bar{\mathcal{E}}^{(j)} = \lbrack \bar{\mathcal{F}}^{(j)} \rbrack^t \bar{\mathcal{H}}^{(j)} \ , \qquad  \bar{\mathcal{F}}^{(j-1)}_{1,2} \bar{\mathcal{H}}_{1}^{(j-1)} \bar{\mathcal{E}}^{(j-1)} \lbrack \bar{\mathcal{E}}^{(j-1)} \rbrack^t = R^{(\h,j-\h)}(q^{-j}) \ ,  \\
				\lbrack \bar{\mathcal{F}}^{(j-1)} \rbrack^t \bar{\mathcal{F}}^{(j-1)} R^{(\frac 12, j-\frac 12)}(q^{-j}) = \bar{\mathcal{F}}_{1,2}^{(j-1)} \bar{\mathcal{H}}_{1}^{(j-1)} \ \bar{\mathcal{E}}^{(j-1)} \bar{\mathcal{F}}^{(j-1)} \ ,
			\end{align}
            with the scalars $\bar{\mathcal{F}}_{1,2}^{(j)}$ and $\bar{\mathcal{H}}_1^{(j)}$ introduced in~\eqref{coefbar}  and~\eqref{coefp2}, respectively. We note that $\bar{\mathcal{F}}_{1,2}^{(j)} \bar{\mathcal{H}}_1^{(j)} = (q-q^{-1})^2\prod_{k=2}^{2j} c(q^{-k})$.
		\end{lem}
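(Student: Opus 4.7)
The plan is to mirror the structure of the proof of Lemma~\ref{lem:rel} in the unbarred case, using the explicit matrix entries of $\bar{\mathcal{E}}^{(j)}$ and $\bar{\mathcal{F}}^{(j)}$ given in~\eqref{barE-proj}--\eqref{coefbar}, together with the diagonal entries of $\bar{\mathcal{H}}^{(j)}$ and the R-matrix decomposition~\eqref{decompbarR} and pseudo-inverse property~\eqref{eq:bFE1}. The closing identity $\bar{\mathcal{F}}_{1,2}^{(j)} \bar{\mathcal{H}}_1^{(j)} = (q-q^{-1})^2\prod_{k=2}^{2j} c(q^{-k})$ should fall out as an immediate consequence of the explicit formula for $\bar{\mathcal{H}}_1^{(j)}$ from the definition below~\eqref{defbarH} (after the index shift $j-\h \mapsto j$) combined with the value of $\bar{\mathcal{F}}_{1,2}^{(j)}$ from~\eqref{coefbar}.

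First, I would prove the first identity $\bar{\mathcal{F}}_{1,2}^{(j)} \bar{\mathcal{H}}_1^{(j)} \bar{\mathcal{E}}^{(j)} = [\bar{\mathcal{F}}^{(j)}]^t \bar{\mathcal{H}}^{(j)}$ by entry-by-entry verification. Since $\bar{\mathcal{H}}^{(j)}$ is diagonal, and the nonzero entries of $\bar{\mathcal{E}}^{(j)}$ live in two disjoint "blocks" at positions $(n+1,n)$ and $(n+2j+1,n)$ (matching the nonzero columns of $[\bar{\mathcal{F}}^{(j)}]^t$), the identity reduces to the two families of scalar equalities
\begin{equation*}
\bar{\mathcal{F}}_{1,2}^{(j)} \bar{\mathcal{H}}_1^{(j)} \bar{\mathcal{E}}^{(j)}_{n+1,n} = \bar{\mathcal{F}}_{n,n+1}^{(j)} \bar{\mathcal{H}}_n^{(j)}, \qquad \bar{\mathcal{F}}_{1,2}^{(j)} \bar{\mathcal{H}}_1^{(j)} \bar{\mathcal{E}}^{(j)}_{n+2j+1,n} = \bar{\mathcal{F}}_{n,n+2j+1}^{(j)} \bar{\mathcal{H}}_n^{(j)}.
\end{equation*}
Rearranging the defining formula for $\bar{\mathcal{H}}_n^{(j)}$ (shift of the formula under~\eqref{decompbarR}) expresses it as a ratio involving $\bar{\mathcal{E}}^{(j)}_{n+2j+1,n}$ and $\bar{\mathcal{F}}_{n,n+1}^{(j)}$; substituting this and using the pseudo-inverse relation between $\bar{\mathcal{F}}_{n,n+1}^{(j)}$, $\bar{\mathcal{F}}_{n,n+2j+1}^{(j)}$ and the $\bar{\mathcal{E}}^{(j)}$ entries from~\eqref{coefbar} turns both scalar identities into elementary consequences of the product formulas in~\eqref{barE-proj}, where a telescoping in $n$ of the ratios $B_{j+\h,j-p-\h}/B_{j,j-p}$ collapses to a ratio of factors at index $n$ and index $1$.

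Once the first identity is in hand, the second and third identities follow by short algebraic manipulations. For the second identity, transposing the first one (using that $\bar{\mathcal{H}}^{(j)}$ is symmetric) with $j \mapsto j-1$ yields $\bar{\mathcal{F}}_{1,2}^{(j-1)} \bar{\mathcal{H}}_1^{(j-1)} [\bar{\mathcal{E}}^{(j-1)}]^t = \bar{\mathcal{H}}^{(j-1)} \bar{\mathcal{F}}^{(j-1)}$; multiplying on the left by $\bar{\mathcal{E}}^{(j-1)}$ and applying~\eqref{decompbarR} with $j \mapsto j-\h$ gives the stated equality with $R^{(\h,j-\h)}(q^{-j})$ on the right. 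For the third identity, substituting~\eqref{decompbarR} (again with $j \mapsto j-\h$) into the left-hand side and then collapsing $\bar{\mathcal{F}}^{(j-1)} \bar{\mathcal{E}}^{(j-1)} = \mathbb{I}_{2j}$ via~\eqref{eq:bFE1} leaves $[\bar{\mathcal{F}}^{(j-1)}]^t \bar{\mathcal{H}}^{(j-1)} \bar{\mathcal{F}}^{(j-1)}$, and one last application of the first identity (with $j \mapsto j-1$) rewrites the factor $[\bar{\mathcal{F}}^{(j-1)}]^t \bar{\mathcal{H}}^{(j-1)}$ as $\bar{\mathcal{F}}_{1,2}^{(j-1)} \bar{\mathcal{H}}_1^{(j-1)} \bar{\mathcal{E}}^{(j-1)}$, producing the right-hand side.

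The main technical obstacle will be the first identity: one must carefully keep track of the two-block index structure of $\bar{\mathcal{E}}^{(j)}$ and ensure that the telescoping of the $B_{j+\h,j-p-\h}/B_{j,j-p}$ ratios in~\eqref{barE-proj} matches the $q$-integer factors $[m-2j-1]_q$ arising from the lower block, after dividing by $\bar{\mathcal{H}}_n^{(j)}/\bar{\mathcal{H}}_1^{(j)}$. Because $\bar{\mathcal{H}}_n^{(j)}$ is itself defined by a product of R-matrix normalizations times a ratio of $\bar{\mathcal{E}}^{(j)}$ and $\bar{\mathcal{F}}^{(j)}$ entries, careful bookkeeping is needed to see that all combinatorial prefactors cancel, leaving exactly $(q-q^{-1})^2\prod_{k=2}^{2j} c(q^{-k})$ as the universal proportionality constant identified in the statement. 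Once this careful check is done, the rest of the proof is purely formal.
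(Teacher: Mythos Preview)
Your proposal is correct and follows essentially the same approach as the paper: the first identity is verified directly from the explicit matrix entries, and the remaining two are then obtained by the short algebraic manipulations you describe (transposing, using the decomposition~\eqref{decompbarR} and the pseudo-inverse~\eqref{eq:bFE1}), exactly mirroring the proof of Lemma~\ref{lem:rel}. The paper's own proof is in fact just the one-line summary ``The first equation is proven directly. The other relations are obtained as in Lemma~\ref{lem:rel}.''
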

		\begin{proof}
			The first equation in~\eqref{eq:rel2p1} is proven directly. The other relations are obtained as in Lemma~\ref{lem:rel}.
		\end{proof}
		
		\section{Spin-$1$ K-operator}	\label{Ap:ex-K-op}
			The spin-$1$ fused K-operator for ${\cal A}_q$ is computed using (\ref{fused-K-op}) with (\ref{K-Aq}). The entries $({\cal K}^{(1)}(u))_{mn}$ are given by:
			\begin{align*}
				\mathcal{K}^{(1)}_{11}(u)&= \big ( c(q)^{-1} +\rho^{-1} \tG_+(u q^{-\h})  \big ) \big ( \rho + c(q)  \tG_-(u q^\h)  \big )    \\
				&+c(u^2q) \big ( u q^\h \tW_+(u q^{-\h}) - u^{-1} q^{-\h} \tW_-(u q^{-\h})   \big ) \big ( u q^{\frac 32} \tW_+(u q^\h) - u^{-1} q^{-\frac 32} \tW_-(u q^\h) \big ), \\
				\mathcal{K}^{(1)}_{12}(u)&= \frac{(q+q^{-1})^{-\frac 32}}{k_-} \Big( c(u^2) \big ( \rho c(q)^{-1} + \tG_+(u q^{-\h}) \big ) \big (u q^{\frac 32} \tW_+(uq^\h) - u^{-1} q^{-\frac 32} \tW_-(u q^\h) \big ) \\
				 &+ \big ( \rho + c(q) \tG_+(u q^{-\h}) \big ) \big ( u q^{\frac 32} \tW_-(u q^\h) - u^{-1} q^{-\frac 32} \tW_-(u q^\h) \big ) \\
				 & + c(u^2 q) \big ( u q^\h \tW_+(u q^{-\h}) - u^{-1} q^{-\h} \tW_-(u q^{-\h}) \big ) \big ( \rho c(q)^{-1} + \tG_+(u q^\h) \big ) \Big ), \\  
				\mathcal{K}^{(1)}_{13}(u)&=  \frac{c(u^2)}{k_-^2c(q^2)^2} \big ( \rho + c(q) \tG_+(uq^{-\h}) \big )  \big ( \rho + c(q) \tG_+(u q^\h)  \big ) ,    \\   \numberthis \label{expK-spin1} 
				\mathcal{K}^{(1)}_{21}(u)&= \frac{c(q)^{-1}}{2k_+\sqrt{q+q^{-1}}} \Big (  c(u^2q) \big ( \rho + c(q) \tG_-(u q^{-\h}) \big ) \big ( u q^{\frac 32} \tW_+(u q^\h ) - u^{-1} q^{-\frac 32} \tW_-(u q^\h)  \big )  \\
				&+\big ( q^{-\h} ( u^{-3} + u(-2+q^2)) \tW_-(u q^{-\h}) + q^{\h} (u^3 + u^{-1}(-2+q^{-2})) \tW_+(u q^{-\h}) \big ) \Big ) \\
				& \times \big ( \rho + c(q) \tG_-(u q^\h)    \big ), \\
				\mathcal{K}^{(1)}_{22}(u)&= \frac{c(u^2q)}{2c(q)^2 \rho} \Big (  \big (\rho + c(q) \tG_+(u q^{-\h}) \big ) \big (  \rho 
				 + c(q) \tG_-(u q^\h) \big) + \big ( \rho + c(q) \tG_-(u q^{-\h}) \big ) \big ( \rho  + c(q) \tG_+(u q^\h) \big) \Big ) \\
				 &+ \frac{1}{2} \big( q^{-\h} (u^{-3} +u (-2+q^2)) \tW_+(u q^{-\h}) + q^{\h} (u^{-1} (-2+q^2) +u^3) \tW_-(u q^{-\h})  \big ) \\
				 &\times \big ( u q^{\frac 32} \tW_+(u q^\h) - u^{-1} q^{-\frac 32} \tW_-(u q^\h) \big) \\
				 & + \frac{1}{2} \big( q^{-\h} (u^{-3} +u (-2+q^2)) \tW_-(u q^{-\h}) + q^{\h} (u^{-1} (-2+q^2) +u^3) \tW_+(u q^{-\h})  \big ) 
				 \\
				 &\times \big ( u q^{\frac 32} \tW_-(u q^\h) - u^{-1} q^{-\frac 32} \tW_+(u q^\h) \big) , \\
				 &\hspace{3cm}	{\cal K}^{(1)}_{23}(u)=\sigma({\cal K}^{(1)}_{21}(u))
				 \	, \qquad {\cal K}^{(1)}_{31}(u)=\sigma({\cal K}^{(1)}_{13}(u)) 
				 \	,  \\
				 &\hspace{3cm}	{\cal  K}^{(1)}_{32}(u)=
				 \sigma({\cal  K}^{(1)}_{12}(u))
				 \	, \qquad {\cal  K}^{(1)}_{33}(u)=\sigma({\cal  K}^{(1)}_{11}(u)) \ ,
			 \end{align*}
		 \noindent
	 where $\sigma$ is  defined by \cite{BS09}:
\begin{equation}
\sigma\colon \qquad \cW_\pm(u) \to \cW_\mp(u), \quad \cG_\pm(u) \to \cG_\mp(u), \quad k_\pm \to k_\mp \ .\label{sigma}
\end{equation}
\section{Proof of Theorem \ref{TTrel}}\label{Ap:proofT34}
The following Lemmas and some properties of the intertwining operator $\mathcal{E}^{(j)}$ and its pseudo-inverse $\mathcal{F}^{(j)}$, reported in Appendix \ref{Ap:EF}, will be used for the proof of the  universal  TT-relations.
Recall first the definition of the operators $\bar{\mathcal{E}}^{(j)}$ and $\bar{\mathcal{F}}^{(j)}$ in Section~\ref{appA:def-Ebar}.
\begin{lem}\label{lem:TT}
	Assume Conjecture~\ref{conj1}, then the following relations hold:
	\begin{align} \nonumber
		&\bar{\mathcal{F}}^{(j-\h)}_\fu \mathcal{K}_1^{(\h)}(u q^{j+1}) R^{(\h,j)}(u^2 q^{j+\tha}) \mathcal{K}_2^{(j)}(u q^{\h}) \bar{\mathcal{E}}_\fu^{(j-\h)} \\ \label{TT1}
		&\qquad= \left ( \displaystyle{\prod_{k=0}^{2j-2} } c(u^2 q^{2j-1-k}) c(u^2q^{2j+1-k}) \right ) \Gamma(u q^j) \mathcal{K}^{(j-\h)}(u) \ ,\\
		&\bar{\mathcal{F}}^{(j-\h)}_\fu K_2^{+(j)}(u q^{-\h}) R^{(\h,j)}(u^{-2} q^{-j-\tha}) K_1^{+(\h)}(u q^{j}) \bar{\mathcal{E}}_\fu^{(j-\h)}\nonumber
        \\ \label{TT3} 
        &\qquad = \frac{ f^{(j-\h)}(u q^{-1})}{f^{(j)}(u q^{-\h})}\left ( \displaystyle{\prod_{k=0}^{2j-2} } c(u^2 q^{2j-k}) c(u^2q^{2j-2-k}) \right ) \Gamma_{+}(u q^{j-1}) K^{+(j-\h)}(u q^{-1}) \ ,
	\end{align}
	where $f^{(j)}(u)$ is given in~\eqref{eq:fj}, $\Gamma(u)$ and $\Gamma_+(u)$ are defined in~\eqref{gammaform} and~\eqref{gammaKP}, respectively.
\end{lem}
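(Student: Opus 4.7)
My plan is to recognize the two stated equalities as the "complementary" fusion formulas for K-operators: they take the same shape as Definition~\ref{def:fusedK} for $\mathcal{K}^{(j+\h)}$ (respectively $K^{+(j+\h)}$), except that the intertwining maps $\mathcal{E}^{(j+\h)}, \mathcal{F}^{(j+\h)}$ -- which project the tensor product $\mathbb{C}^2 \otimes \mathbb{C}^{2j+1}$ onto its top spin-$(j+\h)$ subspace -- are replaced by their complements $\bar{\mathcal{E}}^{(j-\h)}, \bar{\mathcal{F}}^{(j-\h)}$ projecting onto the lower spin-$(j-\h)$ piece. Since the two projectors add up to the identity by~\eqref{rel1}, the general philosophy is that this complementary projection should strip off one ``antisymmetric pair'' factor and return $\mathcal{K}^{(j-\h)}$ multiplied by the quantum determinant $\Gamma$ (respectively $\Gamma_+$ for the dual), in exact analogy with how $\Gamma(u)$ is constructed in~\eqref{gammaform} via the antisymmetrizer $\mathcal{P}^-_{12}$ in the $j=\h$ case.

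To convert this heuristic into a proof I would first invoke Conjecture~\ref{conj1} and its Corollary~\ref{cor:conj1} to replace every $\mathcal{K}^{(\ell)}(v)$ appearing in the LHS of~\eqref{TT1} by $\nu^{(\ell)}(v)^{-1} {\bf K}^{(\ell)}(v)$, so the bracket becomes a sandwich of universal K-matrix evaluations. The universal K-matrix satisfies the coideal/fusion axioms of~\cite[Def.\,2.8]{LBG}, and these axioms force precisely the identity
\[
\bar{\mathcal{F}}^{(j-\h)}_\fu {\bf K}_1^{(\h)}(uq^{j+1}) R^{(\h,j)}(u^2q^{j+\tha}) {\bf K}_2^{(j)}(uq^{\h}) \bar{\mathcal{E}}_\fu^{(j-\h)} \;=\; \Lambda(u)\, {\bf K}^{(j-\h)}(u),
\]
for a scalar series $\Lambda(u)$, by the same argument that identifies the standard (non-barred) sandwich with ${\bf K}^{(j+\h)}$: both sandwiches lie in the one-dimensional space of intertwiners between the appropriate evaluation representations of $\Loop$. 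The scalar $\Lambda(u)$ can then be determined by comparing a single matrix entry (e.g.\ the $(1,1)$ entry, following the same inductive argument used in the proof of Proposition~\ref{prop:R-M-P}(2)).

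The remaining step is to translate $\Lambda(u)$ back into the form involving $\Gamma(uq^j)$. For this I would use the functional equation~\eqref{funct-nu} defining $\nu(u)$, together with the explicit product formula~\eqref{exp-nujh} for $\nu^{(j)}(u)$, to collect the ratio
\[
\frac{\nu(uq^{j+1})\,\nu^{(j)}(uq^{\h})}{\nu^{(j-\h)}(u)} \;\propto\; \Gamma(uq^j) \prod_{k=0}^{2j-2} c(u^2 q^{2j-1-k})\, c(u^2 q^{2j+1-k}),
\]
which reproduces the scalar prefactor in~\eqref{TT1}. As a consistency check, and as an independent derivation that bypasses Conjecture~\ref{conj1} for low spins, one may verify~\eqref{TT1} for $j=\h,1$ directly by inserting the explicit K-operators from Definition~\ref{def:Aq0} and Appendix~\ref{Ap:ex-K-op} and reducing using the PBW ordering relations of Section~\ref{sub:TT-PBW}, together with the quadratic expression~\eqref{gamma} of $\Gamma(u)$.

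The identity~\eqref{TT3} then follows by the same strategy applied to the dual K-matrix $K^{+(j)}(u)$. Because $K^{+(j)}$ is obtained from $K^{(j)}$ by~\eqref{def:kp} via transposition, the substitution $\varepsilon_\pm \to \bar\varepsilon_\mp$, $k_\pm \to -\bar k_\mp$ and the shift $u \to u^{-1}q^{-1}$, one can either rerun the universal K-matrix argument above for the dual K-operator or simply apply this transpose-and-substitute transformation to~\eqref{TT1} and track how the normalization $f^{(j)}(u)$ enters. The main obstacle I anticipate is the second one: keeping the many scalar factors ($\nu^{(j)}$, $\pi^\h(\mu)$, $f^{(j)}$, the product of $c(u^2 q^{\cdot})$) bookkept correctly through the shift so that the prefactor lands exactly as $\frac{f^{(j-\h)}(uq^{-1})}{f^{(j)}(uq^{-\h})}\prod_{k=0}^{2j-2} c(u^2 q^{2j-k})c(u^2 q^{2j-2-k}) \Gamma_+(uq^{j-1})$. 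Once the universal fusion identity is in hand, this is a bookkeeping check rather than a conceptual one.
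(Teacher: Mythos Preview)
Your approach to \eqref{TT1} is essentially what the paper does: it simply cites \cite[Prop.\,6.10, eq.\,(6.32)]{LBG}, whose proof is precisely the universal K-matrix argument you sketch (replace $\mathcal{K}^{(\ell)}$ by $\nu^{(\ell)}(u)^{-1}{\bf K}^{(\ell)}$ via Corollary~\ref{cor:conj1}, use the fusion/intertwining axiom for ${\bf K}$ on the complementary spin-$(j-\h)$ subrepresentation, then unwind the $\nu$'s with \eqref{funct-nu} to produce $\Gamma(uq^j)$ and the product of $c$'s).

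For \eqref{TT3} there is a genuine gap. You propose to apply the transpose-and-substitute transformation directly to \eqref{TT1}, but \eqref{TT1} is an identity between K-\emph{operators} $\mathcal{K}^{(\h)},\mathcal{K}^{(j)}$ whose entries live in the noncommutative algebra $\mathcal{A}_q$; the parameter substitution $\varepsilon_\pm\to\bar\varepsilon_\mp,\ k_\pm\to-\bar k_\mp$ is defined only on the scalar K-\emph{matrices} $K^{(j)}$ and has no meaning on $\mathcal{A}_q$, and there is no ``dual K-operator'' on which to rerun the universal argument. The missing step is to first evaluate under a one-dimensional representation $\epsilon:\mathcal{A}_q\to\mathbb{C}$ (Proposition~\ref{prop:eps}), which sends $\mathcal{K}^{(j)}\mapsto \varsigma^{(j)}K^{(j)}$ by \eqref{eq:Kaqj-eps} and $\Gamma\mapsto\varsigma^{(\h)}(\cdot)\varsigma^{(\h)}(\cdot\,q)\,\Gamma_-$ by \eqref{eq:eps-gam}; only after this scalarization does transpose-and-substitute become legitimate. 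The paper in fact applies $\epsilon$ not to \eqref{TT1} but to the companion identity \eqref{TT2} from \cite[eq.\,(6.33)]{LBG}, in which the order of the two K's is reversed so that the arguments match those of \eqref{TT3} after $u\to -u^{-1}$. Finally, transposition produces $[\bar{\mathcal{E}}^{(j-\h)}]^t\cdots[\bar{\mathcal{F}}^{(j-\h)}]^t$ rather than the required $\bar{\mathcal{F}}^{(j-\h)}\cdots\bar{\mathcal{E}}^{(j-\h)}$; swapping these uses the relations \eqref{eq:rel2p1}, \eqref{usefulbarEFH} together with the dual reflection equation \eqref{REKdual}, which is a structural step and not merely scalar bookkeeping.
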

\begin{proof} The equation~\eqref{TT1} is proven in~\cite[Prop.\,6.10 \& eq.\,(6.32)]{LBG}  under the assumption that Conjecture~\ref{conj1} holds. The equation~\eqref{TT3} is proven as follows. By~\cite[eq.\,(6.33)]{LBG}, we also have:
	\begin{align} \nonumber
		&\bar{\mathcal{F}}^{(j-\h)}_\fu \mathcal{K}_2^{(j)}(u q^{-\h}) R^{(\h,j)}(u^2 q^{-j-\tha}) \mathcal{K}_1^{(\h)}(u q^{-j-1}) \bar{\mathcal{E}}_\fu^{(j-\h)} \\ \label{TT2}
		& = \left ( \displaystyle{\prod_{k=0}^{2j-2} } c(u^2 q^{-2j+2+k}) c(u^2q^{-2j+k}) \right ) \Gamma(u q^{-j-1}) \mathcal{K}^{(j-\h)}(u) \ .
	\end{align}
	Recall $\epsilon$ from Proposition~\ref{prop:eps}.
	Applying $(\epsilon \otimes \id)$ to the above equation using~\eqref{eq:Kaqj-eps} and~\eqref{eq:eps-gam}, one gets
		\begin{align}  \nonumber
			&\bar{\mathcal{F}}^{(j-\h)}_\fu K_2^{(j)}(u q^{-\h}) R^{(\h,j)}(u^2 q^{-j-\tha}) K_1^{(\h)}(u q^{-j-1}) \bar{\mathcal{E}}_\fu^{(j-\h)} \\  \label{eq:reduc-gam}
			& = \left ( \displaystyle{\prod_{k=0}^{2j-2} } c(u^2 q^{-2j+2+k}) c(u^2q^{-2j+k}) \right ) \Gamma_-(u q^{-j-1}) K^{(j-\h)}(u) \ ,
		\end{align}
		where we used that $\varsigma^{(j)}(u q^{-\h}) = \varsigma^{(j-\h)}(u) \varsigma^{(\h)}(u q^{-j})$ due to~\eqref{eq:varsigj}.
	Then, taking the transpose of~\eqref{eq:reduc-gam} and replacing $u \rightarrow -u^{-1}$, $\varepsilon_\pm \rightarrow - \ov{\varepsilon}_\mp$, $k_\pm \rightarrow - \ov{k}_\mp $, from~\eqref{def:kp} and~\eqref{gammaKM} one has
	\begin{align} \nonumber
		&\lbrack\bar{\mathcal{E}}^{(j-\h)}_\fu\rbrack^t 
		K_1^{+(\h)}(u q^{j}) R^{(\h,j)}(u^{-2} q^{-j-\tha}) K_2^{+(j)}(u q^{-\h})  \lbrack\bar{\mathcal{F}}_\fu^{(j-\h)} \rbrack^t \\ \label{p1TT3}
		& = \frac{ f^{(j-\h)}(u q^{-1})}{f^{(j)}(u q^{-\h})} \left ( \displaystyle{\prod_{k=0}^{2j-2} } c(u^2 q^{2j-k}) c(u^2q^{2j-2-k}) \right ) \Gamma_{+}(u q^{j-1}) K^{+(j-\h)}(u q^{-1}) \ .
	\end{align}
	Finally, using~\eqref{eq:rel2p1}, the relations in~\eqref{usefulbarEFH}
	and the dual reflection equation, the l.h.s.\ of~\eqref{p1TT3} becomes:
	\begin{align*}
		&\underline{\bar{\mathcal{H}}^{(j-\h)} \bar{\mathcal{F}}^{(j-\h)} }	K_1^{+(\h)}(u q^{j}) R^{(\h,j)}(u^{-2} q^{-j-\tha}) K_2^{+(j)}(u q^{-\h})  \bar{\mathcal{E}}^{(j-\h)} \lbrack \bar{\mathcal{H}}^{(j-\h)} \rbrack^{-1} \\
		&=\bar{\mathcal{F}}^{(j-\h)} \underline{R^{(\h,j)}(q^{-j-\h})	K_1^{+(\h)}(u q^{j}) R^{(\h,j)}(u^{-2} q^{-j-\tha}) K_2^{+(j)}(u q^{-\h}) } \bar{\mathcal{E}}^{(j-\h)} \lbrack \bar{\mathcal{H}}^{(j-\h)} \rbrack^{-1}\\
		&=\bar{\mathcal{F}}^{(j-\h)} K_2^{+(j)}(u q^{-\h})	 R^{(\h,j)}(u^{-2} q^{-j-\tha}) K_1^{+(\h)}(u q^{j})  \underline{R^{(\h,j)}(q^{-j-\h})  \bar{\mathcal{E}}^{(j-\h)} } \lbrack \bar{\mathcal{H}}^{(j-\h)} \rbrack^{-1} \ ,
	\end{align*}
	and using~\eqref{usefulbarEFH}, the equation~\eqref{TT3} follows.
\end{proof}
\begin{lem} \label{lem:Kp}
	The following relation holds:
	\begin{align} \nonumber
		&K_2^{+(j-\h)} (u q^{-\h}) R^{(\h,j-\h)}(u^{-2} q^{-j-1}) K_1^{+(\h)}(u q^{j-\h})  \bar{\mathcal{E}}^{(j-1)}_\fu \\ \label{eq:bEFKP}
		&= \bar{\mathcal{E}}_\fu^{(j-1)} \bar{\mathcal{F}}_\fu^{(j-1)} K_2^{+(j-\h)} (u q^{-\h}) R^{(\h,j-\h)}(u^{-2} q^{-j-1}) K_1^{+(\h)}(u q^{j-\h})  \bar{\mathcal{E}}^{(j-1)}_\fu  \ .
	\end{align}
\end{lem}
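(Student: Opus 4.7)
My plan is to deduce \eqref{eq:bEFKP} directly from the dual reflection equation~\eqref{REKdual}, by exploiting the fact that $R^{(\h,j-\h)}$ degenerates precisely at the spectral point $q^{-j}$, where it factors through the inclusion of the spin-$(j-1)$ component of $\mathbb{C}^2 \otimes \mathbb{C}^{2j}$.

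First, I would specialize \eqref{REKdual} at $j_1=\h$, $j_2=j-\h$, with arguments $u\mapsto uq^{j-\h}$ and $v\mapsto uq^{-\h}$. A short calculation shows that $v/u=q^{-j}$ and $(uvq^{2})^{-1}=u^{-2}q^{-j-1}$, so the ``inner'' R-matrix matches the one appearing in \eqref{eq:bEFKP}. Writing
\begin{equation*}
X(u) = K_2^{+(j-\h)}(uq^{-\h})\,R^{(\h,j-\h)}(u^{-2}q^{-j-1})\,K_1^{+(\h)}(uq^{j-\h}),
\end{equation*}
and $Y(u)$ for the same expression with $K_1^{+(\h)}$ and $K_2^{+(j-\h)}$ in opposite order, the resulting specialization reads
\begin{equation*}
R^{(\h,j-\h)}(q^{-j})\,Y(u) \;=\; X(u)\,R^{(\h,j-\h)}(q^{-j}).
\end{equation*}

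Next, I would invoke the factorization \eqref{decompbarR} with $j\mapsto j-\h$,
\begin{equation*}
R^{(\h,j-\h)}(q^{-j}) \;=\; \bar{\mathcal{E}}^{(j-1)}\,\bar{\mathcal{H}}^{(j-1)}\,\bar{\mathcal{F}}^{(j-1)},
\end{equation*}
which exhibits the rank drop of $R^{(\h,j-\h)}$ at this spectral value: its image is exactly the spin-$(j-1)$ irreducible component. Substituting this on both sides of the identity above and right-multiplying by $\bar{\mathcal{E}}^{(j-1)}_\fu$, the pseudo-inverse relation $\bar{\mathcal{F}}^{(j-1)}\bar{\mathcal{E}}^{(j-1)}=\mathbb{I}_{2j-1}$ collapses the right-hand side, and invertibility of the diagonal matrix $\bar{\mathcal{H}}^{(j-1)}$ gives
\begin{equation*}
X(u)\,\bar{\mathcal{E}}^{(j-1)}_\fu \;=\; \bar{\mathcal{E}}^{(j-1)}_\fu\,\bar{\mathcal{H}}^{(j-1)}\,\bar{\mathcal{F}}^{(j-1)}_\fu\,Y(u)\,\bar{\mathcal{E}}^{(j-1)}_\fu\,[\bar{\mathcal{H}}^{(j-1)}]^{-1}.
\end{equation*}
The right-hand side manifestly lies in the image of $\bar{\mathcal{E}}^{(j-1)}_\fu$. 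Since that image is fixed pointwise by the idempotent $\bar{\mathcal{E}}^{(j-1)}_\fu \bar{\mathcal{F}}^{(j-1)}_\fu$ (which is a projector, by the same pseudo-inverse relation), we conclude $\bar{\mathcal{E}}^{(j-1)}_\fu\bar{\mathcal{F}}^{(j-1)}_\fu\,X(u)\,\bar{\mathcal{E}}^{(j-1)}_\fu = X(u)\,\bar{\mathcal{E}}^{(j-1)}_\fu$, which is exactly \eqref{eq:bEFKP}.

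The proof is short once one notices the compatibility of spectral parameters: the ``extra'' R-matrix produced when the dual reflection equation is used to permute $K_1^{+(\h)}$ and $K_2^{+(j-\h)}$ lands exactly at the degeneracy point $q^{-j}$ at which \eqref{decompbarR} applies. Beyond identifying this coincidence, no further technical difficulty arises — in particular, Conjecture~\ref{conj1} is not needed, nor are any manipulations of transposes or of the one-dimensional representations $\epsilon$ of $\mathcal{A}_q$.
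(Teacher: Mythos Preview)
Your proof is correct and follows essentially the same approach as the paper: both arguments specialize the dual reflection equation~\eqref{REKdual} at $(j_1,j_2)=(\h,j-\h)$ with arguments $uq^{j-\h}$, $uq^{-\h}$ so that the outer R-matrix lands at the degeneracy point $q^{-j}$, and then use the factorization~\eqref{decompbarR} together with the pseudo-inverse relation~\eqref{eq:bFE1}. The only cosmetic difference is that the paper applies the dual reflection equation twice (forward then backward) via the relations~\eqref{usefulbarEFH}, whereas you apply it once and then invoke the projector property $\bar{\mathcal{E}}^{(j-1)}\bar{\mathcal{F}}^{(j-1)}\bar{\mathcal{E}}^{(j-1)}=\bar{\mathcal{E}}^{(j-1)}$ directly.
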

\begin{proof}
	Inserting $ \bar{\mathcal{H}}^{(j-1)} \lbrack \bar{\mathcal{H}}^{(j-1)} \rbrack^{-1} = \mathbb{I}_{2j-1}$ in the l.h.s.\ of~\eqref{eq:bEFKP} and using~\eqref{usefulbarEFH}, one gets:
	\begin{align} \nonumber
		K_2^{+(j-\h)} (u q^{-\h})& R^{(\h,j-\h)}(u^{-2} q^{-j-1}) K_1^{+(\h)}(u q^{j-\h})  \bar{\mathcal{E}}^{(j-1)}_\fu   \bar{\mathcal{H}}^{(j-1)}  \lbrack \bar{\mathcal{H}}^{(j-1)} \rbrack^{-1}  \\ \label{eq:steplemKP}
		&=K_2^{+(j-\h)} (u q^{-\h}) R^{(\h,j-\h)}(u^{-2} q^{-j-1}) K_1^{+(\h)}(u q^{j-\h}) R^{(\h,j-\h)}(q^{-j}) \bar{\mathcal{E}}^{(j-1)}_\fu \lbrack \bar{\mathcal{H}}^{(j-1)} \rbrack^{-1} \ .
		\intertext{Then, using the dual reflection equation~\eqref{REKdual} and~\eqref{usefulbarEFH}, one finds that~\eqref{eq:steplemKP} equals}
		\bar{\mathcal{E}}^{(j-1)}_\fu& \bar{\mathcal{F}}^{(j-1)}_\fu R^{(\h,j-\h)}(q^{-j}) K_1^{+(\h)}(u q^{j-\h})  R^{(\h,j-\h)}(u^{-2} q^{-j-1})  K_2^{+(j-\h)} (u q^{-\h}) \bar{\mathcal{E}}^{(j-1)}_\fu \lbrack \bar{\mathcal{H}}^{(j-1)} \rbrack^{-1} . \nonumber
	\end{align}
	Finally, using again~\eqref{REKdual} and~\eqref{usefulbarEFH}, the equation~\eqref{eq:bEFKP} follows.
\end{proof}
\begin{proof}[Proof of Theorem \ref{TTrel}]
	Assuming Conjecture~\ref{conj1}, another expression for the fused K-operators that agrees with~\eqref{fused-K-op} is given by~\cite[Props.\,5.8\,\&\,6.12]{LBG}
	\begin{equation}\label{v2fused-K-op}
		\mathcal{K}^{(j)}(u) =  \mathcal{F}^{(j)}_\fu  \mathcal{K}_2^{(j-\h)}(u q^{-\h})R^{(\h,j-\h)}(u^2 q^{j-1}) \mathcal{K}_1^{(\h)}(u q^{j-\h})  \mathcal{E}^{(j)}_\fu \ .
	\end{equation}
	We underline the step of calculations that use~\eqref{rel1},~\eqref{rel2}, the reflection equation or the dual reflection equation.
	Inserting~\eqref{fused-KP} and~\eqref{v2fused-K-op} in~\eqref{tg}, $\bt^{(j)}(u)$ reads, up to an overall scalar factor, as follows: 
	\begin{align} \nonumber
		& \text{\normalfont{tr}} \Big ( \big [ \underline{\mathcal{E}^{(j)}_\fu } \big ]^t   K_2^{+(j-\h)} (u q^{-\h}) R^{(\h,j-\h)}(u^{-2} q^{-j-1}) K_1^{+(\h)}(u q^{j-\h}) \big [ \mathcal{F}^{(j)}_\fu \big ]^t  \\  \nonumber
		&  \times  \mathcal{F}^{(j)}_\fu  \mathcal{K}_2^{(j-\h)}(u q^{-\h})R^{(\h,j-\h)}(u^2 q^{j-1}) \mathcal{K}_1^{(\h)}(u q^{j-\h})  \underline{ \mathcal{E}^{(j)}_\fu } \Big ) \\ \nonumber
		&\overset{\eqref{rel2}}{=}  \frac{1}{\mathcal{H}_{1}^{(j)}} \text{\normalfont{tr}} \Big (   K_2^{+(j-\h)} (u q^{-\h}) R^{(\h,j-\h)}(u^{-2} q^{-j-1}) K_1^{+(\h)}(u q^{j-\h}) \big [ \mathcal{F}^{(j)}_\fu \big ]^t  \\  \nonumber
		&  \times  \mathcal{F}^{(j)}_\fu \underline{  \mathcal{K}_2^{(j-\h)}(u q^{-\h})R^{(\h,j-\h)}(u^2 q^{j-1})\mathcal{K}_1^{(\h)}(u q^{j-\h}) R^{(\h,j-\h)}(q^j) } \Big ) \\ \nonumber
		&\overset{\eqref{REKop}}{=} \frac{1}{\mathcal{H}_{1}^{(j)}} \text{\normalfont{tr}} \Big (   K_2^{+(j-\h)} (u q^{-\h}) R^{(\h,j-\h)}(u^{-2} q^{-j-1}) K_1^{+(\h)}(u q^{j-\h}) \underline{ \big [ \mathcal{F}^{(j)}_\fu \big ]^t } \\  \nonumber
		&  \times   \underline{\mathcal{F}^{(j)}_\fu R^{(\h,j-\h)}(q^j) } \mathcal{K}_1^{(\h)}(u q^{j-\h})  R^{(\h,j-\h)}(u^2 q^{j-1}) \mathcal{K}_2^{(j-\h)}(u q^{-\h}) \Big ) \\ \nonumber
		&\overset{\eqref{rel2}}{=}  \text{\normalfont{tr}} \Big (   K_2^{+(j-\h)} (u q^{-\h}) R^{(\h,j-\h)}(u^{-2} q^{-j-1}) K_1^{+(\h)}(u q^{j-\h})  \\  \nonumber
		&  \times   \underline{ \mathcal{E}_\fu^{(j)} \mathcal{F}_\fu^{(j)} } \mathcal{K}_1^{(\h)}(u q^{j-\h})  R^{(\h,j-\h)}(u^2 q^{j-1}) \mathcal{K}_2^{(j-\h)}(u q^{-\h}) \Big ) \ . 
	\end{align}
	 Then, from the second relation of (\ref{rel1}) one has $\mathcal{E}_\fu^{(j)} \mathcal{F}_\fu^{(j)} = \mathbb{I}_{4j} - \bar{\mathcal{E}}_\fu^{(j-1)} \bar{\mathcal{F}}_\fu^{(j-1)}$, and taking into account the overall factor,
		one gets:
		\begin{equation} \label{t-ab}
			\bt^{(j)}(u) = (a) +(b) \ ,
		\end{equation}
		where
		\begin{align*}
			(a)&=\frac{f^{(j-\h)}(u q^{-\h})}{f^{(j)}(u)} \text{\normalfont{tr}} \Big (   K_2^{+(j-\h)} (u q^{-\h}) R^{(\h,j-\h)}(u^{-2} q^{-j-1})\\   &\times K_1^{+(\h)}(u q^{j-\h})  
			\mathcal{K}_1^{(\h)}(u q^{j-\h})  R^{(\h,j-\h)}(u^2 q^{j-1}) \mathcal{K}_2^{(j-\h)}(u q^{-\h}) \Big ) \ , \\
			(b)&=- \frac{f^{(j-\h)}(u q^{-\h})}{f^{(j)}(u)} \text{\normalfont{tr}} \Big (  K_2^{+(j-\h)} (u q^{-\h}) R^{(\h,j-\h)}(u^{-2} q^{-j-1}) K_1^{+(\h)}(u q^{j-\h})   \\  
			& \times \bar{\mathcal{E}}^{(j-1)}_\fu \bar{\mathcal{F}}^{(j-1)}_\fu \mathcal{K}_1^{(\h)}(u q^{j-\h})  R^{(\h,j-\h)}(u^2 q^{j-1}) \mathcal{K}_2^{(j-\h)}(u q^{-\h}) \Big ) \ .
		\end{align*}
		Now, we rewrite the first term as follows: 
	\begin{align} \nonumber
		(a)&= \frac{f^{(j-\h)}(u q^{-\h})}{f^{(j)}(u)} \text{\normalfont{tr}} \Big (  \left [  K_2^{+(j-\h)} (u q^{-\h}) \big [R^{(\h,j-\h)}(u^{-2} q^{-j-1}) K_1^{+(\h)}(u q^{j-\h}) \big ]^{t_1} \right ]^{t_1} \\  \nonumber
		& \times \left [ \big[ \mathcal{K}_1^{(\h)}(u q^{j-\h})  R^{(\h,j-\h)}(u^2 q^{j-1}) \big ]^{t_1} \mathcal{K}_2^{(j-\h)}(u q^{-\h})  \right ]^{t_1}\Big ) \\  \nonumber
		&= \frac{f^{(j-\h)}(u q^{-\h})}{f^{(j)}(u)} \text{\normalfont{tr}} \Big (  \left [  K_2^{+(j-\h)} (u q^{-\h})  \big [ K_1^{+(\h)}(u q^{j-\h})  \big ]^{t_1} \big  [R^{(\h,j-\h)}(u^{-2} q^{-j-1}) \big ]^{t_1}  \right ]^{t_1} \\  \nonumber
		& \times \left [\big [  R^{(\h,j-\h)}(u^2 q^{j-1}) \big ]^{t_1}   \big[ \mathcal{K}_1^{(\h)}(u q^{j-\h}) \big ]^{t_1} \mathcal{K}_2^{(j-\h)}(u q^{-\h})  \right ]^{t_1}\Big ) \ .
		\intertext{ Then, apply $t_1$ inside the trace and use the cyclicity of the trace to get:} \nonumber
		(a)&= \frac{f^{(j-\h)}(u q^{-\h})}{f^{(j)}(u)} \text{\normalfont{tr}} \Big (  \big  [R^{(\h,j-\h)}(u^{-2} q^{-j-1}) \big ]^{t_1} \big [  R^{(\h,j-\h)}(u^2 q^{j-1}) \big ]^{t_1} \big[ \mathcal{K}_1^{(\h)}(u q^{j-\h}) \big ]^{t_1}  \\  \nonumber	
		& \times     \mathcal{K}_2^{(j-\h)}(u q^{-\h}) K_2^{+(j-\h)} (u q^{-\h})  \big [ K_1^{+(\h)}(u q^{j-\h})  \big ]^{t_1}     \Big )  \ ,
		\intertext{and using the crossing symmetry~\eqref{crossingJ} we obtain} \nonumber
		&(a)=\frac{f^{(j-\h)}(u q^{-\h})}{f^{(j)}(u)} \xi^{(j-\h)}(u^{-2}q^{-j-1}) \bt^{(j-\h)}(u q^{-\h}) \bt^{(\h)}(u q^{j-\h}) \\ \label{part-a}
		&\hphantom{(a)}= \bt^{(j-\h)}(u q^{-\h}) \bt^{(\h)}(u q^{j-\h}) \ .
	\end{align}
	The latter equality is obtained using the expressions of $\xi^{(j)}(u)$, $f^{(j)}(u)$ and $c(u)$ given in~\eqref{beta},\eqref{eq:fj},\eqref{eq:cu}, respectively.
 We now compute the second term in~\eqref{t-ab}. Using~\eqref{eq:bEFKP} and the cyclicity of the trace, it reads 
	\begin{align} \nonumber
		(b)&= - \frac{f^{(j-\h)}(u q^{-\h})}{f^{(j)}(u)}   \text{\normalfont{tr}} \Big ( \bar{\mathcal{F}}^{(j-1)}_\fu K_2^{+(j-\h)} (u q^{-\h}) R^{(\h,j-\h)}(u^{-2} q^{-j-1}) K_1^{+(\h)}(u q^{j-\h})   \\ \nonumber
		& \times \bar{\mathcal{E}}^{(j-1)}_\fu \bar{\mathcal{F}}^{(j-1)}_\fu \mathcal{K}_1^{(\h)}(u q^{j-\h})  R^{(\h,j-\h)}(u^2 q^{j-1}) \mathcal{K}_2^{(j-\h)}(u q^{-\h})\bar{\mathcal{E}}^{(j-1)}_\fu  \Big ) \ .
	\end{align}
	Finally, using the relations~\eqref{TT1} and~\eqref{TT3} we get
	\begin{align} \nonumber
		(b)&= - \frac{f^{(j-1)}(u q^{-1})}{f^{(j)}(u)}   \left( \displaystyle{ \prod_{k=0}^{2j-3}}  \displaystyle{ \prod_{\ell=1}^{4}} c(u^2 q^{2j-k-\ell})  \right) \Gamma (u q^{j-\tha}) \Gamma_+(uq^{j-\tha}) \bt^{(j-1)}(u q^{-1}) \\ \label{part-b}
		&=  \frac{\Gamma (u q^{j-\tha}) \Gamma_+ (uq^{j-\tha})}{ c(u^2 q^{2j}) c(u^2 q^{2j-2}) } \bt^{(j-1)}(u q^{-1}) \ .
	\end{align}
	Therefore, inserting~\eqref{part-a} and~\eqref{part-b} in~\eqref{t-ab}, we obtain \eqref{TT-rel}.
\end{proof}
	\section{The representation map $\psi^{(N)}_{\bj,\bar{v}}$}	\label{apD}
Here, we describe 	the spin-chain representation map $\psi^{(N)}_{\bj,\bar{v}}
 \colon \mathcal{A}^{(N)}_q \rightarrow \End(\mathbb{C}^{2j_N+1} \otimes \ldots \otimes \mathbb{C}^{2j_1+1}) $
	defined by \eqref{eq:dressKH}.
 Recall the matrices $S_\pm$, $S_3$ as defined in~\eqref{Bdef}. 
	
According to the ordering of tensor factors in the   vector space $\mathbb{C}^{2j_N+1}\otimes \ldots \otimes \mathbb{C}^{2j_1+1}$, we have for $1\leq k \leq N-1$ \cite{BK07}:
	\begin{align}
		\calW_{-k}^{(N)}&=\frac{(w_0^{(j_N)}-(q+q^{-1})q^{S_3})}{(q+q^{-1})}\otimes
		\calW_{k}^{(N-1)}
		-\frac{v_N^2+v_N^{-2}}{(q+q^{-1})} 1 \otimes \calW_{-k+1}^{(N-1)} +\ \frac{(v_N^2+v_N^{-2})w_0^{(j_N)}}{(q+q^{-1})^2}\calW_{-k+1}^{(N)}
		\nonumber\\
		& + \ \frac{(q-q^{-1})}{k_+k_-(q+q^{-1})^2}
		\left(k_+ v_N q^{1/2}S_+q^{S_3/2}\otimes
		\calG_{k}^{(N-1)}+k_- v_N^{-1} q^{-1/2}S_-q^{S_3/2}\otimes {\tilde \calG}_{k}^{(N-1)}\right)\nonumber\\
		&  +\ q^{S_3}\otimes \calW_{-k}^{(N-1)}
		\ ,\nonumber\\
		\calW_{k+1}^{(N)}&=\frac{(w_0^{(j_N)}-(q+q^{-1})q^{-S_3})}{(q+q^{-1})}\otimes
		\calW_{-k+1}^{(N-1)}
		-\frac{v_N^2+v_N^{-2}}{(q+q^{-1})}1\otimes \calW_{k}^{(N-1)} +\ \frac{(v_N^2+v_N^{-2})w_0^{(j_N)}}{(q+q^{-1})^2}\calW_{k}^{(N)}
		\nonumber\\
		& +\ \frac{(q-q^{-1})}{k_+k_-(q+q^{-1})^2}
		\left(k_+ v_N^{-1} q^{-1/2}S_+q^{-S_3/2}\otimes
		\calG_{k}^{(N-1)}+k_- v_N q^{1/2}S_-q^{-S_3/2}\otimes {\tilde \calG}_{k}^{(N-1)}\right)\nonumber\\
		&  +\ q^{-S_3}\otimes \calW_{k+1}^{(N-1)}
		\ ,\nonumber\\
		\calG_{k+1}^{(N)}&=
		\frac{k_-(q-q^{-1})^2}{k_+(q+q^{-1})}
		S_-^2\otimes {\tilde \calG}_{k}^{(N-1)}
		-\frac{1}{(q+q^{-1})}(v_N^2 q^{S_3}+ v_N^{-2} q^{-S_3})\otimes \calG_{k}^{(N-1)} 
		+1 \otimes \calG_{k+1}^{(N-1)}\nonumber\\
		&+ (q^2-q^{-2})\left(
		k_- v_N q^{-1/2}S_-q^{S_3/2}\otimes \big(\calW_{-k}^{(N-1)}\!\!-\!\calW_{k}^{(N-1)}\big)
		+k_- v_N^{-1} q^{1/2}S_-q^{-S_3/2}\otimes \big(\calW_{k+1}^{(N-1)}\!\!-\!\calW_{-k+1}^{(N-1)}\big)\!
		\right)\nonumber\\
		&+\frac{(v_N^2+v_N^{-2})w_0^{(j_N)}}{(q+q^{-1})^2}\calG_{k}^{(N)}\ ,\nonumber\\
		{\tilde\calG}_{k+1}^{(N)}&=
		\frac{k_+(q-q^{-1})^2}{k_-(q+q^{-1})}
		S_+^2\otimes {\calG}_{k}^{(N-1)}
		-\frac{1}{(q+q^{-1})}(v_N^2 q^{-S_3}+v_N^{-2}q^{S_3})\otimes {\tilde\calG}_{k}^{(N-1)} 
		+ 1 \otimes {\tilde\calG}_{k+1}^{(N-1)}\nonumber\\
		& + (q^2-q^{-2})\left(
		k_+ v_N^{-1} q^{1/2}S_+q^{S_3/2}\otimes \big(\calW_{-k}^{(N-1)}\!\!-\!\calW_{k}^{(N-1)}\big)
		+k_+ v_N q^{-1/2}S_+q^{-S_3/2}\otimes \big(\calW_{k+1}^{(N-1)}\!\!-\!\calW_{-k+1}^{(N-1)}\big)\!
		\right)\nonumber\\
		&+\frac{(v_N^2+v_N^{-2})w_0^{(j_N)}}{(q+q^{-1})^2}{\tilde\calG}_{k}^{(N)}\ , \nonumber
	\end{align}
	where
	\begin{equation*}
		w_0^{(j_n)}=q^{2j_n+1}+q^{-2j_n-1} \ .
	\end{equation*}
	For the special case $k=0$, we set\footnote{Although the notation is ambiguous, the reader must keep in mind that $\calW_k^{(N)}
|_{k=0} \neq\calW_{-k}^{(N)}|_{k=0}$ and $\calW_{-k+1}^{(N)}
|_{k=0} \neq\calW_{k+1}^{(N)}|_{k=0}$
for any $N$.}
	\begin{equation*}
		\calW_k^{(N)}\rvert_{k=0} \equiv 0\ , \qquad \calW_{-k+1}^{(N)}\rvert_{k=0} \equiv 0 \ , \qquad \calG_k^{(N)}\rvert_{k=0} = \tilde{\calG}_k^{(N)}\rvert_{k=0} \equiv  \frac{k_+ k_- (q+q^{-1})^2}{q-q^{-1}} {\mathbb I}^{(N)} \ ,
	\end{equation*}
	and also  the initial c-number conditions
	\begin{equation}\label{W0-init-cond}
		\calW_0^{(0)} \equiv \varepsilon_+^{(0)}, \qquad \calW_1^{(0)} \equiv \varepsilon_-^{(0)}, \qquad \calG_1^{(0)} = \tilde{\calG}_1^{(0)} \equiv \varepsilon_+^{(0)} \varepsilon_-^{(0)}(q-q^{-1}) \ .
	\end{equation}

    \section{Hamiltonians of the open XXZ spin-$\h$ chain}\label{ApE}
    In this appendix, we compute explicitly the two first local conserved quantities~\eqref{H1fromT}, ~\eqref{H2fromT} for the open XXZ spin-$\h$ chain.

\underline{The open XXZ Hamiltonian:} We begin with
\beqa
{\cal H}^{(1)}= \tilde {\boldsymbol  t}^{\h,\bar{\h}}(1)^{-1} \frac{d}{du}\tilde  {\boldsymbol  t}^{\h,\bar{\h}}(u)|_{u=1}\ \label{H1}
\eeqa
where the expression for $\tilde{{\boldsymbol  t}}^{\h,\bar{\h}}(1)$ is obtained from~\eqref{tildetjs}. 
By eq.~\eqref{eq:R-P}, we have $\tilde R^{(\h,\h)}_{0n}(1)=\cal P^{(\h,\h)}_{0n}$. Also, recall the  expression of the spin-$\h$ normalized K-matrix~\eqref{renormK} with~\eqref{KM-spin1/2}, and the dual normalized K-matrix~\eqref{tildeKplus}. With this, we obtain
\beqa
\tilde {\boldsymbol  t}^{\h,\bar{\h}}(1) &=&  \normalfont{\text{tr}}_{{\mathbb C}^{2}}(\tilde K^{+(\h)}(1))\tilde K^{(\h)}(1)\ \label{tm11} \\
&=& (q+q^{-1})(\varepsilon_+ + \varepsilon_-)(\bar\varepsilon_+ + \bar\varepsilon_-)\ .\nonumber
\eeqa
Furthermore, we calculate the derivation at $u=1$, see also~\cite{CLSW02}:
\begin{align} \label{eq:tprime}
\frac{d}{du}\tilde {\boldsymbol  t}^{\h,\bar{\h}}(u)|_{u=1}&=\left (\tilde K^{(\h)}_1(1) \tr(\tilde K^{+(\h)'}(1)) +   \tilde K^{(\h)'}_1(1) \tr(\tilde K^{+(\h)}(1)) \right ) \\ \nonumber
& + c(q)^{-1} \tr(\tilde K^{+(\h)}(1))  \tilde K^{(\h)}_1(1) \Big (H_{21} + 2 \displaystyle{\sum_{k=2}^{N-1}} H_{k+1\,k}   \Big)  \\ \nonumber
& +c(q)^{-1} \Big (  \tr(\tilde K^{+(\h)}(1)) H_{21} \tilde K^{(\h)}_1(1) +2 \tilde K^{(\h)}_1(1) \tr_a(\tilde K_a^{+(\h)}(1) H_{aN}) \Big ) \
\end{align}
where we use the short-hand notation   $\tilde K^{(\h)'}(1) = \frac{d\tilde K^{(\h)}(u)}{du}|_{u=1}$ and we denote
 \begin{align}\label{eq:localham}
c(q)^{-1}H_{k+1\ k} = \mathcal{P}_{k+1\ k} {\tilde R^{(\h,\h)'}_{k+1\ k}}(1)   
= c(q)^{-1}\left (  \sigma_{k+1}^x\sigma_{k}^x+\sigma_{k+1}^y\sigma_{k}^y + \frac{q+q^{-1}}{2} (\sigma_{k+1}^z\sigma_{k}^z - \mathbb{I}) \right ) \ .
\end{align}
Combining~\eqref{tm11} and~\eqref{eq:tprime} according to~\eqref{H1} yields to the standard expression for the Hamiltonian of the open XXZ spin-$\h$ chain given by~\eqref{H1fromYB}.

Then, using Proposition~\ref{prop:tIN} we find that
\beqa
\frac{d}{du}\tilde {\boldsymbol  t}^{\h,\bar{\h}}(u)|_{u=1}&=&4c(q^2)c(q)^{-2N}\left( \psi^{(N)}_{\bar{\h},\bar{1}}(\mathsf{I}^{(N)}(1)) + h_0^{(N)}(1) \mathsf{I}_0 \right ) \label{dertOq}\\  
&& +  2c(q)^{-2N+1}\left(\overline{\varepsilon}_+  \varepsilon_+^{(N)} +  \overline{\varepsilon}_-\varepsilon_-^{(N)} \right) -2N\frac{(q+q^{-1})^2}{c(q)}\left(\overline{\varepsilon}_+  +  \overline{\varepsilon}_-\right)\left(  \varepsilon_++ \varepsilon_- \right)\ ,
\nonumber
\eeqa
with $\mathsf{I}_0$ defined in~\eqref{I0}.
Combining~\eqref{tm11} and~\eqref{dertOq} we get the expression of ${\cal H}^{(1)}$ in~\eqref{H1XXZ}.\vspace{1mm}

\underline{Higher Hamiltonians:} We now consider  the quantity in~\eqref{H2fromT}:   
\begin{equation}
    \tilde {\boldsymbol  t}^{\h,\bar{\h}}(1)^{-1} \frac{d^2}{du^2}\tilde {\boldsymbol  t}^{\h,\bar{\h}}(u)|_{u=1} - \tilde {\boldsymbol  t}^{\h,\bar{\h}}(1)^{-2}\left(\frac{d}{du}\tilde {\boldsymbol  t}^{\h,\bar{\h}}(u)|_{u=1}\right)^2 
    .\label{H2}
\end{equation}
Inserting~\eqref{tm11}, \eqref{dertOq} and
\beqa
\frac{d^2}{du^2}\tilde {\boldsymbol  t}^{\h,\bar{\h}}(u)|_{u=1} = -4c(q^2) c(q)^{-2N}\psi^{(N)}_{\bar{\h},\bar{1}}(\mathsf{I}^{(N)}(1)) + 8\frac{d}{du}\left( c(u^2q^2)c(uq)^{-2N}\psi^{(N)}_{\bar{\h},\bar{1}}\bigl(\mathsf{I}^{(N)}(u)\bigr) \right)\Big|_{u=1}
\eeqa
in~\eqref{H2}, we get the higher Hamiltonian ${\cal H}^{(2)}$ in~\eqref{H2XXZ}.
\end{appendix}

\smallskip

	\end{document}